\newcommand{\cev}[1]{\reflectbox{\ensuremath{\vec{\reflectbox{\ensuremath{#1}}}}}}
\numberwithin{equation}{section}
\def\opl{\Xi}
\def\II{{\mathrm{\mathbf{I\!I}}}}
\def\I{{\mathbf{1}}}
\def\NN{{\mathbb{N}}}
\def\ZZ{{\mathbb{Z}}}  \def\SS{{\mathbb{S}}}   \def\CC{{\mathbb{C}}}
\def\RR{{\mathbb{R}}}   \def\EE{{\mathbb{E}}} 
\def\cO{{\mathcal{O}}}
\def\cG{{\mathcal{G}}}
\def\cJ{{\mathcal{J}}}
\def\scrK{{\mathscr{K}}}
\def\scrH{{\mathscr{H}}}
\def\fg{{\mathfrak{g}}}
\def\fJ{{\mathfrak{J}}}
\def\cL{{\mathcal{L}}}
\def\cM{{\mathscr{M}}}
\def\scrM{{\mathscr{M}}}
\def\cC{{\mathcal{C}}}
\def\scrU{{\mathscr{U}}}
\def\scrV{{\mathscr{V}}}
\def\scrL{{\mathscr{L}}}
\def\cE{{\mathcal{E}}}
\def\cK{{\mathcal{K}}}
\def\cD{{\mathcal{D}}}
\def\cP{{\mathcal{P}}}
\def\cE{{\mathcal{E}}}
\def\cW{{\mathcal{W}}}
\def\Ham{\mathrm{Ham}}
\def\vert{\mathbf{v}}
\def\zert{\mathbf{z}}
\def\face{\mathbf{f}}
\def\edge{\mathbf{e}}
\def\grad{\mathrm{grad}}
\def\fg{{\mathfrak{g}}}
\def\fC{{\mathfrak{C}}}
\def\Green{{{G}}}
\renewcommand{\epsilon}{\varepsilon}
\newcommand{\ip}[1]{\langle #1 \rangle}
\newcommand{\ipp}[1]{\langle \!  \langle #1 \rangle\! \rangle}
\newcommand{\Lie}{\mathrm{Lie}}
\newcommand{\vol}{\mathrm{vol}}
\newcommand{\area}{\mathrm{Area}}
\newcommand{\id}{\mathrm{id}}
\newcommand{\cT}{\mathcal{T}}
\newcommand{\cQ}{\mathcal{Q}}
\newcommand{\del}{\partial}
\renewcommand{\Im}{\mathrm{Im}}
\newcommand{\geuc}{g_{\mathrm{euc}}}
\newcommand{\hkto}{\hookrightarrow}
\newtheorem{theointro}{Theorem}
\newtheorem{corintro}[theointro]{Corollary}
\newtheorem{lemma}[subsubsection]{Lemma}
\newtheorem{prop}[subsubsection]{Proposition}
\newtheorem{cor}[subsubsection]{Corollary}
\newtheorem{theo}[subsubsection]{Theorem}
\newtheorem{notation}[subsubsection]{Notation}
\newtheorem{convention}[subsubsection]{Convention}
\newtheorem{dfn}[subsubsection]{Definition}
\theoremstyle{definition}
\theoremstyle{remark}
\newtheorem{rmk}[subsubsection]{Remark}
\newtheorem{example}[subsubsection]{Example}
\newtheorem*{rmk*}{Remark}
\newtheorem*{rmks*}{Remarks}
\newtheorem{rmks}[subsubsection]{Remarks}
\subjclass[2010]{Primary 5299, 53D12; Secondary 39A14,
  39A70, 47B39, 53D50,  53D20, 53D30}
\keywords{Discrete geometry, piecewise linear submanifolds, Lagrangian
  tori, isotropic tori, discrete Laplacian, discrete
  moment map}
\title{Discrete geometry and isotropic surfaces}
\author{François Jauberteau}
\address{François Jauberteau, Laboratoire Jean Leray, Universit\'e de Nantes}
\email{francois.jauberteau@univ-nantes.fr}
\author{Yann Rollin}
\address{Yann Rollin, Laboratoire Jean Leray, Universit\'e de Nantes}
\email{yann.rollin@univ-nantes.fr}
\author{Samuel Tapie}
\address{Samuel Tapie, Laboratoire Jean Leray, Universit\'e de Nantes}
\email{samuel.tapie@univ-nantes.fr}
\begin{document}
\begin{abstract}
  We consider  smooth isotropic immersions from the $2$-dim\-en\-sion\-al torus
  into $\RR^{2n}$, for $n\geq 2$. When $n=2$ the image of such map is an immersed Lagrangian torus of $\RR^4$.
  We prove that such isotropic immersions  can be approximated by
  arbitrarily $\cC^0$-close piecewise linear isotropic maps. If $n\geq
  3$ the piecewise linear isotropic maps can be chosen so that they are piecewise linear isotropic immersions as well.

  The proofs are obtained using analogies with an infinite
  dimensional moment map geometry due to Donaldson. As a byproduct of
  these considerations, we introduce a numerical flow in finite
  dimension,  whose limits  provide, from an experimental perspective, many
  examples of piecewise linear Lagrangian tori in $\RR^4$.
  \href{http://www.math.sciences.univ-nantes.fr/~rollin/index.php?page=flow}{The
    DMMF program,} 
  which is freely available, is based on the Euler
  method and  shows the evolution equation of discrete surfaces in real time, as
  a movie.
\end{abstract}

{\Huge \sc \bf\maketitle}

\section{Introduction}
\subsection{Original motivations and background}
\emph{Lagrangian submanifolds} are natural objects, arising in the context of \emph{Hamiltonian mecanics} and \emph{dynamical systems}. Their prominent role in symplectic topology and geometry should not come as a surprise. In spite of tremendous efforts, the classification of Lagrangian submanifolds, up to \emph{Hamiltonian isotopy}, is generally an open
problem: for instance, Lagrangian tori of the Euclidean symplectic space $\RR^4$ are not classified up to Hamiltionian isotopy.
Lagrangian submanifolds are also key objects of various gauge theories. For example, 
the \emph{Lagrangian Floer theory}  is defined by counting pseudoholomorphic discs with boundary contained in some prescribed Lagrangian submanifolds.
 Many examples of \emph{smooth} Lagrangian submanifolds are known. They are easy
 to construct and to deform. In a nutshell, Lagrangian submanifolds  are typical, rather \emph{flexible} objects, from symplectic
 topology.

 An elementary  construction of Lagrangian submanifold is provided by
 considering the  $0$-section of the cotangent bundle of a smooth
 manifold $T^*L$, 
 endowed with its natural symplectic structure $\omega=d\lambda$,
 where $\lambda$ is the \emph{Liouville form}. More generally, it is
 well known that any section of $T^*L$ given by a closed $1$-form
 is a Lagrangian submanifold. Furthermore, such Lagrangian submanifolds are  Hamiltonian isotopic to the $0$-section if, and only if, the corresponding $1$-form is exact.
 These examples  provide a large class of Lagrangian submanifolds which
 admit 
 as many Hamiltonian deformations as smooth
 function on $L$ modulo  constants.

 By the \emph{Lagrangian
 neighborhood theorem}, every Lagrangian submanifold $L$ of a symplectic manifold admits a neighborhood symplectomorphic to a
 neighborhood of the $0$-section of $T^*L$. It follows that  the local
 Hamiltonian deformations discussed above (in the case of
 $T^*L$) also provide deformations for  Lagrangian
 submanifolds of \emph{any} symplectic manifold.

 The geometric notion of \emph{stationary Lagrangians submanifolds} was introduced
 by Oh~\cite{O90, O93} 
 in order to seek canonical representatives,  in a
 given isotopy class of Lagrangian submanifolds. Stationary Lagrangian
 submanifolds can be though of as analogues of \emph{minimal
 submanifolds} in the framework of symplectic geometry.
 As in the case of minimal surfaces, one can define various
 modified versions of the \emph{mean curvature flow},
 which are expected to converge toward stationary
 Lagrangians submanifolds in a given isotopy class.
 In an attempt to implement numerical versions of these flows~\cite{JR},
 we ended up facing theoretical problems of
 a \emph{discrete geometric nature}.
 Indeed, from a numerical point of view, surfaces are
 usually understood as some type of~\emph{mesh} and their mathematical
 counterpart is \emph{discrete geometry} and sometimes \emph{piecewise
   linear} geometry. 
Two obstacles arose in order to provide a sound numerical
simulation of geometric flows for Lagrangian submanifolds, namely:
\begin{enumerate}
\item To the best of our knowledge, discrete Lagrangian surfaces of $\RR^4$ and more generally discrete isotropic surfaces of
  $\RR^{2n}$
  are poorly understood, in fact hardly studied. We had no available
  examples of discrete Lagrangian tori in $\RR^4$ in our toolbox, save
  some discrete analogues of product or Chekanov
  tori~(cf Section \ref{sec:examples}). Furthermore, we had no deformation theory
 that we could rely upon contrarily to the smooth case. 
 Implementing a geometric evolution equation for discrete
 Lagrangian surfaces,
 with  so few examples to start the flow was not an enticing project.
 \item As far as a program is based on a numerical implementation,
   using floating point numbers, it is not natural
   to check if a symplectic form
   vanishes \emph{on the nose} along a plane. It only makes sense to
   test if the symplectic density is rater small, which means that we
   have an approximate solution of our problem.
    From an experimental point of view, we dread our numerical flow 
    would exhibit  some spurious drift of the symplectic density.
    We feared such instabilities may jeopardize our numerical
    simulations for flowing Lagrangian submanifolds. 
\end{enumerate}
   These issues led us to consider an auxiliary flow. Ideally, the auxiliary flow should
   attract \emph{any} discrete surface toward Lagrangian discrete surfaces. The
   utility of the auxiliary flow would be $2$-fold: its limits would
   provide examples of Lagrangian discrete surfaces for our experiments. It may also be
   used to    prevent instabilities of evolution equation along
   the moduli space of discrete Lagrangian surfaces.

These questions are part of a larger ongoing project. They have not been fully
investigated yet but  stirred many questions
   of a discrete differential geometric nature, in the context of
   symplectic geometry.
   This paper delivers a few answers to some of the simplest questions
   arising,  as a spin-off to our initial motivations.

\subsection{Statement of results}
We consider smooth maps $\ell:\Sigma\to\RR^{2n}$,
where $\Sigma$ is a
surface and $n\geq 2$.  The Euclidean space
$\RR^{2n}$ is endowed with its standard symplectic form $\omega$.
A map $\ell$ is said to be \emph{isotropic} if $\ell^*\omega=0$.
Lagrangian tori of $\RR^4$ are the submanifolds obtained as the image
of  $\Sigma$ by $\ell$, in the particular case  where $2n=4$, $\Sigma$
is diffeomorphic to a torus and $\ell$ is an isotropic
embedding.

In this paper, we construct approximations of smooth isotropic 
immersions of
the torus in $\RR^{2n}$  by \emph{piecewise linear isotropic maps}. The idea is
to consider a discretization of the torus by a square grid and
approximate the smooth map by a quadrangular mesh. This mesh is almost
isotropic, in a suitable sense. A perturbative argument shows that there exists a nearby
isotropic quadrangular mesh, which is used to build a piecewise linear
map. We provide a more precise statement of the above claims in the rest of the introduction.
\subsubsection{Piecewise linear isotropic maps}
We recall some usual definitions before stating one of our main results.
A \emph{triangulation} of $\RR^2$ 
is a locally finite \emph{simplicial complex} that covers $\RR^2$ entirely.
In this paper, points, line segments, triangles of triangulations are understood as geometrical Euclidean
objects  of the plane. Similarly, we shall consider triangulations of
quotients of $\RR^2$ by a lattice $\Gamma$, obtained by quotient of
$\Gamma$-invariant triangulations of $\RR^2$.

A \emph{piecewise linear map} $f:\RR^2\to\RR^m$ is a
continuous map such that, for some  triangulation of
$\RR^2$, the restriction of $f$ to any triangle is an affine map to
$\RR^m$.

We consider  smooth
isotropic  immersions $\ell:\Sigma\to \RR^{2n}$, where $\Sigma$ is
diffeomorphic to a $2$-dimensional torus and $n\geq 2$. The Euclidean metric $g$ of
$\RR^{2n}$ induces a conformal structure on $\Sigma$. The uniformization
theorem implies that the conformal structure of $\Sigma$ actually
comes from a quotient of $\RR^2$, with its canonical conformal
structure, by a lattice. Thus, we have a conformal covering map
$$
p:\RR^2\to\Sigma,
$$
with group of deck transformations $\Gamma$, a lattice of $\RR^2$.

A triangulation (resp. quadrangulation) of $\Sigma$ is called an
\emph{Euclidean triangulation } (resp. \emph{quadrangulation}) of $\Sigma$ if the boundary
of every face lifts to an Euclidean triangle (resp.quadrilateral) of
$\RR^2$ via $p$.

Similarly, a function $f:\Sigma\to\RR^m$ is a \emph{piecewise linear
  map} if it lifts to a piecewise
   linear map $\RR^2\to\RR^m$ via $p$. Given a  piecewise linear map
 $\hat \ell:\Sigma\to \RR^{2n}$, the pull-back of the symplectic form
 $\omega$ of $\RR^{2n}$ makes sense on each triangle of the triangulation
 subordinate to $\hat \ell$. We say that $\hat\ell$ is a \emph{isotropic piecewise
 linear map} if the pull back of $\omega$ vanishes along each face of the triangulation.
A piecewise linear map which is locally injective is called a \emph{piecewise linear immersion.}

The main result of this paper can be stated as follows:
\begin{theointro}
  \label{theo:maindiscr}
  Let $\ell:\Sigma\to\RR^{2n}$ be a smooth isotropic immersion, where
  $\Sigma$ is a 
  surface  diffeomorphic to a compact torus and $n\geq 2$.    
  Then, for every $\epsilon >0$,
   there exists a piecewise linear isotropic   map
  $\hat\ell:\Sigma\to\RR^{2n}$ such that  for every $x\in \Sigma $, we have
   $$\|\ell(x)-\hat\ell(x)\|\leq \epsilon.$$
   Furthermore, if $n\geq 3$, we may assume that $\hat \ell$ is an
   immersion.
   If $n=2$, we may assume that $\hat\ell$ is an immersion away from a
   finite union of embedded circles in $\Sigma$. 
\end{theointro}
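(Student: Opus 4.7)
The plan is to construct the PL isotropic approximation in two stages: first a naive affine interpolation of $\ell$ on a very fine mesh, which is only approximately isotropic, and then a small corrector of the vertex positions that enforces the isotropic condition on every face exactly. The second stage is an implicit function argument modelled on the linearization of the symplectic-area map on faces, which in the continuous limit is a first-order operator on $\Sigma$ whose cokernel is one-dimensional.

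The first stage sets up the discretization. Using the uniformization $p:\RR^2 \to \Sigma = \RR^2/\Gamma$, fix generators of $\Gamma$ and, for a large integer $N$, take the $\Gamma$-invariant rectangular grid on $\RR^2$ of mesh size $O(1/N)$, subdivided along one diagonal per cell. This descends to an Euclidean triangulation $\cT_N$ of $\Sigma$ with $O(N^2)$ triangles of diameter $O(1/N)$. Define $\hat\ell_0$ by $\hat\ell_0(v) = \ell(v)$ at every vertex $v$ of $\cT_N$ and extend affinely on each triangle. Classical interpolation estimates give $\|\hat\ell_0-\ell\|_{\cC^0} = O(N^{-2})$, so the $\cC^0$-proximity is immediate for $N$ large.

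The heart of the argument is the perturbation. For a triangle $T \in \cT_N$ with vertices $v_1, v_2, v_3$ set
$$
\eta(T) \,=\, \tfrac12 \,\omega\bigl(\hat\ell_0(v_2)-\hat\ell_0(v_1),\, \hat\ell_0(v_3)-\hat\ell_0(v_1)\bigr) \,=\, \int_T \hat\ell_0^*\omega.
$$
A Taylor expansion of $\ell$, using that $\omega(d\ell\,X, d\ell\,Y)=0$ by isotropy, gives $\eta(T) = O(N^{-3})$; moreover, since $\omega = d\lambda$ on $\RR^{2n}$ and $\Sigma$ is closed, a discrete Stokes argument yields the exact identity $\sum_T \eta(T) = 0$. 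Now seek a vertex displacement $\xi$ such that $\hat\ell_0 + \xi$ has vanishing defect on every face: writing this as $F(\xi) = 0$, the map $F$ is quadratic in $\xi$, and its linearization $dF_0$ at $\xi=0$ is the discrete analogue of the first-order operator $\xi \mapsto d\alpha$, $\alpha = \omega(\xi, d\ell\,\cdot\,)$, whose image on a smooth closed surface is exactly the subspace of $2$-forms of integral zero. The main technical obstacle is to establish a uniform-in-$N$ coercivity estimate $\|\xi\|_\infty \leq C\, N\, \|dF_0\xi\|_\infty$ on vertex displacements orthogonal to the one-dimensional discrete cokernel: this is a discrete Hodge theoretic bound expressing the coercivity of a discrete Laplacian naturally associated with $\ell$, and it is precisely the discrete counterpart of the moment-map picture invoked in the introduction. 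Granted this estimate, a Banach contraction on a ball of radius $O(N^{-2})$ produces the corrector $\xi$ with $F(\xi)=-\eta$, and $\hat\ell := \hat\ell_0 + \xi$ is the required isotropic PL map.

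The immersion refinement is a dimension count performed at the end. A triangle of the PL map fails to be an immersion precisely when its three image vertices are colinear in $\RR^{2n}$, and within the set of isotropic triples this degeneracy has codimension $2n-2$ in the triangle configuration space. For $n \geq 3$ this codimension strictly exceeds the $2$-dimensional parameter space of triangles in $\Sigma$, so an additional arbitrarily small generic perturbation, constrained to the solution locus of the isotropic equations, removes all degeneracies simultaneously. For $n=2$ the codimensions match and an optimal transversality argument on the finite-dimensional complex $\cT_N$ leaves a $1$-dimensional degeneracy locus; a standard genericity step ensures that the degenerate edges and vertices assemble into a finite disjoint union of embedded circles in $\Sigma$.
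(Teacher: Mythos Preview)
Your overall architecture---sample, then correct via a contraction mapping---matches the paper's, but the proposal assumes exactly the step that carries all the weight. You write ``Granted this estimate'' for the uniform coercivity bound on $dF_0$; this is not a technicality to be granted but the entire content of the theorem. In the paper this occupies Sections~3--5: one must identify the correct discrete norms (weak H\"older norms $\cC^{k,\alpha}_w$ built on a checkers-board splitting of the faces), prove discrete Schauder estimates uniform in $N$ (via Thom\'ee's theorem), compute the limiting operator as $N\to\infty$ and its kernel, and then deduce a spectral gap via a discrete Ascoli--Arzel\`a argument. None of this is routine, and your $L^\infty$ formulation is too coarse: the paper has to pass through $\cC^{0,\alpha}_w \to \cC^{2,\alpha}_w$ estimates to close the contraction.

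There is also a structural gap in your ansatz. Your $dF_0$ maps $\RR^{2n}$-valued vertex fields to scalar face functions, so it has an enormous kernel and no coercivity in the form you state. The paper does \emph{not} perturb by a general vertex displacement: it restricts to $\xi=-J\delta_\tau^\star\phi$ for a \emph{scalar} discrete function $\phi$, so that the linearization becomes the self-adjoint operator $\Delta_\tau=\delta_\tau\delta_\tau^\star$. This is what makes the spectral gap question well-posed. Moreover, the limiting kernel is not always one-dimensional: there is a ``degenerate'' case (Proposition~5.2.1) where it is two-dimensional, handled either by rotating the conformal cover or by a $\ZZ_2$-equivariant trick. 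Your proposal does not see this.

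Finally, the paper works with \emph{quadrangulations}, not triangulations, and only passes to triangles at the end by placing an apex over each isotropic quadrilateral (an isotropic pyramid). This is not cosmetic: the isotropic condition on a quadrilateral depends only on its diagonals, which produces the shear symmetry and the checkers splitting underlying the weak H\"older analysis. Your direct triangular scheme would require a different discrete analysis that you have not supplied. The immersion statements likewise come from this structure: for $n=2$ the non-immersion locus is exactly the $1$-skeleton of $\cQ_N(\Sigma)$ (a finite union of meridians), not a transversality residue; for $n\geq 3$ one uses the $(2n-3)$-dimensional freedom in the apex placement together with the shear action, not a generic perturbation inside an unspecified solution locus.
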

Loosely stated,  Theorem~\ref{theo:maindiscr} says that every
isotropic immersion $\ell$ of a torus into $\RR^{2n}$ can be approximated by a
piece linear map arbitrarily $\cC^0$-close to~$\ell$. If $n\geq 3$ the
last statement of the theorem provides the following corollary:
\begin{corintro}
\label{cor:corintro}
  Let $n$ be an integer such that $n\geq 3$. Let $\Sigma$ be a smoothly
 immersed surface in $\RR^{2n}$, which is isotropic and
 diffeomorphic to a compact torus.
  Then, there exist piecewise
  linear  immersed surfaces  in $\RR^{2n}$, which are
  isotropic, homeomorphic
  to a compact torus and arbitrarily close to $\Sigma$ with respect to the
  Hausdorff distance.
\end{corintro}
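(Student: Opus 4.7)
The plan is to deduce Corollary~\ref{cor:corintro} as a direct consequence of Theorem~\ref{theo:maindiscr}, essentially by reinterpreting the map-level $\cC^0$-approximation as a Hausdorff approximation of images. First I would fix a smooth isotropic immersion $\ell:\Sigma\to\RR^{2n}$ whose image is the surface in the hypothesis of the corollary, with $\Sigma$ diffeomorphic to a compact torus; such a parametrization exists by the very definition of a smoothly immersed surface that is diffeomorphic to a torus. Then, given any $\epsilon>0$, I would invoke Theorem~\ref{theo:maindiscr} in the range $n\geq 3$ to produce a piecewise linear isotropic map $\hat\ell:\Sigma\to\RR^{2n}$ which is also a piecewise linear immersion and which satisfies $\|\ell(x)-\hat\ell(x)\|\leq \epsilon$ for every $x\in\Sigma$.

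Next I would set $\hat S:=\hat\ell(\Sigma)$ and argue that this is the desired piecewise linear immersed isotropic surface homeomorphic to a compact torus. Indeed, $\hat\ell$ is a piecewise linear immersion of the torus $\Sigma$ whose pullback of $\omega$ vanishes on every face, so $\hat S$ is piecewise linear, isotropic, and immersed in the required sense; the underlying abstract surface is $\Sigma$ itself, hence homeomorphic (even diffeomorphic) to a compact torus. The bound on the Hausdorff distance between $\ell(\Sigma)$ and $\hat\ell(\Sigma)$ would then follow from the trivial observation that for any two maps $f,g:\Sigma\to\RR^{2n}$ with $\sup_{x}\|f(x)-g(x)\|\leq \epsilon$, every point of $f(\Sigma)$ lies within distance $\epsilon$ of $g(\Sigma)$ and conversely. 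Applying this with $f=\ell$ and $g=\hat\ell$ yields $d_H(\ell(\Sigma),\hat\ell(\Sigma))\leq \epsilon$, so letting $\epsilon\to 0$ furnishes a sequence of approximating piecewise linear isotropic immersed tori.

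I do not expect a genuine obstacle in this deduction: the entire content of the corollary is supplied by Theorem~\ref{theo:maindiscr}, and only the bookkeeping translation from the sup-norm on maps to the Hausdorff distance on images is needed. The single subtle point, which I would address explicitly, is the meaning of ``piecewise linear immersed surface homeomorphic to a compact torus'' in the conclusion: I would adopt the convention that such a surface is, by definition, the image of a piecewise linear immersion from an abstract torus, in perfect parallel with the usage of ``smoothly immersed surface'' in the hypothesis. Under this convention the corollary is an immediate byproduct of the theorem, and the restriction $n\geq 3$ is inherited directly from the immersion clause in the last sentence of Theorem~\ref{theo:maindiscr}.
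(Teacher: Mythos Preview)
Your proposal is correct and matches the paper's approach: the corollary is stated as an immediate consequence of Theorem~\ref{theo:maindiscr} (the paper gives no separate proof), and your argument supplies exactly the routine passage from the $\cC^0$-bound on maps to the Hausdorff bound on images, together with the observation that the $n\geq 3$ clause of the theorem yields a piecewise linear isotropic immersion.
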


\begin{rmk}
Our technique does not
allow to get much better results than a rather rough
$\cC^0$-closedness between $\ell$ and its approximation $\hat\ell$.
The best evidence for this weakness is the existence of a certain
\emph{shear action} on the space of isotropic quadrangular meshes
(cf. \S\ref{sec:shear}).
It would be most interesting to understand whether
these limitations are inherent to  the techniques we employed here,
or if there are geometric obstructions to get better
estimates. 
\end{rmk}

\subsubsection{Isotropic quadrangular meshes}
The main tool  to prove Theorem~\ref{theo:maindiscr}
relies on \emph{quadrangulations} of
$\Sigma$ and \emph{quadrangular meshes}.
Quadrangulations of $\RR^2$ are
 CW-complex decomposition of $\RR^2$,
where edges are line segments of $\RR^2$ and the boundary of every face
is an Euclidean quadrilateral. Nevertheless, the precise general definition of
quadrangulations 
is unimportant for our purpose. Indeed, we shall only work with
particular standard quadrangulations $\cQ_N(\RR^2)$ of $\RR^2$, pictured as a regular grid with
step size $N^ {-1}$ tiled by Euclidean squares. 

Particular Euclidean quadrangulations of 
$\cQ_N(\Sigma)$, are defined at \S\ref{sec:quadconv}. They
are obtained as quotients of $\cQ_N(\RR^2)$ by certain lattices $\Gamma_N$
of $\RR^2$. The associated moduli space of \emph{quadrangular meshes} is by definition
$$
\scrM_N = C^0(\cQ_N(\Sigma))\otimes\RR^{2n}.
$$
A mesh $\tau\in\scrM_N$ is an object that associates $\RR^{2n}$-coordinates
to every vertex of the quadrangulation $\cQ_N(\Sigma)$. A quadrilateral
of $\RR^{2n}$ is called an \emph{isotropic quadrilateral}, if the integral of the Liouville
form $\lambda$ along the quadrilateral vanishes. By extension, we say that a mesh $\tau\in
\scrM_N$ is isotropic if the quadrilateral in $\RR^{2n}$ associated to each  face of
$\cQ_n(\Sigma)$ via $\tau$ is isotropic.
The main strategy for proving Theorem~\ref{theo:maindiscr} involves the following approximation result:
\begin{theointro}
  \label{theo:mainquad}
  Given an isotropic immersion  $\ell:\Sigma\to\RR^{2n}$,  there exists a
family of isotropic quadrangular meshes $\rho_N\in\scrM_N$ defined for every $N$ sufficiently large, with the following property:
for every $\epsilon>0$, there exists $N_0>0$ such that for every $N\geq N_0$ and every
vertex $v$ of $\cQ_N(\Sigma)$, we have 
$$
 \|\rho_N(v)-\ell(v)\|\leq \epsilon.
$$
\end{theointro}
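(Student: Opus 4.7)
The plan is to start from the sampling mesh $\tau_N\in\scrM_N$ defined by $\tau_N(v)=\ell(v)$ at every vertex $v$ of $\cQ_N(\Sigma)$, and then to correct it by a small amount to a genuinely isotropic mesh $\rho_N$. Since $\tau_N$ tautologically satisfies $\tau_N(v)=\ell(v)$, the whole content of the theorem is reduced to showing that the deformation needed to restore isotropy tends to zero uniformly as $N\to\infty$.

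To quantify how far $\tau_N$ is from being isotropic, let $\mu:\scrM_N\to\RR^{F_N}$ be the map sending a mesh $\tau$ to the collection of Liouville integrals around the boundaries of the $F_N$ faces of $\cQ_N(\Sigma)$, so that $\tau$ is isotropic precisely when $\mu(\tau)=0$. I would first establish the defect estimate $\|\mu(\tau_N)\|_\infty = O(N^{-3})$. Denote by $\square$ a generic face, let $\Omega_\square\subset\RR^{2n}$ be the chord quadrilateral with vertices $\ell(v_i)$, and let $\ell(\square)$ be the smooth image. Both surfaces share the four corners, so their boundaries differ by four thin ruled surfaces, one per edge, each of area $O(N^{-3})$ by Taylor expansion of the lift $\tilde\ell:\RR^2\to\RR^{2n}$ on a square of side $N^{-1}$. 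Since $\int_{\ell(\square)}\omega=\int_\square\ell^*\omega=0$ by isotropy, Stokes yields $\mu(\tau_N)_\square=\int_{\Omega_\square}\omega=O(N^{-3})$.

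Next I would correct $\tau_N$ via a Newton-type iteration based on the differential $d\mu_{\tau_N}:\RR^{2nV_N}\to\RR^{F_N}$. Locally, varying $\tau_N(v)$ by $X\in\RR^{2n}$ modifies the four face integrals meeting $v$ by an expression linear in $X$ given by symplectic pairings of $X$ with the four image edges of $\ell$ at $v$. Since $\ell$ is an immersion these edges span the tangent plane to $\ell(\Sigma)$ at $\ell(v)$, and for $n\geq 2$ its $\omega$-orthogonal complement has dimension $2n-2\geq 2$, providing enough symplectic normal directions to locally prescribe the four face corrections. Assembling these local prescriptions via a partition of unity on $\cQ_N(\Sigma)$, and accounting for the single linear relation $\sum_\square\mu(\tau)_\square=0$ valid for every mesh because $H_2(\RR^{2n})=0$, gives a right inverse $Q_N$ of $d\mu_{\tau_N}$ with operator norm $O(N)$ from $\ell^\infty(\text{faces})$ to $\ell^\infty(\text{vertex coordinates})$. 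Combined with the defect estimate this yields a correction of size $O(N^{-2})$ at every vertex, which is smaller than any prescribed $\epsilon$ once $N$ is large enough, and the theorem follows.

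The hardest step, and the one deserving the most attention, is the uniform-in-$N$ control of the right inverse $Q_N$. The dimension count is favourable on a torus ($2nF_N$ parameters against $F_N-1$ independent scalar constraints, using $V_N=F_N$), but quantitative surjectivity with constants independent of $N$ is a genuine structural property that cannot be obtained from linear algebra alone. This is where the finite-dimensional moment-map picture inspired by Donaldson's framework, alluded to in the introduction, should enter: $\mu$ is to be read as a moment map on $\scrM_N$ for a suitable gauge action, and the natural inverse factors through a discrete Laplacian on $\cQ_N(\Sigma)$ whose spectral gap is uniform as the mesh is refined.
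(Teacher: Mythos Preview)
Your outline --- sample $\ell$ to get $\tau_N$, estimate the isotropy defect, correct via a quantitative inverse of the linearisation --- matches the paper's strategy, and your final paragraph correctly locates the whole difficulty in the uniform right inverse. The defect estimate and the Stokes relation $\sum_\square \mu(\tau)_\square = 0$ are both used in the paper just as you say.

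The second paragraph, however, is not a proof step. A partition-of-unity assembly of local right inverses does not yield a global inverse with $N$-uniform bounds, and your local solvability is already shaky: the four edge vectors at a vertex of $\tau_N$ span only a $2$-plane up to $O(N^{-1})$, so moving a single vertex affects the four adjacent face integrals through a linear map of rank about $2$, not $4$. You effectively concede all this in your last paragraph, so the middle one should be read as motivation rather than argument.

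The paper fills this gap with substantially more structure than a generic Newton step. The correction is restricted to the specific form $\rho_N = \tau_N - J\delta_N^\star\phi_N$ with $\phi_N\in C^2(\cQ_N(\Sigma))$ a scalar per face (the finite-dimensional shadow of the complexified Hamiltonian directions), so the linearisation becomes the selfadjoint $\Delta_N = \delta_N\delta_N^\star$. Uniform control of $\Delta_N^{-1}$ uses discrete Schauder estimates of Thom\'ee, but only in \emph{weak} H\"older norms $\cC^{k,\alpha}_w$ that treat the two checkerboard colour classes of faces separately --- this splitting is forced by a shear symmetry of isotropic meshes and cannot be bypassed. One then shows $\Delta_N$ converges to an explicit smooth elliptic operator $\Xi$ acting on pairs $(\phi^+,\phi^-)$, computes $\ker\Xi$ by hand (it involves the Gau\ss{} curvature and a second-fundamental-form term, and has dimension $1$ or $2$ depending on a degeneracy that can always be rotated away), and deduces the spectral gap via an Ascoli--Arzel\`a compactness/contradiction argument. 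A contraction mapping then produces $\phi_N$ with $\|\phi_N\|_{\cC^{2,\alpha}_w}=O(N^{-1})$ solving $\mu_N^r(\rho_N)\in\scrK_N$, and your Stokes relation is exactly what forces $\mu_N^r(\rho_N)=0$. The resulting vertex correction is $O(N^{-1})$, not the $O(N^{-2})$ your bookkeeping predicts.
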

An isotropic quadrilateral of $\RR^{2n}$  is always the base of
an isotropic pyramid  in $\RR^{2n}$ (cf. \S\ref{sec:pyramid}), which is
easily
found as the solution
of a linear system. This remark allows to pass from an
isotropic quadrangular mesh to an isotropic triangular mesh. Together with Theorem~\ref{theo:mainquad} this provides essentially the proof of Theorem~\ref{theo:maindiscr}.

\subsubsection{Flow for quadrangular meshes}
Our approach for proving Theorem~\ref{theo:mainquad} has been
inspired to a large extent by the beautiful~\emph{moment map geometry}
introduced by Donaldson~\cite{Don99}. We shall provide a careful
presentation of this infinite dimensional  geometry at
\S\ref{sec:dream}, and merely state a few facts in this introduction:
the moduli space of maps
$$
\scrM=\{f :\Sigma\to\RR^{2n}\},
$$
from a surface $\Sigma$ endowed with a volume form $\sigma$ into
$\RR^{2n}$ admits a natural formal K\"ahler structure, with a formal Hamiltonian action of $\Ham(\Sigma,\sigma)$. The moment map of the action is given by
$$
\mu(f)=\frac{f^*\omega}{\sigma}.
$$
Zeroes of the moment map are precisely  isotropic maps.
It is tempting to make an 
analogy with the
Kempf-Ness theorem, which holds in the finite dimensional setting.
We may conjecture  that a
map $f$ admits an isotropic representative in its
\emph{complexified orbit} provided some type of algebro-geometric hypothesis of
stability.
Furthermore,  one can also define a \emph{moment map flow}, which is
naturally defined in the context of a K\"ahler manifold enfowed with a
Hamiltonian group action. Such flow is
essentially the downward gradient of the function $\|\mu\|^2$ on the
moduli space, which is expected, in favorable circumstances, to converge
toward a zero of the moment map in a prescribed orbit.

\begin{rmk}
We shall not state any significant  results aside  the description of
this geometric framework. For instance, it is an open question whether
the moment map flow exists for short time in this context, which is
part of a broader ongoing program.  
\end{rmk}

In an attempt to define a finite dimensional analogue of
this infinite dimensional moment map picture,
we define a flow  analogous to the moment map flow
on the moduli space of meshes $\scrM_N$, called the discrete moment
map flow. This flow is now just an ODE and its behavior can readily be explored
from a numerical perspective, using the Euler method.
We provide a computer program called DMMF, available on the homepage
\begin{center}
\href{http://www.math.sciences.univ-nantes.fr/~rollin/index.php?page=flow}{http://www.math.sciences.univ-nantes.fr/\textasciitilde{}rollin/},
\end{center}
which is
a numerical simulation of the discrete moment map flow.
 From an experimental point of view,
 the flow seems to be converging quickly toward isotropic
 quadrangular meshes, for any initial quadrangular mesh (cf. \S\ref{sec:dmmf}).

\subsection{Open questions}
Theorem~\ref{theo:maindiscr}
is a fundamental tool for the discrete geometry of isotropic tori,
since it provides a vast class of examples  of piecewise linear
objects by approximation of the smooth ones.
Here is a list a questions that arise immediately in this new
territory of discrete symplectic geometry:
\begin{enumerate}
\item Is there a converse to Theorem~\ref{theo:maindiscr} or
  Corollary~\ref{cor:corintro}? Given a piecewise linear isotropic
  surface in $\RR^{2n}$, is it possible to find a nearby smooth
  isotropic surface?
\item
More generally, to what extent does the moduli space of piecewise linear Lagrangian submanifolds
retain the properties of the moduli space of smooth Lagrangian submanifolds?
In spite of groundbreaking progess in symplectic topology, the classification of Lagrangian submanifolds up to Hamiltonian
    isotopy remains open. It is known that there exists several types of
    Lagrangian tori in $\RR^4$, which are not Hamiltonian
    isotopic:  namely,  product tori and  Chekanov
    tori~\cite{Chekanov96}. On the other hand, Luttinger found infinitely many
    obstructions in~\cite{Luttinger95} to the existence
    of certain type of knotted Lagrangian tori in $\RR^4$. In
    particular spin knots provide knotted tori in $\RR^4$ which cannot
    by isotopic to Lagrangian tori according to Luttinger's theorem.
    This thread of ideas led to the conjecture that product and
    Checkanov tori are the only classes of Lagrangian tori in $\RR^4$,
    up to Hamiltonian isotopy. Although the result was claimed before,
    the conjecture is  still open for the time being~\cite{D17}. However it was proved by Dimitroglou Rizell, Goodman and Ivrii that all embedded Lagrangian tori of $\RR^4$ are isotropic trough Lagrangian isotopies~\cite{DGI}.    Perhaps an interesting approach to
    tackle such conjecture, and more generally any questions involving
    some type of $h$-principle,  would be to recast the question in the finite
    dimensional framework of piecewise linear Lagrangian tori of $\RR^4$.
\item The moment map framework, in an infinite dimensional context,
  presented at \S\ref{sec:dream}, has been a great endeavor for
  proving our main results and introducing a finite dimensional version
  of the moment map flow. However, only a faint shadow of the moment
  map geometry is recovered in the finite dimensional world. More
  precisely, there exists a finite dimensional analogue $\mu_N^r$ of the moment map
  $\mu$ on $\scrM_N$. But it is not clear whether  $\mu_N^r$ is
  actually a
  moment map and for which group action on $\scrM_N$. It would be most interesting to
  define a finite dimensional analogue of the group $\Ham(\Sigma,\sigma)$,
  and try to make sense of the Kempf-Ness theorem in this setting.
\end{enumerate}

\subsection{Comments on the  proofs}
The proofs given in this paper come with a strong differential
geometric flavor, involving \emph{uniformization theorem} for Riemann
surfaces, \emph{discrete analysis}, \emph{discrete elliptic operators},
\emph{discrete Schauder estimates}, \emph{Riemannian geometry},
\emph{discrete spectral gap theorem}, \emph{Gau{\ss} curvature} and
its discrete analogues.

Many of the techniques employed here may be adapted to more general
settings. Working with tori seems to be a key
fact however: indeed, Fourier and discrete Fourier transforms
are well adapted for the analysis on tori and their quadrangulations
and do not seem to extend easily. The discrete Schauder estimates
derived by Thom\'ee~\cite{Thomee68}, which are a crucial ingredient of our fixed point principle, are
proved using Fourier transforms.

Although geometric analysis is quite often  a powerful tool for proving topological theorems,
 symplectic topologists may still 
expect more flexible constructions, like some
sort of jiggling lemma. We have not explored thoroughly  this approach,
 and would be more than happy to see an alternative
proof of our theorems along these lines.

\subsection{Organization of the paper}
Section~\S\ref{sec:dream} of this paper is a presentation of the moment map
geometry of a certain infinite dimensional moduli spaces introduced by
Donaldson. Finite dimensional analogues of this geometry are used in
the rest of the paper. For instance
a discrete
version of the moment map  flow is  introduced  at~\S\ref{sec:dmmf} and
implemented on a computer,  in order to obtain examples of Lagrangian
piecewise linear surfaces from an experimental point of view.
In \S\ref{sec:anal}, we introduce suitable spaces of discrete functions
on tori, together with the analysis suited for implementing
the fixed point principle. This section contains the definition of
quadrangulations, discrete functions,
discrete H\"older norms, together with the relevant notions of
convergence, culminating with a type of  Ascoli-Arzela theorem
(cf. Theorem~\ref{theo:ascoli}).
The equations for Lagrangian quadrangular meshes are introduced
at~\S\ref{sec:pert}, where their linearization is also computed. As
the dimension of the discrete problem goes to infinity, we
show that the finite dimensional linearized problem 
converges toward 
a smooth differential operator at~\S\ref{sec:limit}.
Some
uniform estimates on the spectrum of the finite dimensional  linearized problem are derived as a corollary.
The proof of Theorem~\ref{theo:mainquad} is completed
at~\S\ref{sec:fpt}, using the fixed point principle.
The proof of  Theorem~\ref{theo:maindiscr} follows and is completed at
\S\ref{sec:quadtri} after introducing some generic perturbations in
order to obtain piecewise linear immersions.

\subsection{Acknowledgements}
The authors would like to thank   Vestislav Apostolov, Gilles Carron,
Stéphane Guillermou, 
Xavier Saint-Raymond, Pascal Romon and Carl Tipler for some useful discussions.

A significant part of this research was carried out while the second author was
 working at the CNRS  UMI-CRM
research lab in Montr\'eal, during the academic year 2017-2018.
We are grateful to all the colleagues from l'Universit\'e du Qu\'ebec
\`a Montr\'eal (UQAM) and le Centre Inter-universitaire de Recherche
en G\'eom\'etrie et Topologie (CIRGET), for providing such a great scientific and human environment.

The  authors benefited from the support of the                  
French government
\emph{investissements d’avenir} program
   ANR-11-LABX-0020-01 and from 
   the
   \emph{défi de tous les savoirs} EMARKS research grant ANR-14-CE25-0010.

\tableofcontents
\newpage

\section{Dreaming of the smooth setting}
\label{sec:dream}
The main results of this work (for instance
Theorem~\ref{theo:mainquad}), are
 of  discrete geometric nature.
Yet the main ideas of our proof  were 
obtained via an  analogy with the moment map geometry
of the space of maps, from the tori endowed with a volume form, into
$\RR^{2n}$.
This section is independent of the others, but we think it is important to show how
smooth and discrete geometry analogies can be used to unravel unexpected ideas.

\subsection{Donaldson's moment map geometry}
The  moment map geometry presented here was coined by Donaldson,
although our
specific setting is not emphasized in~\cite{Don99}.
All the notions of moduli spaces shall be  discussed from a  purely
\emph{formal
perspective}.
With some additional effort, it may be possible to define
 infinite dimensional manifolds structures on  moduli spaces of
 interest,  by using suitable Sobolev or H\"older spaces. 

Let $M$ be a smooth manifold endowed with a K\"ahler
structure~$(M,J,\omega,g)$. The K\"ahler structure of $M$ is given by
an integrable almost complex structure~$J$, a K\"ahler form~$\omega$
and the corresponding K\"ahler metric~$g$. Recall that the K\"ahler
form is related to the metric via   
the usual formula
$$
\omega(v_1,v_2)=g(Jv_1,v_2), \mbox { for all $v_1,v_2 \in T_m M$}.
$$
 The reader may keep in mind the simplest example provided by $M=\RR^{2n}\simeq \CC^n$
 with its induced Euclidean K\"ahler structure. In this case,
 $\omega=d\lambda$, where $\lambda$ is the Liouville form, which
 implies that the cohomology class $[\omega]\in H^2(M,\RR)$ vanishes.

Let $\Sigma$ be a closed surface with orientation induced by a volume
form~$\sigma$. In real dimension $2$, a volume form is also a
symplectic form. Thus, the symplectic surface $(\Sigma,\sigma)$
 admits an infinite dimensional Lie group of Hamiltonian transformations denoted
 $$\cG=\Ham(\Sigma,\sigma).$$
 One can consider the moduli space of smooth
 maps
 $$
\scrM=\{ f : \Sigma \to M \quad | \quad f^*[\omega] = 0\};
$$
notice that in the case of $M=\RR^{2n}$ endowed with the standard
symplectic form, the condition $f^*[\omega]=0$
is satisfied by every smooth map.

The tangent space $T_f \cM$ is identified with the space of tangent vector
fields $V$ along~$f$, which is the space of smooth map $V:\Sigma\to TM$
such that $V(x)\in T_{f(x)}M$. 
 There is an obvious right-action of $\Ham(\Sigma,\sigma)$ on $\scrM$
 by precomposition.  

As pointed out by Donaldson, the geometry of the target space induces a formal Kähler structure on $\scrM$ denoted
$(\cM,\fg,\Omega,\cJ)$  given by
$$
(\fJ V)|_x = JV_x, \quad \fg(V,V') = \int_\Sigma g(V,V')\sigma , \quad \Omega(V,V') = \int_\Sigma \omega(V,V')\sigma
$$
for any pair of tangent vector fields $V,V'$ along $f:\Sigma\to\RR^4$.
By definition, the action of $\Ham(\Sigma,\sigma)$ preserves the
Kähler structure of $\cM$.

The canonical $L^2$-inner product on $\Sigma$, given by
$$\ipp{h,h'}=\int_\Sigma hh'\sigma ,$$
allows to define the space of smooth functions orthogonal to constants $C^\infty_0(\Sigma)$, which in turn, 
be identified to the Lie algebra $\Lie(\cG)$ of $\cG=\Ham(\Sigma,\sigma)$ via the map
$h\mapsto X_h$. Here $X_h$ is the Hamiltonian vector field with
respect to the symplectic form $\sigma$, which satisfies
$$
\iota_{X_h}\sigma = dh.
$$
The $L^2$-inner product $\ipp{h,h'}$ also provides an isomorphism between the
Lie algebra of $\Ham(\Sigma,\sigma)$ and its dual. The Lie algebra and
its dual will be identified throughout this section without any further warning.
Since $\Ham(\Sigma,\sigma)$ acts on $\scrM$, any element of the Lie algebra
$h\in \Lie(\cG)\simeq C^\infty_0(\Sigma)$ induces a \emph{fundamental
 vector field} $Y_h$  on
$\scrM$ defined by
$$
Y_h(f)=f_* X_h \in T_f\scrM.
$$

For $f\in\scrM$, we have  $f^*[\omega]=0$, hence
$$
\int_\Sigma
\frac{f^*\omega}\sigma \; \sigma= \int_\Sigma
f^*\omega =0,
$$
so that  we may consider the map
\begin{equation}
  \mu:\left\{\begin{array}{rcll}
    \scrM & \longrightarrow &  C^\infty_0(\Sigma) &\\
f& \longmapsto & \mu(f) = & \frac{f^*\omega}\sigma
  \end{array}\right.
\end{equation}
By definition, we have the obvious property
$$
\mu(f)=0 \Leftrightarrow f^*\omega = 0 \Leftrightarrow \mbox { $f$ is an  isotropic map}.
$$
But we have much more than an equation:
\begin{prop}[Donaldson]
  \label{prop:donaldson}
  The action of $\Ham(\Sigma,\sigma)$ on $\scrM$ is formally
  Hamiltonian and admits $\mu$ as a moment map. More precisely:
\begin{enumerate}
\item $\mu:\scrM\to C^\infty_0(\Sigma)$ is
  $\Ham(\Sigma,\sigma)$-equariant in the sense that for every
  $f\in\scrM$ and $u\in\Ham(\Sigma,\sigma)$
  $$
\ipp{\mu(f \circ u),h\circ u}  = \ipp{\mu(f ),h} ;
$$
\item for every variation $V$ of $f$
  $$
  \ipp{D\mu|_f\cdot V, h} = -\iota_{Y_h(f)} \Omega(V),
  $$
  where $D$ denotes the differentiation of functions on $\scrM$.
\end{enumerate}
\end{prop}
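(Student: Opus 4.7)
The plan is to verify the two claims of Proposition~\ref{prop:donaldson} separately.

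For the equivariance (item~1), I would first establish the pointwise identity $\mu(f\circ u)=\mu(f)\circ u$. Since $u\in\Ham(\Sigma,\sigma)$ preserves the volume form, $u^*\sigma=\sigma$, so
$$
\mu(f\circ u)=\frac{(f\circ u)^*\omega}{\sigma}=\frac{u^*(f^*\omega)}{u^*\sigma}=u^*\mu(f).
$$
Then a change of variables in the $L^2$-pairing, using $u^*\sigma=\sigma$ together with the fact that Hamiltonian diffeomorphisms are orientation-preserving, yields
$$
\ipp{\mu(f\circ u),h\circ u}=\int_\Sigma u^*\bigl(\mu(f)\,h\,\sigma\bigr)=\int_\Sigma \mu(f)\,h\,\sigma=\ipp{\mu(f),h}.
$$

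For the moment map identity (item~2), the plan is to compute $D\mu|_f\cdot V$ using a one-parameter family $f_t$ with $f_0=f$ and $\partial_t f_t|_{t=0}=V$. The central ingredient is the variational formula for a pulled-back closed form: working on the product map $F:\Sigma\times(-\varepsilon,\varepsilon)\to M$, $F(x,t)=f_t(x)$, I would decompose
$$
F^*\omega=\beta_t+dt\wedge\alpha_t
$$
where $\alpha_t$ is the $1$-form on $\Sigma$ given by $\alpha_t(X)=\omega\bigl(\partial_t F,(f_t)_*X\bigr)$. Since $\omega$ is closed, $F^*\omega$ is closed, which forces $\partial_t\beta_t=d\alpha_t$, yielding
$$
\bigl(D\mu|_f\cdot V\bigr)\sigma=\frac{d}{dt}\bigg|_{t=0}f_t^*\omega=d\alpha_0,
\qquad \alpha_0(X)=\omega(V,f_*X).
$$

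From there it remains to integrate by parts on the closed surface $\Sigma$ and unwind. The key pointwise identity is the two-dimensional formula $dh\wedge\alpha=-\alpha(X_h)\,\sigma$, valid for any $1$-form $\alpha$ on the symplectic surface $(\Sigma,\sigma)$, which I would verify in local coordinates where $\sigma=dx\wedge dy$. Combining everything,
$$
\ipp{D\mu|_f\cdot V,h}=\int_\Sigma h\,d\alpha_0=-\int_\Sigma dh\wedge\alpha_0=\int_\Sigma\alpha_0(X_h)\,\sigma=\int_\Sigma\omega\bigl(V,f_*X_h\bigr)\,\sigma,
$$
which equals $\Omega(V,Y_h(f))=-\iota_{Y_h(f)}\Omega(V)$ by antisymmetry of $\Omega$, as desired.

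The only step I expect to require real care is the variational formula for $f_t^*\omega$: because $V$ is only a section of $f^*TM$ and need not extend to a vector field on $M$, one cannot naively interpret $\cL_V\omega$. Passing to the product $\Sigma\times(-\varepsilon,\varepsilon)$ and using closedness of $F^*\omega$ sidesteps this entirely. After that point, everything reduces to Stokes' theorem, a local computation in Darboux coordinates, and sign tracking.
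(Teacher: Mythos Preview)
Your proof is correct and follows essentially the same route as the paper. The paper also passes to the product map $F:I\times\Sigma\to M$ and uses the Cartan formula $\cL_{\partial_t}F^*\omega=d\iota_{\partial_t}F^*\omega$ (together with $d\omega=0$) to obtain $\frac{d}{dt}f_t^*\omega\big|_{t=0}=d\alpha_0$ with $\alpha_0=f^*\iota_V\omega$; your decomposition $F^*\omega=\beta_t+dt\wedge\alpha_t$ with $\partial_t\beta_t=d\alpha_t$ is the same computation in different packaging. Likewise, your pointwise identity $dh\wedge\alpha=-\alpha(X_h)\,\sigma$ is exactly the paper's argument that $\iota_{X_h}(\sigma\wedge\alpha)=0$ as a $3$-form on a surface, rewritten in coordinates; both then finish with Stokes and the identification $Y_h(f)=f_*X_h$.
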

\begin{proof}
  Only the second property requires some explanation.
  We pick a smooth family of maps $f_t:\Sigma\to M$ such that
  $\frac{\del}{\del t} f_t|_{t=0}=V$ and $f_0=f$. The family is
  undersood as a smooth map
  $$
F:I\times\Sigma\to M
$$
where $I$ is a neighborhood of $0$ in $\RR$ and
$F(t,x)=f_t(x)$. We denote by $j_0:\Sigma \hookrightarrow
I\times \Sigma$ the canonical embedding given by $j_0(x)=(0,x)$. Notice
that by definition $F\circ j_0 =f$.
  Then
  \begin{align*}
  \ipp{D\mu|_f\cdot V, h} & = \left . \frac \del{\del t}\right |_{t=0} \int_\Sigma
  hf_t^*\omega\\
  &= \int_\Sigma h j_0^*\cL_{\del_t}\cdot F^*\omega \\
  &= \int_\Sigma h j_0^*(d\iota_{\del_t}F^*\omega +
  \iota_{\del_t}dF^*\omega)
  \end{align*}
where the last line comes from the Cartan formula. 
The symplectic form is closed, hence $dF^*\omega = F^*d\omega=0$. In
addition $F^*\del_t$ agrees with $V$ along $\{0\}\times\Sigma$, so
that $j_0^*\iota_{\del_t}dF^*\omega = df^*\iota_V\omega$.
It
follows that
\begin{align*}
  \ipp{D\mu|_f \cdot V, h} & = \int_\Sigma hd f^*\iota_V\omega\\
  &= - \int_\Sigma dh\wedge f^*\iota_V\omega\\
  &= - \int_\Sigma \iota_{X_h}\sigma \wedge f^*\iota_V\omega
\end{align*}
The interior product is an antiderivation. In particular
$$
\iota_{X_h}(\sigma \wedge f^*\iota_V\omega)= (\iota_{X_h}\sigma)
\wedge f^*\iota_V\omega +  (\iota_{X_h} f^*\iota_V\omega) \sigma.
$$
The LHS of the above identities must vanish since this is the case for
a $3$-form over a surface, and we obtain the identity
\begin{align*}
  \ipp{D\mu|_f\cdot V, h}  &= \int_\Sigma  (\iota_{X_h}
  f^*\iota_V\omega) \sigma\\
  &= \int_\Sigma  \omega(V,Y_h(f))\sigma\\
&=\Omega(V,Y_h(f)) 
  \end{align*}
which proves the proposition.
\end{proof}

\subsection{A moment map flow}
\label{sec:mmf}
From this point, gauge theorists may dream of generalizations of  the
Kempf-Ness Theorem, which is only known to hold in the finite
dimensional setting. The Kempf-Ness theorem asserts that the existence of a zero of the moment map in a given
complexified orbit of the group action is equivalent to an algebraic property of
stability, in the sense of geometric invariant theory. Under the
hypothesis of stability, the zeroes of the moment map are unique up to
the action of the real group. Unfortunately the Kempf-Ness Theorem
does not apply immediately  in the infinite
dimensional setting and the conjectural isomorphism
$$
 \scrM /\!\!/ {\cG}^{\CC} \simeq \mu^{-1}(0)/\cG,
 $$
 where the LHS is some type of GIT quotient,
is out of reach for the moment. To start with, the complexification of
$\cG$ is 
not even well defined and the quotient  $\scrM /\!\!/ {\cG}^{\CC}$ does not make sense. 
Nevertheless, a significant number of this thread of ideas may be implemented.
For instance, we may define a natural \emph{moment map flow}.
\begin{dfn}
  \label{dfn:mmf}
Let $f_t\in\cM$ be a family of  maps, for
$t$ in an open interval of $\RR$. We say that the family $f_t$ is solution of the
moment map flow if
\begin{equation}
  \label{eq:mmf}
\frac {df}{dt} = \fJ Y_{\mu(f)}(f).  
\end{equation}
\end{dfn}
\begin{rmk}
In the finite dimensional setting, such a moment map flow preserves the complexified
orbits and converges to a zero of the moment map under a suitable
assumption of stability. It is natural to conjecture that this flow
should converges generically to an isotropic  map in a prescribed
complexified orbit. We shall not tackle this problem here and only
prove some very down to earth properties of the flow.   
\end{rmk}
By construction, we have
\begin{align*}
  \fg(\fJ Y_{\mu(f)}(f),V) &= \Omega(Y_{\mu(f)},V) \\
  &= -\ipp{D\mu|_f\cdot V,\mu(f)}\\
& =-\frac 12 D(\|\mu\|^2)|_f\cdot V  
\end{align*}
so that
$$
\fJ Y_{\mu(\cdot)} = -\frac 12 \grad \|\mu \|^2,
$$
which proves the following lemma:
\begin{lemma}
  \label{lemma:flowdown}
Smooth maps $f:\Sigma\to M$ are zeroes of the moment map if, and
only if they are isotropic.
In addition, the moment map flow is a downward gradient flow of the functional
$f\mapsto \|\mu(f)\|^2$ on $\scrM$. More precisely, the evolution equation of the moment map flow can be written
 $$
\frac{df}{dt} = -\frac 12  \grad \|\mu(f)\|^2,
$$
where $\grad$ is the gradient of a function on $\scrM$ endowed with its Riemannian metric $\fg$.
\end{lemma}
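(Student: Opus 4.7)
The plan is to verify the two assertions of the lemma in turn. The characterization $\mu(f) = 0 \Leftrightarrow f$ isotropic is essentially a restatement of the definition of $\mu$: since $\sigma$ is a nowhere-vanishing top-degree form on $\Sigma$, the smooth function $\mu(f) = f^*\omega/\sigma$ vanishes identically on $\Sigma$ if and only if the $2$-form $f^*\omega$ does, which is precisely the condition that $f$ be isotropic.

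For the gradient flow reformulation, I would simply assemble the chain of identities carried out in the paragraph just above the statement of the lemma. Combining the compatibility relation $\Omega(U,V) = \fg(\fJ U, V)$ for the formal Kähler structure on $\scrM$ (which follows directly from the analogous compatibility $\omega(\cdot,\cdot) = g(J\cdot,\cdot)$ on the target $M$ upon integration against $\sigma$) with the moment map identity of Proposition~\ref{prop:donaldson}(2) applied to the function $h = \mu(f)$, one obtains, for every variation $V \in T_f \scrM$,
\begin{align*}
\fg(\fJ Y_{\mu(f)}(f), V) &= \Omega(Y_{\mu(f)}(f), V) \\
&= -\ipp{D\mu|_f \cdot V,\; \mu(f)} \\
&= -\frac{1}{2}\, D(\|\mu\|^2)|_f \cdot V,
\end{align*}
where the last equality is just the usual chain rule $D(\|\mu\|^2) = 2\ipp{D\mu,\,\mu}$.

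By the defining property of the gradient of a function $F$ on the formal Riemannian manifold $(\scrM, \fg)$, namely $\fg(\grad F, V) = DF \cdot V$ for every $V$, the displayed identity amounts to $\fJ Y_{\mu(f)}(f) = -\frac{1}{2}\grad \|\mu\|^2$. Substituting this into the moment map flow equation~\eqref{eq:mmf} from Definition~\ref{dfn:mmf} yields the claimed evolution $\frac{df}{dt} = -\frac{1}{2}\grad \|\mu(f)\|^2$. The whole argument is purely formal and reduces to bookkeeping of integrals over the compact surface $\Sigma$; there is no genuine obstacle to overcome, since in this infinite-dimensional setting $\grad$ is \emph{defined} by the Riesz-type identity above rather than obtained through any analytic construction.
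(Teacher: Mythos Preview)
Your proposal is correct and follows essentially the same argument as the paper: the characterization of zeroes of $\mu$ is noted as immediate from the definition, and the gradient identity is obtained by the very chain of equalities $\fg(\fJ Y_{\mu(f)}(f),V) = \Omega(Y_{\mu(f)},V) = -\ipp{D\mu|_f\cdot V,\mu(f)} = -\frac{1}{2}D(\|\mu\|^2)|_f\cdot V$ that the paper displays immediately before stating the lemma.
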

As a corollary, we see that the functional should decrease along flow lines:
\begin{cor}
  If $f_t$ is a smooth family of maps solution of the moment map flow, then
  $$
\frac {d}{dt}\|\mu(f_t)\|^2 <0
$$
unless $f_t$ is isotropic, in which case $\frac {d}{dt}\|\mu(f_t)\|^2=0$ and the flow is stationary.
\end{cor}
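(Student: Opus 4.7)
The statement follows directly from Lemma~\ref{lemma:flowdown}, which already identifies the flow as the downward gradient flow of $\frac12\|\mu\|^2$ on $(\scrM,\fg)$. My plan is to differentiate $\|\mu(f_t)\|^2$ along the flow and invoke the variational formula for $\mu$ supplied by Proposition~\ref{prop:donaldson}.

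The first step is to write
$$
\frac{d}{dt}\|\mu(f_t)\|^2 = 2\,\ipp{D\mu|_{f_t}\cdot \dot f_t,\,\mu(f_t)}.
$$
Applying Proposition~\ref{prop:donaldson}(2) with $V=\dot f_t$ and $h=\mu(f_t)$, this becomes $2\,\Omega(\dot f_t, Y_{\mu(f_t)}(f_t))$. Substituting the flow equation $\dot f_t = \fJ Y_{\mu(f_t)}(f_t)$ and using the defining relation $\Omega(V,V')=\fg(\fJ V,V')$ of the K\"ahler structure together with $\fJ^2=-\id$, one obtains
$$
\frac{d}{dt}\|\mu(f_t)\|^2 = -2\,\fg\bigl(Y_{\mu(f_t)}(f_t),Y_{\mu(f_t)}(f_t)\bigr) = -2\,\|Y_{\mu(f_t)}(f_t)\|_{\fg}^2 \le 0,
$$
which is the desired inequality. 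Alternatively, one can arrive at the same conclusion by writing $-\frac12 D(\|\mu\|^2)|_{f_t}\cdot \dot f_t = -\frac12\|\grad\|\mu\|^2\|_{\fg}^2$ using the gradient flow form of Lemma~\ref{lemma:flowdown}.

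The second step is to discuss the equality case. If $f_t$ is isotropic, then $\mu(f_t)=0$, the Hamiltonian vector field $X_{\mu(f_t)}$ vanishes, hence $Y_{\mu(f_t)}(f_t)=f_{t*}X_{\mu(f_t)}=0$, so both sides of the flow equation vanish: the flow is stationary and $\frac{d}{dt}\|\mu(f_t)\|^2=0$. Conversely, if the derivative vanishes at time $t$, the computation above forces $Y_{\mu(f_t)}(f_t)=0$, i.e.\ $f_{t*}X_{\mu(f_t)}\equiv 0$ on $\Sigma$; under the standing immersion hypothesis this means $X_{\mu(f_t)}=0$, hence $\mu(f_t)$ is constant, and since $\mu(f_t)\in C^\infty_0(\Sigma)$ has zero mean it must vanish identically, so $f_t$ is isotropic.

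The computation itself is routine once Proposition~\ref{prop:donaldson} is in place; the only subtle point is the converse in the equality case, where one needs that $f_{t*}X_{\mu(f_t)}=0$ implies $X_{\mu(f_t)}=0$. This is the step I would flag as the main formal obstacle: it is immediate for an immersion $f_t$, but in the general formal setting of $\scrM$ one should read the corollary as asserting that stationary points of the flow are exactly the isotropic maps up to this immersion caveat, consistent with the discussion preceding Definition~\ref{dfn:mmf}.
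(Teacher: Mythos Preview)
Your proof is correct and follows essentially the same route as the paper's: both reduce to the identity $\frac{d}{dt}\|\mu(f_t)\|^2=-2\,\fg(Y_{\mu(f_t)},Y_{\mu(f_t)})$ and then argue that $Y_{\mu(f_t)}$ cannot vanish identically unless $f_t$ is isotropic. The paper argues the strict inequality by contrapositive (not isotropic $\Rightarrow$ $d\mu(f_t)\neq 0$ somewhere $\Rightarrow$ $X_{\mu(f_t)}\neq 0$ somewhere $\Rightarrow$ $Y_{\mu(f_t)}\neq 0$), while you argue the equality case directly; the content is the same, and you are in fact more careful in flagging that the step $f_{t*}X_{\mu(f_t)}=0\Rightarrow X_{\mu(f_t)}=0$ uses the immersion hypothesis, which the paper passes over silently.
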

\begin{proof}
  Assume that $f_t$ is not isotropic. In particular there exists
  $x\in \Sigma$ such that the differential of $\mu(f_t)$ does not
  vanish at $x$. Otherwise $\mu(f_t)$ would be constant. But the
  fact that $\omega$ is exact would force $\mu(f_t)=0$. By
  definition $X_{\mu(f_t)}$ is a non vanishing vector field at $x$
  since it is the symplectic dual of $d\mu(f_t)$. It follows that
  $Y_{\mu(f_t)}$ does not vanish hence
 \begin{align*}
    \frac{d}{dt}\|\mu(f_t)\|^2 
    &= -2\fg(\fJ Y_{\mu(f_t)}, \fJ Y_{\mu(f_t)}) \\
    &= -2\fg( Y_{\mu(f_t)}, Y_{\mu(f_t)}) <0.
  \end{align*}  
\end{proof}

\subsubsection{Laplacian and related operators}
\label{sec:laplrel}
For each  vector field $V$ tangent to $f$, we define the operator
$$\delta_f:T_f\scrM\to C^\infty_0(\Sigma)
$$
by
\begin{equation}
  \label{eq:deltaf}
\delta_f V = - D\mu|_f\cdot JV.  
\end{equation}
We see that that the adjoint $\delta_f^\star$ of $\delta_f$ satisfies
\begin{align*}
  \fg(\delta^*_fh,V)   & =  \ipp{\delta_f V, h}\\
  &= -\ipp{D\mu|_f\cdot JV,h} \\
  &= \Omega(Y_h(f), JV)\\
  & = \fg(Y_h(f),V) 
\end{align*}
so that
\begin{equation}
  \label{eq:hamdstar}
\delta^\star_f h = Y_h(f).  
\end{equation}
For each $f\in\scrM$, we may define a natural Laplacian
\begin{equation}
  \label{eq:Deltaf}
\Delta_f = \delta_f\delta^\star_f  
\end{equation}
acting on smooth functions on $\Sigma$.

\begin{rmk}
It seems likely that the moment map flow of Definition~\ref{dfn:mmf}
can be interpreted as a parabolic flow, once a suitable 
analytical framework
and gauge condition have been setup. Although we shall not prove
anything about short time existence of the moment map flow in this work, we provide at least a heursitic
 evidence. In the next section, 
we compute the variation of the moment map and
show that the variation of $\mu(f)$,
when
  $f$ is deformed in the direction of the complexified action $\fJ Y_h$, is expressed as a
  Laplacian of $h$. However, the
  systematic study of the moment map flow in the smooth setting is not our purpose here, and
  we shall return to this question in a sequel to this paper~\cite{JRT}.  
\end{rmk}

\subsection{Variations of the moment map}
The operator
$f\mapsto \mu(f)$ is a first order differential operator.
 This section is devoted to calculate its linearization.

\subsubsection{General variations}
Let $f_s:\Sigma\to M$ be a smooth family of maps, with
parameter $s\in I$, where $I$ is an open interval of $\RR$.
We use the notation,
$$
V_s=\frac{\del f_s}{\del s},
$$
for the infinitesimal variation  $V_s\in T_{f_s}\scrM$
 of the
family $f_s$.

We consider the map $F:I\times \Sigma\to M$ given by $F(s,x)=f_s(x)$ and the canonical injection
$j_{s_0}:\Sigma \hookrightarrow I\times\Sigma$, defined by $j_{s_0}(x)=(s_0,x)$ for some $s_0\in I$.
We compute, using the Cartan formula
\begin{align*}
  \left .\frac{\del f_s^*\omega}{\del s}\right |_{s=s_0} & = j_{s_0}^* \frac{\del}{\del s}\cdot F^*\omega \\
  & =  j_{s_0}^*(d\circ \iota_{\del_s} +\iota_{\del_s}\circ d)F^*\omega \\
  & = j_{s_0}^* d\circ \iota_{\del_s} F^*\omega \\
  & = j_{s_0}^* d F^*\iota_{V_s}\omega \\
  & = df_{s_0}^*\iota_{V_{s_0}}\omega 
\end{align*}
where we have used the fact that $d\omega=0$, that $d$ commutes with
pullbacks and that $F\circ j_{s_0} = f_{s_0}$.

The form
$$\alpha_{s_0} = f^*_s\iota_{V_{s_0}}\omega
$$
is called the
\emph{Maslov form} of the deformation $f_s$ at $s=s_0$.
The above computation shows that
$$
 \frac{\del f_s^*\omega}{\del s} = d\alpha_s,
 $$
which reads
  $$
 \frac{\del \mu(f_s)}{\del s} = \delta \alpha_s
 $$
 where $\delta$ is the operator  given by
 \begin{equation}
   \label{eq:deltadef}
   \delta \gamma = \frac{d\gamma}\sigma,
 \end{equation}
 for every $1$-form $\gamma$ on $\Sigma$.

Thus we have proved the following result:
\begin{lemma}
  \label{lemma:genvarmmp}
  Let $f:\Sigma\to M$ be a smooth map and $V\in T_f\scrM$ be an infinitesimal  variation of $f$.
  Then
  $$
D\mu|_f\cdot V = \delta \alpha_V
$$
where $\alpha_V= f^*\iota_V\omega$ is the Maslov form of the deformation and $\delta$ is the operator defined by~\eqref{eq:deltadef}.
\end{lemma}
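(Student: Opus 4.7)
The plan is to prove the lemma by choosing any smooth one-parameter family $f_s$ realizing the infinitesimal variation $V$ at $s=0$ and differentiating the defining expression $\mu(f_s) = f_s^*\omega/\sigma$ at $s=0$. Since $\sigma$ is fixed and independent of $s$, this reduces to computing $\partial_s f_s^*\omega|_{s=0}$, after which one divides by $\sigma$ and invokes the definition of $\delta$ from \eqref{eq:deltadef}.

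Concretely, I would first set $F: I\times\Sigma \to M$, $F(s,x) = f_s(x)$, and let $j_0:\Sigma\hookrightarrow I\times \Sigma$ be the inclusion $j_0(x) = (0,x)$, so that $F\circ j_0 = f$ and $(j_0)_*(T\Sigma)$ together with $\partial_s$ span $T(I\times\Sigma)$ along $\{0\}\times\Sigma$. Then
\begin{align*}
\left.\frac{\partial}{\partial s}\right|_{s=0} f_s^*\omega
  &= j_0^*\,\mathcal{L}_{\partial_s} F^*\omega \\
  &= j_0^*\bigl(d\,\iota_{\partial_s} F^*\omega + \iota_{\partial_s}\, dF^*\omega\bigr),
\end{align*}
by the Cartan magic formula. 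The second term vanishes because $d$ commutes with pullback and $\omega$ is closed, so $dF^*\omega = F^*d\omega = 0$. For the first term, I would use that pullback commutes with $d$ and that $F_*\partial_s|_{(0,x)}=V(x)$, so $j_0^*\iota_{\partial_s}F^*\omega = f^*\iota_V \omega = \alpha_V$. Putting things together gives $\partial_s f_s^*\omega|_{s=0} = d\alpha_V$.

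Finally I would divide by the fixed volume form $\sigma$ and invoke \eqref{eq:deltadef} to conclude that
$$D\mu|_f\cdot V \;=\; \frac{d\alpha_V}{\sigma} \;=\; \delta\alpha_V,$$
which is the claimed identity. There is essentially no obstacle here; the only subtlety is being careful that the cohomological constraint $f^*[\omega]=0$ defining $\scrM$ is preserved along the family, but this follows from $d\alpha_V$ being exact, so $[f_s^*\omega]$ is constant in $s$, and moreover integrating the identity over $\Sigma$ gives $\ipp{\delta\alpha_V,1} = \int_\Sigma d\alpha_V = 0$, confirming that $\delta\alpha_V$ indeed lies in $C^\infty_0(\Sigma)$, consistent with $\mu$ taking values there. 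This justifies writing the final formula intrinsically without choosing a family.
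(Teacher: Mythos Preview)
Your proof is correct and follows essentially the same approach as the paper: the computation via the Cartan formula applied to $F^*\omega$ on $I\times\Sigma$, with the $\iota_{\partial_s}d$ term vanishing by closedness of $\omega$, is exactly what appears in the text preceding the lemma. Your closing remark that $\delta\alpha_V$ lands in $C^\infty_0(\Sigma)$ is a nice sanity check not made explicit in the paper.
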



\subsubsection{Variations at an immersion}
We assume now that $f:\Sigma\to M$ is a smooth immersion. In
particular the pullback $g_\Sigma=f^*g$ is a Riemannian metric on
$\Sigma$. The volume form $\vol_\Sigma$ may not agree with $\sigma$,
but the $2$-forms are related by a conformal factor
$$
\vol_\Sigma = \theta\sigma
$$
where $\theta:\Sigma\to \RR$ is a positive smooth function.
We introduce a conformal metric $g_\sigma$ that satisfies the equation
$$
g_\Sigma =\theta g_\sigma,
$$
and  the Hodge operator acting on forms of
$\Sigma$, associated to the metric $g_\sigma$ is denoted $*_\sigma$.
\begin{lemma}
  \label{lemma:varmu}
  Assume that  $f:\Sigma\to M$ is a smooth immersion. Then
  $\Sigma$ has an induced Riemannian metric $g_\Sigma$. Let
  $g_\sigma$ be the Riemannian metric with volume form $\sigma$, conformal to $g_\Sigma$.
  Let
  $Y_h$ be the fundamental vector field on $\scrM$ associated to the Hamiltonian function $h$.
   Then
  $$
\Delta_fh=\delta_f\delta^\star_f h= -D\mu|_{f}\cdot \fJ Y_h(f) =   d^{*_\sigma}\theta d h = \theta\Delta_\sigma
h - g_{\sigma}(d\theta,dh) .
  $$
  where $\Delta_\sigma$ is the Laplacian associated to the Riemannian
  metric $g_\sigma$ and $\theta$ is the conformal factor such that
  $g_\Sigma=\theta g_\sigma$.
\end{lemma}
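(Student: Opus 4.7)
\medskip

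\noindent\textbf{Proof proposal.} The first equality $\Delta_f h = \delta_f\delta_f^\star h$ is the definition \eqref{eq:Deltaf}. The second one is the concatenation of \eqref{eq:hamdstar}, which gives $\delta_f^\star h = Y_h(f)$, and \eqref{eq:deltaf}, which gives $\delta_f V = -D\mu|_f\cdot \fJ V$; applied to $V = Y_h(f)$ this yields $\delta_f\delta_f^\star h = -D\mu|_f\cdot \fJ Y_h(f)$. So the real content is to identify $-D\mu|_f\cdot \fJ Y_h(f)$ with $d^{*_\sigma}\theta dh$ and then to expand this as $\theta\Delta_\sigma h - g_\sigma(d\theta,dh)$.

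The plan is to apply Lemma~\ref{lemma:genvarmmp} to the variation $V = \fJ Y_h(f)$. By the pointwise definition of $\fJ$ and the fact that $Y_h(f) = f_*X_h$, this variation equals $J f_* X_h$. Its Maslov form is
$$
\alpha_{\fJ Y_h(f)} = f^*\iota_{Jf_*X_h}\omega.
$$
Using the K\"ahler compatibility $\omega(v_1,v_2)=g(Jv_1,v_2)$, which implies $\omega(Jw,\cdot)=-g(w,\cdot)$, and then $f^*g = g_\Sigma = \theta g_\sigma$, I would rewrite this as
$$
\alpha_{\fJ Y_h(f)} = -\theta\, g_\sigma(X_h,\cdot).
$$
On an oriented surface, the Riemannian volume form $\sigma$ of $g_\sigma$ satisfies $\iota_Y\sigma = *_\sigma Y^\flat$ for every vector field $Y$, where $Y^\flat = g_\sigma(Y,\cdot)$. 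Combined with $\iota_{X_h}\sigma = dh$ and $*_\sigma^2 = -1$ on $1$-forms in dimension $2$, this gives $g_\sigma(X_h,\cdot)=-*_\sigma dh$, hence $\alpha_{\fJ Y_h(f)} = \theta *_\sigma dh$.

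It then suffices to expand $\delta\alpha_{\fJ Y_h(f)} = d(\theta *_\sigma dh)/\sigma$ by the Leibniz rule. For the first term, the pointwise identity $d\theta\wedge *_\sigma dh = g_\sigma(d\theta,dh)\,\sigma$ on an oriented surface handles the cross term. For the second, the characterization of the Laplacian $\Delta_\sigma h\,\sigma = -d*_\sigma dh$ (taken with the positive semidefinite sign convention) gives $\theta\, d*_\sigma dh = -\theta\Delta_\sigma h\,\sigma$. Summing yields
$$
D\mu|_f\cdot \fJ Y_h(f) = g_\sigma(d\theta,dh) - \theta\Delta_\sigma h,
$$
whose negative matches the claimed formula. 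Finally, since $*_\sigma \beta = \beta/\sigma$ for any $2$-form $\beta$, the same computation reads $d^{*_\sigma}(\theta dh) = -*_\sigma d *_\sigma(\theta dh) = -d(\theta*_\sigma dh)/\sigma = \theta\Delta_\sigma h - g_\sigma(d\theta,dh)$, closing the chain of equalities. No step is a serious obstacle; the only place that requires care is the bookkeeping of signs for $*_\sigma$, $\Delta_\sigma$ and $J$ in dimension $2$, together with the consistent application of Lemma~\ref{lemma:genvarmmp}.
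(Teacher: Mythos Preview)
Your proof is correct and follows essentially the same route as the paper: you apply Lemma~\ref{lemma:genvarmmp} to the variation $V=\fJ Y_h(f)=Jf_*X_h$, identify the Maslov form as $\theta *_\sigma dh$ via the K\"ahler relation and the defining equation $\iota_{X_h}\sigma=dh$, and then expand $d(\theta *_\sigma dh)/\sigma$. The paper's argument is the same computation written more tersely; your version simply makes the sign bookkeeping for $*_\sigma$, $\Delta_\sigma$ and the codifferential explicit.
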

\begin{rmk}
  In particular, if $\vol_\Sigma$ agrees with $\sigma$, then
  $\theta=1$, $g_\sigma=g_\Sigma$ and the above formal says that
  $$
 \Delta_f h = \Delta_\Sigma h.
  $$
\end{rmk}
\begin{proof}
  Let $f_s\in \scrM$, be a smooth family of maps for $s\in
  I=(-\epsilon,\epsilon)$, with the
  property that $f_0=f$ and $V_0 = JY_h = Jf_*X_h$.
    Then
  $\frac{\del\mu(f_s)}{\del_s} = \delta \alpha_s$ by Lemma~\ref{lemma:genvarmmp}.
  But $\alpha_0(U)= f ^*\omega(V_0,U)=
  \omega(Jf_*X_h,f_* U)= -g( f_*X_h,f_*
  U)=-g_\Sigma(X_h,U)$.
  It follows that
  $\alpha_0(U)=  - \theta
  g_\sigma(X_h,U) = -\theta \sigma(X_h,*_\sigma U) = \theta *_\sigma dh.$
Then we conclude that
$$
\left .\frac{\del \mu(f_s)}{\del s}\right |_{s=0} = *_\sigma
d\theta *_\sigma dh = - d^{*_\sigma}\theta d h = -\theta\Delta_\sigma
h + g_{\sigma}(d\theta,dh) ,
$$
which proves the Lemma.
\end{proof}
The next lemma shows that $\Delta_f$ is essentially an isomorphism.
\begin{lemma}
  \label{lemma:isodelta}
  The operator $h\mapsto d^{*_\sigma}\theta dh$ is an elliptic
  operator of order $2$, which is an isomorphism modulo constants.
\end{lemma}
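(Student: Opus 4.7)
The plan is to proceed by checking the three classical ingredients: ellipticity, formal self-adjointness, and computation of the kernel, then invoke the Fredholm alternative for elliptic operators on a closed surface. First I would compute the principal symbol. Since $\theta$ is a positive smooth function, multiplication by $\theta$ is a zeroth order operator that preserves ellipticity, and $d^{*_\sigma} d$ is just the scalar Laplacian $\Delta_\sigma$ up to sign; more precisely, by Lemma~\ref{lemma:varmu},
\[
L(h) := d^{*_\sigma}\theta\, dh = \theta \Delta_\sigma h - g_\sigma(d\theta, dh),
\]
whose principal symbol at $\xi\in T^*_x\Sigma$ is $\theta(x)|\xi|^2_{g_\sigma}>0$ for $\xi\neq 0$. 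Hence $L$ is a second order elliptic differential operator.

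Next I would show that $L$ is formally self-adjoint with respect to the $L^2$ pairing $\ipp{h,h'}=\int_\Sigma hh'\sigma$. Since $d^{*_\sigma}$ is by definition the formal adjoint of $d$ with respect to $g_\sigma$ and the volume form $\sigma$, integration by parts on the closed surface $\Sigma$ gives
\[
\ipp{L(h), h'} = \int_\Sigma h'\, d^{*_\sigma}(\theta dh)\, \sigma = \int_\Sigma \theta\, g_\sigma(dh,dh')\, \sigma,
\]
which is manifestly symmetric in $(h,h')$. Taking $h'=h$ yields $\ipp{L(h),h}=\int_\Sigma \theta |dh|^2_{g_\sigma}\sigma$, a nonnegative quantity that vanishes precisely when $dh\equiv 0$, since $\theta>0$. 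Because $\Sigma$ is connected, this forces $h$ to be constant, so $\ker L$ consists exactly of the constant functions. Note also that $L(h)$ is automatically $L^2$-orthogonal to constants: $\ipp{L(h),1}=\int_\Sigma d^{*_\sigma}(\theta dh)\,\sigma = \int_\Sigma g_\sigma(d1,\theta dh)\,\sigma = 0$, so $L$ maps $C^\infty(\Sigma)$ into $C^\infty_0(\Sigma)$.

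Finally, since $L$ is an elliptic operator of order two on a closed surface, standard elliptic theory (acting between appropriate Sobolev or Hölder completions) guarantees that it is Fredholm. Formal self-adjointness identifies the cokernel with the kernel, so $\mathrm{coker}\, L$ also consists of constants and the Fredholm index is $0$. Passing to the quotient by constants on both sides, $L$ descends to a map
\[
\bar L : C^\infty(\Sigma)/\RR \longrightarrow C^\infty_0(\Sigma)
\]
which is injective (by the kernel computation) and has dense image whose $L^2$-closure is all of $C^\infty_0(\Sigma)$ (since the orthogonal complement of the image is the cokernel, which is trivial in $C^\infty_0$). Combining injectivity, index $0$, and elliptic regularity, $\bar L$ is a smooth isomorphism, which is precisely the statement that $L$ is an isomorphism modulo constants. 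No step is particularly delicate: the positivity $\theta>0$ coming from the fact that $f$ is an immersion does all the real work, by simultaneously providing ellipticity and the coercivity estimate needed to identify the kernel.
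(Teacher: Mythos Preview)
Your proof is correct and follows essentially the same approach as the paper's: ellipticity from the explicit formula (you compute the principal symbol, the paper just cites the formula), self-adjointness via the identity $\ipp{Lh,h'}=\ipp{\theta\,dh,dh'}$, kernel reduced to constants by positivity of $\theta$, and then the Fredholm alternative to conclude. The paper is terser on the last step (it simply says the orthogonal of the image is the kernel since the operator is self-adjoint), while you spell out the Fredholm argument and elliptic regularity more explicitly, but the substance is identical.
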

\begin{proof}
  The fact that the operator is elliptic of order $2$ follows from the
  formula
  $$
d^{*_\sigma}\theta dh = \theta\Delta_\sigma h -g_\sigma(d\theta,dh).
$$
The operator is selfadjoint since
$$
\ipp{d^{*_\sigma}\theta dh , h'}= \ipp{\theta dh,d h'} = \ipp{h,d^{*_\sigma}\theta dh'}.
$$
If $h$ belongs to the  kernel of the operator, then
$$
0=\ipp{d^{*_\sigma}\theta dh,h} = \ipp{\theta dh,dh}
$$
which implies that $h$ is constant. Because the operator is
selfajoint, the orthogonal of its image is identified to the
kernel. So the operator is an isomophism when restricted to functions
which are $L^2$-orthogonal to constants.
\end{proof}

\subsection{Application}
We know that $\scrM$ is acted on by $\cG=\Ham(\Sigma,\sigma)$. The
$\cG$-orbit of $f\in\scrM$ is denoted $\cO_f$. The group of Hamiltonian transformations does not
admit a natural complexification. Nevertheless, it is possible to make sense of its complexified orbits.

The space of vector fields $Y_u$ defined by
$Y_u(f)=f_*X_u$ over $\scrM$ defines an integrable distribution
$\cD\subset T\cM$ which is the tangent space to $\cG$-orbits.
We can consider the complexified distribution of the tangent bundle to $\scrM$
$$
\cD^\CC = \cD +\fJ \cD 
$$
given by vector fields of the form $Y_u+\fJ Y_v$.
The fact
that $\cG$ preserves the complex structure $\fJ$ of $\scrM$ implies
that the distribution is formally integrable
into a holomorphic foliation. A leaf of the foliation, obtained by
integrating the distribution, is refered to as a
\emph{complexified orbit of $\cG$}. The complexified orbit of a
element $f\in\scrM$ is denoted $\cO_f^\CC$. 

We are now assuming for simplicity that $M$ is the K\"ahler manifolds
$\RR^{2n}$ identified to $\CC^{n}$. In this case, given $f\in \cM$, we may consider a type of exponential map given by
$$
\exp_f (u+iv)= f + Y_u(f) +\fJ Y_v(f),
$$
for $u, v\in C^\infty_0(\Sigma)$.
This type of exponential map does not come from a Lie group exponential
map. However, $\exp_f(u+iv)$
provides perturbations of $f$ in directions tangent to the
complexified orbit $\cO_f^\CC$ at $f$.

We have now all the tools necessary to prove the following result:
\begin{theo}
  \label{theo:perttoy}
  We choose $M=\RR^{2n}$ for the construction of $\scrM$.
  Let $\ell\in\cM$ be a smooth isotropic immersion. If $f\in\scrM$
  is sufficiently close to $\ell$ in $\cC^{1,\alpha}$-norm, there exists a nearby 
  perturbation of the form $\tilde \ell=\exp_f(ih)$, where $h$ is a
  $\cC^{2, \alpha}$ function on $\Sigma$, such that
  $\tilde \ell$ is an
  isotropic immersion.
\end{theo}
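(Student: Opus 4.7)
The plan is to set up the equation $\mu(\exp_f(ih))=0$ and solve it for $h$ as a function of $f$ near the base point $(f,h)=(\ell,0)$ by the implicit function theorem in Banach spaces. Concretely, introduce the functional
$$
F:\cU\times\cV \longrightarrow \cC^{0,\alpha}_0(\Sigma),\qquad F(f,h):=\mu(\exp_f(ih)),
$$
where $\cU$ is a small $\cC^{1,\alpha}$-neighborhood of $\ell$ in $\scrM$, and $\cV$ is a small neighborhood of $0$ in the space $\cC^{2,\alpha}_0(\Sigma)$ of $\cC^{2,\alpha}$ functions with zero mean with respect to $\sigma$. The target space really is mean-zero because $[\omega]=0$ on $\RR^{2n}$. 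Since $\ell$ is isotropic, $F(\ell,0)=\mu(\ell)=0$.

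The key computation is the partial Fréchet derivative in $h$ at the base point. Because $\exp_f(ih)\big|_{h=0}=f$ and the infinitesimal variation in $h$ at $h=0$ is precisely $\fJ Y_h(f)$, Lemma~\ref{lemma:varmu} applied at the immersion $\ell$ yields
$$
D_h F|_{(\ell,0)}\cdot h \;=\; D\mu|_\ell\cdot \fJ Y_h(\ell) \;=\; -\Delta_\ell h \;=\; -d^{*_\sigma}(\theta\,dh).
$$
By Lemma~\ref{lemma:isodelta}, this operator is elliptic of order two and an isomorphism from $\cC^{2,\alpha}_0(\Sigma)$ onto $\cC^{0,\alpha}_0(\Sigma)$, where $\theta>0$ is the smooth conformal factor coming from $\ell$ being an immersion. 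Assuming $F$ is a $\cC^1$ map between the chosen Banach spaces in a neighborhood of $(\ell,0)$ -- a routine Schauder multiplication and composition check applied to the chain $f\mapsto \exp_f(ih)\mapsto \mu(\cdot)$ -- the implicit function theorem produces a unique $\cC^1$ map $f\mapsto h(f)\in\cV$ such that $F(f,h(f))\equiv 0$, i.e.\ $\tilde\ell=\exp_f(ih(f))$ is isotropic.

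Finally, being an immersion is an open condition in the $\cC^1$ topology, and the constructed $\tilde\ell$ is $\cC^{1}$-close to $\ell$ once $\cU$ and $\cV$ are shrunk sufficiently; elliptic regularity applied to the equation $d^{*_\sigma}(\theta\,dh(f))=$ (smooth nonlinear remainder) lets one promote $h(f)$ to the appropriate Hölder class and guarantees that the correction $\fJ Y_{h(f)}(f)$ is controlled in $\cC^1$-norm. Thus $\tilde\ell$ is automatically an isotropic immersion. The main obstacle I expect is purely the bookkeeping of function spaces: making precise the regularity of $\exp_f(ih)$ when $f$ is only $\cC^{1,\alpha}$, verifying that $F$ is genuinely $\cC^1$ in the stated sense, and confirming that the inverse of the linearization depends continuously on the base point. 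None of these points require a new idea beyond the elliptic Schauder theory already used in Lemma~\ref{lemma:isodelta}, but they are delicate enough that a complete proof must treat them carefully, possibly by working in a slightly higher-regularity class for $f$ and recovering the full statement by an approximation argument.
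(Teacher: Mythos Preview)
Your proposal is correct and follows essentially the same route as the paper: define $F(f,h)=\mu(\exp_f(ih))$, compute the partial derivative in $h$ at $(\ell,0)$ via Lemma~\ref{lemma:varmu} to obtain $\pm\Delta_\ell$, invoke Lemma~\ref{lemma:isodelta} for the isomorphism on mean-zero functions, and apply the implicit function theorem in H\"older spaces followed by elliptic bootstrapping. The only cosmetic difference is that the paper uses $\exp_f(-ih)$ in the intermediate map to make the linearization come out as $+\Delta_\ell$ rather than $-\Delta_\ell$, which is immaterial.
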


\begin{proof}
  We denote by $\cC^{k,\alpha}$, for some Hölde parameter $\alpha>0$, the
  usual Hölder spaces. The moduli space $\scrM$ is  replaced with
  $\scrM^{k,\alpha}$ which consists of $\cC^{k,\alpha}$-maps $f:\Sigma\to\RR^{2n}$.
Since $\scrM^{k,\alpha}$ is an affine space  modeled on
$\cC^{k,\alpha}$, it is naturally endowed with an infinite dimensional
manifold structure. In particular, the map $\exp_f$ defines a smooth
map
$$
\exp : \cC^{k+1,\alpha}(\Sigma,\CC) \times
\scrM^{k,\alpha}\longrightarrow \scrM^{k,\alpha}.
$$
given by $(h,f)\mapsto \exp_f(h)$.

We denote by $\cC^{k,\alpha}_0(\Sigma)$ the subspace of
$\cC^{k,\alpha}(\Sigma)$ that consists of real valued functions
$h:\Sigma\to \RR$
such that $\int_{\Sigma}h\sigma =0$ (i.e. functions orthogonal to
constants for the inner product $\ipp{\cdot,\cdot}$).
We consider the map
  $$
  Z:\left\{\begin{array}{ccl}
    \cC^{2,\alpha}_0(\Sigma)\times \scrM^{1,\alpha} &\longrightarrow & \scrM^{1,\alpha} \\
    (h,f) &\longmapsto &\exp_{f}(-ih)
  \end{array}\right.
  $$
whose  differential  at $(0,\ell)$ satisfies
$$
\frac{\del Z}{\del h}|_{(0,\ell)} \cdot \dot h = -\fJ Y_{\dot h}(\ell)
$$
by definition of the exponential map.
In particular
\begin{equation}
  \label{eq:implicit}
\frac{\del(\mu\circ Z)}{\del h}|_{(0,\ell)}\cdot \dot h=
- D\mu|_{\ell}\cdot\fJ Y_{\dot h}(\ell) = \delta_\ell
\delta^\star_\ell h = \Delta_\ell h  
\end{equation}
by Lemma~\ref{lemma:varmu}. This operator is an isomorphism modulo
constants by Lemma~\ref{lemma:isodelta}.
We consider the map
$$
F:\cC^{2,\alpha}_0(\Sigma)\times\scrM^{1,\alpha}\to \cC^{0,\alpha}_0(\Sigma)
$$
given by $F=\mu\circ Z$. We have proved that the differential
$$
\frac{\del F}{\del h}|_{(0,\ell)}: \cC^{2,\alpha}_0(\Sigma)\longrightarrow \cC^{0,\alpha}_0(\Sigma)
$$
is an isomorphism.
The rest of the proof follows from the implicit function theorem: for
every $f\in \scrM^{1,\alpha}$ sufficiently close to $\ell$ in
$\cC^{1,\alpha}$-norm, there exists a unique $\tilde h=h(f)\in \cC^{2,\alpha}(\Sigma)$
in a small neighborhood of the origin, such that
$$
F(\tilde h, f)=0.
$$
By definition $\exp_{f}(i\tilde h)=\tilde \ell$
 satisfies $\mu(\tilde\ell)=0$. By assumption $\ell$ is smooth. If $f$
 is also smooth, elliptic regularity and standard bootstrapping
 argument shows that $\tilde h$, and in turn $\tilde \ell$,  must be smooth as well. This proves the theorem.  
\end{proof}
\begin{rmk}
  In section~\ref{sec:pert}, we will develop a perturbation theory on
  the space of quadrangular meshes $\scrM_N$ that mimics
  Theorem~\ref{theo:perttoy}.
  We shall define an analogue $\delta_\tau$ of the operator $\delta_f$
  in the context of discrete geometry (cf. Formula~\eqref{eq:deltatau}).
  The operator $\delta_\tau$, and more precisely its adjoint
  $\delta^\star_\tau$, could be used to define an analogue of
  Hamiltonian vector fields in the context of discrete differential geometry, in
  view of Formula~\eqref{eq:hamdstar}. This could be relied upon to define a
  discrete analogue of the gauge group action
  $\cG=\Ham(\Sigma,\sigma)$. This idea will be explored in a sequel
  to this work~\cite{JRT}.
\end{rmk}

\subsubsection{Outreach}
In \S\ref{sec:pert} we shall define finite dimensional analogues of the
infinite dimensional moment map picture presented in the current section,
provided .
This will provide the incomplete dictionary below, where the right column, is
conjecturally a finite dimensional approximation of the left column:
\begin{center}
  \begin{tabular}{|c|c|}
    \hline
    Infinite dimensional case & finite dimensional case \\
    \hline
Area form $\sigma$ on $\Sigma$ & Quadrangulation $\cQ_N(\Sigma)$  \\
    \hline
    $\scrM=\{f:\Sigma\to\RR^{2n}\}$ &  $\scrM_N =
    C^0(\cQ_N(\Sigma)\otimes\RR^{2n}$ \\
    \hline
Canonical K\"ahler structure &   Canonical K\"ahler structure\\
    \hline
    $\Ham(\Sigma,\sigma)$-action & ???\\
    \hline
    Fundamental V.F $Y_h(f)=\delta^\star_f h$  & $\delta^\star_\tau \phi$\\
    \hline
    A moment map $\mu:\scrM\to C^\infty(\Sigma)$ &  $\mu_N^r:\scrM_N
    \to C^2(\cQ_N(\Sigma))$\\
    \hline
    The moment map flow \eqref{eq:mmf} & The discrete flow
    \eqref{eq:discrf} \\
    \hline
  \end{tabular}
\end{center}
Many aspects of the above dictionary remain unclear. First, the finite
dimensional picture does not come with a Lie group action that would,
in some sense, approximate $\Ham(\Sigma,\sigma)$. In particular
$\mu_N^r$ is not a moment map and $C^2(\cQ_N(\Sigma))$ is not
interpreted as a Lie algebra.  The flows are defined on both sides and
we would like to compare them as $N$ goes to infinity. Unfortunately,  we do not
even know whether the infinite dimensional flow exists for short time. The
discrete flow is an ODE, but it is not completely understood at this stage. For $N$ fixed, does
the flow converge, or does it blowup  ? Does a sequence
of flow converge to the moment map flow as $N$ goes to infinity ? Can we use the above sketch
of correspondence to make sense of some type of Kempf-Ness theorem in
the infinite dimensional setting ? 

All these gripping questions are  postponed
  to a later work. In this paper, we focus on the discrete flow on $\scrM_N$,
  for a given $N$, and merely provide a computer simulation of the
  discrete flow at \S\ref{sec:dmmf}.

\section{Discrete analysis}
\label{sec:anal}
In this section, we consider a real surface $\Sigma$, diffeomorphic to a
 torus. We denote by $g$ the canonical Euclidean metric of
$\RR^{2n}$ and $J$ the standard complex structure deduced from the
identification $\RR^{2n}\simeq \CC^n$. The standard symplectic form of $\RR^{2n}$ is
given by $\omega(\cdot,\cdot)=g(J\cdot,\cdot)$ and  $\ell:\Sigma\to \RR^{2n}$ is an
\emph{isotropic immersion}.

\subsection{Conformally flat metric}
\label{sec:confflat}
Every Riemannian metric on a surface diffeomorphic to a  torus is \emph{conformally flat}.
In particular,  $\Sigma$ carries a pullback Riemannian metric
$$
g_\Sigma=\ell^*g,
$$
which must be  conformally flat. In other words, there exists a
covering map
\begin{equation}
  \label{eq:p}
  p:\RR^2\to\Sigma
\end{equation}
with deck transformations given by a lattice $\Gamma\subset
\RR^2$. The Euclidean metric $\geuc$ of $\RR^2$ descends as a flat
metric $g_\sigma$ on $\Sigma$. In addition there exists   a positive
smooth function $\theta  :\Sigma\to (0,+\infty)$,
 known as the \emph{conformal factor}, 
such that
$$g_\Sigma = \theta g_\sigma.$$
The projection $p$, which descends to the quotient $\RR^ 2/\Gamma $, provides a preferred diffeomorphism
\begin{equation}
  \label{eq:ptilde}
  \Phi : \RR^ 2/\Gamma \to \Sigma,
\end{equation}
which is also an isometry from $(\RR^ 2/\Gamma,\geuc)$ to $(\Sigma,g_\sigma)$.

\subsection{Square lattice and checkers board}
\label{sec:sqlat}
Let  $e_1=(1,0)$ and $e_2=(0,1)$ be the canonical basis of
$\RR^2$. The basis $(e_1,e_2)$ is orthonormal with respect to the
canonical Euclidean metric $\geuc$ of $\RR^2$ and it is
positively oriented, by convention. 

For every positive integer $N$, we introduce the  lattice
$\Lambda_N\subset \RR^2$ spanned by
$e_1/N$ and $e_2/N$:
$$
\Lambda_N = \ZZ\cdot \frac{e_1}N\oplus \ZZ\cdot \frac{e_2}N  \subset  \RR^2.
$$
The lattice $\Lambda_N$ provides the familiar picture of a square grid
in $\RR^2$ with step size~$N^{-1}$. 
The lattice $\Gamma$, introduced at \S\ref{sec:confflat}, admits a basis
 $(\gamma_1,\gamma_2)$, compatible with the canonical orientation of $\RR^ 2$. The lattice $\Gamma$ is generally not a sublattice
of $\Lambda_N$. Indeed, the components of the vectors $\gamma_1, \gamma_2\in\RR^ 2$ may not be rational. This fact will cause a technical catch
for constructing quadrangulations of $\Sigma$. Luckily this difficulty is easily overcome
as we shall explain below.
The \emph{checkers board} sublattice
$\Lambda^{ch}_N\subset \Lambda_N$ 
is spanned by the vectors $\frac
{e_1+e_2}N$ and  $\frac
{e_2-e_1}N$:
$$
\Lambda_N^{ch}= \ZZ\cdot \frac{e_1+e_2}N \oplus \ZZ\cdot \frac{e_2-e_1}N \subset\Lambda_N.
$$
The elements of $\Lambda_N\subset \RR^ 2$ may be  thought of as the
positions of a standard
checkers board game. Then $\Lambda_N^{ch}$ acts on $\Lambda_N$ by translations. These translations are spanned by 
diagonal motions, as in some kind of checkers game. One can easily see that the quotient
$\Lambda_N/ \Lambda^{ch}_N$ is isomorphic to $\ZZ_2$ which is
isomorphic to the equivalence classes
of the usual black and white positions of the
checkers board game.

For each $N>0$ and $i=1,2$, we choose   $\gamma_i^ N\in\Lambda^{ch}_N$ which is a best approximation of 
$\gamma_i$ in~$\Lambda_{N}^{ch}$, for the Euclidean distance in
$\RR^2$. By definition, $\gamma_1^N$ and $\gamma_2^N$ are linearly
independent for all sufficiently large $N$. We define the lattice $\Gamma_N$, at least for sufficiently large $N$,  as
$$
\Gamma_N=\ZZ \cdot \gamma_1^ N \oplus \ZZ\cdot \gamma_2^N  \subset \Lambda_N^ {ch}\subset \Lambda_N.
$$
We summarize our construction in Figure~\ref{figure:gammaN}. The red and blue bullets 
represent the elements of $\Lambda_N$, where  the red bullets are in
 $\Lambda_N^{ch}$. We draw the generators $\gamma_i$ of
$\Gamma$ and their best approximations, in red, by elements
$\gamma_i^N$ of~$\Lambda_N^{ch}$:
  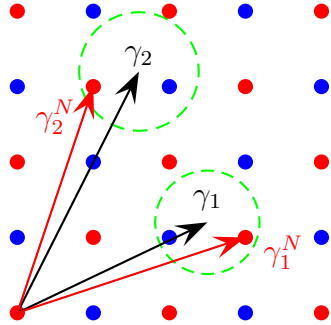
\begin{figure}[H]
  \begin{pspicture}[showgrid=false](0,0)(4,4)
     \psset{linecolor=green, linestyle=dashed}
         \pscircle(1.6,3.2){0.8}
         \pscircle(2.5,1.2){0.7}

         \color{black}
         \rput(2.5,1.5){$\gamma_1$}
         \rput(1.6,3.4){$\gamma_2$}
         \color{red}
         \rput(3.5,.8){$\gamma^N_1$}
         \rput(0.5,2.6){$\gamma^N_2$}

    \psset{linecolor=red, fillstyle=solid , fillcolor=red, linestyle=solid}
     \pscircle (0,0){.1}
     \pscircle (0,2){.1}
     \pscircle (0,4){.1}
     \pscircle (1,1){.1}
     \pscircle (1,3){.1}
     \pscircle (2,0){.1}
     \pscircle (2,2){.1}
     \pscircle (2,4){.1}
     \pscircle (3,1){.1}
     \pscircle (3,3){.1}
     \pscircle (4,0){.1}
     \pscircle (4,2){.1}
     \pscircle (4,4){.1}

       \psset{linecolor=blue, fillstyle=solid , fillcolor=blue, linestyle=solid}
     \pscircle(0,1){.1}
     \pscircle (0,3){.1}
     \pscircle (1,0){.1}
     \pscircle (1,2){.1}
     \pscircle (1,4){.1}
     \pscircle (2,1){.1}
     \pscircle (2,3){.1}
     \pscircle (3,0){.1}
     \pscircle (3,2){.1}
     \pscircle (3,4){.1}
     \pscircle (4,1){.1}
     \pscircle (4,3){.1}

         \psset{linecolor=black, arrows=->,arrowsize=.3,linestyle=solid}
     \psline(0,0)(1.6,3.2)
         \psline(0,0)(2.5,1.2)

         \psset{linecolor=red}
         \psline(0,0)(1,3)
         \psline(0,0)(3,1)

  \end{pspicture}
  \caption{Construction of $\Gamma_N$}
  \label{figure:gammaN}
  \end{figure}
  By construction,  $\Gamma_N$ is  a sublattice of
  $\Lambda_N^{ch}$; this choice has been designed so that the checkers
  graph splits into two connected components precisely~(cf. \S\ref{sec:check}).
  Furthermore, the lattices $\Gamma_N$ converge towards $\Gamma$,
  in a sense to be made more precise now:
  the linear transformation $U_N$ of
$\RR^2$ defined by
$$
U_N(\gamma_i^N)= \gamma_i
$$
identifies  the lattices
$\Gamma_N$ and $\Gamma$ by an automorphism of $\RR^2$.
Using an operator norm for linear transformations of $\RR^2$, we have 
\begin{equation}
  \label{eq:unasympt}
\|U_N - \id|_{\RR^ 2}\| = \cO\left (N^ {-1}\right ).
\end{equation}
In conclusion  $U_N$ converges towards the identity and
$U_N(\Gamma_N)=\Gamma$, which is understood as $\Gamma_N$ converges
towards $\Gamma$.

By construction,  $U_N$   descends to the
quotient as a (locally linear) diffeomorphism
$$
u_N:\RR^2/\Gamma_N\to \RR^2/\Gamma.
$$
The linear transformation $U_N$ may not belong to the orthogonal
group. Therefore neither $U_N$ nor $u_N$ are isometries. But,
they are isometries in the limit, since $U_N$ converges to the
identity. This fact will be sufficient for our purpose.
The quotients $\RR^2/\Gamma$ and $\RR^2/\Gamma_N$ are canonically identified to $\Sigma$ via the diffeomorphisms
$$
  \RR^2/\Gamma_N
\stackrel{u_N}\longrightarrow  \RR^2/\Gamma  \stackrel{\Phi}\longrightarrow \Sigma.
$$
There are now several competing covering maps: we defined $p:\RR^ 2\to
\Sigma$ at~\eqref{eq:p}, but we may also consider the covering maps
\begin{equation}
  \label{eq:pN}
  p_N= p\circ U_N:\RR^ 2\to \Sigma.
\end{equation}
The group of  deck transformation of $p$ is $\Gamma$, whereas the group of deck transformations of $p_N$ is $\Gamma_N$.
There are also several
 flat metrics  descending on $\Sigma$ via $p$ and $p_N$. The first
$g_\sigma$ is induced by the Euclidean metric and the diffeomorphism
$\Phi : \RR^2/\Gamma \to \Sigma$. The other flat metrics $g_{\sigma}^ N$ are induced by the
 Euclidean metric and the diffeomorphisms
 \begin{equation}
   \label{eq:ptildeN}
\Phi_N :\RR^2/\Gamma_N
\to \Sigma
 \end{equation}
induces by~\eqref{eq:pN}.
According to \eqref{eq:unasympt} we have
$$
g_{\sigma}^ N= g_\sigma + \cO\left (N^ {-1}\right ).
$$

\subsection{Quadrangulations}
\label{sec:quadconv}
Instead
of  linear triangulations, we shall work with  particular
\emph{linear quadrangulations} $\cQ_N(\Sigma)$ of $\Sigma$. The
current section is devoted to the definition of these CW-complexes.

\subsubsection{Quadrangulations of the plane}
\label{sec:quadnot}
For $k,l\in\ZZ$, the points of $\RR^2$ given by
$$
\vert_{kl} = \frac kN e_1 + \frac lN e_2 ,
$$
are  the
elements of the lattice $\Lambda_N\subset\RR^2$.
 The elements of the lattice $\Lambda_N$ are also the vertices of a
 nice quadrangulation $\cQ_N(\RR^2)$ of the plane $\RR^2$, pictured as
 the  usual square grid with step $N^{-1}$.
 More precisely, the   quadrangulation $\cQ_N(\RR^2)$ is a particular
 CW-complex decomposition of $\RR^2$,
characterized by the following properties:
\begin{itemize}
\item The edges $\edge_{1,kl}$ and $\edge_{2,kl}$ of the quadrangulation are
  the oriented line segments of $\RR^2$ with
  oriented boundary
  $$\del \edge_{1,kl}= \vert_{k+1,l}-\vert_{kl} \mbox{   and  } \del
  \edge_{2,kl}=\vert_{k,l+1}-\vert_{kl}.
  $$
\item 
 The faces $\face_{kl}$ of the quadrangulation are oriented squares of
 $\RR^2$ with oriented boundary 
 $$ \del \face_{kl}=\edge_{1,kl}+\edge_{2,k+1,l} - \edge_{1,k,l+1} - \edge_{2,kl}.$$
\end{itemize}
Figure~\ref{figure:tiling} shows the familiar picture of the plane
tiled  by  squares together with the notations introduced above.
\begin{figure}[H]
\begin{pspicture}[showgrid=false](-3,-3)(3,3)
                          \psscalebox{1.0}
           {
             \psline (-3,3) (3,3)
\psline (-3,1) (3,1)
\psline (-3,-1) (3,-1)
\psline (-3,-3) (3,-3)

\psline (-3,3) (-3,-3)
\psline (-1,3) (-1,-3)
\psline (1,3) (1,-3)
\psline (3,3) (3,-3)

\psset{linecolor=blue, arrows=->,arrowsize=.3, linewidth=2pt,linestyle=solid}
\psline{->}(-1,-1) (-1,1)
\psline{->}(-1,-1) (1,-1)
\psline{->}(-1,1) (1,1)
\psline{->}(1,-1) (1,1)

\psset{linecolor=red, linewidth=2pt,linestyle=solid, fillstyle=solid, fillcolor=red}
\rput(-2,-2){$\face_{k-1,l-1}$}
\rput(0,-2){$\face_{k,l-1}$}
\rput(2,-2){$\face_{k+1,l-1}$}

\rput(-2,0){$\face_{k-1,l}$}
\rput(0,0){$\face_{kl}$}
\rput(2.1,0){$\face_{k+1,l}$}

\rput(-2,+2){$\face_{k-1,l+1}$}
\rput(0,+2){$\face_{k,l+1}$}
\rput(2,+2){$\face_{k+1,l+1}$}

\color{red}
\pscircle(-1,-1){.1}
\rput[tr](-1.1,-1.1){$\vert_{k,l}$}
\pscircle(1,-1){.1}
\rput[tl](1.1,-1.1){$\vert_{k+1,l}$}

\pscircle(-1,+1){.1}
\rput[br](-1.1,1.1){$\vert_{k,l+1}$}
\pscircle(1,1){.1}
\rput[bl](1.1,1.1){$\vert_{k+1,l+1}$}

\color{blue}
\rput[t](0.0,-1.1){$\edge_{1,kl}$}
\rput[t](0.0,+0.9){$\edge_{1, k,l+1}$}

\rput[t]{90}(-0.9,0.0){$\edge_{2,kl}$}
\rput[t]{90}(1.1,0.0){$\edge_{2, k+1,l}$}

}
\end{pspicture}
\caption{Quadrangulation $\cQ_N(\RR^2)$}
\label{figure:tiling}
\end{figure}

\subsubsection{Quadrangulations of the torus}
The lattice $\Lambda_N$ acts on itself, by translation. It follows that $\Lambda_N$ also acts naturally on the vertices, on the edges and on the faces of the quadrangulation $\cQ_N(\RR^2)$ by translation.
Since $\Gamma_N\subset\Lambda_N^{ch}\subset \Lambda_N$, the lattice
$\Gamma_N$ acts on $\cQ_N(\RR^2)$ as well. Thus, the quadrangulation descends
to a quadrangulation  $\cQ_N(\Sigma)$  of the quotient $\Sigma$, via the covering map $p_N:\RR^ 2\to\Sigma$.
When this is clear from the context, the  vertices, edges and faces of $\cQ_N(\Sigma)$ will  still be denoted $\vert_{kl}$, $\edge_{1,kl}$,
$\edge_{2,kl}$ and $\face_{kl}$.

\subsubsection{Alternate quadrangulation of the plane}
\label{sec:altquad}
Our  construction involves the various diffeomorphisms
 $\Phi:\RR^ 2/\Gamma\to\Sigma$ and $\Phi_N:\RR^ 2/\Gamma_N\to\Sigma$.
For the purpose of analysis and, more specifically, the notion of convergence, it is convenient to identify $\Sigma$ with a single reference quotient, say $\RR^ 2/\Gamma$ using $\Phi$. 

Lifting $\cQ(\Sigma)$ via the covering map $p:\RR^2\to \Sigma$ provides a quadrangulation different from $\cQ_N(\RR^2)$. We denote by $\hat\cQ_N(\RR^2)$ the quadrangulation obtained as the image of $\cQ_N(\RR^ 2)$ by the isomorphism $U_N:\RR^ 2\to \RR^ 2$. We also denote by $\hat\Lambda_N$ and $\hat\Lambda^{ch}_N$ 
the images of $ \Lambda_N$ and $\Lambda^{ch}_N$  by $U_N$. 
By definition,  $\Gamma$ is a sublattice of $\hat\Lambda^{ch}_N$ and we have a sequence of canonical inclusions
$$
\Gamma\subset \hat\Lambda^{ch}_N  \subset \hat\Lambda_N
$$
which is nothing else but the image of the inclusions
$$
\Gamma_N\subset \Lambda^{ch}_N  \subset \Lambda_N
$$
by $U_N$.
By construction, the quadrangulation $\hat\cQ_N(\RR^ 2)$ has vertices given by the elements of the lattice $\hat\Lambda_N$.
Furthermore $\hat\cQ_N(\RR^2)$  descends to the quotient via the covering map $p:\RR^2\to\Sigma$ into a quadrangulation that coincides with~$\cQ_N(\Sigma)$.

\subsection{Checkers graph}
\label{sec:check}
We  associate a graph $\cG_N(\RR^2)$ to the quadrangulation $\cQ_N(\RR^2)$,
called the \emph{checkers graph} of $\cQ_N(\RR^2)$. Combinatorially, the vertices $\zert_{kl}$ of $\cG_N(\RR^2)$ correspond to
faces $\face_{kl}$ of $\cQ_N(\RR^2)$. However a vertex $\zert_{kl}$ of the graph $\cG_N(\RR^2)$ shall
be though of as  the  
barycenter  of the face $\face_{kl}$ of $\cQ_N(\RR^2)$, understood as a square of
$\RR^2$. The fact that vertices of the graph correspond to points in
$\RR^2$ will be most helpful for defining the notion of convergence at \S\ref{sec:conv}.
Two barycenters are connected by an edge if, and only if, they
belong to faces having exactly one vertex in common. For instance the
faces $\face_{kl}$ and $\face_{k+1,l+1}$ of $\cQ_N(\RR^ 2)$ have exactly one vertex in
common. An edge between two vertices of $\cG_N(\RR^ 2)$ is the segment of straight
line of $\RR^2$ between the two vertices.

Figure~\ref{fig:CG} shows
the quadrangulation $\cQ_N(\RR^2)$ using dashed lines  and
the corresponding checkers graph $\cG_N(\RR^2)$. The graph has two connected components painted
with  colors red and blue. The bullets correspond to vertices of the
graph.
\begin{figure}[H]
  \begin{pspicture}[showgrid=false](-2,-2)(2,2)
    \psgrid[griddots=5,gridlabels=0, subgriddiv=1]
    \psset{linecolor=blue }
    \psline (-2,1) (-1,2)
    \psline (-2,-1) (1,2)
    \psline (-1,-2) (2,1)
    \psline (1,-2) (2,-1)
    \psline (2,-2) (-2,2)
    \psline (0,2) (2,0)
    \psline (-2,0) (0,-2)

    \psset{linecolor=blue, fillstyle=solid , fillcolor=blue, linestyle=solid}
    \pscircle (-1.5,1.5){.12}
    \pscircle (0.5,1.5){.12}
    \pscircle (-0.5,0.5){.12}
    \pscircle (1.5,0.5){.12}
    \pscircle (-1.5,-0.5){.12}
     \pscircle(0.5,-0.5){.12}
     \pscircle (0.5,-0.5){.12}
     \pscircle(-0.5,-1.5){.12}
     \pscircle(1.5,-1.5){.12}

         \psset{linecolor=red}
         \psline(-2,0)(0,2)
         \psline(-2,-2)(2,2)
         \psline(0,-2)(2,0)
         \psline(-2,-1)(-1,-2)
         \psline(-2,1)(1,-2)
         \psline(-1,2)(2,-1)
         \psline(1,2)(2,1)
         
    \psset{linecolor=red, fillstyle=solid , fillcolor=red,
      linestyle=solid}
    \pscircle (-0.5,1.5){.12}
    \pscircle (1.5,1.5){.12}
    \pscircle (-1.5,0.5){.12}
    \pscircle (0.5,0.5){.12}
    \pscircle (-0.5,-0.5){.12}
     \pscircle (1.5,-0.5){.12}
     \pscircle (-0.5,-0.5){.12}
     \pscircle (-1.5,-1.5){.12}
     \pscircle (0.5,-1.5){.12}
  \end{pspicture}
  \caption{Graph $\cG_N(\RR^2)$}
    \label{fig:CG}
\end{figure}

 \subsubsection{Splitting of the ckeckers graph} 
The graph $\cG_N(\RR^2)$ splits  into two connected components denoted
$$
\cG_N(\RR^2) = \cG^+_N(\RR^2) \cup \cG_N^-(\RR^2),
$$
where $\cG_N^+(\RR^2)$ contains the vertex $\zert_{00}$ corresponding to the face
$\face_{00}$, by convention.

 The lattice  $\Lambda_N$ acts by translation on $\cQ_N(\RR^2)$ and on
$\cG_N(\RR^2)$. The action on the vertices of
$\cG_N(\RR^2)$
(or, equivalently the faces of $\cQ_N(\RR^2)$) is transitive.
However the sublattice $\Lambda^{ch}_N$ does not act transitively: in fact it
preserves the connected components of the
graph $\cG_N(\RR^2)$  and acts  transitively on  each component.
The quotient  $\Lambda_N/\Lambda^{ch}_N
\simeq \ZZ_2$ is the residual action of the lattice $\Lambda_N$ on the connected
components of $\cG_N(\RR^2)$.

\subsubsection{Checkers graph of the quotient}
By construction $\Gamma_N\subset \Lambda^{ch}_N$, so that the action of $\Lambda_N$
preserves the connected components of $\cG_N(\RR^2)$. It follows that the
graphs $\cG_N(\RR^2)$, $\cG_N^+(\RR^2)$ and $\cG_N^-(\RR^2)$ descend
as graphs $\cG_N(\Sigma)$, $\cG_N^+(\Sigma)$ and $\cG_N^-(\Sigma)$ on the
quotient $\Sigma\simeq \RR^2/\Gamma_N$ via the covering map $p_N:\RR^ 2\to \Sigma$. Furthermore, the graph
$\cG_N(\Sigma)$ splits into two connected components 
$\cG_N^+(\Sigma)$ and $\cG_N^-(\Sigma)$:
$$
\cG_N(\Sigma) = \cG_N^+(\Sigma)\cup \cG_N^-(\Sigma).
$$

\subsubsection{Alternate checkers graph on the plane}
A discussion similar to the case of the quadrangulations $\cQ_N(\RR^ 2)$ and $\hat\cQ_N(\RR^ 2)$ occurs here (cf. \S\ref{sec:altquad}). We introduce the checkers graphs
$\hat\cG_N(\RR^2)$, $\hat\cG^+_N(\RR^2)$ and $\hat\cG_N^-(\RR^2)$ obtained as the image of $\cG_N(\RR^2)$, $\cG^+_N(\RR^2)$ and $\cG_N^-(\RR^2)$ by $U_N$. Similarly to the non-hat version, these graphs can be also understood  as the checkers graphs of $\hat\cQ_N(\RR^ 2)$. They descend via the covering map $p:\RR^ 2\to\Sigma$ where we recover  $\cG_N(\Sigma)$, $\cG_N^+(\Sigma)$ and $\cG_N^-(\Sigma)$. 
If the vertices of the checkers graph $\cG_N(\RR^2)$ are the barycenters  $\zert_{kl}$ of the faces $f_{kl}$, their images by $U_N$, denoted $\hat \zert_{kl}$, are the vertices of $\hat\cG_N(\RR^2)$.

\subsection{Examples}
\label{sec:examples}
The lattices of $\RR^2$ defined at \S\ref{sec:sqlat} come with canonical inclusions
$$
\Lambda_1\subset \cdots\Lambda_N\subset\Lambda_{N+1}\subset\cdots
$$
and
$$
\Lambda_1^{ch}\subset \cdots\Lambda_N^{ch}\subset\Lambda_{N+1}^{ch}\subset\cdots
$$
If $\Gamma$ is a sublattice of $\Lambda_1^{ch}$, then its appromixations $\Gamma_N$ coincide with $\Gamma$, which makes the construction of $\cQ_N(\Sigma)$ somewhat simpler.
For example, we may consider the lattice
$$
\Gamma' = \ZZ (e_1+e_2)\oplus \ZZ(e_2-e_1) \subset \Lambda_1^{ch},
$$
or the lattice 
$$
\Gamma'' = \ZZ e_1\oplus \ZZ e_2 = \Lambda_1.
$$
In the latter case, $\Gamma''\subset \Lambda_N^{ch}$ if and only if $N$ is even and we shall only consider $\cQ_N(\Sigma'')$ when $N$ is even.
The quotients $\Sigma'=\RR^2/\Gamma'$ and $\Sigma''=\RR^2/\Gamma''$ are conformally isomorphic but the quadrangulations $\cQ_N(\Sigma')$ and $\cQ_N(\Sigma'')$ are not isomorphic through a conformal mapping. 

Let $\ell_1:\SS^1\to\CC $ and $\ell_2:\SS^1\to\CC$ be two smooth embeddings of the circle into the complex plane $\CC$. This provides an embedding of the torus
\begin{align*}
\ell:\SS^1\times\SS^1 &\longrightarrow \CC^2 \simeq \RR^4\\
(\varphi_1,\varphi_2)&\longmapsto (\ell_1(\varphi_1),\ell_2(\varphi_2))
\end{align*}
which is isotropic since both maps $\ell_i$ are. The image of $\ell$
is usually called a \emph{product Lagrangian torus of $\RR^4$}.

The map $\ell$ can be approximated by a piecewise linear maps. The idea
is to approximate the two embedded circles by polygons of $\CC$. We
obtain a product of two polygons approximating the product torus.
More precisely, we define
$$
\ell_i^N:(N^{-1}\ZZ)/\ZZ \to \CC
$$
by $\ell_i^N(v)=\ell_i(v)$. The map $\ell_i^N$ can be extended as a
piecewise linear map denoted
$$
\ell_i^N:\RR/\ZZ \to \CC
$$
as well. If $N$ is sufficiently large, the maps $\ell^N_i$ are
piecewise linear embeddings.
For the same reasons as before, the product embedding
$$
\ell_N:\SS^1\times\SS^1\to \RR^4
$$
defined by $\ell_N(\varphi_1,\varphi_2)=(\ell_1^N(\varphi_1),\ell_2^N(\varphi_2))$
is isotropic and it is a piecewise linear isotropic approximation of
$\ell$.
Notice that the maps $\ell_N$ can be recovered only from the
$\RR^4$-coordinates of the vertices of the points in
$\Lambda_N/\ZZ^2$. These vertices are by definition the vertices of
the quadrangulation  $\cQ_N(\Sigma'')$,
 modulo the isomorphism
$$
\SS^1\times\SS^1\simeq \RR^2/\Gamma'' =\Sigma'',
$$
where $\Gamma''=\Lambda_1$ is the standard lattice described above.
Notice that each face of the quadrangulation is mapped to a
quadrialteral of $\RR^4$ contained in a Lagrangian plane.

\begin{rmk}
  The piecewise linear isotropic
  embeddings $\ell_N$ of
  the torus described above were essentially the only examples
  known at the begining of this research project. 
   If $\ell$ is any smooth isotropic map, one can
   construct samples $\ell_N$ as above (cf. \S\ref{sec:quadsamp}).
   Strictly speaking, these samples
  are quadrangular meshes. In general these samples are not exactly isotropic. 
  From this point of view, the product examples described above are very special, because in
  this case the samples are isotropic.  In general,  one needs a suitable perturbation theory so that they
  become isotropic, which is the  technical task  of this paper. 
\end{rmk}

\subsection{A splitting for discrete functions}
In this paper, the space of cycles  $C_j(\cW)$ of a CW-complex $\cW$, is the
real vector space freely generated by $j$-cells of $\cW$. The space of
cocycles
$C^j(\cW)$ is the dual of $C_j(\cW)$. The family of $j$-cells form a
canonical basis of $C_j(\cW)$. But there is no simple way to associate a
canonical dual basis to $C^j(\cW)$, unless $C_j(\cW)$ is finite
dimensional. However $C^j(\cW)$ may be thought of as the space of
constant functions on each $j$-cell called \emph{discrete functions}.

Cocycles $\phi\in C^2(\cQ_N(\RR^2))$ are thought of as discrete
functions, taking constant values
$$
\phi_{kl}=\phi(\face_{kl})= \ip{\phi,\face_{kl}},
$$
on faces.
Here $\ip{\cdot,\cdot}$ denotes the duality bracket, but we shall use the notation  $\phi(\face_{kl})$ as well.
 By construction there is a canonical identification between
the faces $\face_{kl}$ of $\cQ_N(\RR^ 2)$ and the vertices
$\zert_{kl}$ of $\cG_N(\RR^2)$. In other words, there is an identification 
$$
\fC_2(\cQ_N(\RR^2))\simeq \fC_0(\cG_N(\RR^2)) = \fC_0(\cG^+_N(\RR^2)) \cup \fC_0(\cG^-_N(\RR^2)),
$$
where $\fC_i$ denotes the set of vertices, edges, faces for $i=0,1,2$
of the relevant $CW$-complexes. 
Therefore, a discrete function $\phi$ can be understood, either as a function on
faces $\face_{kl}$ of $\cQ_N(\RR^2)$, or as a function on vertices $\zert_{kl}$ of $\cG_N(\RR^2)$. 
The above identification leads to an  isomorphism of cocycles
\begin{equation}
\label{eq:splitA}
  C^2(\cQ_N(\RR^2)) \simeq C^0(\cG_N(\RR^2)) = C^0(\cG^+_N(\RR^2)) \oplus C^0(\cG^-_N(\RR^2)).  
\end{equation}
The same decomposition holds for the \emph{hat} version of theses
objects and 
we have a canonical isomorphism 
\begin{equation}
  \label{eq:splitB}
C^2(\hat\cQ_N(\RR^2)) \simeq C^0(\hat\cG_N(\RR^2)) =
C^0(\hat\cG^+_N(\RR^2)) \oplus C^0(\hat\cG^-_N(\RR^2)).
\end{equation}
The isomorphism \eqref{eq:splitA} descends to the quotient $\Sigma$ via $p_N$ and may be expressed as an
isomorphism
\begin{equation}
  \label{eq:splitC}
  C^2(\cQ_N(\Sigma)) \simeq C^0(\cG_N(\Sigma)) = C^0(\cG^+_N(\Sigma)) \oplus C^0(\cG^-_N(\Sigma)).  
\end{equation}
Any discrete function $\phi$, in one of the
three kind of spaces $C^0(\cG_N(\cdot))$
as above,  admits a unique decomposition according to the
splittings~\eqref{eq:splitA}, \eqref{eq:splitB} or \eqref{eq:splitC}
$$
\phi = \phi^+ +\phi ^-
$$
where $\phi^\pm \in C^0(\cG_N^\pm(\cdot))$.

The induced splitting of
$C^2(\cQ_N(\cdot))$ via the isomorphisms~\eqref{eq:splitA},  \eqref{eq:splitB} or  \eqref{eq:splitC} is also denoted 
\begin{equation}
  \label{eq:splitD}
  C^2(\cQ_N(\cdot)) = C^2_+(\cQ_N(\cdot))\oplus C^2_-(\cQ_N(\cdot)).
\end{equation}
When the discrete function $\phi$ is regarded as a constant function on faces of the quadrangulation, we also write $\phi = \phi^+ +\phi^-$ according to the above splitting.
\begin{convention}
  In the sequel we shall use a shorthand
  in order to make statements that hold  either for cocycles
  of the graph $\cG_N^+(\Sigma)$, or for cocycles
  of the graph $\cG_N^-(\Sigma)$. For this purpose, we will use
  the notation $\cG_N^\pm(\Sigma)$ and the convention below:

  For every statement using the symbols  $\pm$ and $\mp$, the reader should either
  \begin{itemize}
  \item replace all symbols $\pm$  (resp $\mp$) consistently with  $+$ (resp. $-$), or
\item replace all symbols $\pm$  (resp $\mp$) consistently with $-$ (resp. $+$). 
  \end{itemize}
\end{convention}

\subsection{Discrete Hölder norms}
\label{sec:dhn}
In this section we define particular norms on the space
$C^2(\cQ_N(\RR^2))$ (or equivalently, on the space $C^0(\cG_N(\RR^2))$,  which is
a discrete analogue of the Hölder norm. The norms are defined first
on each component of the splitting~\eqref{eq:splitD} (or~\eqref{eq:splitA}).

\subsubsection{$\cC^ 0$-norm}
\label{sec:findif}
Given $\phi \in C^0(\cG_N^+(\RR^2))$ we
define its $\cC^0$-norm by
\begin{equation}
  \label{eq:C0}
\|\phi \|_{\cC^0} = \sup_{\zert \in \fC_0(\cG_N^+(\RR^2))}
|\ip{\phi,\zert}|.
\end{equation}
We define a similar norm on $C^0(\cG_N^-(\RR^2)$ (resp. $C^0(\cG_N(\RR^2)$) by taking the $\sup$
on vertices of $\cG_N^-(\RR^2)$ (resp. $\cG_N(\RR^2)$). We deduce  a norm, with the same notation~$\|\cdot\|_{\cC^0}$ on
$C^2_\pm(\cQ_N(\RR^2))$ via the isomorphisms~\eqref{eq:splitA} and
\eqref{eq:splitD}.
These quantities may be infinite. Later we shall restrict to periodic
functions, which are bounded and have a well defined $\cC^0$-norm.

\subsubsection{Finite differences}
The canonical basis $(e_1,e_2)$ with canonical coordinates $(x,y)$ of $\RR^2$ is
 not the best for
our situation.
Most of the times, we shall rotate the plane $\RR^2$ by an angle
$\pi/4$. For this purpose we introduce the rotated orthonormal basis
$(e'_1,e'_2)$ of $\RR^ 2$ 
given by 
\begin{equation}
e'_1 = \frac {e_1+e_2}{\sqrt 2},\quad e'_2 = \frac {e_2-e_1}{\sqrt 2}.
\end{equation}
The coordinates $(u,v)$ with respect to the basis $(e'_1,e'_2)$ are
deduced from the canonical coordinates $(x,y)\in\RR^2$ by the formula
\begin{equation}
  \label{eq:uv}
u = \frac {x+y}{\sqrt 2},\quad v = \frac {y-x}{\sqrt 2}.
\end{equation}

We  define finite differences of $\phi \in C^0(\cG_N^\pm(\RR^2))$,
which are discrete analogues of the partial derivatives of a function
on $\RR^2$, with respect to $u$ or $v$. These differences are denoted 
$$
\frac{\del\phi}{\del \cev u},\quad \frac{\del\phi}{\del \vec u},\quad
\frac{\del\phi}{\del \cev v}\quad \mbox{ and }\quad \frac{\del\phi}{\del \vec v} \in C^0(\cG_N^\pm(\RR^2)),
$$
where the forward or retrograde arrows indicate forward or retrograde differences,
defined as follows: for $\phi \in C^0(\cG^+(\RR^2))$, we write
$\phi_{kl} = \ip{\phi,\zert_{kl}}$ for $\zert_{kl}\in
\fC_0(\cG_N^+(\RR^2))$ and put
\begin{align}
\ip{\frac{\del\phi}{\del \cev u},\zert_{kl}}  &=  \frac{N}{\sqrt 2}(\phi_{kl}-
\phi_{k-1,l-1}) , \\
\ip{\frac{\del\phi}{\del \vec u},\zert_{kl}} & =  \frac{N}{\sqrt 2}(\phi_{k+1,l+1}-
\phi_{kl}), \\
\ip{\frac{\del\phi}{\del \cev v},\zert_{kl}} & =  \frac{N}{\sqrt 2}(\phi_{kl}-
\phi_{k+1,l-1})  \quad \mbox { and }\\
\ip{\frac{\del\phi}{\del \vec v},\zert_{kl}} & =  \frac{N}{\sqrt 2}(\phi_{k-1,l+1}-
\phi_{kl}).
\end{align}
The finite differences are defined with the same formulae if  $\phi
\in C^0(\cG^-(\RR^2))$. Since all the indices involved in the above formulae correpond to
vertices in connected component of $\zert_{kl}$ in $\cG_N(\RR^2)$, the
finite differences $\frac{\del}{\del \cev u}$, $\frac{\del}{\del \vec u}$,
$\frac{\del}{\del \cev v}$  and $\frac{\del}{\del \vec v}$ define endomorphisms 
$$
C^0(\cG_N^+(\RR^2))\oplus  C^0(\cG_N^-(\RR^2))\longrightarrow
C^0(\cG_N^+(\RR^2))\oplus  C^0(\cG_N^-(\RR^2)),
$$
which respect the above splitting.

Finite differences can also be expressed using the translations of
$\Lambda_N^{ch}$ acting on functions. If $T_u$, $T_v$ are the
translations acting on  $\cG_N(\RR^2)$, given respectively by the
vectors
$\frac {e_1+e_2}N$ and $\frac {e_2-e_1}N$, then
\begin{equation}
  \label{eq:fd1}
\frac{\del\phi}{\del\vec u} = \frac N{\sqrt 2} ( \phi\circ T_u -\phi ), \quad  \frac{\del\phi}{\del\cev u} = \frac N{\sqrt 2} ( \phi -\phi \circ T_u^ {-1} )  
\end{equation}
and
\begin{equation}
  \label{eq:fd2}
\frac{\del\phi}{\del\vec v} = \frac N{\sqrt 2} ( \phi\circ T_v -\phi ), \quad  \frac{\del\phi}{\del\cev v} = \frac N{\sqrt 2} ( \phi -\phi \circ T_v^ {-1} ).
\end{equation}
As an immediate consequence of \eqref{eq:fd1}, we have
\begin{equation}
  \label{eq:fd3}
  \frac{\del\phi}{\del\vec u} = \frac{\del\phi}{\del \cev u}\circ T_u,
\end{equation}
so that  the functions $\frac{\del\phi}{\del\vec u}$ and $\frac{\del\phi}{\del \cev u}$
have the same $\cC^0$-norm.
The same holds for the $v$-coordinate since by~\eqref{eq:fd2} 
\begin{equation}
  \label{eq:fd4}
  \frac{\del\phi}{\del\vec v} = \frac{\del\phi}{\del \cev v}\circ T_v,
\end{equation}
so that  finite differences $\frac{\del\phi}{\del \cev v}$ and $\frac{\del\phi}{\del \vec v}$
 have the same $\cC^ 0$-norm.
\begin{notation}
 As far as we are concerned with the $\cC^0$-norms of finite
 differences, we could drop the arrow notation over $u$ or $v$, since
 the forward of retrograde differences have  the  same norms. 
\end{notation}

\subsubsection{Definition of H\"older norms}
For $\phi\in C^0(\cG_N^+(\RR^2))$, we may define its $\cC^1$-norm as
$$
\|\phi\|_{\cC^1} = \|\phi\|_{\cC^0} + \left \|\frac{\del \phi}{\del
  u}\right \|_{\cC^0} + \left \|\frac{\del \phi}{\del v}\right
\|_{\cC^0}
$$
and its  $\cC^2$-norm by
$$
\|\phi\|_{\cC^2} = \|\phi\|_{\cC^1} + \left \|\frac{\del^2 \phi}{\del
  u^2}\right \|_{\cC^0} + \left \|\frac{\del^2 \phi}{\del
  v^2}\right  \|_{\cC^0} + \left \|\frac{\del^2 \phi}{\del
  u\del v}\right \|_{\cC^0}.
$$
More generally we can define a $\cC^k$-norm on $C^0(\cG^+(\RR^2))$ by
induction. Similarly we define a $\cC^k$-norm on $C^0(\cG^-(\RR^2))$.

For a positive H\"older constant $\alpha\in (0,1)$, we define the
$\cC^{0,\alpha}$-H\"older norm of~$\phi\in C^0(\cG^+(\RR^2))$ by 
\begin{equation}
  \label{eq:C0alpha}
\|\phi\|_{\cC^{0,\alpha}}=  \|\phi\|_{\cC^{0}} +
\sup_{
  \substack{{\zert_{kl}, \zert_{mn} \in
    \fC_0(\cG_N^+(\RR^2))} \\
    {\zert_{kl}\neq\zert_{mn} }
  }q
}\frac {|\phi_{kl}-\phi_{mn}|}{\|\zert_{kl}-\zert_{mn}\|^\alpha},
\end{equation}
where $\|\zert_{kl}-\zert_{mn}\|$ is the Euclidean distance between $\zert_{kl}$
and $\zert_{mn}$ in $\RR^2$.
The $\cC^{1,\alpha}$-H\"older norm of $\phi\in C^0(\cG^+(\RR^2))$ is defined by
$$
\|\phi\|_{\cC^{1,\alpha}} = \|\phi\|_{\cC^0} + \left \|\frac{\del \phi}{\del
  u}\right \|_{\cC^{0,\alpha}} + \left \|\frac{\del \phi}{\del
  v}\right \|_{\cC^{0,\alpha}} ,
$$
and its $\cC^{2,\alpha}$-H\"older norm is defined by
$$
\|\phi\|_{\cC^{2,\alpha}} = \|\phi\|_{\cC^1} + \left \|\frac{\del^2 \phi}{\del
  u^2}\right \|_{\cC^{0,\alpha}} + \left \|\frac{\del^2 \phi}{\del
  v^2}\right \|_{\cC^{0,\alpha}} + \left \|\frac{\del^2 \phi}{\del
  u \del v}\right \|_{\cC^{0,\alpha}}. 
$$
More generally, we can define a $\cC^{k,\alpha}$-H\"older norm by induction on
$C^0(\cG^+(\RR^2))$, in a obvious way. We define
a $\cC^k$ and a $\cC^{k,\alpha}$-H\"older norm on  $C^0(\cG^-(\RR^2))$ by taking the
$\sup$ of the above formulae on vertices
of $\cG_N^-(\RR^2)$ instead.

\subsubsection{Weak H\"older norms}
\label{sec:won}
 For
 $\phi \in C^2(\cQ_N(\RR^2))\simeq C^0(\cG_N(\RR^2))$ we use the direct sum decomposition
 $\phi=\phi^++\phi^-$ of \eqref{eq:splitA} or \eqref{eq:splitD}. We define the \emph{weak} $\cC^{k,\alpha}_w$-norm of $\phi$ by
$$
\|\phi\|_{\cC^{k,\alpha}_w} = \left \|\phi^+\right \|_{\cC^{k,\alpha}}
+\left \|\phi^-\right \|_{\cC^{k,\alpha}},
$$
where the Hölder norms of each components $\phi^\pm$ are defined in
the previous section.
Similarly, the \emph{weak} $\cC^k_w$-norm of $\phi $ is defined by
$$
\|\phi\|_{\cC^{k}_w} = \left \|\phi^+\right \|_{\cC^{k}}
+\left \|\phi^-\right \|_{\cC^{k}}.
$$
\begin{rmk}
As you may have noticed, the discrete $\cC^{k,\alpha}_w$-H\"older norms or $\cC^k_w$-norms defined above on
  $C^0(\cG_N(\RR^2))$ are called \emph{weak}. Indeed, only the variations of $\phi$ in the \emph{diagonal directions} spanned by the vectors $\frac{e_1+e_2}2$ and $\frac{e_2-e_1}2$
are taken into account. It turns out that these weak norms are the one appropriate to
set up the fixed point principle, as explained in~\S\ref{sec:fpt}.

In the sequel, we shall drop the term \emph{weak} for the sake of brevity. However, the reader should bear in mind that these norms may allow some unexpected behavior when $N$ goes to infinity (cf. Example~\ref{example:comb}).
\end{rmk}

\subsubsection{Quotient and alternate quadrangulations}
The alternate versions of the quadrangulation $\hat\cQ_N(\RR^2)$ 
and checkers graph $\hat\cG_N(\RR^2)$ are canonically isomorphic to the non-hat
versions  $\cQ_N(\RR^2)$ 
and $\cG_N(\RR^2)$. Thus, we have an isomorphism
$$C^2(\cQ_N(\RR^ 2))\simeq C^2(\hat\cQ_N(\RR^ 2)).$$
This isomorphism allows to define   $\cC^k_w$ and $\cC^ {k,\alpha}_w$-norms   on $C^2(\hat\cQ_N(\RR^ 2))$.
A function $\phi\in C^2(\cQ_N(\Sigma))$ admits a lift
$\phi_N = \phi \circ p_N\in C^ 2(\cQ_N(\RR^2))$. We define the norms of $\phi$ as the norms of its lift:
$$
\|\phi\|_{\cC^{k,\alpha}_w} = \|\phi_N\|_{\cC^{k,\alpha}_w},\quad \|\phi\|_{\cC^{k}_w} = \|\phi_N\|_{\cC^{k}_w}.
$$

\begin{rmk}
  The discrete functions on $\Sigma$ have finite H\"older norm since they are bounded, and so are their finite differences. 
\end{rmk}

\subsection{Convergence of discrete functions}
\label{sec:conv}
In this section,  a suitable notion of convergence for a sequence of discrete functions is introduced. This concept will be the cornerstone of a version of the Ascoli-Arzela compactness theorem. It will be an essential tool to obtain spectral gap results at~\S\ref{sec:specgap}.

\subsubsection{Definition of converging sequences}
\begin{dfn}
\label{def:conv}
  Let $(N_k)_{k\in \NN}$ be an  increasing sequence of positive integers. Let $\psi_{N_k}\in C^0(\hat\cG^\pm_{N_k}(\RR^2))$ be a sequence of discrete functions and
  $\phi:\RR^2\to \RR$ be  a function defined on the plane.

Assume that
  for every point  $w \in \RR^2$ and $\epsilon >0$, there exists
  $ \delta >0$  and an integer $ k_0 >0$, such that
for every integer $k\geq k_0$ and
vertex $\zert\in\fC_0(\hat\cG_{N_k}^\pm(\RR^2))$
with the property that $\| w-\zert \|\leq \delta$, we have
$  |\phi(w)-\psi_{N_k}(\zert)|\leq \epsilon$.

Then we say that the sequence of discrete functions $(\psi_{N_k})$
converges toward the function  $\phi:\RR^2\to \RR$. This property is denoted by
$$
\psi_{N_k} \to \phi \quad \mbox{ or } \quad \lim \psi_{N_k} = \phi.
$$  

If $\psi_{N_k}\in C^0(\hat\cG_{N_k}(\RR^2))$ is a sequence of discrete
functions with associated decomposition $ \psi_{N_k}= \psi^+_{N_k}+
\psi^-_{N_k}$ and with the property that the components converge to
functions $\phi^+$ and $\phi^-$, in the sense of the above  
definition, i.e.
$$
\psi_{N_k}^+ \to \phi^+ \quad \mbox{ and }\quad \psi_{N_k}^- \to \phi^-,
$$
we say that $\psi_{N_k}$ converges toward the pair of functions $(\phi^+,\phi^-)$. This property is  denoted by
$$
\psi_{N_k} \to (\phi^+,\phi^ -) \quad \mbox{ or } \quad \lim \psi_{N_k} =  (\phi^+,\phi^ -).
$$
\end{dfn}
\begin{rmk}
  The above definition may also be stated in a somewhat slicker way:
  we say that a sequence $\psi_{N_k} \in
  C^0(\hat\cG^+_{N_k}(\RR^2))$ converges toward a function
  $\phi:\RR^2\to \RR$ if, at every point $w$ of the plane,
   $\psi_{N_k}$ takes  arbitrarily close values to $\phi(w)$, for
  every $k$ sufficiently large and for all
  vertices of $\hat\cG^+_{N_k}(\RR^2)$ in a sufficiently
  small neighborhood of $w$.
\end{rmk}
\begin{example}
  \label{example:comb}
  The splitting of discrete functions into their positive and negative
  components leads to some unusual type  converging sequences
  in the sense of  Definition~\ref{def:conv}. 
For example, we may define a sequence of \emph{discrete comb functions} as follows. We 
define $\psi_N^\pm\in C^0(\hat\cG^\pm_N(\RR^2))$ as a constant
function each connected component of the graph, equal to $\pm 1$ at
each vertex of $\cG^\pm_N(\RR^2)$. Let  
$\I:\RR^ 2\to\RR$ be the constant  function equal to $1$ at every
point of the plane. Then $\lim \psi_N^+ =\I$ whereas $\lim \psi_N^-
=-\I$. If $\psi_N:=\psi^+_N+\psi^-_N$, then $\psi_N$ converges and 
$$
\lim\psi_N=(\I,-\I).
$$
Typically, the sequence $\psi_N$ is uniformly bounded in weak
$\cC^{0,\alpha}_w$-norm. Our notion of convergence is designed to
state a version of the Ascoli-Arzela theorem in this setting.
\end{example}

The notion of convergence of discrete functions is extended to $C^2(\cQ_N(\Sigma))$ as follows:
\begin{dfn}
\label{def:convsigma}
  Let $\psi_{N_k}\in C^0(\cG^\pm_{N_k}(\Sigma))$ be a sequence of discrete functions and
  $\phi:\Sigma\to \RR$ be a function defined on $\Sigma$.
Let $\hat \psi_{N_k} =\psi_{N_k}\circ p \in C^0(\hat\cG^\pm_{N_k}(\RR^2))$ be the lift of $\psi_{N_k}$
via the canonical projection $p$ and $\hat
\phi=\phi\circ p:\RR^2\to \RR$ be the lift of $\phi$. We say that $(\psi_{N_k})$ converges to
$\phi$ if $(\hat\psi_{N_k}) \in C^0(\cG^\pm_{N_k}(\Sigma))$ converges to $\hat \phi:\RR^2\to \RR$ in the sense Definition~\ref{def:conv}.
 This property is denoted by
$$
\psi_N \to \phi \mbox{ or } \lim \psi_N = \psi.
$$  

If $\psi_{N_k}\in C^0(\cG_{N_k}(\Sigma))$ is a sequence of discrete functions with associated decomposition $ \psi_{N_k}= \psi^+_{N_k}+ \psi^-_{N_k}$ and with the property that both components converge to some functions $\phi^+:\Sigma\to\RR$ and $\phi^-:\Sigma\to\RR$ in the sense of the above 
definition, we say that $\psi_{N_k}$ converges toward the pair of functions $(\phi^+,\phi^-)$ and denote this by
$$
\psi_{N_k} \to (\phi^+,\phi^ -) \mbox{ or } \lim \psi_{N_k} =  (\phi^+,\phi^ -).
$$
\end{dfn}

\subsection{Continuity and limits of discrete functions}
Our notion of convergence for discrete function is intimately
 related  to the uniform convergence, in the case of continuous functions.
Indeed, we have the following result:
\begin{prop}
\label{lemma:cont}
  Let $\psi_{N_k}\in C^0(\hat\cG^\pm_{N_k}(\RR^2))$ be a sequence of discrete
  functions converging toward 
  $\phi:\RR^2\to \RR$. Then $\phi$ must be continuous.
\end{prop}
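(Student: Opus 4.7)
The plan is to prove continuity of $\phi$ at an arbitrary point $w \in \RR^2$ by an $\epsilon/2 + \epsilon/2$ triangle inequality argument, exploiting that the vertices of $\hat\cG^\pm_{N_k}(\RR^2)$ become dense in $\RR^2$ as $k \to \infty$, so that a single vertex $\zert$ can be found arbitrarily close to \emph{two} nearby points simultaneously.

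Fix $w \in \RR^2$ and $\epsilon > 0$. First I apply the convergence hypothesis at $w$ with tolerance $\epsilon/2$ to obtain $\delta_0 > 0$ and $k_0$ such that $|\phi(w) - \psi_{N_k}(\zert)| \le \epsilon/2$ whenever $k \ge k_0$ and $\zert$ is a vertex of $\hat\cG^\pm_{N_k}(\RR^2)$ with $\|w - \zert\| \le \delta_0$. I claim that $\eta := \delta_0/2$ works as the continuity modulus. Indeed, pick any $w' \in \RR^2$ with $\|w - w'\| \le \eta$, and apply the convergence hypothesis at $w'$ (again with tolerance $\epsilon/2$) to obtain $\delta_1 > 0$ and $k_1$ (depending on $w'$) with the analogous property.

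Next, I observe that the vertices of $\hat\cG^\pm_{N}(\RR^2)$ form a translated square sublattice (the image under $U_N$ of a lattice with spacing of order $N^{-1}$), so that for every point $x \in \RR^2$ there is some vertex of $\hat\cG^\pm_N(\RR^2)$ within distance $C/N$ for a universal constant $C$. Choosing an integer $k \ge \max(k_0, k_1)$ large enough that $C/N_k \le \min(\delta_1, \delta_0/2)$, one can pick a vertex $\zert$ of $\hat\cG^\pm_{N_k}(\RR^2)$ with $\|w' - \zert\| \le \delta_1$; by the triangle inequality $\|w - \zert\| \le \|w - w'\| + \|w' - \zert\| \le \delta_0/2 + \delta_0/2 = \delta_0$. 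Both convergence estimates then apply to this single vertex $\zert$, giving
\[
|\phi(w) - \phi(w')| \le |\phi(w) - \psi_{N_k}(\zert)| + |\psi_{N_k}(\zert) - \phi(w')| \le \frac{\epsilon}{2} + \frac{\epsilon}{2} = \epsilon,
\]
which establishes continuity of $\phi$ at $w$. Since $w$ was arbitrary, $\phi$ is continuous on $\RR^2$.

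The only mildly delicate point is ensuring that the parameter $k$ can be chosen large enough so that a single vertex lies simultaneously in the $\delta_0$-neighborhood of $w$ and the $\delta_1$-neighborhood of $w'$; this is exactly where the increasing density of the meshes $\hat\cG^\pm_{N_k}(\RR^2)$ enters in an essential way, and it is the reason the pointwise convergence of Definition~\ref{def:conv} actually forces continuity rather than mere pointwise behavior.
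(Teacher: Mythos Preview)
Your proof is correct and rests on the same idea as the paper's: control $|\phi(w)-\phi(w')|$ by passing through a single lattice vertex $\zert$ close to both points and invoking the convergence hypothesis at each. The paper packages this as a proof by contradiction (assume a discontinuity, produce a sequence $w_k\to w$, and reach a contradiction via the same triangle inequality), while you give the direct $\epsilon$--$\delta$ argument and make the density of the vertices of $\hat\cG^\pm_{N_k}(\RR^2)$ explicit; the two are essentially equivalent.
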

\begin{proof}
The proof goes by contradiction:
assume that $\psi_{N_k}\in C^0(\hat\cG^\pm_N(\RR^2))$ is a sequence
converging toward a discontinuous function $\phi$.
Then there exists $\epsilon_0>0$, $w\in \RR^2$ and a
  sequence of points $w_k\in \RR^2$ such that $\lim_{k\to \infty} w_k = w$ and $|\phi(w_k) -
  \phi(w)|\geq \epsilon_0$ for all $k$.

  From the definition of convergence
  of discrete functions, we can extract a sequence $N'_k$ from $N_k$
  and vertices $\zert_k$ of $\hat\cG^\pm_{N'_k}(\RR^2)$
    such that $|w_k - \zert_{k}|\to
  0 $ and $|\psi_{N'_k}(\zert_k) -  \phi(w_{k})| \to 0$ as
  $k\to \infty$.

  By construction $\lim \zert_k=w$. Furthermore
  $$
     |\phi(w_{k})-\phi( w)|\leq
  |\phi(w_{k})- \psi_{N'_k}(\zert_k)| +   |\psi_{N'_k}(\zert_k)- \phi( w)|.
  $$
  The LHS is bounded below by $\epsilon_0>0$. The first term of the
  RHS converges to $0$ by definition of the sequences. The second term
  of the RHS converges to zero, by definition of the convergence of a
  sequence of discrete functions. This is a contradiction, hence
  $\phi:\RR^2\to \RR$ is continuous.
\end{proof}
\begin{cor}
  \label{cor:cont}
  Let $\psi_{N_k}\in C^0(\cG^\pm_N(\Sigma))$ be a sequence of discrete
  functions converging toward 
  $\phi:\Sigma\to \RR$. Then $\phi$ is continuous.  
\end{cor}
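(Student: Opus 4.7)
The plan is to reduce this corollary directly to Proposition~\ref{lemma:cont} by lifting everything to the universal cover $\RR^2$. Indeed, the notion of convergence on $\Sigma$ was \emph{defined} in Definition~\ref{def:convsigma} exactly so that $\psi_{N_k}\to \phi$ on $\Sigma$ is equivalent to $\hat\psi_{N_k}\to \hat\phi$ on $\RR^2$, where $\hat\psi_{N_k}=\psi_{N_k}\circ p$ and $\hat\phi=\phi\circ p$.

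First, I would invoke Proposition~\ref{lemma:cont} applied to the sequence $\hat\psi_{N_k}\in C^0(\hat\cG^\pm_{N_k}(\RR^2))$. By assumption this sequence converges to $\hat\phi:\RR^2\to\RR$, and the proposition yields that $\hat\phi$ is continuous on $\RR^2$. Second, I would use the fact that $p:\RR^2\to\Sigma$ is a covering map, hence a local homeomorphism: every point $x\in\Sigma$ has an open neighborhood $V$ admitting a local section $s:V\to\RR^2$ of $p$, so that $\phi|_V=\hat\phi\circ s$ is a composition of continuous functions, and is therefore continuous. Since continuity is a local property, $\phi$ is continuous on all of $\Sigma$.

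The argument has no real obstacle, since the definition of convergence was set up precisely to make this reduction trivial; the only thing to be careful about is the convention about the splitting $\psi=\psi^++\psi^-$. If $\psi_{N_k}\in C^0(\cG_{N_k}(\Sigma))$ and converges to a \emph{pair} $(\phi^+,\phi^-)$ in the sense of Definition~\ref{def:convsigma}, then applying the above reasoning separately to each component gives the continuity of $\phi^+$ and $\phi^-$ individually. For the statement as written, which concerns $C^0(\cG^\pm_{N_k}(\Sigma))$, only one component is relevant and the argument is as sketched.
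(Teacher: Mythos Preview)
Your proposal is correct and follows exactly the same approach as the paper's proof: lift via $p$ to $\RR^2$, apply Proposition~\ref{lemma:cont} to the lifted sequence to obtain continuity of $\hat\phi$, and then descend back to $\Sigma$. The paper's proof is a one-line reference to this argument, while you have spelled out the descent via local sections and the remark about the $\pm$ components, but the content is the same.
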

\begin{proof}
We use the covering map $p:\RR^2\to\Sigma$ and apply Proposition~\ref{lemma:cont} to the lift of the functions.
\end{proof}

\subsection{Samples and convergence of discrete functions}
\begin{dfn}
If  $\phi:\RR^2\to \RR$ is any real function, we  define its
\emph{samples} $\phi^\pm_N\in C^0( \hat\cG^\pm_N(\RR^2))$ by
$$
\ip{\phi_N^\pm,
  \hat \zert_{kl}}:= \phi(\hat \zert_{kl})
$$
for every $\hat \zert_{kl}\in\fC_0(\hat\cG^\pm_N(\RR^2))$.
We define similarly the samples  $\phi_N^\pm\in C^0(\cG_N^\pm(\Sigma))$ of
a real function $\phi:\Sigma\to\RR$.
Let $\hat \phi =\phi \circ p :\RR^2\to \RR $
be the  lift of $\phi$ via the projection $p$.
Its samples $\hat\phi_N^\pm\in C^0(\hat\cG_N^\pm(\RR^2))$, as defined above, descend to
discrete functions $\phi_N^\pm\in  C^0(\cG_N^\pm(\Sigma))$ on the
quotient, referred to as the samples of $\phi$.
\end{dfn}

The convergence defined in Definition \ref{def:conv} is uniform in the sense of the following lemma:
\begin{prop}
  \label{prop:c0nec}
  Let $\psi_{N_k}^\pm\in C^0(\cG^\pm_{N_k}(\Sigma))$ be a sequence of discrete
  functions converging to 
  $\phi:\Sigma\to \RR$ and
  $\phi_N^\pm \in C^0(\cG^\pm_{N_k}(\Sigma))$ be the samples of $\phi$. Then
  $$
\lim_{k\to \infty}\left \|\phi^\pm_{N_k} - \psi^\pm_{N_k} \right \|_{\cC^0} = 0.
$$
\end{prop}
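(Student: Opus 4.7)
The plan is to upgrade the pointwise condition in Definition~\ref{def:convsigma} to a uniform estimate by combining continuity of the limit $\phi$ with compactness of $\Sigma$. First I would lift everything to the universal cover via $p:\RR^2\to\Sigma$, replacing $\psi_{N_k}^\pm$ by $\hat\psi_{N_k}^\pm \in C^0(\hat\cG^\pm_{N_k}(\RR^2))$ and $\phi$ by $\hat\phi=\phi\circ p:\RR^2\to\RR$. By Corollary~\ref{cor:cont}, $\phi$ is continuous on $\Sigma$, hence $\hat\phi$ is continuous and $\Gamma$-periodic; since a fundamental domain $F\subset\RR^2$ for $\Gamma$ is compact, $\hat\phi$ is in fact uniformly continuous on $\RR^2$. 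Observe also that, by $\Gamma$-equivariance of the samples and of $\hat\psi_{N_k}^\pm$, it suffices to bound $|\hat\phi_{N_k}^\pm(\hat\zert)-\hat\psi_{N_k}^\pm(\hat\zert)|$ uniformly over vertices $\hat\zert$ lying in some fixed enlargement of $F$.

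Fix $\epsilon>0$. Using uniform continuity, choose $\eta>0$ such that $|w_1-w_2|\leq \eta$ implies $|\hat\phi(w_1)-\hat\phi(w_2)|\leq \epsilon/2$. Then, for each $w$ in the closure $\overline F$, Definition~\ref{def:conv} applied to the sequence $\hat\psi_{N_k}^\pm$ furnishes $\delta_w>0$ and an integer $k_w$ such that for every $k\geq k_w$ and every vertex $\hat\zert\in\fC_0(\hat\cG^\pm_{N_k}(\RR^2))$ with $\|w-\hat\zert\|\leq \delta_w$ one has $|\hat\phi(w)-\hat\psi_{N_k}^\pm(\hat\zert)|\leq \epsilon/2$. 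Shrinking if needed, I may assume $\delta_w\leq \eta$. The open balls $B(w,\delta_w/2)$ cover $\overline F$, and by compactness I extract a finite subcover centered at points $w_1,\dots,w_M$, setting $k_0=\max_i k_{w_i}$ and $\delta_0=\min_i \delta_{w_i}/2$.

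Now let $k\geq k_0$ and consider any vertex $\hat\zert$ of $\hat\cG^\pm_{N_k}(\RR^2)$ in a suitable enlargement of $F$. Choose an index $i$ with $\|\hat\zert-w_i\|\leq \delta_{w_i}$. Since $\delta_{w_i}\leq \eta$, uniform continuity gives $|\hat\phi(\hat\zert)-\hat\phi(w_i)|\leq \epsilon/2$, while the defining property of convergence at $w_i$ yields $|\hat\phi(w_i)-\hat\psi_{N_k}^\pm(\hat\zert)|\leq \epsilon/2$. Since by definition $\hat\phi_{N_k}^\pm(\hat\zert)=\hat\phi(\hat\zert)$, the triangle inequality gives
\[
|\hat\phi_{N_k}^\pm(\hat\zert)-\hat\psi_{N_k}^\pm(\hat\zert)|\leq \epsilon,
\]
and passing to the supremum over vertices (which is finite modulo $\Gamma$-periodicity) delivers $\|\phi_{N_k}^\pm-\psi_{N_k}^\pm\|_{\cC^0}\leq \epsilon$ for all $k\geq k_0$.

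The only real subtlety is ensuring that the bound is genuinely uniform over the infinite vertex set, which is handled by exploiting the $\Gamma$-periodicity of both $\hat\phi_{N_k}^\pm$ and $\hat\psi_{N_k}^\pm$ to reduce to vertices in a compact region; this reduction, together with the standard finite-cover extraction, is where the argument has to be written with care, but no deeper input than Corollary~\ref{cor:cont} and compactness of $\Sigma$ is needed.
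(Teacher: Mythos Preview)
Your proof is correct and follows essentially the same route as the paper's: continuity of $\phi$ (via Corollary~\ref{cor:cont}) plus compactness gives uniform continuity, then a finite-cover argument upgrades the pointwise convergence of Definition~\ref{def:conv} to the uniform $\cC^0$-estimate. The paper projects the covering balls down to $\Sigma$ and extracts the finite subcover there, while you stay upstairs on a fundamental domain $\overline F$; these are equivalent. One small tightening: your ``suitable enlargement of $F$'' is unnecessary, since every vertex of $\cG^\pm_{N_k}(\Sigma)$ already admits a lift in $\overline F$, and your balls $B(w_i,\delta_{w_i}/2)$ cover $\overline F$ on the nose.
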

\begin{proof}
   Since $\phi:\Sigma\to \RR$
is a limit of a sequence of discrete functions, it  is continuous by
Corollary~\ref{cor:cont}. The surface $\Sigma$ is compact, hence $\phi$ is uniformly 
  continuous by Heine theorem.
  We denote by $\hat\psi^\pm_{N_k}$ and $\hat \phi$ the canonical lifts of
  $\psi^\pm_{N_k}\in C^0(\hat\cG^\pm_N(\RR^2))$ and $\phi$ via the projection $p:\RR^2\to \Sigma$.
    Since $\phi$ is uniformly continuous, so is $\hat\phi$.  Let $\epsilon$, be a positive real number. By uniform continuity, there exists $\delta>0$
such that for every $w, w' \in\RR^2$
\begin{equation}
  \label{eq:unifcontcond}
  \|w- w'\|\leq \delta \Rightarrow |\hat\phi(w) -
\hat\phi(w')|\leq \epsilon.  
\end{equation}

By definition of the convergence of discrete functions, for each $w\in\RR^2$, we may choose an integer $k(w)\geq 0$ and
and a real number $\eta(w)>0$ such that for all $k\geq k(w)$ and $\hat
\zert\in
\fC_0(\hat \cG^\pm_{N_k}(\RR^2))$ we have

\begin{equation}
  \label{eq:convcond}
  \|\hat \zert- w\| \leq \eta(w) \Rightarrow |\hat
\psi^\pm_{N_k}(\hat \zert)-
\hat\phi(w)|\leq \epsilon.
\end{equation}

For each $w\in\RR^2$, put
$$
\delta(w) = \min(\delta,\eta(w)).
$$
The family of open Euclidean balls $B(w,\delta(w))$,  centered at
$w\in\RR^2$ with radius $\delta(w)$, provides an open cover of $\RR^2$.
Their images $U_w=p(B(w,\delta(w)))$, by the canonical projection
$p:\RR^2\to\Sigma$, 
provide an open cover of the compact surface $\Sigma$. Hence we can
extract a finite cover  $U_i=U_{w_i}$ of $\Sigma$, for a finite collection of
points  $\{w_i\in \RR^2, 1\leq i\leq d\}$. We put $k_0 = \max \{
k(w_i)_{1\leq i \leq d} \}$ and consider $k\geq k_0$.

Every  $\zert\in \fC_0(\cG^\pm_{N_k}(\Sigma))$ is an
element of one of the open sets $U_i$. Hence $\zert$ admits a lift
$\hat \zert \in \fC_0(\hat\cG^\pm_{N_k}(\RR^2))$ contained in one of a the balls $B(w_i,\delta(w_i))$. In
particular
$$
| \psi^\pm_{N_k}(\zert) -  \phi^\pm_{N_k}(\zert)| =
| \hat \psi^\pm_{N_k}(\hat \zert) - \hat \phi^\pm_{N_k}(\hat \zert)|
\leq |\hat \psi^\pm_{N_k}(\hat \zert)  -
\hat \phi(w_i)| + |\hat \phi (w_i) - \hat \phi^\pm_{N_k}(\hat \zert)|.
$$
The first term of the RHS is bounded above by $\epsilon$ by
\eqref{eq:convcond}. By definition $\hat \phi^\pm_{N_k}(\hat \zert) = \hat
\phi(\hat \zert)$, hence the second term of the RHS is bounded above by
$\epsilon$ thanks to \eqref{eq:unifcontcond}.
In conclusion
$$
| \psi^\pm_{N_k}( \zert) - \phi^\pm_{N_k}(\zert)|\leq 2\epsilon,
$$
which  shows that
$$
\|\psi^\pm_N - \phi^\pm_N\|_{\cC^0}\leq 2\epsilon
$$
for $k\geq k_0$.
\end{proof}
We also have a sort of converse for Proposition~\ref{prop:c0nec}:
\begin{prop}
    \label{prop:c0suf}
  Let $\psi^\pm_{N_k}\in C^0(\cG^\pm_{N_k}(\Sigma))$ be a sequence of discrete
  functions and 
  $\phi:\Sigma\to \RR$ a continuous function such that
$$
\lim_{k\to \infty}\left \|\phi_{N_k}^\pm - \psi_{N_k}^\pm \right\|_{\cC^0} = 0,
$$
where $\phi^\pm_{N_k}\in C^0(\cG^\pm_{N_k}(\Sigma))$ are the samples of $\phi$.
Then
$$
 \lim \psi_{N_k}^\pm = \phi.
$$
\end{prop}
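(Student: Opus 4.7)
The strategy is to verify Definition~\ref{def:conv} pointwise on lifts, reducing everything to a two-term triangle inequality split into a ``continuity of $\phi$'' term and a ``discrete $\cC^0$-proximity'' term.

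More precisely, I would first pass to the universal cover: write $\hat\phi=\phi\circ p$, $\hat\psi_{N_k}^\pm=\psi_{N_k}^\pm\circ p$, and $\hat\phi_{N_k}^\pm=\phi_{N_k}^\pm\circ p$, which by the definition of samples on the quotient is also the sample of $\hat\phi$ on $\hat\cG_{N_k}^\pm(\RR^2)$. By Definition~\ref{def:convsigma}, it suffices to show $\hat\psi_{N_k}^\pm\to\hat\phi$ in the sense of Definition~\ref{def:conv}. Fix $w\in\RR^2$ and $\epsilon>0$. Since $\phi$ is continuous on $\Sigma$, so is its lift $\hat\phi$, hence there exists $\delta>0$ such that $\|w-w'\|\leq\delta$ implies $|\hat\phi(w)-\hat\phi(w')|\leq\epsilon/2$. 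By the assumption $\|\phi_{N_k}^\pm-\psi_{N_k}^\pm\|_{\cC^0}\to 0$, there exists $k_0$ such that for every $k\geq k_0$, $\|\phi_{N_k}^\pm-\psi_{N_k}^\pm\|_{\cC^0}\leq\epsilon/2$; lifting via $p$ preserves this bound since the $\cC^0$-norm is defined as a supremum over vertices.

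Now take any vertex $\hat\zert\in\fC_0(\hat\cG_{N_k}^\pm(\RR^2))$ with $\|w-\hat\zert\|\leq\delta$. The vertex $\hat\zert$ projects to some $\zert\in\fC_0(\cG_{N_k}^\pm(\Sigma))$, and by the very definition of samples we have $\hat\phi_{N_k}^\pm(\hat\zert)=\hat\phi(\hat\zert)$. The triangle inequality yields
\begin{equation*}
|\hat\phi(w)-\hat\psi_{N_k}^\pm(\hat\zert)|
\leq |\hat\phi(w)-\hat\phi(\hat\zert)| + |\hat\phi_{N_k}^\pm(\hat\zert)-\hat\psi_{N_k}^\pm(\hat\zert)|
\leq \tfrac{\epsilon}{2}+\tfrac{\epsilon}{2}=\epsilon,
\end{equation*}
where the first summand is controlled by uniform continuity and the second by the $\cC^0$-hypothesis. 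This is exactly the condition of Definition~\ref{def:conv}, so $\hat\psi_{N_k}^\pm\to\hat\phi$ on $\RR^2$, and therefore $\psi_{N_k}^\pm\to\phi$ on $\Sigma$.

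There is essentially no hard step: the only subtle point is the bookkeeping that the sample $\phi_{N_k}^\pm$ literally records the value $\hat\phi(\hat\zert)$ at each lifted vertex (so the triangle inequality above is sharp at the middle term), and that the compactness of $\Sigma$ is not needed here because continuity alone is assumed on $\phi$ and the hypothesis $\|\phi_{N_k}^\pm-\psi_{N_k}^\pm\|_{\cC^0}\to 0$ already encodes a global $\sup$. Hence this is a short direct argument, a clean converse to Proposition~\ref{prop:c0nec} modulo the observation that discrete $\cC^0$-control plus continuity of the candidate limit is enough to recover the pointwise-neighborhood convergence of Definition~\ref{def:conv}.
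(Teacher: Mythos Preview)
Your proof is correct and follows essentially the same route the paper sketches: lift to $\RR^2$, split via the triangle inequality into a continuity term for $\hat\phi$ and a $\cC^0$-proximity term, and verify Definition~\ref{def:conv} directly.

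One small difference worth noting: the paper's hint invokes uniform continuity of $\phi$ (via compactness of $\Sigma$), presumably to parallel the proof of Proposition~\ref{prop:c0nec}. You correctly observe that this is unnecessary here: since Definition~\ref{def:conv} allows $\delta$ to depend on the point $w$, pointwise continuity of $\hat\phi$ at $w$ suffices. This makes your argument marginally more elementary than what the paper's sketch suggests, though in the present setting (compact $\Sigma$) the distinction is immaterial.
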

\begin{proof}
The compactness of $\Sigma$ implies the uniform continuity of
$\phi$, which  is a key argument in a proof closely related to the one of Proposition~\ref{prop:c0nec}. The details are left to the
  interested reader.
\end{proof}

Proposition~\ref{prop:c0suf} has the following immediate corollary, which shows that samples of a function are natural approximations:
\begin{cor}
\label{cor:sample}    Let  
  $\phi:\Sigma\to \RR$ be a continuous function, and
    $\phi^\pm_{N} \in C^0(\cG^\pm_{N}(\Sigma))$ its samples.
Then
$$
 \lim \phi_{N}^\pm = \phi.
$$
\end{cor}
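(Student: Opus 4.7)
The plan is to deduce Corollary~\ref{cor:sample} as an immediate application of Proposition~\ref{prop:c0suf}. Given a continuous function $\phi:\Sigma\to\RR$ with samples $\phi_N^\pm\in C^0(\cG^\pm_N(\Sigma))$, I would simply take the sequence $\psi_N^\pm=\phi_N^\pm$ itself. Then $\|\phi_N^\pm-\psi_N^\pm\|_{\cC^0}=0$ for every $N$, so trivially
$$
\lim_{k\to\infty}\|\phi_{N_k}^\pm-\psi_{N_k}^\pm\|_{\cC^0}=0
$$
for any increasing sequence $(N_k)$. Proposition~\ref{prop:c0suf} then yields $\lim\psi_{N_k}^\pm=\phi$, i.e. $\lim\phi_{N_k}^\pm=\phi$, which is exactly the conclusion sought.

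There is essentially no obstacle here: the corollary is a tautological specialization of the preceding proposition, obtained by comparing the samples against themselves. The only mild subtlety is to note that the statement makes sense: by definition the samples $\phi_N^\pm$ are bona fide elements of $C^0(\cG^\pm_N(\Sigma))$, obtained by evaluating $\phi$ at the vertices of $\cG^\pm_N(\Sigma)$ (where these vertices are viewed as points of $\Sigma$ via the identification $\Sigma\simeq \RR^2/\Gamma$), so the hypothesis of Proposition~\ref{prop:c0suf} applies verbatim with the continuous target function being $\phi$ itself.

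In summary, the proof is a single line invoking Proposition~\ref{prop:c0suf}, and no additional analysis, uniform continuity argument, or compactness argument is required beyond what has already been packaged into that proposition.
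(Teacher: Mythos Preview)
Your proposal is correct and matches the paper's approach exactly: the paper introduces Corollary~\ref{cor:sample} as an ``immediate corollary'' of Proposition~\ref{prop:c0suf} and gives no further argument, which is precisely your specialization $\psi_N^\pm=\phi_N^\pm$.
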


\subsection{Precompactness}
We denote by $\|\cdot \|_{\cC^{0,\alpha}}$ the usual Hölder norm on
the space of function $\phi:\Sigma\to \RR$, defined with respect to the
Riemannian metric $g_\sigma$, for instance. The corresponding Hölder space is
denoted
$\cC^{0,\alpha}(\Sigma)$.
We may now state a version of the Ascoli-Arzela theorem adapted to our setting:
\begin{theo}[Ascoli-Arzela, first version]
  \label{theo:AA1}
  Let $\psi^\pm_{N_k}$ be a sequence of discrete function in
  $C^0(\cG^\pm_{N_k}(\Sigma))$,   
  which are uniformly bounded in $\cC^{0,\alpha}$-norm.
  In other words, 
  there exists a constant $c>0$ with the property that
  $$
\|\psi^\pm_{N_k}\|_{{\cC}^{0,\alpha}}\leq c
$$
for all $k\in\NN$.
Then there exists a subsequence $N_k'$ of $N_k$ and a function $\phi:\Sigma\to
\RR$ in $\cC^{0,\alpha}(\Sigma)$,  such that
$$
\lim \psi^\pm_{N_k'} = \phi.
$$
\end{theo}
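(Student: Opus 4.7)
The plan is to carry out a diagonal Ascoli-Arzela extraction, working with lifts on $\RR^2$ and then descending to $\Sigma$. First, I lift the sequence via $p:\RR^2\to\Sigma$ to obtain $\Gamma$-periodic discrete functions $\hat\psi_{N_k}^\pm\in C^0(\hat\cG_{N_k}^\pm(\RR^2))$, bounded uniformly in the discrete $\cC^{0,\alpha}$-norm (that is, both in $\cC^0$ and in the discrete Hölder seminorm defined by~\eqref{eq:C0alpha}). Since $\Sigma$ is compact and the lifts are $\Gamma$-periodic, it suffices to construct a convergent subsequence (in the sense of Definition~\ref{def:conv}) on $\RR^2$ whose limit is continuous; the periodicity of the discrete functions forces the limit to be $\Gamma$-periodic, so it descends as a function $\phi^\pm$ on $\Sigma$.

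Next, I fix a countable dense set $D\subset\RR^2$, for instance $D=\QQ^2$. For each $w\in D$ and each index $k$, I select a vertex $\hat\zert_k(w)\in\fC_0(\hat\cG_{N_k}^\pm(\RR^2))$ that minimizes the Euclidean distance to $w$; since the mesh size of $\hat\Lambda_{N_k}$ is $\cO(N_k^{-1})$, we have $\|\hat\zert_k(w)-w\|\to 0$. The values $\hat\psi_{N_k}^\pm(\hat\zert_k(w))$ are bounded by the uniform $\cC^0$-bound, so a standard Cantor diagonal argument produces a subsequence $(N_k')$ along which $\hat\psi_{N_{k'}}^\pm(\hat\zert_{k'}(w))$ converges for \emph{every} $w\in D$. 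Call the limit $\phi^\pm(w)$.

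Now I use the uniform discrete Hölder bound to propagate this pointwise convergence. For any two points $w,w'\in D$, the bound~\eqref{eq:C0alpha} applied to nearby vertices and a passage to the limit yields $|\phi^\pm(w)-\phi^\pm(w')|\leq c\|w-w'\|^\alpha$, so $\phi^\pm$ is uniformly $\alpha$-Hölder on the dense set $D$ and therefore extends uniquely to a function $\phi^\pm\in\cC^{0,\alpha}(\RR^2)$ with the same Hölder constant. Then I verify Definition~\ref{def:conv}: given $w\in\RR^2$ and $\epsilon>0$, pick $w_0\in D$ with $\|w-w_0\|\leq\delta$ small enough that $|\phi^\pm(w)-\phi^\pm(w_0)|\leq\epsilon/3$ and $c\delta^\alpha\leq\epsilon/3$; for $k'$ large, $|\hat\psi_{N_{k'}}^\pm(\hat\zert_{k'}(w_0))-\phi^\pm(w_0)|\leq\epsilon/3$, and the discrete Hölder control gives $|\hat\psi_{N_{k'}}^\pm(\hat\zert)-\hat\psi_{N_{k'}}^\pm(\hat\zert_{k'}(w_0))|\leq c\|\hat\zert-\hat\zert_{k'}(w_0)\|^\alpha\leq\epsilon/3$ for every vertex $\hat\zert$ within a suitable distance of $w$. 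Summing the three contributions yields the definition of $\lim\hat\psi_{N_{k'}}^\pm=\phi^\pm$.

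The main subtlety, and therefore what I expect to be the principal technical obstacle, is that the discrete Hölder seminorm~\eqref{eq:C0alpha} only controls differences between vertices of the \emph{same} connected component of the checkers graph, so it does not bound differences between distinct $\hat\cG_{N_k}^\pm$ vertices and a fixed base point independently of $k$. The diagonal extraction over the dense set $D$ must therefore be set up so that the comparison vertex $\hat\zert_k(w_0)$ always lies in $\hat\cG_{N_k}^\pm(\RR^2)$, not the other component; once this is arranged the Hölder seminorm does the work. Finally, the $\Gamma$-periodicity of the $\hat\psi_{N_k}^\pm$ passes to the limit $\phi^\pm$ by continuity, so $\phi^\pm$ descends to $\Sigma$, proving $\lim\psi_{N_{k'}}^\pm=\phi^\pm$ in the sense of Definition~\ref{def:convsigma}.
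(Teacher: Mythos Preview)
Your argument is correct and follows essentially the same route as the paper: a diagonal extraction over a countable dense set using the uniform $\cC^0$-bound, followed by the uniform discrete H\"older estimate to propagate convergence and obtain a $\cC^{0,\alpha}$ limit. The only cosmetic differences are that the paper takes its dense set in $\Sigma$ (lifting each point individually) rather than in $\RR^2$, and concludes convergence via Proposition~\ref{prop:c0suf} rather than verifying Definition~\ref{def:conv} directly; your concern about staying in the correct component $\hat\cG_{N_k}^\pm$ is already built into the hypothesis $\psi_{N_k}^\pm\in C^0(\cG_{N_k}^\pm(\Sigma))$, so it causes no actual difficulty.
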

\begin{proof}
Let 
$\psi_{N_k}\in C^0(\cG^+_{N_k}(\Sigma))$ be a sequence of discrete functions bounded in H\"older norm, as in the theorem.

We start by choosing a countable dense set $Q=\{q_n\in
  \Sigma, n\in\NN \}$  of
  $\Sigma$; for instance the projection by $p:\RR^2\to \Sigma$ of the points of rational
  coordinates in $\RR^2$ is a possible choice. For each $q_n$, we choose a lift $\hat q_n$ such that $p(\hat q_n)=q_n$.
  For each $n$ we choose a sequence $\hat \zert^n_{N_k}\in
  \fC_0(\hat\cG^+_{N_k}(\RR^2))$ such that
  that
  $$
  \lim_{k\to\infty} \hat \zert_{N_k}^n = \hat q_n.
  $$

We denote by $\hat\psi_{N_k}=\psi_{N_k}\circ p\in C^ 0(\hat\cG_{N_k}^+(\RR^2))$ the
canonical lift of $\psi_{N_k}$. By assumption, the uniform estimate on the H\"older norms provides a 
 uniform bound  $|\hat \psi_{N_k}(\hat \zert_{N_k}^n)|\leq c$. Hence we can choose a
  subsequence $N^0_k$ of integers such that
  $\hat \psi_{N^0_k}(\hat \zert_{N^0_k}^0)$ converges as $k\to \infty$.
  
  By extracting a subsequence $N^1_k$ of $N^0_k$, we may assume that
  $\hat \psi_{N^1_k}(\hat \zert_{N^1_{k}}^n)$ converges for $n=0$ or $1$, as $k\to
  \infty$.
  Extracting subsequences inductively provides 
  family of  subsequences $N^m_k$, indexed by $m$, such that
  $\hat \psi_{N^m_k}(\hat \zert^n_{N^m_{k}})$ converges for fixed $0\leq n\leq m$ as
  $k\to \infty$.
 Finally, using the diagonal subsequence $M_k=N^k_k$, we find a
 subsequence $\psi_{M_k}$ such that $\hat\psi_{M_k}(\hat \zert^n_{M_k})$
 converges for every $n\in \NN$, as $k\to \infty$.

 The function 
 $$
\phi:Q\to \RR
 $$
is defined on the countable dense subset $Q\subset\Sigma$ by
$$\phi(q_n) = \lim_{k\to\infty} \hat \psi_{M_k}(\hat z^n_{M_k}).
$$
Since the $\psi_{N_k}$ are uniformly bounded with respect to the discrete $\cC^{0,\alpha}$-norms, it follows that
 the function $\phi:Q\to \RR$ is bounded with respect to the usual $\cC^{0,\alpha}$-norm. In
 particular $\phi$ is uniformly continuous on $Q$, hence it admits a unique
 continuous extension $\phi:\Sigma\to \RR$ which turns out to be in
 $\cC^{0,\alpha}(\Sigma)$ as well.
 One can readily check, using the uniform Hölder-norm estimates, that
 the construction of the function $\phi$ is independent of the choice
 of sequences $\hat \zert^n_N$.
Furthermore the uniform H\"older estimates imply that
$$\lim _{k\to \infty}\|\psi_{M_k}-\phi_{M_k}\|_{\cC^0}=0,$$ where
$\phi_{M_k}\in C^0(\cG^+_{M_k}(\Sigma))$ are the samples of $\phi$.
This implies by Proposition~\ref{prop:c0suf} that
$$
\lim \psi_{M_k} = \phi.
$$

\end{proof}

\subsection{Higher order convergence}
We are interested in stronger  convergence of discrete
functions, taking into account higher 
order  finite differences. We start by stating the
following  elementary results:
\begin{lemma}
  \label{lemma:convc1}
  Let $\psi_{N_k}\in C^0(\cG^\pm_{N_k}(\Sigma))$ be a sequence of discrete
  functions. The finite differences
   $\frac{\del \psi_{N_k}}{\del \vec
    u}$ (resp. $\frac{\del \psi_{N_k}}{\del \vec
    v}$) converge if, and only if,
the finite differences
   $\frac{\del \psi_{N_k}}{\del \cev
    u}$ (resp.  $\frac{\del \psi_{N_k}}{\del \cev
    v}$)
converge. It they converge, they have the same limits:
$$
\lim \frac{\del \psi_{N_k}}{\del \vec
    u} =  \lim \frac{\del \psi_{N_k}}{\del \cev
    u}, \quad \lim \frac{\del \psi_{N_k}}{\del \vec
    v} =  \lim \frac{\del \psi_{N_k}}{\del \cev
    v}.
$$
\end{lemma}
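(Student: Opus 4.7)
The plan is to reduce the lemma to the observation that forward and retrograde differences differ only by composition with a translation of size $\cO(N^{-1})$, and that Definition~\ref{def:conv} is stable under such vanishing translations.

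\textbf{Step 1: Translate the identity to the \emph{hat} setting.} Identities \eqref{eq:fd3}--\eqref{eq:fd4} give
$$\frac{\del \psi_N}{\del \vec u} = \frac{\del \psi_N}{\del \cev u}\circ T_u,\qquad \frac{\del \psi_N}{\del \vec v} = \frac{\del \psi_N}{\del \cev v}\circ T_v,$$
where $T_u,T_v$ are translations of $\cG_N(\RR^2)$ by the vectors $\frac{e_1+e_2}{N}$ and $\frac{e_2-e_1}{N}$, both of which lie in $\Lambda_N^{ch}$ and hence preserve each connected component $\cG_N^\pm(\RR^2)$. Working on the lifts $\hat\psi_{N_k}\in C^0(\hat\cG^\pm_{N_k}(\RR^2))$ as in Definition~\ref{def:convsigma}, the same identities hold with $T_u,T_v$ replaced by the translations by $U_{N_k}(\tfrac{e_1+e_2}{N_k})$ and $U_{N_k}(\tfrac{e_2-e_1}{N_k})$. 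By \eqref{eq:unasympt}, both translation vectors have Euclidean norm bounded by $C/N_k$ for some fixed constant $C$, so they tend to $0$ as $k\to\infty$.

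\textbf{Step 2: Stability of the convergence under a vanishing shift.} I will prove the general claim: if $(\chi_{N_k})\in C^0(\hat\cG^\pm_{N_k}(\RR^2))$ converges to $\phi:\RR^2\to\RR$ in the sense of Definition~\ref{def:conv}, and if $S_{N_k}$ is a translation of $\hat\cG^\pm_{N_k}(\RR^2)$ whose translation vector $s_{N_k}$ satisfies $\|s_{N_k}\|\to 0$, then $(\chi_{N_k}\circ S_{N_k})$ also converges to $\phi$. Indeed, given $w\in\RR^2$ and $\epsilon>0$, pick $\delta>0$ and $k_0$ from the convergence hypothesis for $\chi_{N_k}$. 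Enlarging $k_0$ if necessary so that $\|s_{N_k}\|\leq \delta/2$ for all $k\geq k_0$, any vertex $\zert\in\fC_0(\hat\cG^\pm_{N_k}(\RR^2))$ with $\|w-\zert\|\leq \delta/2$ satisfies $\|w-S_{N_k}\zert\|\leq \|w-\zert\|+\|s_{N_k}\|\leq\delta$; hence $|\phi(w)-(\chi_{N_k}\circ S_{N_k})(\zert)|=|\phi(w)-\chi_{N_k}(S_{N_k}\zert)|\leq\epsilon$.

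\textbf{Step 3: Conclude.} Suppose first that $\frac{\del \psi_{N_k}}{\del \cev u}$ converges to some function $\phi$. Applying Step 2 with $\chi_{N_k}=\frac{\del \psi_{N_k}}{\del \cev u}$ and $S_{N_k}=T_u$ (on the hat side), and invoking the identity $\frac{\del \psi_{N_k}}{\del \vec u}=\frac{\del \psi_{N_k}}{\del \cev u}\circ T_u$, we conclude that $\frac{\del \psi_{N_k}}{\del \vec u}$ also converges to $\phi$. Conversely, if $\frac{\del \psi_{N_k}}{\del \vec u}$ converges to $\phi$, apply Step 2 with $S_{N_k}=T_u^{-1}$, whose translation vector also has norm $\cO(N_k^{-1})$. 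The $v$-case is identical, with $T_v$ in place of $T_u$. This yields both the equivalence of the convergences and the equality of their limits, as announced.

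The argument is entirely elementary; the only thing to watch is that $T_u$ and $T_v$ preserve the components $\cG_N^\pm$ (so compositions like $\chi_{N_k}\circ T_u$ make sense on the same component), which is guaranteed by $T_u,T_v\in\Lambda_N^{ch}$. No real obstacle is expected.
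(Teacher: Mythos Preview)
Your proof is correct and follows the same approach as the paper: both rely on the identities \eqref{eq:fd3}--\eqref{eq:fd4} relating forward and retrograde differences via translations in $\Lambda_N^{ch}$. The paper's proof is a one-line pointer to these formulae, leaving the stability of Definition~\ref{def:conv} under a vanishing shift implicit; you have simply made that step explicit in your Step~2, which is a welcome clarification rather than a different route.
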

\begin{proof}
  This follows
   from the fact that finite
  differences in the forward and backward directions are related by the
  translations $T_u$, or $T_v$, spanning the lattice $\Lambda^{ch}_N$,
  thanks to Formulae~\eqref{eq:fd3} and \eqref{eq:fd4}.
\end{proof}
\begin{rmk}
According to the above lemma, one can talk about the convergence of
the finite differences of a sequence of discrete functions without
specifying on the forward or retrograde directions.  
\end{rmk}

\begin{prop}
\label{prop:c1conv}
  Let $\psi_{N_k}\in C^0(\cG^\pm_{N_k}(\Sigma))$ be a converging sequence of discrete
  functions such that its first order  finite differences
  converge as well  towards the limits
    $$ \phi=\lim\psi_{N_k}, \phi_u=\lim \frac{\del
    \psi_{N_k}}{\del \vec u}
    \mbox{ and }  \phi_v= \lim
  \frac{\del \psi_{N_k}}{\del \vec v}.
  $$
  Then, the limit $\phi:\Sigma\to \RR$ is of class $\cC^1$ with partial derivatives
  given by
  $$
  \frac{\del\phi}{\del u}=\phi_u,\quad \frac{\del\phi}{\del v}=\phi_v.
  $$
\end{prop}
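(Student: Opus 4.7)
The plan is to lift everything to the universal cover $\RR^2$ via $p$, to establish a discrete fundamental theorem of calculus along the diagonals of the checkers graph $\hat\cG^\pm_{N_k}(\RR^2)$, and to pass to the limit to express $\hat\phi = \phi \circ p$ as an integral of $\hat\phi_u = \phi_u \circ p$ and $\hat\phi_v = \phi_v \circ p$. From this integral representation the classical fundamental theorem of calculus yields $\hat\phi\in\cC^1$ with the stated partial derivatives, and descending by $p$ gives the result on $\Sigma$.

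First I would note that the limits $\phi$, $\phi_u$ and $\phi_v$ are continuous by Corollary~\ref{cor:cont}. By Proposition~\ref{prop:c0nec} the three convergences happen in uniform $\cC^0$-norm with respect to the samples of the limits; lifting by $p$, this uniformity transfers to every compact subset of $\RR^2$, and $\hat\phi,\hat\phi_u,\hat\phi_v$ are continuous on $\RR^2$.

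Now fix $w_0\in\RR^2$ and consider $w_1=w_0+s\,e'_1$ in the $u$-direction. For each large $k$, I choose vertices $\hat\zert_0^k,\hat\zert_1^k$ of the same component of $\hat\cG^\pm_{N_k}(\RR^2)$ such that $\hat\zert_1^k=T_u^{m_k}\hat\zert_0^k$ for some integer $m_k$, with $\hat\zert_0^k\to w_0$ and $\hat\zert_1^k\to w_1$; for large $k$ the vertices of a fixed checkers component form a lattice of spacing $\sqrt 2/N_k$ in the $u$-direction (up to the $\cO(N_k^{-1})$ distortion coming from $U_{N_k}-\id$), so this is possible. The definition of $\del/\del\vec u$ recalled at \eqref{eq:fd1} then yields the telescopic identity
\[
\hat\psi_{N_k}(\hat\zert_1^k)-\hat\psi_{N_k}(\hat\zert_0^k)=\frac{\sqrt 2}{N_k}\sum_{j=0}^{m_k-1}\frac{\del\hat\psi_{N_k}}{\del\vec u}(T_u^j\hat\zert_0^k).
\]
The left-hand side tends to $\hat\phi(w_1)-\hat\phi(w_0)$ by definition of convergence of discrete functions. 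The right-hand side is a Riemann sum of step $\sqrt 2/N_k$ for $\hat\phi_u$ along $[w_0,w_1]$; the uniform convergence of the finite differences together with the continuity of $\hat\phi_u$ shows that it tends to $\int_0^s\hat\phi_u(w_0+\tau e'_1)\,d\tau$. Therefore
\[
\hat\phi(w_0+s\,e'_1)-\hat\phi(w_0)=\int_0^s\hat\phi_u(w_0+\tau e'_1)\,d\tau,
\]
and an identical argument using $T_v$ and $\del/\del\vec v$ provides the analogous formula in the $v$-direction.

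Combining both identities along a right-angle path from $w_0$ to $w=w_0+s\,e'_1+t\,e'_2$ yields
\[
\hat\phi(w)=\hat\phi(w_0)+\int_0^s\hat\phi_u(w_0+\tau e'_1)\,d\tau+\int_0^t\hat\phi_v(w_0+s\,e'_1+\tau e'_2)\,d\tau.
\]
Since $\hat\phi_u$ and $\hat\phi_v$ are continuous, the classical fundamental theorem of calculus applied to this integral representation forces $\hat\phi\in\cC^1$ with $\del\hat\phi/\del u=\hat\phi_u$ and $\del\hat\phi/\del v=\hat\phi_v$; pushing down by $p$ gives the same regularity and identities for $\phi$ on $\Sigma$. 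The main obstacle is controlling the telescopic Riemann sums uniformly enough to identify their limit with the integral at the end, and this is precisely what the uniform $\cC^0$-convergence of Proposition~\ref{prop:c0nec}, together with the continuity of the limits supplied by Corollary~\ref{cor:cont}, is designed to provide.
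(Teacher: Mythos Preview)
Your proof is correct and follows essentially the same approach as the paper: you show via a telescopic identity and Riemann sums that $\phi$ is a primitive of $\phi_u$ (resp.\ $\phi_v$) in the $u$-direction (resp.\ $v$-direction), then invoke continuity of the limits to conclude $\phi\in\cC^1$. The paper's own proof is only a two-sentence sketch of exactly this argument, so you have simply filled in the details it leaves implicit; the combined right-angle integral formula you write at the end is not needed, since the two separate directional identities already yield both partial derivatives.
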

\begin{proof}
One can readily show that $\phi$ is a primitive function of $\phi_u$
(resp. $\phi_v$) in the
$u$-direction (resp. $v$-direction) using Riemann sums.
The limits $\phi_u$ and $\phi_v$ are continuous by
Lemma~\ref{lemma:cont} and it follows that $\phi$ is continuously differentiable.
\end{proof}

Lemma~\ref{lemma:convc1} and Proposition~\ref{prop:c1conv} motivate the following definition:
\begin{dfn}
  If a sequence of discrete functions $\psi_{N_j}\in
  C^0(\cG^\pm_{N_j}(\Sigma))$ converges together with its
  finite differences, up to order $k$, we say that the sequence  $(\psi_{N_j})$
  converges in the $\cC^k$-sense toward the function $\phi=\lim
  \psi_{N_j}$. We denote
  this property by
  $$
  \psi _{N_j}\stackrel{\cC^k}\longrightarrow \phi.
  $$

  If  $\psi_{N_j}\in
  C^0(\cG_{N_j}(\Sigma))$ is a sequence of discrete functions with
  decompositions $\psi_{N_j} =\psi_{N_j}^+ + \psi_{N_j}^-$ and
  $\phi^+,\phi^-:\Sigma\to \RR$ are functions such that
  $$
  \psi _{N_j}^+ \stackrel{\cC^k}\longrightarrow \phi^+,
  \mbox{ and }  \psi _{N_j}^-\stackrel{\cC^k} \longrightarrow \phi^-,
  $$
  we say that $\psi_{N_j}$ converges in the weak $\cC^k$-sense toward
  the pair of functions $(\phi^+,\phi^-)$. This property is denoted
  $$
  \psi _{N_j}\stackrel{\cC^k_w}\longrightarrow (\phi^+,\phi^-).
  $$
\end{dfn}
This  definition and Propositions~\ref{prop:c1conv} leads to the
following corollary:
\begin{prop}
  If  $\psi_{N_j}\in
  C^0(\cG^\pm_{N_j}(\Sigma))$ converges in the $\cC^k$ sense, the limit $\phi=\lim \psi_{N_j}$ is of class $\cC^k$. Furthermore the finite differences of $\psi_{N_j}$ converge, up to order $k$ toward the corresponding partial derivatives of $\phi$.
\end{prop}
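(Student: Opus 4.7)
The plan is to prove the statement by induction on $k$, using Proposition~\ref{prop:c1conv} as the engine for passing from order $k-1$ to order $k$.

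The base case $k=0$ is a tautology (convergence in the $\cC^0$ sense is the definition of convergence, and the limit is continuous by Corollary~\ref{cor:cont}). The case $k=1$ is exactly the content of Proposition~\ref{prop:c1conv}, combined with Lemma~\ref{lemma:convc1} which guarantees that we may freely interchange forward and retrograde finite differences when discussing the limits.

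For the inductive step, assume the proposition for order $k-1$ and suppose $\psi_{N_j} \stackrel{\cC^k}\longrightarrow \phi$. First I would consider the two sequences of first-order finite differences
\[
\frac{\del \psi_{N_j}}{\del \vec u},\quad \frac{\del \psi_{N_j}}{\del \vec v}\ \in\ C^0(\cG^\pm_{N_j}(\Sigma)).
\]
By definition, convergence of $\psi_{N_j}$ in the $\cC^k$ sense means that \emph{all} finite differences of $\psi_{N_j}$ of order $\leq k$ converge. Since an order-$m$ finite difference of $\frac{\del \psi_{N_j}}{\del \vec u}$ is itself an order-$(m+1)$ finite difference of $\psi_{N_j}$, the sequence $\frac{\del \psi_{N_j}}{\del \vec u}$ converges in the $\cC^{k-1}$ sense, and similarly for $\frac{\del \psi_{N_j}}{\del \vec v}$. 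Denote their limits by $\phi_u$ and $\phi_v$. By the induction hypothesis, $\phi_u$ and $\phi_v$ are of class $\cC^{k-1}$, and their partial derivatives up to order $k-1$ are precisely the limits of the corresponding finite differences of $\frac{\del \psi_{N_j}}{\del \vec u}$ and $\frac{\del \psi_{N_j}}{\del \vec v}$.

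Next I would invoke Proposition~\ref{prop:c1conv} directly on the sequence $\psi_{N_j}$: its hypotheses are now met (the sequence converges, and so do its first-order finite differences), hence $\phi$ is of class $\cC^1$ with
\[
\frac{\del \phi}{\del u} = \phi_u,\qquad \frac{\del \phi}{\del v} = \phi_v.
\]
Since $\phi_u$ and $\phi_v$ are $\cC^{k-1}$, we conclude that $\phi$ is $\cC^k$. Moreover, any partial derivative of $\phi$ of order $m\leq k$ is a partial derivative of order $m-1$ of either $\phi_u$ or $\phi_v$, and by the induction hypothesis this equals the limit of the corresponding order-$(m-1)$ finite difference of $\frac{\del \psi_{N_j}}{\del \vec u}$ or $\frac{\del \psi_{N_j}}{\del \vec v}$, which in turn is an order-$m$ finite difference of $\psi_{N_j}$.

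The case of a sequence $\psi_{N_j}\in C^0(\cG_{N_j}(\Sigma))$ converging in the weak $\cC^k$ sense to a pair $(\phi^+,\phi^-)$ follows by applying the above argument componentwise to the decomposition $\psi_{N_j}=\psi_{N_j}^+ + \psi_{N_j}^-$. There is no genuine obstacle here: all the difficulties were concentrated in Proposition~\ref{prop:c1conv} (transferring discrete finite-difference convergence into classical differentiability via Riemann sums), and the inductive step is essentially bookkeeping on indices of partial derivatives versus iterated finite differences, together with the symmetry statement of Lemma~\ref{lemma:convc1}.
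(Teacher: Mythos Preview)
Your proof is correct and matches the paper's intended argument. The paper states this proposition immediately after the definition of $\cC^k$-convergence with the remark ``This definition and Proposition~\ref{prop:c1conv} leads to the following corollary,'' and gives no further proof; your induction on $k$ using Proposition~\ref{prop:c1conv} as the step from $k-1$ to $k$ is precisely the argument the paper leaves implicit.
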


We may now state an improved version of the Ascoli-Arzela theorem in the
$\cC^k$ setting:
\begin{theo}[Ascoli-Arzela, second version]
  \label{theo:ascoli}
  Let $\psi_{N_j}$ be a sequence of discrete function in $C^0(\cG^\pm_{N_j}(\Sigma))$,
  which are uniformly bounded in $\cC^{k,\alpha}$-norm for some $k\geq 0$, in the sense  that
  there exists  a constant $c>0$ with the property that
  $$
\|\psi_{N_j}\|_{\cC^{k,\alpha}}\leq c \quad \mbox{ for all $j\geq 0$.}
$$
 Then there exists a subsequence $N_j'$ of $N_j$ and a function $\phi:\Sigma\to
\RR$ with $\phi\in\cC^{k,\alpha}(\Sigma)$, such that
$$
\psi_{N'_j}\stackrel{\cC^k}{\longrightarrow} \phi.
$$
\end{theo}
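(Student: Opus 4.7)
The plan is to reduce the statement to the case $k=0$ handled by Theorem~\ref{theo:AA1}, via a diagonal extraction applied to all finite differences of order at most $k$. There are finitely many multi-indices $(m_1,m_2)$ with $m_1+m_2\leq k$, and the definition of $\|\cdot\|_{\cC^{k,\alpha}}$ ensures that each finite difference
$$
D^{(m_1,m_2)}\psi_{N_j} := \frac{\del^{m_1+m_2}\psi_{N_j}}{\del \vec u^{m_1}\, \del \vec v^{m_2}}
$$
is uniformly bounded in $\cC^{0,\alpha}$-norm. Applying Theorem~\ref{theo:AA1} successively and then diagonalizing in the finitely many multi-indices, I extract a common subsequence $N'_j$ together with continuous limits $\phi^{(m_1,m_2)}\in\cC^{0,\alpha}(\Sigma)$ such that $D^{(m_1,m_2)}\psi_{N'_j}\to\phi^{(m_1,m_2)}$ for every $m_1+m_2\leq k$. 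Write $\phi := \phi^{(0,0)}$.

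Next I would prove by induction on $m\leq k$ that $\phi\in\cC^{m}(\Sigma)$ with partial derivatives
$$
\frac{\del^{m}\phi}{\del u^{m_1}\,\del v^{m_2}} = \phi^{(m_1,m_2)}\qquad \text{for all } m_1+m_2=m.
$$
The base case $m=0$ is immediate. For the inductive step, assuming the claim at order $m-1$, each sequence $D^{(m_1,m_2)}\psi_{N'_j}$ with $m_1+m_2=m-1$ converges to $\phi^{(m_1,m_2)}$, while its first-order finite differences in the $u$ and $v$ directions converge respectively to $\phi^{(m_1+1,m_2)}$ and $\phi^{(m_1,m_2+1)}$. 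Proposition~\ref{prop:c1conv} then guarantees that the continuous function $\phi^{(m_1,m_2)}$ is itself $\cC^1$, with partial derivatives equal to the expected order-$m$ limits. Combined with the inductive hypothesis this yields $\phi\in\cC^m$ with the stated partial derivatives.

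At the final step $m=k$, the top-order partial derivatives of $\phi$ coincide with the $\phi^{(m_1,m_2)}$ for $m_1+m_2=k$, each of which lies in $\cC^{0,\alpha}(\Sigma)$ by Theorem~\ref{theo:AA1}. Hence $\phi\in\cC^{k,\alpha}(\Sigma)$, and by construction the sequence $\psi_{N'_j}$ converges together with all its finite differences up to order $k$, so $\psi_{N'_j}\stackrel{\cC^{k,\alpha}}{\longrightarrow}\phi$ in the sense of the paper (i.e.\ $\cC^k$-convergence with a $\cC^{k,\alpha}$ limit).

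The main obstacle is ensuring that the abstract limits $\phi^{(m_1,m_2)}$ produced by Theorem~\ref{theo:AA1} actually coincide with the true higher-order partial derivatives of $\phi$, rather than being \emph{a priori} independent functions. This is precisely what Proposition~\ref{prop:c1conv} supplies, but only under the hypothesis that both a converging sequence \emph{and} its first-order finite differences converge. This is why the common subsequence must be extracted simultaneously for \emph{all} multi-indices at the very beginning: if one tried to extract subsequences as the induction progresses, the first-order finite differences feeding into Proposition~\ref{prop:c1conv} might fail to be known to converge along the already-fixed subsequence. The $\pm$-splitting introduces no extra difficulty since the statement concerns a single component $\cG^\pm_{N_j}(\Sigma)$.
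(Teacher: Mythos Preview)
Your proposal is correct and follows essentially the same strategy as the paper: apply the $k=0$ version (Theorem~\ref{theo:AA1}) to all finite differences up to order $k$, then use Proposition~\ref{prop:c1conv} inductively to identify the limits with the partial derivatives of $\phi$. The paper sketches only the case $k=1$ and leaves the general induction to the reader, so your explicit simultaneous extraction over all multi-indices is in fact a cleaner presentation of the same argument.
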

\begin{proof}
  We give a sketch of proof in the case  $k=1$. By assumption, the $\psi_{N_j}$ are
uniformly bounded in $\cC^{1,\alpha}$-norms. Thus the finite
differences of order $1$ are bounded in $\cC^{0,\alpha}$-norm:
$$
\left \|\frac{\del\psi_{N_j}}{\del \vec u}\right \|_{\cC^{0,\alpha}}\leq c, \quad
\left \|\frac{\del\psi_{N_j}}{\del \vec v}\right \|_{\cC^{0,\alpha}}\leq c.
$$
and we may apply  Theorem~\ref{theo:AA1} to
the first order finite differences. After passing to suitable
subsequences, we may assume that
$$
\frac{\del\psi_{N_j}}{\del \vec u} \stackrel{\cC^0}{\longrightarrow}
     {\phi_u}, \frac{\del\psi_{N_j}}{\del \vec v} \stackrel{\cC^0}{\longrightarrow}{\phi_v}
$$
where $\phi_u, \phi_v \in \cC^{0,\alpha}(\Sigma)$.
Since $\psi_N$ is bounded in $\cC^{1,1}$-norm, we may apply
Ascoli-Arzela again and assume, up to further extraction, that
$$
\lim \psi_{N_j} = \phi
$$
for some continuous function $\phi$.
The rest of the proof follows from Proposition~\ref{prop:c1conv}.
The general case is proved by induction on $k$.
\end{proof}

\subsection{Examples of discrete convergence}
We present two examples of converging sequences of discrete functions
that will turn out to be useful.
\subsubsection{Samples of continuously differentiable functions}
Corollary~\ref{cor:sample} extends to stronger $\cC^k$-convergence as
follows:
\begin{prop}
  Let  $\phi:\Sigma\to \RR$ be a function of class $\cC^k$, and
    $\phi^\pm_{N} \in C^0(\cG^\pm_{N}(\Sigma))$ its samples.
Then
$$
 \phi_{N}^\pm \stackrel{\cC^k}\longrightarrow \phi.
$$
\end{prop}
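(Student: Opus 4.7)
The plan is to proceed by induction on $k$; the base case $k=0$ is exactly Corollary~\ref{cor:sample}. For the inductive step, I would fix $\phi\in \cC^k(\Sigma)$ and assume the statement for $k-1$. Recalling that $\psi_N\stackrel{\cC^k}{\to}\phi$ is equivalent to the conjunction of $\psi_N\to\phi$, $\frac{\partial \psi_N}{\partial \vec u}\stackrel{\cC^{k-1}}{\to}\frac{\partial \phi}{\partial u}$ and $\frac{\partial \psi_N}{\partial \vec v}\stackrel{\cC^{k-1}}{\to}\frac{\partial \phi}{\partial v}$, and since the inductive hypothesis applied to $\phi$ (viewed as an element of $\cC^{k-1}(\Sigma)$) already yields $\phi_N^\pm\stackrel{\cC^{k-1}}{\to}\phi$, the task reduces to establishing
$$
\frac{\partial \phi_N^\pm}{\partial \vec u}\stackrel{\cC^{k-1}}{\longrightarrow} \frac{\partial \phi}{\partial u},
\qquad
\frac{\partial \phi_N^\pm}{\partial \vec v}\stackrel{\cC^{k-1}}{\longrightarrow} \frac{\partial \phi}{\partial v}.
$$

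I would treat the $u$-direction, the $v$-direction being entirely analogous. Since $\partial\phi/\partial u\in \cC^{k-1}(\Sigma)$, the inductive hypothesis applied to this function yields $(\partial\phi/\partial u)_N^\pm\stackrel{\cC^{k-1}}{\to} \partial\phi/\partial u$, so the problem further reduces to proving
$$
\left\|\frac{\partial \phi_N^\pm}{\partial \vec u} - \left(\frac{\partial \phi}{\partial u}\right)_N^\pm\right\|_{\cC^{k-1}_w} \longrightarrow 0.
$$
The key computation is a mean-value identity. For a vertex $\zert_{kl}\in \fC_0(\cG_N^\pm(\Sigma))$ with lift $\hat\zert_{kl}\in \RR^2$, the definition of samples and the lattice construction give $\hat\zert_{k+1,l+1}-\hat\zert_{kl} = \frac{\sqrt 2}{N}U_N(e'_1)$, so
$$
\frac{\partial \phi_N^\pm}{\partial \vec u}(\zert_{kl}) = \frac{N}{\sqrt 2}\bigl(\phi(\hat\zert_{k+1,l+1})-\phi(\hat\zert_{kl})\bigr) = d\phi|_{\hat\zert^{*}}\bigl(U_N e'_1\bigr)
$$
for some intermediate $\hat\zert^{*}$ on the segment. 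The right-hand side differs from $d\phi|_{\hat\zert_{kl}}(e'_1) = (\partial\phi/\partial u)_N^\pm(\zert_{kl})$ by a quantity tending to zero uniformly in $(k,l)$, thanks to uniform continuity of $d\phi$ on the compact surface $\Sigma$ (Heine's theorem) together with the asymptotic $\|U_N - \id\|=\cO(N^{-1})$ from~\eqref{eq:unasympt}. This yields the $\cC^0$-estimate.

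To extend this to the full $\cC^{k-1}_w$-norm, I would iterate the argument. Any mixed finite difference of order $j\leq k-1$ applied to either term in the bracket above is a linear combination of values of $\phi$ at grid points lying within an $\cO(N^{-1})$-ball of $\hat\zert_{kl}$, with coefficients scaling like $N^j$. Applying Taylor's formula for $\phi$ to order $k$ around $\hat\zert_{kl}$ expresses each such combination as the corresponding mixed partial of $\partial\phi/\partial u$, plus a remainder controlled by the modulus of continuity of $d^k\phi$ (uniform on the compact $\Sigma$) and by the $\cO(N^{-1})$ perturbation coming from $U_N$. The main obstacle will be the combinatorial bookkeeping of these Taylor terms to verify that the lower-order contributions indeed cancel out after the alternating sum defining the finite difference, but no new analytical ingredient is required beyond Heine's theorem and~\eqref{eq:unasympt}.
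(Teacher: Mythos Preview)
Your proposal is correct and follows essentially the same approach as the paper: the paper's proof is a two-sentence sketch invoking the Taylor formula to show that each finite difference of $\phi_N^\pm$ converges uniformly to the corresponding partial derivative of $\phi$, then applies Proposition~\ref{prop:c0suf}. Your inductive framework and the mean-value computation with the $U_N$ correction simply spell out in detail what the paper compresses into the phrase ``the Taylor formula insures.''
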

\begin{proof}
  The Taylor formula insures that finite differences of $\phi^\pm_N$ converge
  uniformly to the corresponding partial derivative of $\phi$. It
  follows by Proposition~\ref{prop:c0suf} that, up to order $k$, the finite differences
  of $\phi_N^\pm$ converge in the sense of
  Definition~\ref{def:convsigma}, which proves the proposition.
\end{proof}

\subsubsection{Discrete tangent vector fields}
We may consider discrete functions with values in $\RR^m$, or more
precisely $\RR^{2n}$, rather than real valued functions.
It is an easy exercise to check
that all the notions of convergence of discrete functions, H\"older norms, introduced
before trivially extend to this setting.

Given a smooth  immersion
$\ell:\Sigma\to\RR^{2n}$, we shall define a sample $\tau_N\in 
 C^0(\cQ_N(\Sigma)) \otimes \RR^{2n}$ of $\ell$ at \S\ref{sec:lagquad}.
We will show that the discrete tangent vector fields associated to the
diagonals of the
sample $\tau_N$ converge in Proposition~\ref{prop:convell}.

\section{Perturbation theory for  isotropic meshes}
\label{sec:pert}
We keep on using the notations of the previous section. Recall that
$\ell:\Sigma\to\RR^{2n}$ is a smooth isotropic immersion and  $\Sigma$ a
surface diffeomorphic to a torus. The surface is endowed with the
pullback metric $g_\Sigma$ and the flat metric $g_\sigma$ related by a
conformal factor $g_\Sigma=\theta g_\sigma$. There is also a family of
flat metrics $g_{\sigma}^N$ induced by the diffeomorphism
$\Phi_N: \RR^2/\Gamma_N \to \Sigma$. We construct
the various versions of quadrangulations and the checkers
graphs as in~\S\ref{sec:anal}.

\subsection{Isotropic quadrangular meshes}
\label{sec:lagquad}
A quadrangular mesh
$$
\tau\in \cM_N = C^0(\cQ_N(\Sigma))\otimes\RR^{2n}
$$
associates  $\RR^{2n}$-coordinates to each vertex
of $\cQ_N(\Sigma)$. One can define a unique piecewise linear map
$$
\ell_\tau:\Sigma^1_N\to \RR^{2n}
$$
from the $1$-skeleton $\Sigma_N^1$ of the quadrangulation
$\cQ_N(\Sigma)$ into $\RR^{2n}$, which agrees with  $\tau$ at vertices.
 Contrarily to the case of a triangulation, there
 is generally no piecewise linear extension to the $2$-skeleton, that is $\Sigma$. Indeed,
  quadrilaterals of $\RR^{2n}$ that may not be planar.
There are several options to construct extensions of $\ell_\tau
:\Sigma^1_N\to \RR^{2N}$ to 
$\Sigma$, but this is not a fundamental issue as we shall see.
\begin{dfn}
An Euclidean quadrilateral of $\RR^{2n}$ is said to be isotropic if
the integral of the Liouville form $\lambda$ along the quadrilateral
vanishes.
Similarly, a mesh $\tau\in\cM_N$ is called isotropic
if the quadrilaterals of $\RR^n$ associated to each face of $\cQ_N(\Sigma)$ via
$\tau$ are isotropic in the above sense.
 The space $\scrL_N\subset \scrM_N$ is the set of all isotropic
 quadrangular meshes  $\tau\in\cM_N$.
\end{dfn}
\subsubsection{Equation for isotropic quadrilaterals}
An oriented quadrilateral  of $\RR^{2n}$ can be given by $4$ ordered vertices
$(A_0,$  $A_1,$ $A_2,$ $A_3)$. We  introduce the diagonals of the
quadrilateral
\begin{equation}
\label{eq:notdiag}
  D_0=\overrightarrow{A_0A_2},\quad D_1=\overrightarrow{A_1A_3}.  
\end{equation}
Then we have the following result, which shows that the equation for
an isotropic quadrilateral is quadratic:
\begin{lemma}
  \label{lemma:quadiso}
  The integral of the Liouville form $\lambda$ along an oriented
  quadrilateral $(A_0,\cdots,A_3)$ of $\RR^ {2n}$ is given by
  $$
\frac 12 \omega(D_0,D_1),
$$
where $D_i$ are the diagonals of the quadrilateral defined by~\eqref{eq:notdiag}.
\end{lemma}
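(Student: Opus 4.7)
The plan is to subdivide the quadrilateral into two triangles sharing the diagonal $D_0$, evaluate $\int \lambda$ along each triangle via Stokes, and then repackage the result using bilinearity of $\omega$.

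Concretely, consider the two planar triangles $T_1 = (A_0, A_1, A_2)$ and $T_2 = (A_0, A_2, A_3)$. Together their oriented boundaries give $A_0A_1 + A_1A_2 + A_2A_3 + A_3A_0$: the diagonal $\overrightarrow{A_0A_2}$ appears once in $\partial T_1$ and once with opposite orientation in $\partial T_2$, so it cancels. Since each $T_i$ is contained in an affine $2$-plane of $\RR^{2n}$, the pullback of $\omega$ to that plane is a constant multiple of the Euclidean area form, and a direct computation (or Stokes applied to $d\lambda = \omega$ combined with parameterization by barycentric coordinates) yields
\begin{equation*}
\int_{T_1} \omega = \tfrac{1}{2}\,\omega(\overrightarrow{A_0A_1}, \overrightarrow{A_0A_2}), \qquad \int_{T_2} \omega = \tfrac{1}{2}\,\omega(\overrightarrow{A_0A_2}, \overrightarrow{A_0A_3}).
\end{equation*}
Applying Stokes on each triangle and summing, the integral of $\lambda$ along the quadrilateral equals
\begin{equation*}
\tfrac{1}{2}\,\omega(\overrightarrow{A_0A_1}, \overrightarrow{A_0A_2}) + \tfrac{1}{2}\,\omega(\overrightarrow{A_0A_2}, \overrightarrow{A_0A_3}).
\end{equation*}

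The final step is purely algebraic. Writing $\overrightarrow{A_1A_3} = \overrightarrow{A_0A_3} - \overrightarrow{A_0A_1}$ and using bilinearity and antisymmetry of $\omega$, we obtain
\begin{equation*}
\tfrac{1}{2}\,\omega(D_0, D_1) = \tfrac{1}{2}\,\omega(\overrightarrow{A_0A_2}, \overrightarrow{A_0A_3}) - \tfrac{1}{2}\,\omega(\overrightarrow{A_0A_2}, \overrightarrow{A_0A_1}) = \tfrac{1}{2}\,\omega(\overrightarrow{A_0A_2}, \overrightarrow{A_0A_3}) + \tfrac{1}{2}\,\omega(\overrightarrow{A_0A_1}, \overrightarrow{A_0A_2}),
\end{equation*}
which matches the expression above and proves the lemma.

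There is no real obstacle here; the only mild subtlety is book-keeping the orientations so that the cancellation along the diagonal is unambiguous and that the sign conventions in the triangle formula match those used for the boundary of the quadrilateral.
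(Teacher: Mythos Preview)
Your proof is correct and follows essentially the same route as the paper: apply Stokes to reduce the line integral of $\lambda$ to a sum of symplectic areas of triangles, then simplify algebraically to $\tfrac{1}{2}\omega(D_0,D_1)$. The only cosmetic difference is the choice of filling surface---you subdivide along the diagonal $A_0A_2$ into two triangles, whereas the paper cones from the origin $O$ to obtain four triangles $(OA_iA_{i+1})$; both computations are equally short.
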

\begin{proof}
  We construct a pyramid $\cP$ with base the quadrilateral $\cQ$ and with apex located at the
  origin $O\in\RR^{2n}$, for instance.
  By Stokes Theorem
  $$
\int_{\cQ}\lambda
=\int_{\cP}\omega.  $$
The integral of the RHS is the sum of the symplectic areas of the four
triangles $(OA_iA_{i+1})$, for $i$ considered as an index modulo $4$.
Hence the integral of the Liouville form is given by
$$
\frac
12 \sum_{i=0}^3\omega(\overrightarrow{OA_i},\overrightarrow{OA_{i+1}}) = \frac 12 \omega(D_0,D_1).
$$
\end{proof}

\subsubsection{Diagonals notation}
\label{sec:diag1}
For $\tau\in\cM_N$, we consider the lifts $\tilde \tau = \tau \circ
p_N \in C^0(\cQ_N(\RR^2))$.
Using the notations of \S\ref{sec:quadnot}, we define the diagonals
$$
D^u_\tau,D^v_\tau \in C^2(\cQ_N(\RR^2))\otimes \RR^{2n}
$$
by
$$
D^{u}_\tau(\face_{kl})=  \tilde \tau (\vert_{k+1,l+1}) - \tilde \tau(\vert_{kl}) 
$$
and
$$
D^{v}_\tau (\face_{kl})=  \tilde \tau(\vert_{k,l+1}) - \tilde \tau( \vert_{k+1,l}).
$$
Then, $D^u_\tau$ and $D^v_\tau$ descend to the quotient $\Sigma$ and provide
discrete vector fields denoted in the same way
$$
D^u_\tau,D^v_\tau\in C^2(\cQ_N(\Sigma))\otimes \RR^{2n} \simeq C^0(\cG_N(\Sigma))\otimes \RR^{2n}.
$$
By definition $D^u_\tau$ and $D^v_\tau$ represent certain diagonals of each face
of the quadrangular mesh $\tau$. It is also convenient to introduce
the renormalized discrete vector fields
$$
\scrU_\tau=\frac N{\sqrt 2}D^u_\tau \quad \mbox {and } \quad  \scrV_\tau =\frac N{\sqrt 2}D^v_\tau.
$$
\subsubsection{Equation for isotropic mesh}
The problem of finding isotropic meshes can be formulated
using a suitable equation.
Each $ \tau\in \cM_N$ and each
face $\face\in \fC_2(\cQ_N)$ defines  Euclidean quadrilateral in
$\RR^{2n}$, given by the $\RR^{2n}$-coordinates of ordered vertices of
$\face$. Such a
quadrilateral has a symplectic area defined by  the
integral of the Liouville form $\lambda$ along the quadrilateral.
We can pack this data
into a map 
$$
\mu_N : \cM_N  \longrightarrow C^2(\cQ_N(\Sigma))
$$
such that $\ip{\mu_N(\tau),\face}$ is the symplectic area of the
corresponding quadrilateral.
The space  of isotropic meshes $\scrL_N$
is by definition the set of solutions of the
equation $\mu_N=0$. In other words
$$
\cM_N\supset \scrL_N=\mu_N^{-1}(0).
$$
For analytical reasons, it will be convenient to introduce a
renormalized version of $\mu_N$, defined by
$$
\mu_N^r = N^2 \mu_N.
$$
\begin{rmk}
Given $\face\in\fC_2(\cQ_N(\Sigma))$, the real number $\mu^r_N(\face)$ is the
ratio between the symplectic area $\ip{\mu_N(\tau),\face}$ and the Euclidean area
of $\face$ with respect to the metric $g_\sigma^N$, which is
$$
\area(\face,g_\sigma^ N)=\frac 1{N^ 2}.
$$
In this sense $\mu_N^r$ can be regarded as a discrete version of the
moment map $\mu(\ell)=\frac{\ell^*\omega}{\sigma}$ introduced at
\S\ref{sec:dream} and $\ip{\mu_N^r(\tau),\face}$ as the symplectic density
of the face~$\face$ with respect to $\tau$.   
\end{rmk}

The space of isotropic meshes $\scrL_N$ is the zero set of
$\mu_N^r$. This subspace is defined by a system of quadratic
polynomials as shown by the following lemma.
\begin{lemma}
  \label{lemma:quadratic}
  The map $\mu_N : \cM_N \to C^2(\cQ_N(\Sigma))$ is quadratic.
More precisely, we have
\begin{equation}
  \label{eq:sympareaquad}
  \ip{\mu_N(\tau),\face} = \frac 12 \omega(D^u_\tau(\face),D^v_\tau(\face))
\end{equation}
and 
\begin{equation}
  \label{eq:notdiagrenq}
  \ip{\mu_N^r(\tau),\face} = \omega(\scrU_\tau(\face),\scrV_\tau(\face)).
\end{equation}
\end{lemma}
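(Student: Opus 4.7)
The plan is essentially to reduce everything to Lemma~\ref{lemma:quadiso}, which already identifies the integral of the Liouville form along a quadrilateral with a symplectic pairing of its diagonals.

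First, I would fix a face $\face = \face_{kl}$ of $\cQ_N(\Sigma)$ and lift to $\cQ_N(\RR^2)$ via $p_N$, so that $\face$ corresponds to the ordered quadrilateral with vertices
\[
A_0 = \tilde\tau(\vert_{kl}),\ A_1 = \tilde\tau(\vert_{k+1,l}),\ A_2 = \tilde\tau(\vert_{k+1,l+1}),\ A_3 = \tilde\tau(\vert_{k,l+1}).
\]
By definition of $\mu_N$, the quantity $\ip{\mu_N(\tau),\face}$ is the integral of the Liouville form $\lambda$ along this oriented Euclidean quadrilateral. Matching the conventions of \S\ref{sec:diag1}, the diagonals defined in \eqref{eq:notdiag} are exactly
\[
D_0 = \overrightarrow{A_0 A_2} = D^u_\tau(\face),\qquad D_1 = \overrightarrow{A_1 A_3} = D^v_\tau(\face).
\]
Applying Lemma~\ref{lemma:quadiso} then gives \eqref{eq:sympareaquad} directly, with no additional computation.

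For the renormalized version \eqref{eq:notdiagrenq}, I would just unwind the definitions: since $\scrU_\tau = \frac{N}{\sqrt 2} D^u_\tau$ and $\scrV_\tau = \frac{N}{\sqrt 2}D^v_\tau$, bilinearity of $\omega$ gives
\[
\omega(\scrU_\tau(\face),\scrV_\tau(\face)) = \frac{N^2}{2}\, \omega(D^u_\tau(\face), D^v_\tau(\face)) = N^2 \ip{\mu_N(\tau),\face} = \ip{\mu_N^r(\tau),\face},
\]
using $\mu_N^r = N^2 \mu_N$.

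Finally, the quadratic nature of $\mu_N$ is an immediate consequence of \eqref{eq:sympareaquad}: the assignments $\tau \mapsto D^u_\tau(\face)$ and $\tau \mapsto D^v_\tau(\face)$ are $\RR$-linear in $\tau \in \cM_N$ (each diagonal is a difference of two values of $\tau$ on vertices), and $\omega$ is bilinear, so the composition is a face-by-face quadratic form in the coordinates of $\tau$. There is no real obstacle here; the only mild care needed is to check that the cyclic ordering of $(A_0,A_1,A_2,A_3)$ used in defining the oriented boundary of $\face_{kl}$ (cf.\ \S\ref{sec:quadnot}) is consistent with the diagonal labels $D^u_\tau, D^v_\tau$ of \S\ref{sec:diag1}, which is just a bookkeeping verification.
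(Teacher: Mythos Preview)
Your proof is correct and follows exactly the same approach as the paper, which simply states that the lemma is an immediate consequence of Lemma~\ref{lemma:quadiso}. Your write-up just makes the identification of diagonals and the renormalization step explicit, which is a faithful unpacking of that one-line reference.
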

\begin{proof}
This is an immediated consequence of Lemma~\ref{lemma:quadiso}.
\end{proof}

\begin{dfn}
  \label{dfn:quadratic}
Since $\mu_N:\cM_N\to C^2(\cQ_N(\Sigma))$ is a quadratic map, it is
associated to a unique symmetric bilinear map
$$\Psi_N : \cM_N \times  \cM_N \to C^2(\cQ_N(\Sigma)).
$$
  Similarly, $\Psi^r_N$ is the symmetric bilinear map associated to
  the quadratic map~$\mu^r_N$.
\end{dfn}

\subsection{Shear action on meshes}
\label{sec:shear}
The space  $\cM_N$ admits an obvious action induced by the
translations of $\RR^{2n}$, which  preserves the subspace of
isotropic meshes $\scrL_N$.
However, translations belong to a larger group acting on $\scrM_N$, defined below, preserving
isotropic meshes.

The space of vertices of $\cQ_N(\RR^2)$ admits a splitting similar to
faces. Indeed, $\Lambda_N^{ch}$ acts on the vertices, with exactly two
orbits denoted
$\fC^+_0(\RR^2)$ and $\fC_0^-(\RR^2)$, with the convention that $\vert_{00}\in \fC^+_0(\RR^2)$.
This splitting descends to the quotient via $p_N:\RR^2\to\Sigma$,
where we have two sets of vertices (cf. Figure~\ref{figure:shear}  for
a picture)
$$
\fC_0(\Sigma) = \fC^+_0(\Sigma)\cup \fC_0^-(\Sigma).
$$
For any mesh $\tau\in\scrM_N$ and vector $T=(T_+,T_-)\in\RR^{2n}\times \RR^{2n}$, we define the action of $T$ on $\tau$ by
$$
\ip {T\cdot\tau , \vert}=
\left \{
\begin{array}{ll}
  \ip{\tau,\vert}+T_+ &\mbox { if } \vert\in \fC^+_0(\Sigma) \\
    \ip{\tau,\vert}+T_- &\mbox { if } \vert\in \fC^-_0(\Sigma) 
\end{array}
\right .
$$
The above action of $\RR^{2n}\times\RR^{2n}$ on $\scrM_N$ is called
the \emph{shear action}. If $T_+=T_-$ the action of $T$ is the usal
action by translations mentionned earlier. However, the shear action
$\tau\mapsto T\cdot\tau$
by a vector $T=(T_+,0)$  pulls apart positive and negative
vertices of $\tau$.
But the shear action preserves isotropic meshes:
\begin{prop}
  The space of isotropic meshes $\cL_N\subset \cM_N$ is invariant under the shear action.
\end{prop}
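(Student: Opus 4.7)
The plan is to show that the shear action leaves both diagonals $D^u_\tau(\face)$ and $D^v_\tau(\face)$ unchanged on every face $\face$, so by the formula \eqref{eq:sympareaquad} in Lemma~\ref{lemma:quadratic} the symplectic area map $\mu_N$ is itself shear-invariant, from which the claim follows.

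The key observation is a compatibility between the two checkers splittings. On the one hand, faces of $\cQ_N(\Sigma)$ split as $\cG_N^+(\Sigma)\sqcup\cG_N^-(\Sigma)$. On the other hand, as noted in the definition of the shear action, vertices split as $\fC_0(\Sigma)=\fC_0^+(\Sigma)\sqcup\fC_0^-(\Sigma)$, both parities being preserved by $\Gamma_N\subset\Lambda_N^{ch}$. Lift the situation to $\RR^2$: writing $\vert_{kl}=\tfrac kN e_1+\tfrac lN e_2$, the $\Lambda_N^{ch}$-orbit of a vertex $\vert_{kl}$ is determined by the parity of $k+l$, since $\Lambda_N^{ch}$ is spanned by $\tfrac{e_1+e_2}N$ and $\tfrac{e_2-e_1}N$. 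Looking at a face $\face_{kl}$, whose four vertices are $\vert_{kl},\vert_{k+1,l},\vert_{k+1,l+1},\vert_{k,l+1}$, the two opposite vertices $\vert_{kl}$ and $\vert_{k+1,l+1}$ share the parity $k+l\bmod 2$, while the other two opposite vertices $\vert_{k+1,l}$ and $\vert_{k,l+1}$ share the opposite parity $k+l+1\bmod 2$. In other words, each diagonal of a face connects two vertices of the same checker type.

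From this I conclude as follows. For $T=(T_+,T_-)\in\RR^{2n}\times\RR^{2n}$, by definition of the shear action, $(T\cdot\tau)(\vert)=\tau(\vert)+T_{\pm}$ whenever $\vert\in\fC_0^{\pm}(\Sigma)$. Since the two endpoints of the diagonal $D^u_\tau(\face_{kl})$ lie in the same set $\fC_0^{\pm}(\Sigma)$, the same translation vector is added to both, and therefore
\[
D^u_{T\cdot\tau}(\face_{kl})=D^u_\tau(\face_{kl}).
\]
Exactly the same argument applies to $D^v_{T\cdot\tau}(\face_{kl})=D^v_\tau(\face_{kl})$. Invoking \eqref{eq:sympareaquad},
\[
\langle\mu_N(T\cdot\tau),\face_{kl}\rangle=\tfrac12\omega\bigl(D^u_{T\cdot\tau}(\face_{kl}),D^v_{T\cdot\tau}(\face_{kl})\bigr)=\langle\mu_N(\tau),\face_{kl}\rangle,
\]
which holds for every face. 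Hence $\mu_N(T\cdot\tau)=\mu_N(\tau)$, so $T\cdot\tau\in\scrL_N$ whenever $\tau\in\scrL_N$.

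There is no serious obstacle here; the only point that requires a little care is checking that the checkers splitting on vertices used to define the shear action is genuinely compatible with the splitting on faces used to describe the diagonals. Once the parity computation $k+l\equiv k+l+2\pmod 2$ has been carried out, the conclusion is immediate from Lemma~\ref{lemma:quadiso}, and no further estimate or analysis is needed.
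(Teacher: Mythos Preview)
Your proof is correct and follows the same approach as the paper: you show that the diagonals are invariant under the shear action and then invoke the expression of the symplectic area in terms of diagonals (Lemma~\ref{lemma:quadiso}/Lemma~\ref{lemma:quadratic}). The paper simply asserts the diagonal invariance in one sentence, while you supply the parity check that justifies it; otherwise the arguments are identical.
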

\begin{proof}
The diagonals of the quadrilaterals associated to some mesh $\tau$ are
invariant under the shear action. In particular, any isotropic mesh
remains isotropic under the shear action by Lemma~\ref{lemma:quadiso}.
\end{proof}

\begin{rmk}
The shear symmetry for $\scrL_N$ seems to rule out all prospects of
the solutions of the equation $\mu^r_N=0$ getting more regular as
$N\to\infty$. Intuitively, if $\tau$ is isotropic and close to a
smooth immersed surface (in some sense), the isotropic mesh
$(T_+,0)\cdot\tau$ now  looks wild
(cf. Figure~\ref{figure:shear}), even more so as the step size of the
quadrangulation goes to $0$.
This partly explains why  we cannot expect to use much stronger norms than the
weak Hölder norms introduced at \S\ref{sec:dhn}, and why
Theorem~\ref{theo:maindiscr} and Theorem~\ref{theo:mainquad} are only
stated with $\cC^0$-norms.
\end{rmk}

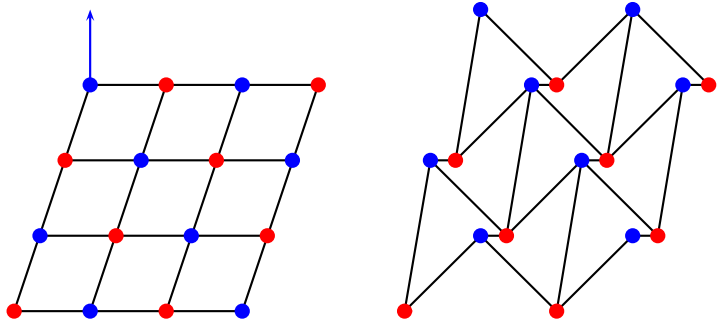
\begin{figure}[H]
\begin{pspicture}[showgrid=false](-2,-2)(3,2)
\psline(-2,-2) (-1,1)
\psline(-1,-2) (0,1)
\psline(0,-2) (1,1)
\psline(1,-2) (2,1)

\psline(-2,-2)(1,-2)
\psline(-1.66,-1)(1.33,-1)
\psline(-1.33,0)(1.66,0)
\psline(-1,1)(2,1)

    \psset{linecolor=red, fillstyle=solid , fillcolor=red, linestyle=solid}
     \pscircle (-2,-2){.1}
     \pscircle (0,-2){.1}
     \pscircle (-0.66,-1){.1}
     \pscircle (1.33,-1){.1}
     \pscircle (0.66,0){.1}
     \pscircle (-1.33,0){.1}
     \pscircle (0,1){.1}
     \pscircle (2,1){.1}

         \psset{linecolor=blue, fillstyle=solid , fillcolor=blue, linestyle=solid}
         \pscircle (-1,-2){.1}
     \pscircle (1,-2){.1}
     \pscircle (-1.66,-1){.1}
     \pscircle (0.33,-1){.1}
         \pscircle (-0.33,0){.1}
     \pscircle (1.66,0){.1}
         \pscircle (-1,1){.1}
     \pscircle (1,1){.1}
\psline{->}(-1,1)(-1,2)
     
\end{pspicture}
\begin{pspicture}[showgrid=false](-2,-2)(3,2)
\psline(-2,-2)(-1.66,0)(-1.33,0) (-1,2)
\psline(-1,-1) (-0.66,-1)(-0.33,1)(0,1)
\psline(0,-2) (0.33,0)(0.66,0) (1,2)
\psline(1,-1)(1.33,-1)(1.66,1) (2,1)

\psline(-2,-2)(-1,-1)(0,-2)(1,-1)
\psline(-1.66,0)(-.66,-1)(.33,0)(1.33,-1)
\psline(-1.33,0)(-0.33,1)(0.66,0)(1.66,1)
\psline(-1,2)(-0,1)(1,2)(2,1)

    \psset{linecolor=red, fillstyle=solid , fillcolor=red, linestyle=solid}
     \pscircle (-2,-2){.1}
     \pscircle (0,-2){.1}
     \pscircle (-0.66,-1){.1}
     \pscircle (1.33,-1){.1}
     \pscircle (0.66,0){.1}
     \pscircle (-1.33,0){.1}
     \pscircle (0,1){.1}
     \pscircle (2,1){.1}

         \psset{linecolor=blue, fillstyle=solid , fillcolor=blue, linestyle=solid}
         \pscircle (-1,-1){.1}
     \pscircle (1,-1){.1}
     \pscircle (-1.66,0){.1}
     \pscircle (0.33,0){.1}
         \pscircle (-0.33,1){.1}
     \pscircle (1.66,1){.1}
         \pscircle (-1,2){.1}
     \pscircle (1,2){.1}


\end{pspicture}
\caption{Shear action on the blue vertices of a mesh}
\label{figure:shear}
\end{figure}

\begin{rmk}
  We will make seldom mention of the shear action. But this action
  will be crucial  at
  \S\ref{sec:quadtri} to get more  generic isotropic
  quadrangular meshes. 
\end{rmk}

\subsection{Meshes obtained by sampling}
\label{sec:quadsamp}
Given a smooth immersion $\ell:\Sigma\to \RR^{2n}$, we construct a
canonical sequence of approximations of $\ell$ by quadrangular meshes
$$\tau_N\in
\scrM_N.
$$
The  map $\ell:\Sigma \to \RR^{2n}$   can be restricted to the
vertices of $\cQ_N(\Sigma)$.  Hence we may define  an element
$\tau_N\in \cM_N$, called \emph{a sample 
of $\ell$}, by
$$
\tau_N(\vert)=\ell(\vert)
$$
for each $\vert\in \fC_0(\cQ_N(\Sigma))$.
 
We would like to discuss more precisely the nature of the convergence
of $\tau_N$ towards $\ell$ in the spirit of \S\ref{sec:anal}. This is
possible at the cost of extending all the analysis introduced at
\S\ref{sec:anal} for discrete functions on faces of $\cQ_N(\Sigma)$ to
the case of functions defined at vertices. Instead of carrying this
uncomplicated but lengthy work, we will adopt a more straightforward
approach here.

For the special case $\tau=\tau_N$, where the meshes $\tau_N$ are the samples of
an immersion $\ell:\Sigma\to\RR^{2n}$, the diagonals
$D_\tau^u$, $D_\tau^v$, $\scrU_\tau$ and $\scrV_\tau$ are denoted $D_N^u,
D_N^v, \scrU_N$ and $\scrV_N$ instead.
Then we have the following result
\begin{prop}
\label{prop:convell}
  The sequence of discrete vector fields $\scrU_N^\pm$ and $\scrV_N^\pm
\in C^0(\cG_N^\pm(\Sigma))\otimes\RR^{2n}$
converge in the $\cC^k$-sense, for every $k$. Furthermore
$$
\scrU_N^\pm\stackrel {\cC^k}\longrightarrow \frac{\del\ell}{\del u}
\quad \mbox{ and } \quad
\scrV_N^\pm\stackrel {\cC^k}\longrightarrow \frac{\del\ell}{\del v}.
$$
More precisely, if we denote by $\scrU_N'$ (resp. $\scrV_N'$) the
samples of $\frac{\del\ell}{\del u}$ (resp. $\frac{\del\ell}{\del u}$)
then
$$
\|\scrU_N-\scrU_N'\|_{C^{k}_w}=\cO(N^{-1}) \mbox{ and } \|\scrV_N-\scrV_N'\|_{C^{k}_w}=\cO(N^{-1}) .
$$
\end{prop}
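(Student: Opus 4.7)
The strategy I would adopt is to reduce both assertions to a higher-order Taylor expansion of the smooth map $\ell$, exploiting the fact that $\scrU_N$ and $\scrV_N$ are rescaled first-order finite differences of $\ell$ with step size $\cO(N^{-1})$.

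First, I would lift $\tau_N$ via $p_N = p\circ U_N$, writing $\tilde\tau_N(\vert_{kl}) = \hat\ell(U_N\vert_{kl})$ with $\hat\ell = \ell\circ p$. By definition
\begin{equation*}
\scrU_N(\face_{kl}) = \tfrac{N}{\sqrt{2}}\bigl(\hat\ell(U_N\vert_{k+1,l+1}) - \hat\ell(U_N\vert_{kl})\bigr)
\end{equation*}
is a difference quotient of $\hat\ell$ along the vector $h_N := U_N((e_1+e_2)/N)$. Since $\|h_N - \sqrt{2}\,e'_1/N\| = \cO(N^{-2})$ by~\eqref{eq:unasympt} and $\hat\ell$ is smooth, Taylor's theorem yields $\scrU_N(\face_{kl}) = \frac{\del\hat\ell}{\del u}(U_N\vert_{kl}) + \cO(N^{-1})$. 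On the other hand, the lifted sample $\scrU_N'$ equals $\frac{\del\hat\ell}{\del u}(U_N\zert_{kl})$, and since $\zert_{kl}-\vert_{kl}$ is a fixed vector of length $\cO(N^{-1})$ (the offset from corner to barycenter), smoothness of $\frac{\del\hat\ell}{\del u}$ gives $(\scrU_N - \scrU_N')(\face_{kl}) = \cO(N^{-1})$ uniformly in $k,l$ and in each summand of the checkers splitting. This is the $\cC^0$-part of the estimate.

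For the $\cC^k_w$-bound I would iterate. A mixed finite difference of order $k$ applied to $\scrU_N$ rearranges as $N^{k+1}$ times a $(k+1)$-st order finite-difference combination of values $\hat\ell(U_N\vert_{k'l'})$ along the two translates $U_N((e_1+e_2)/N)$ and $U_N((e_2-e_1)/N)$. The standard $(k+1)$-st order Taylor formula recovers the corresponding mixed derivative $\frac{\del^{k+1}\hat\ell}{\del u^{a+1}\del v^{b}}$ (with $a+b=k$) plus $\cO(N^{-1})$; the three sources of error --- the Taylor remainder, the defect $U_N-\id=\cO(N^{-1})$, and the shift from corner to barycenter position --- all stay of that size because $\ell$ is smooth and all relevant derivatives are uniformly bounded on the compact surface $\Sigma$. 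The same computation applied to $\scrU_N'$ (the sample of the corresponding smooth function) gives the identical leading term with an $\cO(N^{-1})$ error, so subtracting gives $\|\scrU_N - \scrU_N'\|_{\cC^k_w} = \cO(N^{-1})$.

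Finally, I would observe that $\scrU_N'$ converges in the $\cC^k$-sense to $\frac{\del\ell}{\del u}$ by the earlier proposition asserting that samples of a $\cC^k$-function converge in $\cC^k$, and combine this with the $\cC^k_w$-bound through Proposition~\ref{prop:c0suf} applied to every finite difference of order $\leq k$. This yields $\scrU_N \stackrel{\cC^k}{\longrightarrow} \frac{\del\ell}{\del u}$, and the argument for $\scrV_N$ is identical with $(e_2-e_1)/N$ replacing $(e_1+e_2)/N$ and direction $v$ in place of $u$. The main obstacle is purely bookkeeping: cleanly combining the three sources of $\cO(N^{-1})$ error uniformly for every $k$ and every finite-difference pattern. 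Nothing is conceptually difficult given the smoothness of $\ell$ and the uniform asymptotic $U_N \to \id$.
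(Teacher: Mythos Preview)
The paper states Proposition~\ref{prop:convell} without proof; it passes directly to the next subsection. Your Taylor-expansion argument is correct and is precisely the routine computation the authors evidently had in mind and chose to omit. The three error sources you identify---the Taylor remainder of order $N^{-1}$, the defect $U_N-\id=\cO(N^{-1})$ from~\eqref{eq:unasympt}, and the $\cO(N^{-1})$ offset between the corner $\vert_{kl}$ and the barycenter $\zert_{kl}$---are exactly the ones that enter, and each contributes $\cO(N^{-1})$ uniformly because $\ell$ is smooth on a compact surface. The iteration to finite differences of order $k$ is straightforward for the same reason, and the passage from the $\cC^k_w$-estimate to $\cC^k$-convergence via Proposition~\ref{prop:c0suf} and the convergence of samples is the correct final step.
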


\subsection{Almost isotropic samples}
The defect of the samples $\tau_N$ to be isotropic is given by the
sequence of discrete functions 
\begin{equation}
  \label{eq:etaN}
\eta_N=\mu_N^r(\tau_N) \in C^2(\cQ_N(\Sigma)).  
\end{equation}
 The error
$\eta_N$ is small as $N$ goes to infinity in the sense of the
following proposition:
\begin{prop}
  Let $\ell:\Sigma\to\RR^{}$ be a smooth isotropic immersion and
 $\tau_N\in\scrM_N$ be the sequence of samples of $\ell$ with respect to the quadrangulations
  $\cQ_N(\Sigma)$.
Let $\eta_N = \mu^r(\tau_N) \in C^2(\cQ_N(\Sigma))$ be the isotropic defect
of $\tau_N$.
  Then for every integer $k\geq 0$, we have
  $$
\|\eta_N \|_{\cC_w^{k}}=\cO(N^{-1}).
$$
\end{prop}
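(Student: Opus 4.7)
The plan is to exploit the explicit quadratic formula
$$
\ip{\eta_N,\face}=\omega(\scrU_N(\face),\scrV_N(\face))
$$
provided by Lemma~\ref{lemma:quadratic} (equation~\eqref{eq:notdiagrenq}), together with the isotropy of $\ell$ and the sharp approximation result of Proposition~\ref{prop:convell}. The guiding principle is that $\eta_N$ is built from a pointwise bilinear pairing applied to the discrete diagonals, and in the continuous limit this pairing becomes $\omega(\partial_u\ell,\partial_v\ell)$, which vanishes identically because $\ell^*\omega=0$. It remains to quantify the error.

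I introduce the auxiliary discrete vector fields $\scrU_N',\scrV_N'\in C^2(\cQ_N(\Sigma))\otimes\RR^{2n}$, defined as the samples of the smooth vector fields $\partial\ell/\partial u$ and $\partial\ell/\partial v$ respectively. Proposition~\ref{prop:convell} gives
$$
\|\scrU_N-\scrU_N'\|_{\cC^{k}_w}=\cO(N^{-1}),\quad \|\scrV_N-\scrV_N'\|_{\cC^{k}_w}=\cO(N^{-1})
$$
for every $k\geq 0$. Since $\partial_u\ell$ and $\partial_v\ell$ are smooth, the samples $\scrU_N'$ and $\scrV_N'$ are also bounded in every $\cC^k_w$-norm uniformly in $N$, by Taylor's formula applied at each face barycenter. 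Using bilinearity of $\omega$, I then expand
$$
\eta_N=\omega(\scrU_N',\scrV_N')+\omega(\scrU_N-\scrU_N',\scrV_N')+\omega(\scrU_N',\scrV_N-\scrV_N')+\omega(\scrU_N-\scrU_N',\scrV_N-\scrV_N').
$$
The first term vanishes identically: at the barycenter of any face $\face$ one has $\omega(\scrU_N'(\face),\scrV_N'(\face))=\omega(\partial_u\ell,\partial_v\ell)=0$ by isotropy of $\ell$. This pointwise vanishing respects the splitting into positive and negative components of $\eta_N=\eta_N^++\eta_N^-$.

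For each of the three remaining terms I invoke a discrete Leibniz rule for the pointwise bilinear operation $\omega$: for discrete vector fields $X,Y$, the finite difference of $\omega(X,Y)$ in direction $\vec u$ or $\vec v$ decomposes as $\omega(\partial X,T Y)+\omega(T X,\partial Y)$, where $T$ is a lattice translation that preserves $\cC^k_w$-norms. Iterating this identity up to order $k$ yields a product-type estimate
$$
\|\omega(X,Y)\|_{\cC^k_w}\leq C_k\,\|X\|_{\cC^k_w}\,\|Y\|_{\cC^k_w}
$$
with a universal constant $C_k$. Applied to each of the three remaining terms, with one or both factors of order $\cO(N^{-1})$ and the others uniformly bounded, this gives $\|\eta_N\|_{\cC^k_w}=\cO(N^{-1})$. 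The only mildly technical point is the bookkeeping of the discrete Leibniz rule across the positive/negative splitting of the checkers graph; no genuine obstacle arises, and the entire argument essentially reduces the proposition to the isotropy equation $\omega(\partial_u\ell,\partial_v\ell)=0$ and the first-order Taylor approximation of $\ell$ at quadrangulation vertices.
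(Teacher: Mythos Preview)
Your proof is correct and follows essentially the same approach as the paper's own proof: both use the identity $\eta_N=\omega(\scrU_N,\scrV_N)$, the isotropy of $\ell$ to kill the leading term, and the $\cO(N^{-1})$ approximation of Proposition~\ref{prop:convell} combined with a discrete Leibniz rule for the bilinear form. Your version is in fact more explicit than the paper's, which compresses the entire argument into the single sentence ``the formula for discrete differences of a quadratic form and the $\cC^k$-convergence of Proposition~\ref{prop:convell} proves the proposition.''
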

\begin{proof}
  For each face $f\in \fC_2(\cQ_N(\Sigma))$, the quantity $\eta_N(f)$
  is given by
$$
\eta_N(f)=\frac {N^2}2 \omega(D^u_N(f),D^v_N(f))  = \omega(\scrU_N(f), \scrV_N(f)).
$$
The formula for discrete differences of a quadratic form and the
$\cC^k$-convergence of  Proposition~\ref{prop:convell} proves the proposition.
\end{proof}

\subsection{Inner products}
\label{sec:ip}
The tangent vectors to the space of meshes $\scrM_N$ and the space of
discrete functions come equiped with canonical inner products, which
are crucial for the analysis.
\subsubsection{The case of function}
The space $C^2(\cQ_N(\Sigma))$ of discrete functions comes equiped with an Euclidean inner
product which is a  discrete version of the $L^2$-inner product
for smooth functions.
The space $C_2(\cQ_N(\Sigma))$ admits a canonical basis,
given by the set of
faces $\face \in \fC_2(\cQ_N(\Sigma))$. Thus, we have a  corresponding
dual basis $\face^*$ of 
$C^2(\cQ_N(\Sigma))$ defined by
$$\ip{\face^*,\face'}=\left \{
\begin{array}{l}
  1 \mbox{ if } \face=\face'\\
0 \mbox{ otherwise}
\end{array}
\right .$$
where $\ip{\cdot,\cdot}$ is the duality bracket.

Recall that the area of a face  $\face$ of $\cQ_N(\Sigma)$, with respect to the
Riemannian metric $g_{\sigma}^N$,  is equal to $N^{-2}$.
The $1$-form $\face^*$ is understood as a constant function equal to
$1$ on the face $\face$ and $0$ on other faces. This intuition gives
an interpretation of the duality bracket
$$
\ip{\cdot,\cdot} : C^2(\cQ_N(\Sigma))\times C_2(\cQ_N(\Sigma)) \to \RR
$$
as the \emph{pointwise} evaluation of functions on face.
This leads to a discrete analogue
$$\ipp{\cdot,\cdot }  : C^2(\cQ_N(\Sigma))\times C^2(\cQ_N(\Sigma))\to \RR$$
of the $L^2$-inner
product defined by
$$\ipp{\face^*_{1},\face^*_{2}} =
\left \{
\begin{array}{ll}
  0 &\mbox{ if $\face_1\neq \face_2$} \\
\frac 1{N^2} & \mbox{ if $\face_1=\face_2$}
\end{array}
\right . .
$$
The corresponding Euclidean norm on $C^2(\cQ_N(\Sigma))$  is
simply denoted $\|\cdot\|$.
Notice that the splitting of $$C^2(\cQ_N(\Sigma))\simeq
C^0(\cG_N(\Sigma))= C^0(\cG_N^+(\Sigma)) \oplus C^0(\cG^-_N(\Sigma))$$
is  orthogonal for $\ipp{\cdot,\cdot }$.
By construction, we have the following result:
\begin{prop}
  Let $\psi_{N_k}^\pm \in C^0(\cG_{N_k}^\pm(\Sigma))$ be a converging sequence
  of discrete functions with $\lim\psi^\pm_{N_k}=\phi ^\pm$. Then
$$
\lim \|\psi_{N_k}^\pm\|^2 = \frac 12 \|\phi^\pm\|^2_{L^2}
$$
where $\|\phi^\pm\|_{L^2}$ is the $L^2$-norm of $\phi^\pm$ with respect to
the Riemannian \emph{flat} metric $g_\sigma$. In particular if both sequences
converge and $\phi^+=\phi^-=\phi$, then $\lim \|\psi_{N_k} \|^2 = \|\phi
\|^2_{L^2}$.
\end{prop}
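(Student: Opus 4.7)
The plan is to reduce, via Proposition~\ref{prop:c0nec}, to the corresponding statement for the samples of $\phi^+$, and then to interpret the discrete norm as a Riemann sum, using the $\ZZ_2$-symmetry exchanging the two components of the checkers graph to produce the factor $\tfrac12$.

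First, by Corollary~\ref{cor:cont} the limit $\phi^+$ is continuous on the compact surface $\Sigma$, hence uniformly continuous and bounded. Let $\phi_{N_k}^+\in C^0(\cG^+_{N_k}(\Sigma))$ denote the samples of $\phi^+$. Proposition~\ref{prop:c0nec} gives $\|\psi_{N_k}^+-\phi_{N_k}^+\|_{\cC^0}\to 0$, and the cardinality of $\fC_0(\cG^+_{N_k}(\Sigma))$ is $\cO(N_k^2)$. Using $|a^2-b^2|\le|a-b|(|a|+|b|)$ together with the uniform bound $\|\phi_{N_k}^+\|_{\cC^0}\le\|\phi^+\|_{\cC^0}$, one finds
\[
\big|\|\psi_{N_k}^+\|^2-\|\phi_{N_k}^+\|^2\big|\longrightarrow 0,
\]
so it suffices to prove $\|\phi_{N_k}^+\|^2\to \tfrac12\|\phi^+\|^2_{L^2}$ for the samples.

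Set
\[
S^\pm_k\ :=\ \sum_{\face\in\fC_2^\pm(\cQ_{N_k}(\Sigma))} \frac{1}{N_k^{2}}\,|\phi^+(c_\face)|^2,
\]
where $\fC_2^\pm(\cQ_{N_k}(\Sigma))$ denotes the faces of $\cQ_{N_k}(\Sigma)$ corresponding, through the identification \eqref{eq:splitC}, to vertices of $\cG^\pm_{N_k}(\Sigma)$, and $c_\face$ is the barycenter of $\face$; by definition of the inner product, $\|\phi_{N_k}^+\|^2=S^+_k$. The full sum $S^+_k+S^-_k$ is a standard Riemann sum for the continuous function $|\phi^+|^2$ with respect to the flat metric $g_\sigma^{N_k}$, and since $g_\sigma^{N_k}=g_\sigma+\cO(N_k^{-1})$ by \eqref{eq:unasympt}, it converges to $\int_\Sigma|\phi^+|^2\,\sigma=\|\phi^+\|^2_{L^2}$. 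The heart of the argument is the estimate $|S^+_k-S^-_k|\to 0$: since $\Lambda_{N_k}/\Lambda^{ch}_{N_k}\simeq\ZZ_2$, the translation by $e_1/N_k\in\Lambda_{N_k}\setminus\Lambda^{ch}_{N_k}$ commutes with $\Gamma_{N_k}$, descends to $\Sigma$, and induces a bijection $\face\mapsto\face'$ between $\fC_2^+(\cQ_{N_k}(\Sigma))$ and $\fC_2^-(\cQ_{N_k}(\Sigma))$ shifting barycenters by a vector of Euclidean length $1/N_k$; uniform continuity of $|\phi^+|^2$ then gives $\sup_\face\big||\phi^+(c_\face)|^2-|\phi^+(c_{\face'})|^2\big|\to 0$, and summing $\cO(N_k^2)$ such terms weighted by $N_k^{-2}$ yields $|S^+_k-S^-_k|\to 0$. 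Combining the two, $S^+_k\to\tfrac12\|\phi^+\|^2_{L^2}$, as required.

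The same argument handles the $-$ case verbatim. For the last statement, the splitting \eqref{eq:splitC} is orthogonal for $\ipp{\cdot,\cdot}$, so $\|\psi_{N_k}\|^2=\|\psi_{N_k}^+\|^2+\|\psi_{N_k}^-\|^2$, and when $\phi^+=\phi^-=\phi$ the two limits add up to $\|\phi\|^2_{L^2}$. The only mildly delicate point is the Riemann sum step for $S^+_k+S^-_k$, since it involves the varying flat metrics $g_\sigma^{N_k}$ rather than $g_\sigma$; but the $\cO(N_k^{-1})$-control of $g_\sigma^{N_k}-g_\sigma$ together with the uniform boundedness of $|\phi^+|^2$ makes this routine. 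No real obstacle is anticipated.
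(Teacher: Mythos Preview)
Your proof is correct and follows essentially the same route as the paper's: reduce to samples via Proposition~\ref{prop:c0nec}, then interpret $\|\phi_{N_k}^+\|^2$ as a Riemann sum over half the faces. The paper's proof is terser---it simply asserts that ``throwing away half of the faces'' yields the factor $\tfrac12$ and then invokes the triangle inequality---whereas you spell out the $\ZZ_2$-symmetry argument (the translation by $e_1/N_k$ swapping $\fC_2^+$ and $\fC_2^-$) and flag the harmless discrepancy between $g_\sigma^{N_k}$ and $g_\sigma$; but the underlying argument is the same.
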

\begin{proof}
  Let $\phi^\pm_N$ be the sequence of samples of $\phi^\pm$. Then
  $\|\phi_N^\pm \|^2$ is understood as a Riemann sum for the integral
  $\|\phi^\pm\|^2_{L^2}$. Compared to a usual Riemann sum, we are
  throwing away half of the faces of the subdivision, and we have
$$
\lim \|\phi_N^\pm\|^2 = \frac 12 \|\phi^\pm\|^2_{L^2}.
$$
Using the $\cC^0$-convergence of $\psi_N^\pm$ and Proposition~\ref{prop:c0nec}, we
deduce that 
$$\lim \|\phi_N^\pm - \psi_N^\pm\|^2 = 0,$$
and the proposition follows by the triangle inequality.
\end{proof}

\subsubsection{The case of vector fields}
The space $T_\tau\cM_N$ consists of tangent vectors $V\in
C^0(\cQ_N(\Sigma))\otimes \RR^{2n}$. Here $V$ is understood as a
family of vectors,
given at each vertex $\vert$ of $\cQ_N(\Sigma)$ by
$V(\vert)=\ip{V,\vert}\in \RR^{2n}$. 
We deduce an Euclidean inner product on $C^0(\cQ_N)\otimes\RR^4$,
defined by
$$
\ipp{V,V'} =  \frac 1{N^2} \sum_{\vert \in \fC_0(\cQ_N(\Sigma))} g(V(\vert),V'(\vert)).
$$
The corresponding Euclidean norm is also denoted $\|\cdot\|$.

\subsection{Linearized equations}
Recall that the moduli space of quadrangular meshes $\cM_N$ is in fact
the vector  space
$C^0(\cQ_N(\Sigma))\otimes \RR^{2n}$.
So for $\tau\in \cM_N$, the tangent space at
$\tau$ is identified to
$$
T_\tau\cM_N = C^0(\cQ_N(\Sigma))\otimes \RR^{2n} = \scrM_N.
$$
Hence a tangent vector at $\tau$ is identified to a familly of vectors
of $\RR^{2n}$ defined
at each vertex of the quadrangulation.

The differential of $\mu_N^r:\cM_N\to C^2(\cQ_N(\Sigma))$ at $\tau$,
which is a linear map denoted
$$
D\mu^r_N|_\tau : T_\tau \cM_N \to C^2(\cQ_N(\Sigma)),
$$
is readily computed. Formally, we have
$$
D\mu^r_N|_\tau \cdot V = 2\Psi^r_N(\tau,V),
$$
where $\Psi^r_N$ is the symmetric bilinear map associated to the
quadratic map $\mu_N^r$.
For a more explicit formula, we merely need to compute the variation of the
symplectic area of a
quadrilateral in $\RR^{2n}$, which is being deformed by moving its
vertices. Let $V\in T_\tau\cM_N$ be a discrete vector field. We define a path of quadrangulations by
$$
\tau_t = \tau+ tV, \mbox{ for } t\in \RR.
$$
We would like to express the variation of $\mu_N^r$ along $\tau_t$. In order to state a result, we need some additional notations.

\subsubsection{Other diagonal notations}
We introduced the diagonals $D^u_\tau$ and $D^v_\tau$ at
\S\ref{sec:diag1}. We need now a slightly different indexing in order to
have a streamless expression of the differential of $\mu_N^r$.
We denote by $\face_{kl}$ for $k,l\in\ZZ$, the faces of  $\cQ_N(\RR^2)$. Their image under
the projection $p_N$ are still denoted $\face_{kl} \in
\fC_2(\cQ_N(\Sigma))$. Similarly, we denote by $\vert_{kl}$ the vertices
of $\cQ_N(\RR^2)$ and their image by $p_N$ as vertices of
$\cQ_N(\Sigma)$.
Let
$V\in T_\tau\cM_N = C^0(\cQ_N(\Sigma))\otimes\RR^{2n}$ be a vector  given
as family of vectors
$$V_{kl} = \ip{V,\vert_{kl}}\in
\RR^{2n}.$$
We define a deformation of the quadrangulation
$\tau$  by $\tau_t =\tau+tV$, or in coordinates
$$\ip{\tau_t,\vert_{kl}} = \ip{\tau,\vert_{kl}}+
t V_{kl}.$$

Let  $\tau\in\scrM_N$,  $\face\in \fC_2(\cQ_N(\RR^2))$ and
$\vert\in\fC_0(\cQ_N(\RR^2))$ be one of the vertices of $\face$. We enumerate
the vertices $(\vert_0,\vert_1,\vert_2,\vert_3)$ of $\face$  consistently with
the orientation and such that $\vert_0=\vert$.
The diagonals are defined by
$$D^\tau_{\vert,\face} = \tau(\vert_3) - \tau(\vert_1) \in \RR^{2n}
$$
and if $\vert$ is not a vertex of $\face$, we put
$D^\tau_{\vert,\face} =0$. Figure~\ref{figure:diagonal} shows a
diagrammatic representation of the above construction, with
orientation conventions.
\begin{figure}[H]
  \begin{pspicture}[showgrid=false](-2,-1)(2,1.3)
    \psset{linestyle=none, fillstyle=solid,
      fillcolor=lightgray}
    \psline(-1,1) (1,-1) (1,-1) (1,1) (1,1) (-1,1)

    \psset{linecolor=black,linestyle=solid, fillstyle=none, arrowsize=.3}
  \psline{->}(-1,1) (-1,-1)
  \psline{->}(-1,-1) (1,-1)
  \psline{->}(1,-1) (1,1)
  \psline{->}(1,1) (-1,1)
  \psset{fillstyle=solid,fillcolor=black}
  \pscircle(1,1){.1}
  
  \psset{linecolor=red}
  \psline{->}(-1,1)(1,-1)

  \rput(1.3,1.3){$\tau(\vert)$}
  \color{red}
  \rput(-.1,-.4){$D^\tau_{\vert,\face}$}

\end{pspicture}
\caption{A face $\face$ of a mesh $\tau$ with one diagonal and orientations}
\label{figure:diagonal}
\end{figure}
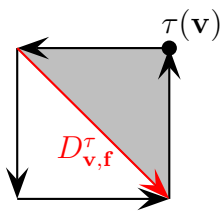

\begin{notation}
The vector $D^\tau_{\vert,\face}\in \RR^{2n}$ is called the diagonal opposite
to $\vert$ of the face $\face$ with respect to $\tau$.
\end{notation}
With these notations, we have the following expression for the
variation of the symplectic area:
\begin{lemma}
  \label{lemma:diffphi}
  $$
\frac d{dt}\ip {\mu_N(\tau_t),\face}|_{t=0} = - \frac 12\sum_{\vert\in \fC_0(\cQ_N(\Sigma))}\omega(V(\vert),D^\tau_{\vert,\face}) .
$$
\end{lemma}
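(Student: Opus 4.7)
The plan is to derive the identity by a direct computation based on equation~\eqref{eq:sympareaquad} from Lemma~\ref{lemma:quadratic}, and then repackage the four resulting terms as a sum indexed by the vertices of $\face$. Enumerate the vertices of $\face$ in positive cyclic order as $(\vert_0,\vert_1,\vert_2,\vert_3)$, and write $A_i=\tau(\vert_i)$, $V_i=V(\vert_i)$, so that $\tau_t(\vert_i)=A_i+tV_i$. According to \S\ref{sec:diag1}, the two diagonals of $\face$ with respect to $\tau_t$ are
$$
D^u_{\tau_t}(\face)=(A_2-A_0)+t(V_2-V_0),\qquad D^v_{\tau_t}(\face)=(A_3-A_1)+t(V_3-V_1).
$$
Substituting into \eqref{eq:sympareaquad} and differentiating at $t=0$, the Leibniz rule and the bilinearity of $\omega$ yield
$$
\frac{d}{dt}\ip{\mu_N(\tau_t),\face}\Big|_{t=0}=\tfrac12\,\omega(V_2-V_0,\,A_3-A_1)+\tfrac12\,\omega(A_2-A_0,\,V_3-V_1).
$$

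Next I would unfold the right-hand side of the claimed identity. Since $D^\tau_{\vert,\face}=0$ whenever $\vert$ is not a vertex of $\face$, only four terms of the sum survive. A short inspection of the cyclic reindexing used to define $D^\tau_{\vert,\face}$ shows
$$
D^\tau_{\vert_0,\face}=A_3-A_1,\qquad D^\tau_{\vert_1,\face}=A_0-A_2,\qquad D^\tau_{\vert_2,\face}=A_1-A_3,\qquad D^\tau_{\vert_3,\face}=A_2-A_0,
$$
so that $D^\tau_{\vert_i,\face}$ is, up to sign, the diagonal joining the two neighbors of $\vert_i$. Substituting into $-\tfrac12\sum_\vert\omega(V(\vert),D^\tau_{\vert,\face})$ and using the antisymmetry of $\omega$ to group the $V_0,V_2$ contributions against $A_3-A_1$ and the $V_1,V_3$ contributions against $A_2-A_0$, the expression collapses exactly to the two-term formula obtained above.

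The main obstacle is entirely bookkeeping: one has to be consistent about the cyclic reindexing appearing in the definition of $D^\tau_{\vert,\face}$, because it is precisely this reindexing that produces the alternating signs ensuring the two expressions agree. No analytic input, convergence argument, or use of the isotropy condition is needed here.
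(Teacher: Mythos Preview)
Your proof is correct and follows essentially the same approach as the paper's: both differentiate the symplectic-area formula $\tfrac12\omega(D_0,D_1)$ from Lemma~\ref{lemma:quadiso} (equivalently~\eqref{eq:sympareaquad}) and then match the resulting four terms against the diagonal convention $D^\tau_{\vert,\face}$. Your version is slightly more explicit in tabulating the four diagonals $D^\tau_{\vert_i,\face}$, whereas the paper simply invokes ``our conventions for the diagonals'' after writing down the expression $\tfrac12(-\omega(V_0,D_1)+\omega(V_1,D_0)+\omega(V_2,D_1)-\omega(V_3,D_0))$.
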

\begin{proof}
  We use the ordered vertices $(A_0,A_1,A_2,A_3)$ of an oriented quadrilateral in
  $\RR^{2n}$ and   consider
  a variation $(A^t_0,A^t_1,A^t_2,A^t_3)= (A_0,A_1,A_2,A_3)+ t
  (V_0,V_1,V_2,V_3)$. We denote by $D_0^t$ and $D_1^t$ the diagonals
  of the deformed quadrilateral.
  By Lemma~\ref{lemma:quadiso}, its symplectic area is
  $$
\frac 12\omega(D_0^t,D_1^t).
  $$
Hence, the variation of
  symplectic area at $t=0$ is given by
  $$
\frac 12 \left ( - \omega (V_0,D_1) + \omega (V_1,D_0) + \omega
(V_2,D_1) - \omega (V_3,D_0) \right ).
$$
Using our conventions for the diagonals of quadrilaterals, this proves
the lemma.
\end{proof}
\subsubsection{Computation of the discrete Laplacian}
Any discrete vector field $V\in T_{\tau}\scrM_N$ is given by a
family of vectors
$$
V_v = \ip{V,v} \in\RR^{2n}.
$$
The almost complex structure $J$ of $\RR^{2n}\simeq\CC^n$ induces a canonical action on
$T_\tau\cM_N=C^2(\cQ_N(\Sigma))\otimes\RR^{2n}$ that can be expressed as
$$(JV)_v:=J(V_v).$$
Recall that the Euclidean metric $g$ and the symplectic form $\omega$
of $\RR^{2n}$ are related by the formula
$$
\forall u_1,u_2\in\RR^{2n}, \quad \omega(u_1,u_2) = g(Ju_1,u_2).
$$
According to the Lemma~\ref{lemma:diffphi}, the differential of
$\mu_N$ at $\tau_N$ satisfies
$$
\ip {D\mu_N|_{\tau}\cdot V,\face} =-\frac 12 \sum_{\vert} \omega (V(\vert),D^\tau_{\vert,\face}) .
$$
hence
$$
\ip {D\mu_N|_{\tau}\cdot JV,\face} =\frac 12 \sum_{\vert} g(V(\vert),D^\tau_{\vert,\face}) .
$$
In turn, we have
\begin{equation}
  \label{eq:DJ}
\ip {D\mu^r_{N}|_{\tau}\cdot JV,\face} = \frac {N^2}{2}\sum_{\vert}
g(V(\vert),D^\tau_{\vert,\face}).
\end{equation}
We introduce the operator (notice the analogy with Formula~\ref{eq:deltaf})
  \begin{equation}
  \label{eq:deltatau}
\boxed{\delta_\tau= -D\mu^r_{N}|_{\tau}\circ J,}
  \end{equation}
so that Formula~\eqref{eq:DJ} reads
\begin{equation}
  \label{eq:deltatauB}
  \ip {\delta_\tau V , \sum_{\face} \phi(\face)\face}
  = \frac {N^2}2\sum_{\vert,\face}
  \phi(\face) g(V(\vert),D^\tau_{\vert,\face})  .
\end{equation}
or, equivalently
\begin{equation}
  \label{eq:deltatauB2}
  \delta_\tau V = 
   \frac {N^2}2\sum_{\vert,\face}
   g(V(\vert),D^\tau_{\vert,\face}) \face^* .
\end{equation}
With the above conventions
\begin{equation}
  \label{eq:deltatauC}
  \ip {\delta_\tau V , \sum_{\face} \phi(\face)\face} =
  N^2 \ipp {\delta_\tau V , \sum_{\face} \phi(\face) \face^*}  
\end{equation}
and it follows from Formulae~\eqref{eq:deltatauB} and \eqref{eq:deltatauC} that
$$
\ipp {\delta_\tau V , \sum_{\face} \phi(\face) \face^* }= \frac 12\sum_{\vert,\face}
\phi(\face) g(V(\vert),D_{\vert,\face}^\tau)  .
$$
We deduce that the ajoint $\delta_\tau^\star$ of $\delta_\tau$ for the inner product
$\ipp{\cdot,\cdot}$ satisfies
\begin{align*}
\ipp {V ,\delta_\tau^\star \sum_{\face} \phi(\face) \face^*} &= \frac 12 \sum_{\vert}
\frac 1{N^2}
g\left (V(\vert), \sum_{\face} N^2\phi(\face)
D^\tau_{\vert,\face}\right ) \\
&= \ipp{V ,\frac{N^2}2\sum_{\vert,\face}\phi(\face) D^\tau_{\vert,\face} \vert^*}.  
\end{align*}
which proves the following lemma
\begin{lemma}
  \label{lemma:operators}
The operator
   $$\delta_\tau :  T_{\tau_N}\cM_N \to  C^2(\cQ_N(\Sigma))$$
 is given by
   \begin{equation}
     \label{eq:dN}
     \boxed{
       \delta_\tau V =  \frac {N^2}2 \sum_{\vert,\face} g(V(\vert),
       D^\tau_{\vert,\face}) \face^*,
       }
   \end{equation}
whereas its adjoint 
   $$\delta_\tau^\star :  C^2(\cQ_N(\Sigma))\to T_{\tau_N}\cM_N$$
 is given by
   \begin{equation}
     \label{eq:dNstar}
     \boxed{
       \delta^\star_\tau \phi =  \frac {N^2}2 \sum_{\vert,\face} \phi(\face)
       D^\tau_{\vert,\face} \otimes \vert^*.
       }
   \end{equation}
 \end{lemma}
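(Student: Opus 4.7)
The statement is essentially a clean compilation of the calculations displayed immediately before the lemma, so the proof reduces to a careful bookkeeping exercise built on Lemma~\ref{lemma:diffphi}. The plan is to derive the formula for $\delta_\tau$ first, and then extract $\delta_\tau^\star$ by a direct duality argument using the inner products of \S\ref{sec:ip}.

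First, I would evaluate $D\mu_N|_\tau \cdot V$ on a single face $\face$. Applying Lemma~\ref{lemma:diffphi} to the linear deformation $\tau_t = \tau + tV$ gives
\[
\ip{D\mu_N|_\tau\cdot V,\face} = -\tfrac 12 \sum_{\vert} \omega(V(\vert),D^\tau_{\vert,\face}),
\]
where the sum is effectively supported on the four vertices of $\face$, since $D^\tau_{\vert,\face}=0$ otherwise. Then, to compute $\delta_\tau V = -D\mu^r_N|_\tau\cdot JV$, I substitute $JV$ for $V$ and use the compatibility identity $\omega(Ju_1,u_2) = g(J^2 u_1, u_2) = -g(u_1,u_2)$ coming from $\omega(\cdot,\cdot) = g(J\cdot,\cdot)$ and $J^2=-\mathrm{id}$. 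Multiplying by $N^2$ to pass from $\mu_N$ to $\mu^r_N$ and expanding over all faces yields Formula~\eqref{eq:dN} (up to the overall sign fixed by the convention in \eqref{eq:deltatau}).

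Next, I would obtain $\delta_\tau^\star$ from the pairing axiom $\ipp{\delta_\tau V,\phi} = \ipp{V,\delta_\tau^\star \phi}$. Writing $\phi = \sum_{\face} \phi(\face)\face^*$ and using the identity $\ip{\alpha,\sum_\face\phi(\face)\face} = N^2 \ipp{\alpha,\phi}$ that compares the duality bracket with the $L^2$-pairing on $C^2(\cQ_N(\Sigma))$, I convert Formula~\eqref{eq:dN} into
\[
\ipp{\delta_\tau V,\phi} = \tfrac 12 \sum_{\vert,\face} \phi(\face)\,g(V(\vert),D^\tau_{\vert,\face}).
\]
Then I swap the order of summation, reading it as a sum over vertices of an $\RR^{2n}$-pairing against $V(\vert)$, and use the analogous $N^{-2}$ normalization on $T_\tau\scrM_N$ to recognize the right-hand side as $\ipp{V,\frac{N^2}{2}\sum_{\vert,\face}\phi(\face)\,D^\tau_{\vert,\face}\otimes\vert^*}$, giving Formula~\eqref{eq:dNstar}.

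No genuine obstacle arises; the content is Lemma~\ref{lemma:diffphi} plus the linear algebra of the Kähler identity. The only point deserving care is tracking the two independent normalization conventions: the factor $N^2$ in $\mu^r_N = N^2\mu_N$, and the factor $N^{-2}$ relating the duality brackets $\ip{\cdot,\cdot}$ to the discrete $L^2$-inner products $\ipp{\cdot,\cdot}$ on both $C^2(\cQ_N(\Sigma))$ and $T_\tau\scrM_N$. Once these are consistently accounted for, both boxed formulas drop out immediately.
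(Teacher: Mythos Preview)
Your proposal is correct and follows essentially the same route as the paper: the lemma is stated as a summary of the computations displayed just before it, which proceed exactly as you describe—apply Lemma~\ref{lemma:diffphi}, substitute $JV$ and use $\omega(J\cdot,\cdot)=-g(\cdot,\cdot)$, rescale by $N^2$, then compute the adjoint by converting between $\ip{\cdot,\cdot}$ and $\ipp{\cdot,\cdot}$ via the $N^{-2}$ factors on both sides. Your identification of the two normalization conventions as the only delicate point matches the paper's emphasis precisely.
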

 \begin{rmk}
   The operator $\delta_\tau =-D\mu^r_N|_\tau\circ J$ is the finite dimensional version of $\delta_f=-D\mu|_f\circ\fJ$ considered in the smooth setting (cf. \S\ref{sec:laplrel}).
   In the smooth setting, the adjoint $\delta_f^\star$ allows to recover the Hamiltonian infinitesimal action of the gauge group $\cG=\Ham(\Sigma,\sigma)$ on $\scrM$ according to the identity \eqref{eq:hamdstar}. In the finite dimensional approximation, there is no clear group action on $\scrM_N$  for which $\mu_N^r$ would be the corresponding moment map. However, the vector fields $V_\phi(\tau)=\delta_\tau^\star \phi$ define infinitesimal isometric Hamiltonian action which should play the role of finite dimensional approximations of $\Ham(\Sigma,\sigma)$.
 \end{rmk}

 The kernel of $\delta^\star_\tau$ contains the constants discrete
 functions, but might contain other function as well.
 This is not the case generically, according to the proposition below
 \begin{prop}
   \label{prop:generickernel}
   Let $\tau\in\scrM_N$ be a generic quadrangular mesh in the
   following sense: for every vertex $\vert$ of the quadrangulation
   $\cQ_N(\Sigma)$, the four possibly non vanishing diagonals
   $D^\tau_{\vert,\face}$, where $\face$ is a face that contains the vertex $\vert$ span a
   $3$-dimensional subspace of $\RR^{2n}$.
   Then the kernel of $\delta_\tau^\star$ reduces to constant discrete functions.
 \end{prop}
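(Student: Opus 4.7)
The plan is to reduce the kernel equation to a local system of linear equations at each vertex of the quadrangulation, and then use the genericity hypothesis together with a propagation argument on the quadrangulation to show that only constants satisfy the system globally.

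First I would unpack Formula~\eqref{eq:dNstar}. A discrete function $\phi \in C^2(\cQ_N(\Sigma))$ belongs to $\ker \delta_\tau^\star$ if and only if, at every vertex $\vert \in \fC_0(\cQ_N(\Sigma))$, we have
\[
\sum_{\face \ni \vert} \phi(\face)\, D^\tau_{\vert,\face} \;=\; 0 \quad\text{in } \RR^{2n},
\]
where the sum runs over the four faces of $\cQ_N(\Sigma)$ incident to $\vert$ (since $D^\tau_{\vert,\face}=0$ if $\vert$ is not a vertex of $\face$). This is a local linear condition on the four values of $\phi$ on the faces surrounding~$\vert$.

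Second, I would check that constants lie in $\ker \delta_\tau^\star$. Labelling the four faces around $\vert=\vert_{kl}$ as $\face_{kl}$, $\face_{k-1,l}$, $\face_{k-1,l-1}$, $\face_{k,l-1}$, a direct telescoping calculation shows
\[
D^\tau_{\vert,\face_{kl}} + D^\tau_{\vert,\face_{k-1,l}} + D^\tau_{\vert,\face_{k-1,l-1}} + D^\tau_{\vert,\face_{k,l-1}} = 0,
\]
because each of the four involved neighboring vertices $\vert_{k\pm 1, l}$, $\vert_{k, l\pm 1}$ appears once with a plus sign and once with a minus sign in the definition of the opposing diagonals. Hence the vector $(1,1,1,1)$ is a linear relation among the four diagonals at every vertex.

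Third, I invoke the genericity hypothesis: at each vertex $\vert$, the four diagonals $D^\tau_{\vert,\face}$ span a $3$-dimensional subspace of~$\RR^{2n}$. Since four vectors in a $3$-dim space have a space of linear relations of dimension exactly $4-3=1$, this $1$-dim space is spanned by the telescoping relation $(1,1,1,1)$ found above. It follows that any $\phi\in\ker\delta_\tau^\star$ is constant on the four faces incident to $\vert$, for every vertex $\vert$.

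Finally, I would propagate this local constancy into global constancy, which is the only nontrivial step after the linear-algebra reduction. Any two faces of $\cQ_N(\Sigma)$ sharing an edge also share two vertices, so they appear jointly in the $4$-face neighborhood of either of these vertices and must carry the same value of $\phi$. Since $\Sigma$ is connected and the dual graph of the quadrangulation (where two faces are joined when they share an edge) is connected on the torus, $\phi$ is globally constant. Note that this step is subtler than it may look: the checkers graph $\cG_N(\Sigma)$ has \emph{two} connected components, so without this edge-sharing argument we would only obtain constancy on each component, yielding a $2$-dimensional kernel. It is precisely the fact that each $4$-face neighborhood around a vertex contains two faces from $\cG_N^+(\Sigma)$ and two from $\cG_N^-(\Sigma)$ that bridges the two components and forces the kernel to be $1$-dimensional.
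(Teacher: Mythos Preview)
Your proof is correct and follows exactly the same approach as the paper's own proof, which is stated very tersely: ``The equation $\delta^\star_\tau\phi=0$ provides a linear system of rank $3$ with four variables associated to each vertex. This implies that $\phi$ must be locally constant around each vertex and it follows that $\phi$ is constant.'' You have simply filled in the details the paper omits --- the explicit telescoping identity showing $(1,1,1,1)$ is always a relation, the dimension count forcing this to be the \emph{only} relation, and the connectedness argument bridging the two checkers components.
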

 \begin{proof}
The equation $\delta^\star_\tau\phi=0$ provides  a linear system of rank $3$
with four variables associated   to each vertex. This imply that
$\phi$ must be locally constant around each vertex and it follows that
$\phi$ is constant.
 \end{proof}

\begin{dfn}
Given a quadrangular mesh $\tau\in\scrM_N$, we define the discrete
Laplacian $\Delta_\tau : C^2(\cQ_N(\Sigma)) \to C^2(\cQ_N(\Sigma))$ associated to the mesh $\tau$ by
$$
\boxed{
  \Delta_\tau = \delta_\tau\delta_\tau ^\star.
  }
$$
Given a smooth isotropic immersion $\ell:\Sigma\to \RR^{2n}$ and its samples $\tau_N\in\scrM_N$,
  the associated  operators to $\delta_{\tau_N}$,
  $\delta_{\tau_N}^\star$ and $\Delta_{\tau_N}$ are denoted  $\delta_{N}$,
  $\delta_{N}^\star$ and $\Delta_{N}$, for simplicity.
\end{dfn}
\begin{rmk}
  Notice the analogy between the operator $\Delta_f$ defined by
  Formula~\eqref{eq:Deltaf} and $\Delta_\tau$. The operator
  $\Delta_\tau$ will play a central role in the perturbation theory of
  quadrangular meshes, as $\Delta_f$ did for smooth isotropic immersions. The reader should already be aware that $\Delta_\tau$ is \emph{not} the classical Laplacian associated to the mesh $\tau$, as will become clear from the sequel.
\end{rmk}

By
Formula~\eqref{eq:dNstar}
$$
\delta_\tau^\star \face^* =  \frac {N^2}2 \sum_{\vert} D_{\vert,\face} \vert^*,
$$
and by Formula~\eqref{eq:dN}
$$
\Delta_\tau \face^* = \frac {N^4}4 \sum_{\vert,\face_2}    g(D^\tau_{\vert,\face}, D^\tau_{\vert,\face_2}) \face_{2}^*.
   $$
We obtain the following result
\begin{prop}
  \label{prop:laplform}
For $\phi \in C^2(\cQ_N(\Sigma))$, we have
   $$
\boxed{
  \Delta_\tau\phi = \frac {N^4}4 \sum_{\vert,\face,\face_2}
  \phi(\face) g(D^\tau_{\vert,\face}, D^\tau_{\vert,\face_2})
  \face_{2}^*.
  }
   $$
\end{prop}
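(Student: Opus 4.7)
The plan is to obtain the formula directly by composing the two explicit expressions provided by Lemma~\ref{lemma:operators}, since by definition $\Delta_\tau = \delta_\tau \delta_\tau^\star$. There is no conceptual step to set up: the statement is the outcome of a bookkeeping exercise on indices. The key point is to be careful not to reuse the same dummy index when applying the two operators in succession.

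First, I would decompose an arbitrary discrete function as $\phi = \sum_{\face} \phi(\face)\face^*$ in the dual basis, and apply $\delta_\tau^\star$ term by term using Formula~\eqref{eq:dNstar}. This produces the discrete vector field
\begin{equation*}
V := \delta_\tau^\star \phi = \frac{N^2}{2}\sum_{\vert,\face} \phi(\face)\, D^\tau_{\vert,\face}\otimes \vert^*,
\end{equation*}
which at a vertex $\vert$ takes the value $V(\vert) = \frac{N^2}{2}\sum_{\face} \phi(\face) D^\tau_{\vert,\face}\in\RR^{2n}$. Only the (at most four) faces incident to $\vert$ contribute, since $D^\tau_{\vert,\face}=0$ otherwise, so the sum is finite and well-defined.

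Next, I would feed $V$ into Formula~\eqref{eq:dN} for $\delta_\tau$, introducing a fresh dummy face $\face_2$ to index the output so as not to collide with the $\face$ summation already present inside $V(\vert)$. Using bilinearity of $g$ in its first argument, this gives
\begin{equation*}
\delta_\tau V = \frac{N^2}{2}\sum_{\vert,\face_2} g\!\left(V(\vert),\, D^\tau_{\vert,\face_2}\right)\face_2^*
= \frac{N^4}{4}\sum_{\vert,\face,\face_2} \phi(\face)\, g\!\left(D^\tau_{\vert,\face},\, D^\tau_{\vert,\face_2}\right)\face_2^*,
\end{equation*}
which is exactly the stated formula. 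The only mild obstacle, and it is purely notational, is to keep the two face indices distinct so that the quadratic nature of $\Delta_\tau$ in the diagonals $D^\tau_{\vert,\cdot}$ is visible; everything else is linearity plus the definition of $\Delta_\tau$.
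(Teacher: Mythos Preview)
Your proposal is correct and follows essentially the same approach as the paper: compose the explicit formulae \eqref{eq:dNstar} and \eqref{eq:dN} for $\delta_\tau^\star$ and $\delta_\tau$, taking care to use distinct face indices. The paper does this on a single basis element $\face^*$ first and then extends by linearity, while you carry the general $\phi$ through directly, but the computation is identical.
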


\subsection{Coefficients of the discrete Laplacian}
The discrete Laplacian $\Delta_N$ is an endomorphism of $C^2(\cQ_N(\Sigma))$ whose
coefficients are explicitely given by Proposition~\ref{prop:laplform}.
When dealing with $\tau_N$, we use the notation
$D_{\vert,\face}:=D^{\tau_N}_{\vert,\face}$ for simplicity.
We introduce the coefficients
$$
\beta_{\face_1\face_2} = \frac {N^4}4 \sum_{\vert}   g(D_{\vert,\face_1}, D_{\vert,\face_2}).
$$
By Proposition~\ref{prop:laplform}
   $$
\Delta_N \face_1^* =  \sum_{\face_1 ,\face_2}    \beta_{\face_1\face_2}  \face_{2}^*.
   $$
\subsubsection{Splitting of the Laplacian}
\label{sec:splitlapl}
The matrix $(\beta_{\face\face'})$ is obviously symmetric in $\face$
and $\face'$, which is not
surprising since $\Delta_N$ is
selfadjoint by definition.
The matrix is sparse in the sense that most of the coefficients
$\beta_{\face\face'}$ vanish.
There are three types of possibly non vanishing coefficients:
\begin{enumerate}
\item $\face=\face'$.
\item $\face$ and  $ \face'$ have only one vertex in common.
  \item $\face$ and  $ \face'$ have exactly one edge (and two vertices) in common.
\end{enumerate}
Using the above observation, we may write the operator $\Delta_N$ as a
sum
$$
\Delta_N = \Delta_N^E + \Delta_N^I.
$$
Here 
$$
\Delta^E_N \face^* = \frac {N^4}4 \sum_{
     \vert,\face_2 \in E_{12}(\face) }
g(D_{\vert,\face}, D_{\vert,\face_2}) \face_{2}^*
$$
where $E_{12}(\face)$ is the set of faces $\face_2$ such that the pair $(\face,\face_2)$
is of type (1) or (2), and
$$
\Delta^I_N \face^* = \frac {N^4}4 \sum_{
    \vert,\face_2\in E_3(\face)}
g(D_{\vert,\face}, D_{\vert,\face_2}) \face_{2}^*
$$
where $E_{3}(\face)$ is the set of faces $\face_2$ such that the pair $(\face,\face_2)$
is of type (3).
By definition, we have the following lemma
\begin{lemma}
  The operator $\Delta_N^E$ preserves the components of the direct
  sum decomposition 
  $C^2_+(\cQ_N(\Sigma))\oplus C^2_-(\cQ_N(\Sigma))$, whereas
  $\Delta_N^I$ exchanges the components. Accordingly, there have
  a block decomposition of the discrete Laplacian
  $$
  \Delta_N = \left (
  \begin{array}{c|c}
      \Delta_N^E 
    & \Delta_N^I \\
      \hline
      \Delta_N^I& \Delta_N^E
  \end{array}
  \right ).
  $$
\end{lemma}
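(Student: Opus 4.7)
The proof is essentially a combinatorial check based on the coloring of faces of $\cQ_N(\Sigma)$ induced by the two connected components of the checkers graph $\cG_N(\Sigma) = \cG_N^+(\Sigma) \cup \cG_N^-(\Sigma)$. I would begin by recalling that, under the identification $C^2(\cQ_N(\Sigma)) \simeq C^0(\cG_N(\Sigma))$, the direct summand $C^2_\pm(\cQ_N(\Sigma))$ consists precisely of discrete functions supported on those faces $\face$ whose corresponding barycenter $\zert$ belongs to $\cG_N^\pm(\Sigma)$. Equivalently, the splitting is a two-coloring of the faces of $\cQ_N(\Sigma)$, where two faces have the same color if and only if they lie in the same component of the checkers graph.

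The central combinatorial observation is the following. By the very definition of $\cG_N(\Sigma)$, two distinct faces $\face$ and $\face_2$ of $\cQ_N(\Sigma)$ are joined by an edge of the checkers graph exactly when they share a single vertex. In particular, if $(\face,\face_2)$ is of type (1) (i.e. $\face=\face_2$) or of type (2) (sharing exactly one vertex), then $\face$ and $\face_2$ have the same color, i.e. $\face_2 \in E_{12}(\face)$ implies $\zert, \zert_2$ lie in the same component $\cG_N^\pm(\Sigma)$. Conversely, two faces sharing an edge (two consecutive vertices) are not joined in $\cG_N(\Sigma)$ but belong to opposite components, since a single step along either generator of $\Lambda_N$ moves from $\Lambda_N^{ch}$ into its non-trivial coset (this uses the splitting properties established in \S\ref{sec:check}). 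Hence if $(\face,\face_2)$ is of type (3), then $\face_2$ has the opposite color of $\face$.

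From this I would conclude directly. Given $\phi^+ \in C^2_+(\cQ_N(\Sigma))$ regarded as a linear combination $\phi^+ = \sum \phi(\face)\face^*$ where the sum runs over positive faces, the formula
$$
\Delta_N^E \face^* = \frac{N^4}{4}\sum_{\vert,\face_2\in E_{12}(\face)} g(D_{\vert,\face},D_{\vert,\face_2})\face_2^*
$$
expresses $\Delta_N^E\face^*$ as a linear combination of faces $\face_2$ lying in the same color class as $\face$. Thus $\Delta_N^E$ sends $C^2_\pm(\cQ_N(\Sigma))$ to itself. Similarly, the analogous formula for $\Delta_N^I$ ranges over $\face_2 \in E_3(\face)$, which by the previous paragraph are of opposite color to $\face$. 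Therefore $\Delta_N^I$ sends $C^2_\pm(\cQ_N(\Sigma))$ into $C^2_\mp(\cQ_N(\Sigma))$, exchanging the two summands.

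The block decomposition of $\Delta_N = \Delta_N^E + \Delta_N^I$ stated in the lemma follows immediately, where the diagonal blocks are the restrictions of $\Delta_N^E$ to each summand (identical in form on both factors by the symmetry $\pm \leftrightarrow \mp$ of the construction) and the off-diagonal blocks are the restrictions of $\Delta_N^I$. The main, and essentially only, subtlety is the verification that the coloring agrees with the splitting: type (2) pairs are same-color while type (3) pairs swap colors. This is not truly an obstacle, but it is the place where care is needed, and it relies on the careful setup of the checkers graph in \S\ref{sec:check} together with the fact that $\Gamma_N \subset \Lambda_N^{ch}$, which ensures that the coloring descends consistently to $\Sigma$.
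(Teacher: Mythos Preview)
Your proposal is correct and matches the paper's approach exactly: the paper simply states ``By definition, we have the following lemma'' and gives no further argument, so your write-up is a careful unpacking of precisely the combinatorial observation the authors regard as immediate from the construction of the checkers graph and the definitions of $E_{12}(\face)$ and $E_3(\face)$.
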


\subsubsection{Finite difference operators and discrete Laplacian}
The smooth Laplacian~\eqref{eq:Deltaf} is related to a twisted
Riemannian Laplacian by Lemma~\ref{lemma:varmu}.
The goal of this section is to find a similar expression for the discrete
Laplacian $\Delta_N \phi$, using finite difference operators.

The strategy is to compute $\ip{\Delta_N\phi,\face}$ at some face $\face$ of
$\cQ_N(\Sigma)$. For this purpose, we will use the notations $\face_{kl}$
and $\vert_{ij}$ for faces and vertices of $\cQ_N(\RR^2)$, considered
as vertices and faces of $\cQ_N(\Sigma)$
(cf. \S\ref{sec:quadnot} and \S\ref{sec:diag1}).
The values of a discrete function are denoted
$$
\phi_{kl}=\ip{\phi,\face_{kl}},
$$
and the diagonals $D_{ijkl}$ are obtained as
$D_{\vert_{ij}\face_{kl}}$, with the convention that $D_{ijkl}=0$ if
$\vert_{ij}$ is not a vertex of the face $\face_{kl}$ in
$\cQ_N(\RR^2)$.
The coefficients $\beta_{klmn}$ are denoted $\beta_{\face_{kl}\face_{mn}}$
and we choose the integers $k,l$ so that $\face=\face_{kl}$. The 
coefficients $\beta_{\face\face'}$  vanishes unless $\face=\face'$ or $\face$ and $\face'$ are
contiguous faces. In such case we may  choose a unique pair of
integers $(m,n)$ such that
$\face'=\face_{mn}$ with $m\in \{k-1,k,k+1\}$ and $n\in\{l-1,l,l+1\}$.
Under these conditions (cf. \S\ref{sec:splitlapl})
\begin{enumerate}
  \item $\face_{kl}$ and $\face_{mn}$  are of type (1) if $(k,l)=(m,n)$,
\item  $\face_{kl}$ and $\face_{mn}$  are of 
type (2) if $(m,n) = (k\pm 1, l\pm 1)$ or $(k\pm 1, l\mp 1)$,
\item $\face_{kl}$ and $\face_{mn}$ are of
 type (3) if $(m,n)= (k\pm 1,l)$ or $(k,l\pm 1)$.
\end{enumerate}

For the first type of coefficients, we find
$$
\beta_{klkl}=  \frac {N^4}4 \sum_{ij}  \|D_{ijkl}\|^2,
$$
where we may take the sum over all pairs of indices $i,j\in\ZZ$.
For  the second type of cooefficients, we have
$$
\beta_{klmn}=  \frac {N^4}4 g(D_{ijkl},D_{ijmn})
$$
where $\vert_{ij}$ is the common vertex of $\face_{kl}$ and
$\face_{mn}$ in $\cQ_N(\RR^2)$.
In the third case there are two common vertices $\vert_{ij}$ and $\vert_{i'j'}$
which belong to the same edge. Then
$$
\beta_{klmn} =  \frac {N^4}4 g(D_{ijkl},D_{ijmn}) + \frac {N^4}4 g(D_{i'j'kl},D_{i'j'mn}).
$$

For simplicity of notations, we also use the notations $D_{kl}^u$ and
$D_{kl}^v$ for the diagonal (cf. \S\ref{sec:diag1}), which differ only by a sign. We start our
computations with the operator $\Delta^E_N$:
\begin{align*}
  4N^{-4} & \ip{\Delta^E_N \phi,\face_{kl}}  = - \phi_{k-1,l-1}
  g(D_{k-1,l-1}^v, D_{kl}^v)  - \phi_{k+1,l+1}
  g(D_{k+1,l+1}^v, D_{kl}^v)\\
  &  - \phi_{k-1,l+1}
  g(D_{k-1,l+1}^u, D_{kl}^u)  - \phi_{k+1,l-1}
  g(D_{k+1,l- 1}^u, D_{kl}^u) \\
  &+ 2\phi_{kl}(g(D_{kl}^u, D_{kl}^u) + g(D_{kl}^v, D_{kl}^v))
\end{align*}

One can write
\begin{align*}
  \phi_{k+1,l+1}g(D^v_{k+1,l+1}, D^v_{kl})= & \phi_{k+1,l+1}g(D^v_{k,l},
  D^v_{kl}) \\
  + & \phi_{kl}  g(D^v_{k+1,l+1}- D^v_{kl}, D^v_{kl}) \\
  + & (\phi_{k+1,l+1}-\phi_{k,l})g(D^v_{k+1,l+1}-D^v_{kl}, D^v_{kl})  
\end{align*}
and similar Leibnitz type decomposition for the other terms.
Thus, we obtain accordingly
\begin{align*}
  4N^{-4} \ip{\Delta^E_{N} \phi,\face_{kl}} & = \\
  &(- \phi_{k-1,l-1} -
  \phi_{k+1,l+1} +2 \phi_{kl})  g(D_{kl}^v, D_{kl}^v) \\
  +&  (- \phi_{k-1,l+1} -
  \phi_{k+1,l-1} +2 \phi_{kl})  g(D_{kl}^u, D_{kl}^u)\\
  - &  \phi_{kl}    g(D_{k-1,l-1}^v +  D_{k+1,l+1}^v - 2D_{kl}^v,
  D_{kl}^v) \\
  -&\phi_{kl}  g(D_{k+1,l-1}^v + D_{k-1,l+1}^u - 2D_{kl}^u, D_{kl}^u)
  \\
  -& (\phi_{k-1,l-1} - \phi_{kl}) g(D^v_{k-1,l-1} -D^v_{kl},D^v_{kl})\\
    -& (\phi_{k+1,l+1} - \phi_{kl}) g(D^v_{k+1,l+1}
    -D^v_{kl},D^v_{kl})   \\
  -& (\phi_{k-1,l+1} - \phi_{kl}) g(D^u_{k-1,l+1} -D^u_{kl},D^u_{kl})\\
    -& (\phi_{k+1,l-1} - \phi_{kl}) g(D^u_{k+1,l-1} -D^u_{kl},D^u_{kl})
\end{align*}

We gather the RHS into a sum of three operators:
first we define $\hat \Delta^E_N$. This operator will turn out to be a
discrete version of the Riemannian Laplace-Beltrami operator on $\Sigma$.
\begin{align*}
  4N^{-4} \ip{\hat \Delta^E_{N} \phi,\face_{kl}} & = \\
  &(- \phi_{k-1,l-1} -
  \phi_{k+1,l+1} +2 \phi_{kl})  g(D_{kl}^v, D_{kl}^v) \\
  +&  (- \phi_{k-1,l+1} -
  \phi_{k+1,l-1} +2 \phi_{kl})  g(D_{kl}^u, D_{kl}^u)
  \end{align*}
then we define the operator $K^E_{N}$, which is some kind of discrete
curvature operator by
\begin{align*}
  4N^{-4} \ip{K^E_{N}\phi,\face_{kl}} & = \\
   - &   \phi_{kl}   g(D_{k-1,l-1}^v +  D_{k+1,l+1}^v - 2D_{kl}^v,
   D_{kl}^v)   \\
   -&\phi_{kl} g(D_{k+1,l-1}^v + D_{k-1,l+1}^u - 2D_{kl}^u, D_{kl}^u)
\end{align*}
The last four lines can be rearranged into an operator $\Gamma^E_N$
given by
\begin{align*}
  4N^{-4} \ip{\Gamma^E_N \phi,\face_{kl}} & =  \\
  -& \frac 12(\phi_{k-1,l-1} - \phi_{kl}) (|D^v_{k-1,l-1}|_g^2 -|D^v_{kl}|_g^2)\\
    -& \frac 12(\phi_{k+1,l+1} - \phi_{kl}) (|D^v_{k+1,l+1}|_g^2
    -|D^v_{kl}|_g^2)\\
  -& \frac 12(\phi_{k-1,l+1} - \phi_{kl}) (|D^u_{k-1,l+1}|_g^2 -|D^u_{kl}|_g^2)\\
    -& \frac 12(\phi_{k+1,l-1} - \phi_{kl}) (|D^u_{k+1,l-1}|_g^2
    -|D^u_{kl}|_g^2)
\end{align*}
plus an operator
\begin{align*}
 4N^{-4} \ip{\cE^E_{N} \phi,\face_{kl}} & =  \\
  -& \frac 12(\phi_{k-1,l-1} - \phi_{kl}) |D^v_{k-1,l-1}-D^v_{kl}|_g^2\\
    -& \frac 12(\phi_{k+1,l+1} - \phi_{kl}) |D^v_{k+1,l+1}   - D^v_{kl}|_g^2\\
  -& \frac 12(\phi_{k-1,l+1} - \phi_{kl}) |D^u_{k-1,l+1} - D^u_{kl}|_g^2\\
    -& \frac 12(\phi_{k+1,l-1} - \phi_{kl}) |D^u_{k+1,l-1}  -D^u_{kl}|_g^2
\end{align*}
So, we have a decomposition
$$
\Delta_N^E = \hat\Delta_N^E + K^E_N + \Gamma_N^E + \cE^E_N.
$$

Similar computations can be carried out for $\Delta^I_N$.
\begin{align*}
  4N^{-4} & \ip{\Delta^I_N \phi,\face_{kl}}  = \\
  & \phi_{k+1,l}
  (-g(D_{k+1,l}^v, D_{kl}^u)  - g(D_{k+1,l}^u, D_{kl}^v))\\
   & \phi_{k,l+1}
  (g(D_{k,l+1}^v, D_{kl}^u)  + g(D_{k,l+1}^u, D_{kl}^v))\\
   & \phi_{k-1,l}
  (-g(D_{k-1,l}^v, D_{kl}^u)  - g(D_{k-1,l}^u, D_{kl}^v))\\
   & \phi_{k,l-1}
  (+g(D_{k,l-1}^v, D_{kl}^u)  + g(D_{k,l-1}^u, D_{kl}^v))  
\end{align*}

We introduce the averaging operator $\phi\mapsto \bar \phi$ defined by
$$
\bar \phi_{kl}=\ip{\bar\phi, \face_{kl}}= \frac 14 \left (\phi_{k+1,l} +
\phi_{k-1,l} + \phi_{k,l+1} + \phi_{k,l-1}\right )
$$
  
  and we write each term above under the form
  \begin{align*}
    \phi_{k+1,l}
    g(D_{k+1,l}^v, D_{kl}^u) & =\\ & (\bar\phi_{kl} +
    (\phi_{k+1,l}-\bar \phi_{kl})) g\left (D_{k,l}^v +
    (D_{k+1,l}^v-D_{k,l}^v), D_{kl}^u \right )  
  \end{align*}
  Expanding these expressions leads to
  \begin{align*}
    4N^{-4} & \ip{\Delta^I_N \phi,f_{kl}}  = \\
& \bar\phi_{kl} g\left ( (D^v_{k,l+1}-D^v_{k+1,l}) -
    (D^v_{k-1,l}-D^v_{k,l-1}),D^u_{kl} \right )
    \\
+ & \bar\phi_{kl} g\left ( (D^u_{k,l+1}-D^u_{k+1,l}) -
(D^u_{k-1,l}-D^u_{k,l-1}),D^v_{kl} \right )
\\
+&2 ((\phi_{k,l+1} - \phi_{k-1,l}) - (\phi_{k+1,l}-\phi_{k,l-1}))
g(D^v_{kl},D^u_{kl})  \\
 +& (\phi_{k+1,l}-\bar \phi_{kl})  (-g(D_{k+1,l}^v - D_{kl}^v,
 D_{kl}^u)  - g(D_{k+1,l}^u - D_{kl}^u, D_{kl}^v))\\
 +& (\phi_{k,l+1}-\bar \phi_{kl})  (g(D_{k,l+1}^v - D_{kl}^v,
 D_{kl}^u)  + g(D_{k,l+1}^u - D_{kl}^u, D_{kl}^v))\\
 +& (\phi_{k-1,l}-\bar \phi_{kl})  (-g(D_{k-1,l}^v - D_{kl}^v,
 D_{kl}^u)  - g(D_{k-1,l}^u - D_{kl}^u, D_{kl}^v))\\
 +& (\phi_{k,l-1}-\bar \phi_{kl})  (g(D_{k,l-1}^v - D_{kl}^v,
 D_{kl}^u)  + g(D_{k,l-1}^u - D_{kl}^u, D_{kl}^v))\\
\end{align*}
  The first two lines can be expressed using Chasles relation as an
  operator

  \begin{align*}
   4N^{-4} \ip{K^I_{N}\phi,\face_{kl}} = &\\
    &\bar\phi_{kl}g(D^u_{k-1,l+1}+D^u_{k+1,l-1}-2D^u_{kl},D^u_{kl})\\
    +&\bar\phi_{kl}g(D^v_{k+1,l+1}+D^v_{k-1,l-1}-2D^v_{kl},D^v_{kl})
  \end{align*}
  In particular, we see that if $\phi_{kl}=\bar\phi_{kl}$, then
  $\ip{(K^I_{\tau_0}+K^E_{\tau_0})\phi,\face_{kl}} = 0$. We decompose
  $\Delta^I$ as a sum
  $$
\Delta^I_N= K^I_N +\cE^I_N.
  $$

All the above operators may be expressend in terms of finite differences. First
we define  analogue $\theta^u_N, \theta_N^v\in C^2(\cG_N(\Sigma))$ of the conformal
factor $\theta$ by
$$
\theta^u_N = \|\scrU_N\|^2_g, \mbox { and } \theta^v_N = \|\scrV_N\|^2_g.
$$
and a discrete analogue of the Gau{\ss} curvature plus an energy term $\kappa_N\in C^2(\cG_N(\Sigma))$ given by
$$
\kappa_N =- g\left (\frac{\del^2}{\del\cev u \del\vec u}\scrV_N
,\scrV_N\right )  - g\left (\frac{\del^2}{\del\cev v \del\vec
  v}\scrU_N ,\scrU_N\right )
$$
\begin{prop}
  The operators introduced above satisfy the following identites for
  every discrete function $\phi$:
  \label{prop:formopl}
  $$
\hat\Delta ^E_N\phi= - \left (\theta_N^u \frac{\del^2}{\del\cev u \del\vec u}
+ \theta_N^v \frac{\del^2}{\del\cev v \del\vec v}\right ) \phi
$$
$$
K^E_N\phi = \kappa_N\cdot\phi
$$
\begin{align*}
\Gamma_N^E\phi =& -\frac 12 \left (\frac {\del\phi}{\del\cev u}  \frac {\del\theta_N^v}{\del\cev
  u}+\frac {\del\phi}{\del\vec u}  \frac {\del\theta_N^v}{\del\vec
  u}+\frac {\del\phi}{\del\cev v}  \frac {\del\theta_N^u}{\del\cev
  v}+\frac {\del\phi}{\del\vec v}  \frac {\del\theta_N^u}{\del\vec
  v} \right )\\
=&- \grad\phi \cdot \grad \theta_N
\end{align*}
and
$$
K_N^I\phi = -\kappa_N\cdot \bar\phi.
$$
\end{prop}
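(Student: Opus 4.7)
The plan is to verify each of the four identities by a purely algebraic expansion, starting from the explicit formulas for $\hat\Delta_N^E$, $K_N^E$, $\Gamma_N^E$, and $K_N^I$ already obtained in the derivation preceding the proposition. No new geometric input is required; the whole proof reduces to recognising finite-difference patterns.

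For $\hat\Delta_N^E$, I would observe that the two combinations of $\phi$-values appearing in its definition,
$2\phi_{kl} - \phi_{k+1,l+1} - \phi_{k-1,l-1}$ and $2\phi_{kl} - \phi_{k+1,l-1} - \phi_{k-1,l+1}$,
are precisely the symmetric second-order finite differences $-(2/N^2)\frac{\del^2\phi}{\del\cev u\del\vec u}|_{\zert_{kl}}$ and $-(2/N^2)\frac{\del^2\phi}{\del\cev v\del\vec v}|_{\zert_{kl}}$. Using $|D^u_{kl}|^2 = (2/N^2)\theta_N^u$ and $|D^v_{kl}|^2 = (2/N^2)\theta_N^v$, the prefactor $N^4/4$ cancels and the claimed formula drops out.

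For $K_N^E$ the same principle applies to vector-valued second differences: one recognises that
$$D^v_{k+1,l+1} + D^v_{k-1,l-1} - 2D^v_{kl} = \frac{2}{N^2}\left.\frac{\del^2 D^v}{\del\cev u\del\vec u}\right|_{\zert_{kl}},$$
and similarly for $D^u$ in the $v$-direction. Rewriting everything via $D^v = (\sqrt 2/N)\scrV_N$ and $D^u=(\sqrt 2/N)\scrU_N$, the numerical factors collapse and one arrives at the defining expression for $\kappa_N$, so $K_N^E\phi = \kappa_N\phi$. The argument for $K_N^I$ is a verbatim transcription, the only differences being an overall sign (coming from the mixed signs in the cross-terms $g(D^u,D^v)$ of the original expansion) and the substitution of the averaged value $\bar\phi_{kl}$ in place of $\phi_{kl}$, yielding $K_N^I\phi = -\kappa_N\bar\phi$.

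For $\Gamma_N^E$, each of the four terms is already a product of two first-order finite differences; factoring $\sqrt 2/N$ out of each slot, as in
$$(\phi_{k+1,l+1} - \phi_{kl})(|D^v_{k+1,l+1}|^2 - |D^v_{kl}|^2) = \frac{4}{N^4}\left.\frac{\del\phi}{\del\vec u}\right|_{\zert_{kl}} \left.\frac{\del\theta_N^v}{\del\vec u}\right|_{\zert_{kl}},$$
and the three analogous factorisations, reproduces the first form of the proposition term by term; the compact form $-\grad\phi\cdot\grad\theta_N$ is then just the definition of the (symmetrised) discrete gradient pairing. The only real obstacle is bookkeeping: keeping the correct signs and index shifts across the nine faces $\face_{mn}$ with $|m-k|,|l-n|\leq 1$ which are involved in the computation, and making sure at each step that the diagonals $D^u$ versus $D^v$ are correctly associated with their respective coordinate directions. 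Once the four elementary second- or first-difference identities above are written down, the four claims follow by direct substitution.
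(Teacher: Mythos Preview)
Your approach is correct and matches the paper's treatment. The paper does not give a separate proof of this proposition: it is stated immediately after the lengthy face-by-face computation of $\hat\Delta_N^E$, $K_N^E$, $\Gamma_N^E$, and $K_N^I$, and is simply a restatement of those explicit formulas in finite-difference notation, exactly as you propose. Your identification of the second-order patterns $2\phi_{kl}-\phi_{k\pm1,l\pm1}-\phi_{k\mp1,l\mp1}$ with $-(2/N^2)\,\partial^2\phi/\partial\cev u\partial\vec u$ (and the $v$-analogue), together with $|D^u|^2=(2/N^2)\theta_N^u$, $|D^v|^2=(2/N^2)\theta_N^v$, is precisely the substitution needed, and your treatment of $K_N^E$, $K_N^I$, and $\Gamma_N^E$ is likewise on target.

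One small caution on the bookkeeping you flag yourself: in the explicit formula for $\hat\Delta_N^E$ the $u$-direction second difference of $\phi$ is paired with $g(D^v_{kl},D^v_{kl})=(2/N^2)\theta_N^v$, not $\theta_N^u$ (and conversely for the $v$-direction). This is consistent with the pairing you correctly identify in $\Gamma_N^E$, where $\partial\phi/\partial u$ multiplies $\partial\theta_N^v/\partial u$. So when you write ``the claimed formula drops out'', be aware that the superscripts on $\theta_N^u$ and $\theta_N^v$ in the displayed $\hat\Delta_N^E$ identity appear swapped relative to what the computation actually gives; this is harmless for all subsequent uses (both converge to $\theta$), but worth noting if you carry out the verification line by line.
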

The operators $\cE^E_N$ and $\cE^I_N$ become negligible as $N$ goes to
infinity, in the sense of the following proposition:
\begin{prop}
  \label{prop:errors}
There exists a sequence $\epsilon_N=\cO(N^{-1})$ with $\epsilon_N>0$
such that for all $N$ and all
functions $\phi\in C^2(\cQ_N(\Sigma))$, we have
$$
\|\cE^I_N\phi \|_{\cC^{0,\alpha}}\leq \epsilon_N \|\phi
\|_{\cC^{2,\alpha}}
\quad \mbox { and } \quad
\|\cE^E_N\phi \|_{\cC^{0,\alpha}}\leq \epsilon_N \|\phi \|_{\cC^{2,\alpha}}
$$
and
$$
\|\cE^I_N\phi \|_{\cC^{0}} \leq \epsilon_N \|\phi
\|_{\cC^{2}}
\quad \mbox { and } \quad
\|\cE^E_N\phi \|_{\cC^{0}} \leq \epsilon_N \|\phi \|_{\cC^{2}}
$$

\end{prop}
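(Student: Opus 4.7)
The plan is to estimate each explicit term in $\ip{\cE^E_N\phi,\face_{kl}}$ and $\ip{\cE^I_N\phi,\face_{kl}}$ pointwise, then take the supremum. Three inputs drive the estimates: (a) Proposition~\ref{prop:convell}, which yields $\|D^u\|_{\cC^k_w}, \|D^v\|_{\cC^k_w} = \cO(N^{-1})$ for every $k$; (b) the fact that $(u,v)$ are isothermal coordinates for the induced metric $\ell^* g$, so that $g(\partial_u\ell,\partial_v\ell)=0$ and hence $\|g(\scrU_N,\scrV_N)\|_{\cC^k_w}=\cO(N^{-1})$; (c) elementary finite-difference bounds on $\phi$ controlled by $\|\phi\|_{\cC^2}$.

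For $\cE^E_N$ the estimate is straightforward. Each of the four terms at $\face_{kl}$ is a product of a first-order diagonal finite difference of $\phi$ within a single color, of size $\cO(N^{-1})\|\phi\|_{\cC^1}$, with the square of a first-order diagonal finite difference of $D^v$ or $D^u$ within that color, of size $\cO(N^{-4})$ by (a). Multiplying by the prefactor $N^4/4$ and summing the four terms yields $|\ip{\cE^E_N\phi,\face_{kl}}| \leq cN^{-1}\|\phi\|_{\cC^2}$. The same scheme applied to one further layer of finite differences delivers the Hölder-seminorm bound.

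For $\cE^I_N$, I split the expression at $\face_{kl}$ into the ``mixed'' term $2\bigl((\phi_{k,l+1}-\phi_{k-1,l}) - (\phi_{k+1,l}-\phi_{k,l-1})\bigr)g(D^v_{kl},D^u_{kl})$ and the four ``neighbor'' terms $(\phi_i-\bar\phi_{kl})X_i$, with $X_i=g(\delta D^v_i,D^u_{kl})+g(\delta D^u_i,D^v_{kl})$ and $i$ ranging over the four edge-neighbors of $\face_{kl}$. For the mixed term, the $\phi$-factor is a second-order mixed finite difference across a $\sqrt{2}/N$-square in $(u,v)$, of size $\cO(N^{-2})\|\phi\|_{\cC^2}$, and $g(D^v_{kl},D^u_{kl})=(2/N^2)g(\scrU_N,\scrV_N)=\cO(N^{-3})$ by (b); after multiplication by $N^4/2$, this contributes $\cO(N^{-1})\|\phi\|_{\cC^2}$. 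For the four neighbor terms, the crucial observation is the discrete Leibniz identity
\[
 X_i = \bigl(g(D^v_{i},D^u_{i}) - g(D^v_{kl},D^u_{kl})\bigr) - g(\delta D^v_i,\delta D^u_i),
\]
which expresses $X_i$ as a first-order cross-color discrete difference of the function $g(D^v,D^u)=\cO(N^{-3})$, modulo an $\cO(N^{-4})$ remainder from the quadratic term. Assembling $\sum_i \varepsilon_i A_i X_i$ with $A_i=\phi_i-\bar\phi_{kl}$, using that $\sum_i\varepsilon_i A_i$ is itself a second-order finite difference of $\phi$ of size $\cO(N^{-2})\|\phi\|_{\cC^2}$, and performing a Leibniz expansion that reads the combination $g(\delta D^v,D^u)+g(\delta D^u,D^v)$ as a finite difference of $g(\scrU_N,\scrV_N)=\cO(N^{-1})$ by (b), one obtains a total bound $\cO(N^{-5})\|\phi\|_{\cC^2}$ on the four-term sum. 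Multiplied by $N^4/4$, this yields $\cO(N^{-1})\|\phi\|_{\cC^2}$, and the Hölder-seminorm version is obtained by applying the scheme to one further layer of finite differences.

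The main obstacle is precisely this four-neighbor analysis in $\cE^I_N$: the naive pointwise bound on each $X_i$ is only $\cO(N^{-3})$, giving an $\cO(1)\|\phi\|_{\cC^2}$ estimate, one power of $N$ short of what is required. The additional factor of $N^{-1}$ must come from the Leibniz-type cancellation $g(\delta D^v,D^u)+g(\delta D^u,D^v) \approx \partial g(D^v,D^u)$ combined with the fact that $g(D^v,D^u)$ is itself small by isothermality. Making this cancellation rigorous in the discrete setting—tracking the residual terms from cross-color finite differences and verifying that the $\phi$-weights $A_i$ do not reinstate the lost factor of $N$—is the heart of the argument.
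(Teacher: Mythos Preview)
The paper states Proposition~\ref{prop:errors} without proof, so there is no argument to compare against directly. Your approach is essentially correct and fills the gap in the right way. A few remarks on the write-up:

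For $\cE^E_N$ your bookkeeping is fine: each term factors as $\frac{\sqrt 2}N\frac{\del\phi}{\del u}$ (or $\del v$) times $\bigl(\frac{2}{N^2}\frac{\del\scrV_N}{\del u}\bigr)^2$ (or the analogous expressions), and the uniform $\cC^k_w$-boundedness of $\scrU_N,\scrV_N$ from Proposition~\ref{prop:convell} gives $\cO(N^{-5})\|\phi\|_{\cC^1_w}$ before the $N^4/4$ prefactor. The H\"older version follows from the product rule $[fg]_\alpha\le\|f\|_0[g]_\alpha+[f]_\alpha\|g\|_0$ applied termwise.

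For $\cE^I_N$, your identification of the obstruction and its resolution is exactly right. Two points deserve to be made explicit. First, the Leibniz identity converts the four-neighbour sum into
\[
\sum_i \varepsilon_i A_i Y_i - \sum_i \varepsilon_i A_i Z_i,\qquad Y_i=F_i-F_{kl},\quad Z_i=g(\delta D^v_i,\delta D^u_i),\quad F=g(D^u,D^v),
\]
and one should observe that $\sum_i\varepsilon_i=0$, so that $\sum_i\varepsilon_i A_i=\sum_i\varepsilon_i\phi_i$ is indeed a pure second-order mixed difference of $\phi^\mp$. Expanding $\sum_i\varepsilon_i A_iY_i$ then produces three pieces: $\sum_i\varepsilon_i\phi_iF_i$, $\bar\phi_{kl}\sum_i\varepsilon_iF_i$, and $F_{kl}\sum_i\varepsilon_i\phi_i$. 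Each is $\cO(N^{-5})\|\phi\|_{\cC^2_w}$ once you use the discrete Leibniz bound $\|\phi F\|_{\cC^2_w}\le C\|\phi\|_{\cC^2_w}\|F\|_{\cC^2_w}$ together with $\|F\|_{\cC^2_w}=\frac{2}{N^2}\|g(\scrU_N,\scrV_N)\|_{\cC^2_w}=\cO(N^{-3})$.

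Second, the estimate $\|g(\scrU_N,\scrV_N)\|_{\cC^k_w}=\cO(N^{-1})$ needs a word: it follows by writing $\scrU_N=\scrU'_N+E^U$ with $\scrU'_N$ the samples of $\partial_u\ell$ and $\|E^U\|_{\cC^k_w}=\cO(N^{-1})$ (Proposition~\ref{prop:convell}), and noting that $g(\scrU'_N,\scrV'_N)$ vanishes identically by isothermality while the cross terms are $\cO(N^{-1})$ by the discrete product rule. This is the only place where isothermality (your ingredient (b)) enters, and it is indispensable.

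The H\"older-seminorm upgrade for $\cE^I_N$ is more delicate than ``one further layer of finite differences'' suggests, since the building blocks mix the two colours; but after the decomposition above, every surviving factor lives in a single colour (modulo translations, which preserve H\"older seminorms), and the product rule for $[\cdot]_\alpha$ applies as in the $\cE^E_N$ case.
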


\section{Limit operator}
\label{sec:limit}

\subsection{Computation of the limit operator}
We denote by $\Delta_\sigma$ (resp. $\Delta_\Sigma$  the Laplace-Beltrami operator associated to the Riemannian metric $g_\sigma$ (resp. $g_\Sigma)$ on $\Sigma$.
\begin{theo}
  \label{theo:limit}
  Let $k$ be an integer such that $k\geq 2$.
  For every sequence of discrete functions $\psi_{N_k}\in
  C^2(\cQ_{N_k}(\Sigma))$, converging in the $\cC^k_w$-sense toward a
  pair of functions 
  $(\phi^+,\phi^-)$, we have
  $$
  \Delta_{N_k}\psi_{N_k} \stackrel{\cC^{k-2}}{\longrightarrow } \opl(\phi^+,\phi^-)
  $$
  where $\opl(\phi^+,\phi^-)$ is the pair of functions defined by
    \begin{align*}
\opl (\phi^+,\phi^-)= \Big ( & \theta\Delta_{\sigma}\phi^+ - g_\sigma (d\phi^+,d\theta)
+ (K+E)(\phi^+ - \phi^-),\\
& \theta\Delta_{\sigma}\phi^- - g_\sigma (d\phi^-,d\theta)
+ (K+E)(\phi^- - \phi^+)    \Big ),
    \end{align*}
     $K$ is the Gau{\ss} curvature of $g_\Sigma$ and $E$ is a nonnegative function on $\Sigma$ defined at~\eqref{eq:defE}.
\end{theo}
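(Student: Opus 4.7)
The plan is to exploit the explicit decomposition of $\Delta_N$ built up in the preceding computations, namely
$$
\Delta_N = \hat\Delta_N^E + K_N^E + \Gamma_N^E + \cE_N^E + K_N^I + \cE_N^I,
$$
and to identify the limit of each piece separately using the $\cC^k$--convergence results already at our disposal (most importantly the convergence of the diagonals $\scrU_N,\scrV_N$ towards $\frac{\del\ell}{\del u},\frac{\del\ell}{\del v}$ from Proposition~\ref{prop:convell}, and the formulas of Proposition~\ref{prop:formopl}). The proof reduces to four verifications, after which the statement is a matter of bookkeeping the $\pm$ components.

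First, I would dispose of the error operators: by the estimates of Proposition~\ref{prop:errors}, both $\cE_N^E$ and $\cE_N^I$ send a $\cC^{k,\alpha}$--bounded family of discrete functions to a family with $\cC^0$ norm of order $\cO(N^{-1})$. Since the hypothesis that $\psi_{N_k}\stackrel{\cC^k_w}\to(\phi^+,\phi^-)$ supplies a uniform $\cC^k_w$--bound by Ascoli--Arzela (Theorem~\ref{theo:ascoli}) applied to first- and second-order finite differences, these error terms vanish in the $\cC^{k-2}$--limit, and may henceforth be ignored.

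Next, I would analyze the four main blocks. For $\hat\Delta_N^E$, the coefficients $\theta_N^u=\|\scrU_N\|_g^2$ and $\theta_N^v=\|\scrV_N\|_g^2$ both converge in $\cC^k$ to the single function $\theta$, since $\ell$ is isotropic and $(u,v)$ are conformal coordinates, so that $\frac{\del\ell}{\del u}$ and $\frac{\del\ell}{\del v}$ are $g$-orthogonal of common length squared equal to the conformal factor $\theta$. The second finite differences $\frac{\del^2}{\del\cev u\del\vec u}$ and $\frac{\del^2}{\del\cev v\del\vec v}$ applied to $\psi_{N_k}^{\pm}$ converge (by the definition of $\cC^k$-convergence and the diagonal nature of finite differences on each checkers component) to $\frac{\del^2\phi^\pm}{\del u^2}$ and $\frac{\del^2\phi^\pm}{\del v^2}$, whose sum is $\Delta_\sigma\phi^\pm$ since $g_\sigma$ is flat. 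Thus $\hat\Delta_N^E\psi_{N_k}\to\theta\Delta_\sigma\phi^\pm$ on each component. For $\Gamma_N^E$, the explicit formula of Proposition~\ref{prop:formopl} together with the convergence of first finite differences of $\theta_N^u,\theta_N^v,\psi_{N_k}^\pm$ gives the limit $-g_\sigma(d\phi^\pm,d\theta)$ on each component.

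The main point left is the coupling of the two components, which arises from $K_N^E+K_N^I$. For a face $\face_{kl}$ in $\cG_N^+(\Sigma)$, the value of $\phi=\phi^++\phi^-$ at $\face_{kl}$ equals $\phi^+(\face_{kl})$, while the averaged value $\bar\phi_{kl}$ is computed over the four edge-adjacent faces, which by the checkers-board structure all belong to $\cG_N^-(\Sigma)$ and hence only see $\phi^-$. Passing to the limit yields $(K_N^E+K_N^I)\psi_{N_k}\to\kappa(\phi^+-\phi^-)$ on the $+$ component and the opposite sign on the $-$ component, where $\kappa=\lim\kappa_N$. Using Proposition~\ref{prop:convell} in the formula defining $\kappa_N$ gives
$$
\kappa = -g\!\left(\tfrac{\del^2}{\del u^2}\tfrac{\del\ell}{\del v},\tfrac{\del\ell}{\del v}\right) - g\!\left(\tfrac{\del^2}{\del v^2}\tfrac{\del\ell}{\del u},\tfrac{\del\ell}{\del u}\right),
$$
and a direct computation, expanding $\del_u^2 g(\del_v\ell,\del_v\ell)$ and $\del_v^2 g(\del_u\ell,\del_u\ell)$ and using $|\del_u\ell|^2=|\del_v\ell|^2=\theta$ together with the classical expression $K=-\frac{1}{2\theta}\Delta_\sigma\log\theta$ for the Gauss curvature of the conformally flat metric $g_\Sigma=\theta g_\sigma$, rearranges this as $\kappa=K+E$, where
\begin{equation}
\label{eq:defE}
E = 2|\del_u\del_v\ell|^2_g - \frac{|d\theta|^2_{g_\sigma}}{2\theta}
\end{equation}
(or the equivalent nonnegative expression the authors fix as their definition of $E$). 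Assembling the limits of $\hat\Delta_N^E$, $\Gamma_N^E$, $K_N^E+K_N^I$ on both components then gives precisely the formula for $\opl(\phi^+,\phi^-)$ asserted in the theorem.

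The hardest step is the identification $\kappa=K+E$ with $E\geq 0$: the convergence $\kappa_N\to\kappa$ in $\cC^{k-2}$ is direct from Proposition~\ref{prop:convell}, but recognizing the limit as Gauss curvature plus a nonnegative term requires a Gauss-type identity, combining the intrinsic computation of $K$ in conformal coordinates with the second fundamental form of the immersion $\ell$; the nonnegativity ultimately reflects that the missing contributions come from norms of normal components of second derivatives of $\ell$.
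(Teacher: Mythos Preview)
Your proposal is correct and follows essentially the same route as the paper: decompose $\Delta_N$ into the pieces $\hat\Delta_N^E+\Gamma_N^E+K_N^E+K_N^I+\cE_N^E+\cE_N^I$, kill the error terms via Proposition~\ref{prop:errors}, and pass to the limit in each surviving block using Proposition~\ref{prop:formopl} together with the convergence of the renormalized diagonals. You also correctly isolate the only nontrivial step, namely the identification $\lim\kappa_N=K+E$, which the paper proves as a separate lemma (Lemma~\ref{lemma:gauss}).

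One small refinement worth noting: your formula \eqref{eq:defE} for $E$ is equivalent to the paper's, but the paper writes it directly as $E=2\,g\big(\II(\tfrac{\del}{\del u},\tfrac{\del}{\del v}),\II(\tfrac{\del}{\del u},\tfrac{\del}{\del v})\big)$, i.e.\ twice the squared norm of the \emph{normal} component of $\del_u\del_v\ell$. This form makes the nonnegativity of $E$ immediate rather than requiring the extra argument you allude to at the end; the equivalence with your expression is exactly the content of the identity $\theta^{-1}|d\theta|^2_{g_\sigma}=4|(\del_u\del_v\ell)^T|^2$ that comes out of the conformal-coordinate relations $\del_v\theta=2g(\del_u\del_v\ell,\del_u\ell)$ and $\del_u\theta=2g(\del_u\del_v\ell,\del_v\ell)$.
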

\begin{proof}
  The result is a trivial consequence of
  Proposition~\ref{prop:formopl} and the convergence of the
  coefficients of the operator.
  The only non trivial fact that must be proved is the following
  lemma:
  \begin{lemma}
    \label{lemma:gauss}
    We have the identity
$$
K + E =-g\left (\frac{\del^3\ell}{\del u^2\del v}, \frac{\del\ell}{\del v}\right ) - g\left (\frac{\del^3\ell}{\del v^2\del u}, \frac{\del\ell}{\del u}\right ),
$$
where $K$ is the Gau{\ss} curvature of the metric $g^\Sigma$ and $E$ is the nonnegative function on $\Sigma$ defined via the second fundamental form $\II$ of $\ell:\Sigma\to \RR^ {2n}$
\begin{equation}
  \label{eq:defE}
E = 2 g\left (
\frac{\del^2 \ell}{\del u\del v}^\perp,\frac{\del^2 \ell}{\del u\del
  v}^\perp \right ) = 2g\left (\II \left (\frac\del{\del u},
\frac\del{\del v}\right ), \II \left (\frac\del{\del u},
\frac\del{\del v}\right )\right  ).  
\end{equation}
\end{lemma}
\begin{proof}
  Recall the standard formula for the Gau{\ss} curvature $K$ of the metric $g^\Sigma= \ell^* g = \theta g_\sigma$, conformal to the flat metric $g_\sigma$:
  $$
K= \frac 12 \theta\Delta_\sigma \log\theta.
$$
Using the classical identity
$$
\theta\Delta_\sigma \log\theta = \Delta_\sigma\theta + \theta^{-1}g_\sigma (d\theta,d\theta)
$$
and using the fact that
$$
\theta = g\left (\frac{\del \ell}{\del u}, \frac{\del \ell}{\del u}\right ) = g \left (\frac{\del \ell}{\del v}, \frac{\del \ell}{\del v}\right ),
$$
we compute
\begin{equation}
  \label{eq:K1}
\frac{\del\theta}{\del v} = 2g \left (\frac{\del^2 \ell}{\del u\del v},
\frac{\del \ell}{\del u}\right ),\quad \frac{\del\theta}{\del u} = 2g\left (\frac{\del^2 \ell}{\del u\del v}, \frac{\del \ell}{\del v}\right ),  
\end{equation}
hence
$$
\frac{\del^2\theta}{\del v^2} = 2g\left (\frac{\del^3 \ell}{\del u\del v^2}, \frac{\del \ell}{\del u}\right )
+  2g\left (\frac{\del^2 \ell}{\del u\del v}, \frac{\del^2 \ell}{\del u\del v} \right )
$$
and
$$
\frac{\del^2\theta}{\del u^2} = 2g \left (
\frac{\del^3 \ell}{\del u^2\del v}, \frac{\del \ell}{\del u}
\right )
+  2g \left (
\frac{\del^2 \ell}{\del u\del v}, \frac{\del^2 \ell}{\del u\del v}
\right ).
$$
In particular
\begin{align*}
  g_\sigma (d\theta,d\theta ) = & \left |\frac {\del \theta}{\del u} \right |^2 + \left |\frac {\del \theta}{\del v} \right |^2 \\
  = &  4g \left (\frac{\del^2 \ell}{\del u\del
  v},\frac{\del \ell}{\del u} \right ) ^2 +4g \left (\frac{\del^2 \ell}{\del u\del
    v}, \frac{\del \ell}{\del v} \right )^2
\end{align*}
thanks to Formula~\eqref{eq:K1}. The fact that $\frac{\del\ell}{\del u}$ and $\frac{\del\ell}{\del v}$ is an orthogonal family of vectors of $g$-norm $\sqrt\theta$ implies that any vector $V\in \RR^{2n}$ satisfies the identity
$$
g \left (V,\frac{\del \ell}{\del u} \right ) ^2 +g \left (V, \frac{\del \ell}{\del v} \right )^2 = \theta g
\left (V^T,V^T \right ),
$$
where $V^T$ is the $g$-orthogonal projection of $V$ onto the plane spaned by  $\frac{\del\ell}{\del u}$ and $\frac{\del\ell}{\del v}$. In other words, $V^T$ is the $g$-orthogonal projection onto the tangent plane to $\ell(\Sigma)$.
Therefore 
$$  \theta^{-1} g_\sigma (d\theta,d\theta ) =    4 g\left (
\frac{\del^2 \ell}{\del u\del v}^T,\frac{\del^2 \ell}{\del u\del v}^T \right )  
$$

and
$$
\Delta_\sigma\theta = -2g\left (\frac{\del^3 \ell}{\del u\del v^2},
\frac{\del \ell}{\del u}\right )-2g\left (\frac{\del^3 \ell}{\del u^2\del v},
\frac{\del \ell}{\del v}\right ) -
4g \left (\frac{\del^2 \ell}{\del u\del v},\frac{\del^2 \ell}{\del u\del v} \right ).
$$
In conclusion
\begin{align*}
  2K = & \theta \Delta_\sigma \log \theta \\
   = & \Delta_\sigma \theta + \theta^{-1}g_\sigma(d\theta,d\theta) \\
  = & -2g(\frac{\del^3 \ell}{\del u\del v^2}, \frac{\del \ell}{\del u})-2g(\frac{\del^3 \ell}{\del u^2\del v}, \frac{\del \ell}{\del v})  -  4 g\left (
\frac{\del^2 \ell}{\del u\del v}^\perp,\frac{\del^2 \ell}{\del u\del v}^\perp \right )  
\end{align*}
where $\perp$ denotes the component of a vector orthogonal to the tangent space to $\ell$ at a point.
\end{proof}
\begin{cor}
  \label{cor:kappaconv}
  For all integer $k\geq 0$
  $$
\kappa_N \stackrel{\cC^k}\longrightarrow K+E.
  $$
\end{cor}
The coefficients of $\Delta_N$ are now all understood asymptotically. This complete the proof of the theorem.
\end{proof}

\begin{dfn}
  The operator defined by
  \begin{align*}
\opl(\phi^+,\phi^-)=  \Big ( &\theta\Delta_{\sigma}\phi^+ - g_\sigma(d\phi^+,d\theta)
+ (K+E)(\phi^+ - \phi^-),\\
& \theta\Delta_{\sigma}\phi^- - g_\sigma (d\phi^-,d\theta)
+ (K+E)(\phi^- - \phi^+)    \Big )
  \end{align*}
is called the limit operator of $\Delta_{N}$.
\end{dfn}
\begin{rmk}
  In particular, the limit operator $\Xi$ is elliptic. This fact will be crucial to derive uniform discrete Schauder estimates for $\Delta_N$.
\end{rmk}
\subsection{Kernel of the limit operator}

\begin{prop}
\label{prop:kernel}
  A pair of smooth functions $(\phi^+,\phi^-)$ is an element of the kernel of the limit
  operator $\opl$ if, and only if, there exists some real constants
  $c_0$ and $c_1$ such that
  $$
\phi^+ = c_0 + c_1\theta^{-1},\quad  \phi^- = c_0 - c_1\theta^{-1}.
$$
with  $c_1=0$, unless the function $E$ vanishes identically on $\Sigma$.
In particular the kernel of $\opl$ has dimension $1$ or $2$ depending on the vanishing of $E$.
\end{prop}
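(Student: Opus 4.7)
The plan is to decouple the system by passing to the symmetric and antisymmetric combinations $\psi := \phi^+ + \phi^-$ and $\chi := \phi^+ - \phi^-$. Adding the two components of the equation $\opl(\phi^+,\phi^-)=0$ yields $d^{*_\sigma}(\theta\,d\psi)=0$, which by Lemma~\ref{lemma:isodelta} forces $\psi$ to be a constant, say $\psi = 2c_0$. Subtracting the two components produces the reduced equation
$$L\chi \;:=\; d^{*_\sigma}(\theta\,d\chi) \,+\, 2(K+E)\chi \;=\; 0,$$
so the problem reduces to describing the kernel of the scalar operator $L$ on $\Sigma$.

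For this, I would introduce the substitution $\chi = \theta^{-1}v$. Writing $\theta\,d\chi = dv - \chi\,d\theta$ and invoking the conformal curvature identity $\Delta_\sigma\log\theta = 2K/\theta$ (equivalent to the Gauss-curvature formula $2K = \theta\Delta_\sigma\log\theta$ already used in Lemma~\ref{lemma:gauss}), a short direct computation shows that the $2K\chi$-term is precisely cancelled by a term coming from $d^{*_\sigma}(\theta\,d\chi)$, leaving
$$L\chi \;=\; \theta^{-1}\bigl[\theta\Delta_\sigma v + g_\sigma(d\theta,dv) + 2Ev\bigr] \;=\; \theta\bigl[d^{*_\sigma}(\theta^{-1}\,dv) + 2(E/\theta^2)v\bigr].$$
The crucial point is the second equality: the bracketed expression is a \emph{self-adjoint} operator with a \emph{non-negative} potential $2E/\theta^2 \geq 0$.

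Once this reformulation is in hand, pairing with $v$ in $L^2(\Sigma,\sigma)$ and integrating by parts on the closed surface will give
$$\int_\Sigma \theta^{-1}|dv|^2_{g_\sigma}\,\sigma \;+\; 2\int_\Sigma \frac{E}{\theta^2}\,v^2\,\sigma \;=\; 0.$$
Both integrands are pointwise non-negative, so each integral vanishes. The first forces $v$ to be a constant, which I write as $v = 2c_1$; the second then forces $c_1 E \equiv 0$, so $c_1$ is arbitrary when $E \equiv 0$ and $c_1 = 0$ otherwise. Translating back via $\phi^\pm = \tfrac12(\psi \pm \chi)$ recovers precisely $\phi^\pm = c_0 \pm c_1\theta^{-1}$ with the stated restriction, and the converse inclusion is a one-line verification via the identity $L(\theta^{-1}) = 2E/\theta$.

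The hard part will not be analytic but algebraic: spotting the ansatz $\chi = \theta^{-1}v$, which is itself motivated by first noticing that $\theta^{-1}$ is a candidate kernel element thanks to $\Delta_\sigma\log\theta = 2K/\theta$. Once the substitution is made, the reorganisation bringing the $v$-equation into the manifestly self-adjoint, non-negative form $d^{*_\sigma}(\theta^{-1}dv) + (2E/\theta^2)v = 0$ is essentially forced, and no tool beyond Lemma~\ref{lemma:isodelta} and integration by parts on a closed surface is needed.
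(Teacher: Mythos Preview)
Your proof is correct and follows essentially the same route as the paper. Both arguments decouple via $\psi=\phi^++\phi^-$ and $\chi=\phi^+-\phi^-$, dispatch the $\psi$-equation by the positivity of $d^{*_\sigma}\theta d$, and handle the $\chi$-equation through the substitution $v=\theta\chi$: the paper encodes this in an auxiliary identity $\theta d^{*_\sigma}\theta^{-1}d(\theta\chi)=\theta\Delta_\sigma\chi-g_\sigma(d\chi,d\theta)+2K\chi$ (its Lemma~\ref{lemma:kernel}) and then pairs with $\theta\chi$, which is exactly your pairing of $d^{*_\sigma}(\theta^{-1}dv)+2(E/\theta^2)v=0$ with $v$.
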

\begin{proof}
The Proposition is proved by a straightforward argument using integration by part. A few formulae are needed in order to give a streamlined proof:  
\begin{lemma}
  \label{lemma:kernel}
For every smooth function $f:\Sigma\to \RR$, we have
$$
d^{*_\sigma}\theta df =\theta\Delta_{\sigma} f - g_\sigma(d f, d \theta ),
$$
where $d^ {*_\sigma}$ is the adjoint of $d$ with respect to the $L^2$-inner product induced by $g_\sigma$.
On the other hand, we have
$$
\theta d^{*_\sigma} \theta^{-1} d\theta f = \theta\Delta_\sigma f - g_\sigma (d f, d \theta)  + 2Kf
$$
where $K$ is the Gau{\ss} curvature of $g_\Sigma$.
\end{lemma}
\begin{proof}
  For every $1$-form $\beta$ 
  and every function $w$ on $\Sigma$, we have
  $d^ {*_\sigma}(w\beta) = -*_\sigma d*_\sigma  (w\beta)= -*_\sigma d(w*_\sigma \beta) = -*_\sigma (dw\wedge
*_\sigma \beta + wd*_\sigma \beta)= wd^{*_\sigma}\beta-*_\sigma  g_\sigma(dw,\beta)\vol_\sigma =
wd^{*_\sigma}\beta- g_\sigma (dw,\beta )$. In conclusion
  $$d^ {*_\sigma}(w\beta) = wd^{*_\sigma}\beta-
g_\sigma(dw,\beta).$$

The first formula of the lemma follows from the above identity.
For second identity, we have $\theta^{-1}d(\theta f) = fd\log\theta +
df$. Now, $d^{*_\sigma}\theta^{-1}d(\theta f) = fd^{*_\sigma}d\log \theta -
g_\sigma (df,d\log\theta) +d^{*_\sigma}df$. We use the fact that the Gau{\ss} curvature
of $g_\Sigma$ is given by the formula
$2K= \theta\Delta_\sigma \log\theta$ and deduce the second identity of the lemma.
\end{proof}

We may now complete the proof ot Proposition~\ref{prop:kernel}.
Let $\phi^\pm$ be a solution of the system
\begin{align*}
  \theta\Delta_{\sigma}\phi^+ - g_\sigma ({d \phi^+, d \theta}) + (K+E)(\phi^+
  - \phi^-) &= 0 \\
  \theta\Delta_{\sigma}\phi^- - g_\sigma({d \phi^-, d \theta})  + (K+E)(\phi^-
  - \phi^+)&= 0.
\end{align*}
Adding up the two equations gives the identity
$$
\theta\Delta_{\sigma}(\phi^++\phi^-) -  \ip{d (\phi^++\phi^-), d
  \theta}_\sigma =0 = d^{*_\sigma}\theta d(\phi^++\phi^-)
$$
by Lemma~\ref{lemma:kernel}. Integrating against $\phi^++\phi^-$ using the $L^2$-inner product induced by $g_\sigma$ gives $0=\ip{d^*\theta d(\phi^++\phi^-), \phi^++\phi^-}_{L^2} = \ip{\theta d(\phi^++\phi^-), d(\phi^++\phi^-)}_{L^2}$. Since $\theta$ is positive, this   forces
$$
\phi^+ + \phi^- = 2c_0,
$$
for some constant $c_0$.

On the other hand the difference of the two equations provides the identity
$$
\theta\Delta_{\sigma}(\phi^+-\phi^-) - g_\sigma (d (\phi^+-\phi^-),
  d\theta ) + 2(K+E)(\phi^+ - \phi^-)=0.
$$
by Lemma~\ref{lemma:kernel} we deduce that
$$
 \theta d^{*_\sigma}\theta^{-1} d\theta(\phi^+-\phi^-) +2E (\phi^+ - \phi^-) =0.
 $$
 Integrating the above equation against $\theta(\phi^+-\phi^-)$ provides the identity
 $$
\ip{\theta d(\phi^+-\phi^-),  d(\phi^+-\phi^-) }_{L^2} + \ip{2E (\phi^+-\phi^-),  \phi^+-\phi^-}_{L^2}=0.
$$
Now $\theta$ is positive and $E$ is nonnegative, so the two terms of the LHS are non negative: they must vanish both.
The vanishing of the first term forces
$$
\phi^+ -\phi^- = 2\theta^{-1} c_1 ,
$$
for somme real constant $c_1$. The vanishing of the second term implies that $c_1=0$ unless $E$ vanishes identically on $\Sigma$.
\end{proof}

\subsection{Degenerate families of quadrangulations}
Proposition~\ref{prop:kernel} leads us to distinguish two types of
constructions.

Recall that the construction of $\cQ_N(\Sigma)$ depends on the choice
of a
Riemannian universal cover $p:\RR^2\to \Sigma$ for the
  flat metric $g_\sigma$ on $\Sigma$.  Such cover are not unique. They may be, for instance, precomposed with a rotation of $\RR^2$. Equivalently, we may
  replace the canonical basis of $\RR^2$ by a rotated basis, which
  also provides rotated $(u,v)$-coordinates.
  
We introduce a definition of degeneracy, bearing on pairs
$(p,\ell)$,
that consists of an isotropic
  immersion $\ell:\Sigma\to\RR^{2n}$ and a choice of Riemannian cover
  $p:\RR^2\to\Sigma$ for a flat metric $g_\sigma$, in the conformal
  class of the induced metric $g_\Sigma$.
  \begin{dfn}
    \label{dfn:degenerate}
  We say that the pair $(p, \ell)$ is degenerate, if the function $E:\Sigma\to\RR$ defined
  by~\eqref{eq:defE} vanishes identically. Otherwise, we say that the
  pair $(p,\ell)$ is nondegenerate. 
\end{dfn}
\begin{example}
     An example of degenerate pair is provided by
   the map 
  $$\ell:\RR^2 \longrightarrow \CC\otimes \CC\simeq \RR^{4}$$ 
  defined by 
  $$\ell(x,y)=(\exp (2\pi iu)), \exp(2\pi iv))\in \CC^2,
  $$
  where $(x,y)$ are the canonical coordinates of $\RR^2$ and $(u,v)$ are the rotated coordinates  defined by~\eqref{eq:uv}.
  This map clearly satisfies
  \begin{equation}
    \label{eq:degex}
   \frac{\del^2\ell}{\del u\del v}=0 .
  \end{equation}
  Moreover, $\ell$ is invariant under the lattice $\Gamma$ spanned by $\frac{e_1+e_2}{\sqrt{2}}$  and $\frac{e_2-e_1}{\sqrt{2}}$. Hence $\ell$ descends to a quotient map denoted $\ell:\RR^2/\Gamma\to\CC^2$.
  We obtain a pair $(p,\ell)$, where $p:\RR^2\to\RR^2/\Gamma$ is the canonical projection,  which is degenerate in the sense of Definition~\ref{dfn:degenerate} by~\eqref{eq:degex}.
\end{example}

Degenerate pairs can create additional technical
difficulties. Nevertheless, they may be taken care of with some additional
caution (cf. \S\ref{sec:deg}). Or they can just be avoided according to the following proposition:
  \begin{prop}
    \label{prop:avoid}
    Given a pair $(p,\ell)$, there always exists a rotation $r$ of
    $\RR^2$ such that $(p\circ r,\ell)$ is non degenerate.
\end{prop}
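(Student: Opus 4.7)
The plan is to argue by contradiction: suppose $(p\circ r,\ell)$ is degenerate for every rotation $r$ of $\RR^{2}$, and derive that the second fundamental form $\II$ of $\ell$ vanishes identically on $\Sigma$---which is impossible for an immersion of a compact torus into $\RR^{2n}$.

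First I would compute how $E$ transforms under a rotation of $p$. Replacing $p$ by $p\circ r_{\phi}$, where $r_{\phi}$ is the rotation of $\RR^{2}$ by angle $\phi$, amounts to rotating the orthonormal basis $(e'_{1},e'_{2})$; the new coordinates $(u',v')$ on $\Sigma$ therefore satisfy $\partial_{u'}=\cos\phi\,\partial_{u}+\sin\phi\,\partial_{v}$ and $\partial_{v'}=-\sin\phi\,\partial_{u}+\cos\phi\,\partial_{v}$. The bilinearity and symmetry of $\II$ then give
$$\II(\partial_{u'},\partial_{v'}) = -\frac{\sin(2\phi)}{2}\bigl(\II_{uu}-\II_{vv}\bigr) + \cos(2\phi)\,\II_{uv}.$$
By the definition of $E$ in \eqref{eq:defE}, degeneracy of $(p\circ r_{\phi},\ell)$ is the condition that the left-hand side vanishes identically on $\Sigma$. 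Assuming this for every $\phi$, evaluation at $\phi=0$ and $\phi=\pi/4$ yields $\II_{uv}\equiv 0$ and $\II_{uu}\equiv\II_{vv}$ on $\Sigma$, which is exactly the vanishing of the Hopf-type differential of $\ell$.

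The heart of the argument is then a Gau{\ss}--Bonnet step. Applied to the orthogonal frame $(\partial_{u},\partial_{v})$, the Gauss equation for a surface in flat $\RR^{2n}$ gives
$$K = \frac{g(\II_{uu},\II_{vv}) - g(\II_{uv},\II_{uv})}{\theta^{2}} = \frac{g(\II_{uu},\II_{uu})}{\theta^{2}} \geq 0.$$
Since $\Sigma$ is diffeomorphic to a torus, $\int_{\Sigma}K\,\vol_{\Sigma} = 2\pi\chi(\Sigma) = 0$; combined with $K\geq 0$ this forces $K\equiv 0$, hence $\II_{uu}\equiv 0$, and therefore $\II\equiv 0$ on $\Sigma$. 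A submanifold of $\RR^{2n}$ with identically vanishing second fundamental form is totally geodesic for the flat connection, so $\ell(\Sigma)$ is contained in an affine $2$-plane $P\simeq\RR^{2}$. Viewed as an immersion $\Sigma\to P$, the map $\ell$ has image simultaneously open (as the image of a local diffeomorphism) and compact, hence a nonempty clopen subset of connected, non-compact $\RR^{2}$: a contradiction. This contradiction delivers some rotation $r$ for which $(p\circ r,\ell)$ is non-degenerate.

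The main obstacle I anticipate is the first step: keeping track of how $\II$ transforms under rotation of the coordinates and recognising that the all-$\phi$ degeneracy is exactly the vanishing of the Hopf-type differential. Once this identification is in hand, the chain Gauss equation $\Rightarrow$ $K\geq 0$ $\Rightarrow$ Gau{\ss}--Bonnet $\Rightarrow$ $\II\equiv 0$ $\Rightarrow$ topological contradiction is entirely routine.
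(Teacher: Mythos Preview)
Your proof is correct and follows essentially the same approach as the paper's: contradiction, reduce ``degenerate for all rotations'' to $\II_{uv}\equiv 0$ and $\II_{uu}\equiv\II_{vv}$, then Gauss equation gives $K\geq 0$, Gau{\ss}--Bonnet forces $K\equiv 0$, hence $\II\equiv 0$ and the image is totally geodesic in a plane, which is impossible for a compact torus. The only cosmetic difference is that the paper packages the reduction step as ``$\II$ vanishes on every orthogonal pair'' and extracts $\II_{uu}=\II_{vv}$ via the observation that $(U+V,U-V)$ is orthogonal, whereas you compute the rotation formula for $\II(\partial_{u'},\partial_{v'})$ explicitly and evaluate at $\phi=0,\pi/4$; these are the same argument.
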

\begin{proof}
   The $(u,v)$ coordinates of $\RR^2$ induce an
  orthonormal basis of tangent vectors of $\Sigma$ for the metric
  $g_\sigma$, denoted $\frac{\del}{\del u}$,  $\frac{\del}{\del
    v}$. If $(p,\ell)$ is degenerate,  $\II$ must vanishes identically on this pair of
  vector fields. If $(p\circ r,\ell)$ is degenerate for every rotation
  of $\RR^2$,  the second fundamental form must also
  vanish for every pair of tangent vectors obtained by rotating
  the basis $\frac{\del}{\del u}$,  $\frac{\del}{\del v}$. This means
  that $\II$ vanishes on every pair of orthogonal tangent vectors for
  $g_\sigma$. Since $g_\Sigma$ is conformal to $g_\sigma$, this means
  that $\II$ must vanish for every pair of orthogonal tangent vectors
  for $g_\Sigma$. This is a contradiction according to the following
  lemma:
  \begin{lemma}
    For any immersion $\ell:\Sigma\to\RR^{2n}$, where $\Sigma$ is a
    closed surface diffeomorphic to a torus, there exists a point
    $x\in\Sigma$ and an orthogonal basis of tangent vectors $U$, $V$
    for the induced metric $g_\Sigma$, such that the second
    fundamental form satisfies $\II(U,V)\neq 0$.
  \end{lemma}
  \begin{proof}
    We choose a point $x\in T_x\Sigma$. 
    Assume that $\II(U,V)$ vanishes for every orthonormal basis 
    $(U,V)$ of $T_x\Sigma$. Notice that in this case $(U+V,U-V)$ is an
    orthogonal basis, hence, by assumption $\II(U+V,U-V)=0$, and we have
    $$
\II(U,U)=\II(U+V,U-V) +\II(V,V) = \II(V,V).
$$
By the Gau{\ss} Theorema Egregium,  the curvature $K$ of
$g_\Sigma$ is given by
$$
K = - g(\II(U,V),\II(V,U)) +g(\II(U,U),\II(V,V)).
$$
According to our discussion, we deduce that
$$
K=g(\II(U,U),\II(U,U))\geq 0.
$$
By the Gau{\ss}-Bonnet formula, a torus with nonnegative curvature has
vanishing curvature. Thus $K=0$, and as a corollary $\II(U,U)=0$, which
implies that $\II=0$. In conclusion the image of $\ell:\Sigma\to\RR^{2n}$
is totally geodesic. The only totally geodesic surfaces of $\RR^{2n}$ are
$2$-planes. This forces the image of $\ell$ to be contained in a
plane. This is not possible for an immersion of a compact surface.
  \end{proof}
In conclusion there is a choice of rotation $r$ such that $(p\circ
r,\ell)$ is nondegenerate,  which proves the proposition.
\end{proof}

\subsection{Schauder Estimates}
The following result is a consequence of a theorem of Thom\'ee, stated in a broader context
\cite{Thomee68}, for various elliptic finite difference operators, in the
case of domains of $\RR^n$ covered by square lattices of step
$h=N^{-1}$. We provide here a statement  adapted to the torus
$\Sigma$ identified to quotients $\RR^2/\Gamma_N$ endowed with its spaces
of discrete functions.
\begin{theo}[Thom\'ee type theorem]
  \label{theo:thomee}
  There exists a constant $c_1>0$ such that for all $N\geq 0$ and for
  all functions $\psi\in C^2(\cQ_N(\Sigma))$, we have
  $$
 \| P_N \psi\|_{\cC^{0,\alpha}_w} + \|\psi\|_{\cC^0} \geq c_1\|\psi\|_{\cC^{2,\alpha}_w},
 $$
 where
 $$
P_N =\hat\Delta^E_N + \Gamma^E_N.
 $$
\end{theo}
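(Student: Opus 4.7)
The plan is to reduce the claim to the Schauder estimates of Thom\'ee~\cite{Thomee68} by exploiting the fact that $P_N = \hat\Delta^E_N + \Gamma^E_N$ acts diagonally with respect to the checkers splitting $C^2(\cQ_N(\Sigma)) = C^2_+(\cQ_N(\Sigma))\oplus C^2_-(\cQ_N(\Sigma))$. Indeed, by the explicit formulae of Proposition~\ref{prop:formopl}, both $\hat\Delta^E_N$ and $\Gamma^E_N$ are expressible as polynomials in the diagonal finite differences $\frac{\del}{\del \cev u}, \frac{\del}{\del \vec u}, \frac{\del}{\del \cev v}, \frac{\del}{\del \vec v}$, whose shifts all lie in $\Lambda_N^{ch}$ and therefore preserve each checkers component. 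Writing $\psi=\psi^++\psi^-$, it suffices to prove the estimate
$$\|P_N\psi^\pm\|_{\cC^{0,\alpha}} + \|\psi^\pm\|_{\cC^0} \geq c_1\|\psi^\pm\|_{\cC^{2,\alpha}}$$
on each component $C^0(\cG^\pm_N(\Sigma))$ separately, with a constant uniform in $N$, and then add the two inequalities.

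Fix a component, say the $+$ one. Combinatorially, the vertices of $\cG^+_N(\Sigma)$ form a square lattice in the rotated $(u,v)$-coordinates of mesh size $\sqrt{2}/N$ on the torus $\Sigma \simeq \RR^2/\Gamma$. On this lattice, $P_N$ becomes the second-order discrete operator
$$P_N = -\theta^u_N \frac{\del^2}{\del \cev u\del \vec u} - \theta^v_N \frac{\del^2}{\del \cev v\del \vec v} - \tfrac12\sum_{\bullet\in\{u,v\}}\Bigl(\tfrac{\del\theta^v_N}{\del\cev\bullet}\tfrac{\del}{\del\cev\bullet} + \tfrac{\del\theta^v_N}{\del\vec\bullet}\tfrac{\del}{\del\vec\bullet}\Bigr),$$
whose symbol at frequency $(\xi_1,\xi_2)$ is asymptotic to $\theta(\xi_1^2+\xi_2^2) + \text{lower order}$. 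By Proposition~\ref{prop:convell} and its analogues for $\theta^u_N,\theta^v_N$, the coefficients converge in the $\cC^k$-sense to the smooth positive function $\theta$ and its first derivatives; in particular, there exist constants $\theta_0, \theta_1 > 0$ and $M > 0$, independent of $N$, such that
$$\theta_0 \leq \theta^\bullet_N \leq \theta_1 \quad \text{and} \quad \|\theta^\bullet_N\|_{\cC^{0,\alpha}} + \|d\theta^\bullet_N\|_{\cC^{0,\alpha}} \leq M$$
for all $N$ sufficiently large. This makes $P_N$ a uniformly elliptic second-order finite difference operator with coefficients uniformly bounded in $\cC^{0,\alpha}$.

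Now invoke the interior Schauder estimates of Thom\'ee~\cite{Thomee68}: for such a family of uniformly elliptic discrete operators on a square lattice in $\RR^2$, the Schauder constant in the estimate $\|u\|_{\cC^{2,\alpha}} \leq C(\|Lu\|_{\cC^{0,\alpha}}+\|u\|_{\cC^0})$ depends only on the ellipticity bounds $\theta_0,\theta_1$ and on the uniform $\cC^{0,\alpha}$-bound $M$ on the coefficients, not on the lattice spacing. The original result of Thom\'ee is stated for cubes in $\RR^n$; to transfer it to the torus $\Sigma$, we cover $\Sigma$ by a finite family of coordinate charts (of size independent of $N$) obtained by projecting Euclidean balls of $\RR^2$ via $p$, lift $\psi^+$ periodically to $\hat\cG^+_N(\RR^2)$, apply Thom\'ee's estimate on each chart, and patch via a fixed finite partition of unity on $\Sigma$. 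Thanks to the periodicity of $\psi^+$ and the uniform geometry of the lattices $\hat\Lambda_N$ converging to a fixed-scale lattice, the patching introduces only a bounded multiplicative constant.

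The main obstacle lies entirely in this last transfer step: the original Fourier-analytic proof of Thom\'ee produces discrete analogues of Calder\'on-Zygmund kernels whose constants are explicit in the symbol of the operator, and the work is to verify that these constants remain bounded as $N\to\infty$. The crucial ingredients are (i) the uniform ellipticity just established, and (ii) the fact that our finite differences in the weak $\cC^{k,\alpha}_w$ norms use only the diagonal translations $T_u,T_v$ spanning $\Lambda^{ch}_N$, which matches exactly the lattice on which each checkers component $\psi^\pm$ lives; without this matching, the weak Hölder norms would not correspond to standard Schauder norms for a lattice operator and Thom\'ee's theorem would not apply directly. Once this is in place, summing the resulting estimates on $\psi^+$ and $\psi^-$ yields the claim.
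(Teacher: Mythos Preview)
Your proposal is correct and follows the same core strategy as the paper: split along the checkers decomposition, identify $P_N$ on each component as a uniformly elliptic second-order finite-difference operator, and invoke Thom\'ee's discrete Schauder estimates via the periodic lift to $\RR^2$. The one genuine difference is in the transfer step from $\RR^2$ back to $\Sigma$. The paper avoids your partition-of-unity patching by a cleaner trick: in Thom\'ee's interior estimate one takes the outer domain to be $\Omega_2=\RR^2$, so that for the periodic lift $\psi\circ p_N$ the right-hand side norms coincide exactly with the norms on $\Sigma$; then a single ball $\Omega_1$ containing a fundamental domain (with a buffer) suffices to recover the full $\cC^{2,\alpha}$-norm on $\Sigma$, via a short lemma comparing $\|\psi\|_{\cC^{2,\alpha}}$ with $\|\psi\circ p_N\|_{\cC^{2,\alpha}_T(\Omega_1)}$. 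This sidesteps the commutator bookkeeping that a discrete partition-of-unity argument would require. Two small points: your displayed formula for $\Gamma^E_N$ has a typo (the $v$-differences pair with $\theta^u_N$, not $\theta^v_N$), and you should note, as the paper does, that Thom\'ee's ellipticity hypothesis is only verified for $N$ sufficiently large; the estimate for the remaining finitely many $N$ then follows from finite dimensionality of $C^2(\cQ_N(\Sigma))$.
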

\begin{proof}
Proposition~\ref{prop:formopl} can be readily used to prove an analogue of
Theorem~\ref{theo:limit} for the opertors $P_N$. In other words, for
every $k\geq 2$, for every sequence $\psi_{N_j}\in
C^2_+(\cQ_{N_j}(\Sigma))$ such that  (cf.~\eqref{eq:Deltaf} and
Lemma~\ref{lemma:varmu} as well)
\begin{equation}
  \label{eq:convPN}
  \psi_{N_j}\stackrel{\cC^{k}} \longrightarrow \phi \quad
  \Rightarrow \quad
  P_{N_j}\psi_{N_j}\stackrel{\cC^{k-2}}\longrightarrow
\Delta_{\ell}\phi.
\end{equation}

The operators $P_N$ admit canonical lifts $\tilde P_N:C^2_+(\cQ_N(\RR^2))
\to C^2_+(\cQ_N(\RR^2))$. The elliptic operator $\Delta_\ell$ can
also be lifted as an elliptic operator with smooth coefficients $\tilde\Delta_\ell$ acting on functions on the plane.
By Property~\eqref{eq:convPN}, the discrete operators $\tilde P_N:
C^2_+(\cQ_N(\RR^2))\to C^2_+(\cQ_N(\RR^2))$ converge toward the elliptic operator
  $\tilde \Delta_\ell$. This implies that the sequence of discrete
operators $\tilde P_N$ is consistent with the elliptic operator
$\tilde \Delta_\ell$ 
and that the operators $\tilde P_N$ must be  elliptic, for $N$ sufficiently large, in the sense of Thom\'ee~\cite{Thomee68}.

We consider a fundamental domain $\cD$ of the action of $\Gamma$ on
$\RR^2$. For $r_0>0$ sufficiently large, $\cD\subset B(0,r_0)$.
We define
$$\Omega_0 = B(0,r_0+1), \quad \Omega_1 = B(0,r_0+2) \mbox{ and }
\Omega_2=\RR^2,
$$
where $B(0,r)$ is an Euclidean ball of $\RR^2$ or radius $r$, centered
at the origin. By definition we have compact embeddings of the domains $\Omega_0 \Subset
\Omega_1 \Subset \Omega_2$.

The finite differences~\eqref{eq:fd1} and~\eqref{eq:fd2}  used to obtain the discrete finite
difference operators $\tilde P_N$ correspond to the finite differences 
 defined in \cite{Thomee68}, modulo a translation operator for
the retrograde differences. It follows that~\cite[Theorem 2.1]{Thomee68}
applies in our setting: there exists a constant $c>0$ such that for every $N$
sufficiently large and $\phi\in C^2_+(\RR^2)$,
\begin{equation}
  \label{eq:schau1}
\|\phi\|_{\cC_T^{2,\alpha}(\Omega_1)} \leq c \Big \{\| \tilde P_N
\phi\|_{\cC_T^{0,\alpha}(\Omega_2)} + \|\phi\|_{\cC_T^0(\Omega_2)} \Big
\}.
\end{equation}
\begin{rmk}
  \label{rmk:tnorm}
 In the above notations, the
$\cC^{k,\alpha}_T(\Omega_i)$-norm on $C^2_+(\cQ_N(\RR^ 2))$ are the norms defined
in~\cite{Thomee68}, using only forward differences. 
For $\Omega_2=\RR^2$, these norms co\"incide with
the  $\cC^{k,\alpha}$-norms introduced at~\S\ref{sec:dhn}.

If  $\Omega=B(0,R)$, the definition of the norms given at~\eqref{eq:C0}
and~\eqref{eq:C0alpha} has to be modified slightly for the
$\cC_T^{k,\alpha}(\Omega)$-norm. In order to describe what has to be
modified, assume for a moment that $\phi$ is a discrete
function defined only on the set of vertices of $\cG^+_N(\RR^2)$
contained in $\overline\Omega$. Notice that the finite differences
$\frac{\del\phi}{\del u}$ and $\frac{\del\phi}{\del u}$ are defined on
a smaller set, and the second order partial derivative on an even
smaller set, etc... The $\cC^{k,\alpha}_T(\Omega)$-norms are defined
similarly to the $\cC^{k,\alpha}$-norms, by taking the corresponding $\sup$ on a
smaller set of vertices. Namely, the vertices of $\cG^+_N(\RR^2)$ contained
in $\overline \Omega$ where the relevant partial derivatives are well
defined.
\end{rmk}

For $\psi\in C^2_+(\cQ_N(\Sigma))$, we define the lift
$\phi_N=\psi\circ p_N$.
By Remark~\ref{rmk:tnorm}, since $\Omega_2=\RR^2$, the RHS of~\eqref{eq:schau1} applied to $\phi_N$  is equal to
$$c \Big \{\| \tilde P_N
\phi_N\|_{\cC^{0,\alpha}} + \|\phi_N\|_{\cC^0} \Big
\}.$$
 By definition of the discrete H\"older norms, $\|\psi\|_{\cC^0}=
\|\phi_N\|_{\cC^0}$ and since $\tilde P_N\phi_N =
(P_N\psi)\circ p_N$, we have
$$
\| \tilde P_N
\phi_N\|_{\cC^{0,\alpha}} = \|  P_N
\psi\|_{\cC^{0,\alpha}}.
$$
Thus by~\eqref{eq:schau1}
\begin{equation}
  \label{eq:schau2}
\|\psi\circ p_N\|_{\cC_T^{2,\alpha}(\Omega_1)}\leq c \Big \{\|  P_N
\psi\|_{\cC^{0,\alpha}} + \|\psi\|_{\cC^0} \Big \} .
\end{equation}
We conclude using the following result
\begin{lemma}
 There exists a constant $c'>0$
  such that for every   $N$ sufficiently large and all $\psi\in C^2_+(\cQ_N(\Sigma))$
  $$
c'\|\psi\|_{\cC^{2,\alpha}} \leq \|\psi\circ p_N \|_{\cC_T^{2,\alpha} (\Omega_1)}.
  $$
\end{lemma}
\begin{proof}[Proof of the lemma]
  By definition, $\Omega_0$ contains a fundamental domain $\cD$ of
  $\Gamma$, and furthermore $\cD\Subset \Omega_0$. By construction, the lattices $\Gamma_N$ admit fundamental
  domains $\cD_N$ which converge (say in Hausdorff distance) toward
  $\cD$. Therefore $\cD_N \Subset \Omega_0$ for all $N$ sufficiently
  large.

 In particular every vertex $\zert\in\cG_N^+(\Sigma)$ admits a lift
 $\tilde \zert\in\cG_N^+(\RR^2)$ via $p_N$ such that $\zert\in
 \Omega_0$. This shows that
 $$
\|\psi\|_{\cC^0}\leq  \|\psi\circ p_N\|_{\cC^0_T(\Omega_1)}.
$$
If $N$ is sufficiently large, the finite differences of order $1$ or
$2$ of $\psi\circ p_N$ are well defined at $\tilde\zert$ depend only on values taken by
the function on the domain $\overline\Omega_1$. It follows by
Remark~\ref{rmk:tnorm} that
\begin{equation}
  \label{eq:controt1}
  \|\psi\circ p_N\|_{\cC^2}\leq  \|\psi\circ p_N\|_{\cC^2_T(\Omega_1)}.
\end{equation}
If $\xi$ is any discrete function in $C^0(\cG^+_N(\RR^2))$, for every
pair of vertices $\vert_0$, $\vert'$ of $\cG_N^+(\RR^2)$ with $\vert_0 \in
\overline \Omega_0$ and $\vert' \not \in
\overline \Omega_1$, we have
$$
\frac{|\xi(\vert_0)-\xi(\vert ')|}{\|\vert_0-\vert'\|^\alpha} \leq
{|\xi(\vert_0)-\xi(\vert ')|} \leq 2\|\xi\|_{\cC^0}
$$
since $ \|\vert_0-\vert'\|\geq 1$. We apply this inequality to the
second order finite differences of $\psi\circ p_N$. This shows that
the  $\cC^{2,\alpha}$-norm of $\psi\circ p_N$ is controled by its
$\cC^2$-norm  and its $\cC^{2,\alpha}_T(\Omega_1)$-norm. Hence by
\eqref{eq:controt1} the $\cC^{2,\alpha}_T(\Omega_1)$-norm controls the
$\cC^{2,\alpha}$-norm.
\end{proof}
Using the lemma and \eqref{eq:schau2}, we deduce that for every $N$ sufficiently large
and $\psi\in C^2_+(\cQ_N(\Sigma))$, we have
$$
c'\|\psi\|_{\cC^{2,\alpha}} = c'\|\psi\circ p_N\|_{\cC^{2,\alpha}}
\leq
 c \Big \{\| P_N
\psi\|_{\cC^{0,\alpha}} + \|\psi\|_{\cC^0} \Big
\}.
$$
 This proves the theorem for $N$
 sufficiently large and $\psi\in C^2_+(\cQ_N(\Sigma))$.
 The same result holds if $\psi \in C^2_-(\cQ_N(\Sigma))$. For a general
$\psi$, we use the decomposition in components $\psi=\psi^++\psi^-$
and the theorem follows, for $N$ sufficiently large, by definition of the weak H\"older norms.
If the theorem holds for $N$ sufficiently large, it holds for every
$N$ since $C^2(\cQ_N(\Sigma))$ is finite dimensional, and all norms
are equivalent.
\end{proof}
\begin{cor}
  \label{cor:schauder}
    There exists a constant $c_2>0$ such that for all $N\geq 0$ and for
  all functions $\phi  \in C^2(\cQ_N(\Sigma))$, we have
  $$
 \|\Delta_N \phi \|_{\cC^{0,\alpha}_w} + \|\phi \|_{\cC^0} \geq c_2\|\phi\|_{\cC^{2,\alpha}_w}.
 $$
\end{cor}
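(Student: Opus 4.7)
The plan is to bootstrap Theorem~\ref{theo:thomee} via the decomposition of $\Delta_N$ derived in \S\ref{sec:splitlapl}. Recall
$$
\Delta_N = \Delta_N^E + \Delta_N^I = \underbrace{\hat\Delta_N^E + \Gamma_N^E}_{=\,P_N} + \underbrace{K_N^E + K_N^I}_{\text{zero-th order}} + \underbrace{\cE_N^E + \cE_N^I}_{\text{small error}},
$$
so $\Delta_N = P_N + R_N$ with $R_N := K_N^E + K_N^I + \cE_N^E + \cE_N^I$. The strategy is: apply the Thom\'ee-type Schauder estimate to $P_N$, and bound the remainder $R_N$ in a form that can be absorbed into $\|\Delta_N\phi\|_{\cC^{0,\alpha}_w}$ and $\|\phi\|_{\cC^0}$.

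First I would estimate the zero-th order part. By Proposition~\ref{prop:formopl}, $K_N^E\phi = \kappa_N\cdot\phi$ and $K_N^I\phi = -\kappa_N\cdot\bar\phi$, where the averaging operator $\phi\mapsto\bar\phi$ is a bounded operator on each discrete H\"older space (with norm independent of $N$). Corollary~\ref{cor:kappaconv} gives $\kappa_N\stackrel{\cC^k}\longrightarrow K+E$ for every $k$, so $\kappa_N$ is uniformly bounded in $\cC^{0,\alpha}_w$. The discrete product rule for H\"older seminorms then yields a constant $C_0$ with
$$
\|K_N^E\phi + K_N^I\phi\|_{\cC^{0,\alpha}_w}\leq C_0\,\|\phi\|_{\cC^{0,\alpha}_w}
$$
uniformly in $N$. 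For the error part, Proposition~\ref{prop:errors} delivers
$$
\|\cE_N^E\phi + \cE_N^I\phi\|_{\cC^{0,\alpha}_w}\leq 2\epsilon_N\,\|\phi\|_{\cC^{2,\alpha}_w},\qquad \epsilon_N=\cO(N^{-1}).
$$

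Combining these with Theorem~\ref{theo:thomee} gives
$$
c_1\|\phi\|_{\cC^{2,\alpha}_w}\leq \|P_N\phi\|_{\cC^{0,\alpha}_w}+\|\phi\|_{\cC^0}\leq \|\Delta_N\phi\|_{\cC^{0,\alpha}_w} + C_0\|\phi\|_{\cC^{0,\alpha}_w}+2\epsilon_N\|\phi\|_{\cC^{2,\alpha}_w}+\|\phi\|_{\cC^0}.
$$
The $2\epsilon_N\|\phi\|_{\cC^{2,\alpha}_w}$ term is absorbed into the LHS for $N$ large enough that $2\epsilon_N<c_1/2$. To handle the $C_0\|\phi\|_{\cC^{0,\alpha}_w}$ term, I would invoke a discrete H\"older interpolation inequality: for every $\eta>0$, there is a constant $C(\eta)$ independent of $N$ with $\|\phi\|_{\cC^{0,\alpha}_w}\leq \eta\|\phi\|_{\cC^{2,\alpha}_w}+C(\eta)\|\phi\|_{\cC^0}$. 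This follows by the standard two-scale argument: on short scales one controls H\"older differences by first-order differences (using finite differences of $\phi$), and on long scales by the $\cC^0$-norm directly; the diagonal direction used in the weak norms poses no extra difficulty. Picking $\eta=c_1/(4C_0)$ gives a second absorption, leading to
$$
\tfrac{c_1}{4}\|\phi\|_{\cC^{2,\alpha}_w}\leq \|\Delta_N\phi\|_{\cC^{0,\alpha}_w}+C'\|\phi\|_{\cC^0}
$$
for all $N\geq N_0$. The remaining values $N<N_0$ are handled by finite dimensionality: on each finite-dimensional space $C^2(\cQ_N(\Sigma))$ the two sides are equivalent norms once one checks that $\phi\mapsto\|\Delta_N\phi\|_{\cC^{0,\alpha}_w}+\|\phi\|_{\cC^0}$ is a norm, which reduces to showing that $\Delta_N\phi=0$ and $\phi=0$ on $\cC^0$ force $\phi=0$; so each individual $N$ contributes its own constant and the maximum over $N<N_0$ combines with $c_1/4$ to yield the desired uniform constant $c_2$.

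The main obstacle I anticipate is the discrete interpolation inequality with constants independent of $N$. The core difficulty is that the weak H\"older norms $\cC^{k,\alpha}_w$ only involve diagonal finite differences and split according to $C^0(\cG_N^+)\oplus C^0(\cG_N^-)$, so one must verify that the two-scale argument works when differences are taken only along the $u,v$ directions; this boils down to checking that on each connected component $\cG_N^\pm(\Sigma)$, vertices can be joined by diagonal paths of controlled length, which is immediate from the grid structure. Once this is in hand, the absorption arguments are routine and the corollary follows.
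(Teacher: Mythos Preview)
Your argument is correct and essentially parallels the paper up to the point where one must replace the $\|\phi\|_{\cC^{0,\alpha}_w}$ term by $\|\phi\|_{\cC^0}$. There, the paper takes a different route: rather than a discrete interpolation inequality, it proves the intermediate estimate
\[
\|\Delta'_N\phi\|_{\cC^{0,\alpha}_w}+\|\phi\|_{\cC^0}\geq c''\|\phi\|_{\cC^{0,\alpha}_w}
\]
by contradiction and compactness. Assuming a sequence $\phi_{N_k}$ violating it with $\|\phi_{N_k}\|_{\cC^{0,\alpha}_w}=1$, the already-established estimate with $\cC^{0,\alpha}_w$ on the left yields a uniform $\cC^{2,\alpha}_w$-bound, Theorem~\ref{theo:ascoli} extracts a $\cC^2_w$-limit, and $\|\phi_{N_k}\|_{\cC^0}\to 0$ forces this limit to vanish, contradicting $\|\phi_{N_k}\|_{\cC^{0,\alpha}_w}=1$.

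The trade-offs: your interpolation argument is more self-contained and quantitative (the constant $C(\eta)$ is explicit in $\eta$), and avoids the Ascoli--Arzel\`a machinery. The paper's compactness argument requires no new inequality, since Theorem~\ref{theo:ascoli} is already in place and is reused verbatim for the spectral gap (Theorem~\ref{theo:specgap}); it also sidesteps the need to verify uniform-in-$N$ path estimates on $\cG_N^\pm(\Sigma)$. Your two-scale sketch for the interpolation is correct: on each component $\cG_N^\pm$ the lattice is a rotated square grid of step $\sqrt{2}/N$, and two vertices at distance $d$ are joined by a $(u,v)$-path of total length at most $\sqrt{2}\,d$, which gives $[\phi^\pm]_\alpha\leq C\delta^{1-\alpha}\|\phi^\pm\|_{\cC^1}+2\delta^{-\alpha}\|\phi^\pm\|_{\cC^0}$ with $C$ independent of $N$. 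The finite-$N$ tail is handled identically in both approaches.
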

\begin{proof}
  We use the  decomposition $\phi=\phi^++\phi^-$ and prove the
  Corollary in the case of the operator
  $$ \Delta_N' = \hat\Delta^E_N +\Gamma^E_N + K^E_N + K^I_N
  $$ first.
  
Then  $\Delta'_N\phi = \psi = \psi^++\psi^-$, where $\psi^+ = P_N\phi^+ +
\kappa_N(\phi ^+ - \bar\phi^-)$ and $\psi^- = P_N\phi^- +
\kappa_N(\phi ^- - \bar\phi^+)$. 
Since $\kappa_N$ converges in the sense of Lemma~\ref{cor:kappaconv},
we deduce that $\|\kappa_N\|_{\cC^{0,\alpha}_w}$ is uniformly bounded
for all $N$.  Thus a $\cC^{0,\alpha}_w$-bound on $\phi$ provides
 a $\cC^{0,\alpha}_w$-bound on $\kappa_N\phi^\pm$. Similarly a
  $\cC^{0,\alpha}_w$-bound provides $\cC^{0,\alpha}_w$-bound on
  $\bar\phi^\pm$. In other words, there exists a constant $c'>0$
  independent of $N$ and $\phi$ such that
  $$
\|\kappa_N(\phi ^+ - \bar\phi^-)\|_{\cC_w^{0,\alpha}} \leq
c'\|\phi\|_{\cC^{0,\alpha}_w}  \mbox { and }
\|\kappa_N(\phi ^- - \bar\phi^+)\|_{\cC_w^{0,\alpha}} \leq
c'\|\phi\|_{\cC_w^{0,\alpha}} .
$$
It follows that
$$
\|\Delta'_N\phi\|_{\cC_w^{0,\alpha}} + 2c'\|\phi\|_{\cC_w^{0,\alpha}} \geq  \|P_N\phi\|_{\cC_w^{0,\alpha}} ,
$$
and by Theorem~\ref{theo:thomee}
\begin{equation}
  \label{eq:schtrick}
\|\Delta'_N\phi\|_{\cC_w^{0,\alpha}} + (2c'+1)\|\phi\|_{\cC_w^{0,\alpha}}
\geq  c\|\phi\|_{\cC_w^{2,\alpha}} .
\end{equation}
We are not quite finished since we have a $\cC_w^{0,\alpha}$-estimate
for $\phi$ in the above inequality rather than a $\cC^0$-estimate as in the corollary.
We prove a weaker version of the corollary first: we show that there
exists a constant $c''>0$ such that for all $N$ and for all $\phi$,
\begin{equation}
  \label{eq:schtrick2}
\|\Delta'_N\phi\|_{\cC_w^{0,\alpha}} + \|\phi\|_{\cC^{0}}
\geq  c''\|\phi\|_{\cC_w^{0,\alpha}} .
\end{equation}
If this is true, the corollary trivially follows in the case of $\Delta'_N$ from
\eqref{eq:schtrick} and \eqref{eq:schtrick2}. Finally,
Proposition~\ref{prop:errors} completes the proof in the case of
$\Delta_N =\Delta'_N+\cE^E_N+\cE^I_N$.

Assume that \eqref{eq:schtrick2} does not hold.
Then theres exists a sequence of
discrete functions $\phi_{N_k}\in\cC^2(\cQ_{N_k}(\Sigma))$ with the
property that
$$
\|\phi_{N_k}\|_{\cC_w^{0,\alpha}} = 1,\quad 
\|\Delta'_{N_k}\phi_{N_k}\|_{\cC_w^{0,\alpha}} \to 0  \mbox{ and }
\|\phi_{N_k}\|_{\cC_w^0} \to 0.
$$
Using Inequality~\eqref{eq:schtrick}, we obtain a uniform
$\cC^{2,\alpha}_w$-bound on $\phi_{N_k}$.
By the Ascoli-Arzela theorem~\ref{theo:ascoli}, we may assume up to
extraction of a subsequence,  that $\phi_{N_k}$ converges in the
$\cC^2_w$-sense toward a pair of functions $(\phi^+,\phi^-)$ on
$\Sigma$. Since the convergence is $\cC^2_w$ hence $\cC^0$, the
condition $\|\phi_{N_k}\|_{\cC^0}\to 0$ forces $\phi^+=\phi^-=0$.
This imply that $\phi_{N_k}\stackrel{\cC_w^2}\longrightarrow
(0,0)$, and in particular $\|\phi_{N_k}\|_{\cC_w^2}\to 0$. Since the
$\cC_w^2$-discrete norm controls the $\cC_w^{0,\alpha}$-discrete norm,
this contradicts the assumption $\|\phi_{N_k}\|_{\cC_w^{0,\alpha}}=1$.
\end{proof}

\subsection{Spectral gap}
\label{sec:specgap}
We define the discrete functions
$$
\I_N^\pm \in C^0(\cG_N^\pm(\Sigma)) \simeq   C^2_\pm(\cQ_N(\Sigma))
$$
by
$$
\ip{\I_N^\pm,\zert} = \left \{
\begin{array}{ll}
  1& \mbox{ if }  \zert\in\fC_0(\cG^\pm_N(\Sigma)) \\
  0& \mbox{ if }  \zert\in\fC_0(\cG^\mp_N(\Sigma))
\end{array}
\right .
$$
We also define the discrete functions $\I_N, \zeta_N  \in C^0(\cG_N(\Sigma))$ by
\begin{align*}
  \I_N & = \I^+_N +\I^ -_N\\
\zeta_N &=\theta_N^{-1}\cdot\I_N^+  - \theta_N^{-1}\cdot\I_N^-  
\end{align*}
where $\theta_N$ is any discrete function, sufficiently close to
$\theta^u_N$ or $\theta_N^v$. For instance, we put
$$
\theta_N =\frac 12( \theta^u_N + \theta^v_N).
$$
We define the spaces of discrete functions $\scrK_N \subset  C^0(\cG_N(\Sigma))$ by
\begin{equation}
  \label{eq:almostkernel}
\scrK_N = \left \{
\begin{array}{ll}
  \RR\cdot \I_N & \mbox{ if $(p,\ell)$ is nondegenerate,}  \\
  \RR\cdot \I_N \oplus \RR\cdot \zeta_N &
  \mbox{ in the degenerate case.}
\end{array}
\right .
\end{equation}
 In addition, we denote by
$$
\scrK_N^\perp \subset C^2(\cQ_N(\Sigma))
$$
the  orthogonal complement of $\scrK_N$, with respect to the $\ipp{\cdot,\cdot}$-inner product.

\begin{rmks}
  \begin{itemize}
\item  The function $\I_N$ and more generally, any constant function, is contained in the kernel of
  the operator $\Delta_N$. Indeed, $\Delta_N =\delta_N\delta_N^\star$, but
  $d^\star\I_N = 0$ by formula~\eqref{eq:dNstar}.

\item  The sequence of discrete functions $\zeta_N$ converges toward the
  pair of functions $(\theta^{-1},-\theta^{-1})$, at least in the
  $\cC^2_w$-sense. Whenever $\ell$ is degenerate, we must have
  $$
\Delta_N\zeta_N \stackrel{\cC^0_w}\longrightarrow (0,0),
$$
by Theorem~\ref{theo:limit} and Proposition~\ref{prop:kernel}.

\item The kernel of $\Delta_N$
  is at least $1$ dimensional. If $\ell$ is degenerate, our next result at Theorem~\ref{theo:specgap}, implies that
  for $N$ sufficiently large, $\ker\Delta_N$ has dimension at most
  $2$. Although $\zeta_N$ may not belong to  $\ker\Delta_N$, the previous
  remark shows that this function is approximately in the kernel. In
  this sense, $\scrK_N$ may be thought of as an approximate kernel of $\Delta_N$.
\end{itemize}
\end{rmks}

\begin{theo}
  \label{theo:specgap}
  There exists a real constant $c_3>0$ such that, for all positive integers $N$
  sufficiently large and for all discrete function $\phi\in
  \scrK_N^\perp$, we
  have
  $$
\|\Delta_N\phi\|_{\cC^{0,\alpha}_w} \geq c_3\|\phi\|_{\cC^{2,\alpha}_w}.
  $$
\end{theo}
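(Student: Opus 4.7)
The plan is to argue by contradiction, using the Schauder estimate from Corollary~\ref{cor:schauder}, the compactness theorem (Ascoli--Arzela, Theorem~\ref{theo:ascoli}), and the description of the kernel of the limit operator $\Xi$ from Proposition~\ref{prop:kernel}. If the inequality fails, then for every $n$ there is $N_n$ and $\phi_{N_n}\in\scrK_{N_n}^\perp$ with
$$\|\phi_{N_n}\|_{\cC^{2,\alpha}_w}=1, \qquad \|\Delta_{N_n}\phi_{N_n}\|_{\cC^{0,\alpha}_w}\to 0.$$
If the sequence $N_n$ were bounded we could reduce to a contradiction on a fixed finite-dimensional space since $\scrK_N$ already accounts for $\ker \Delta_N$ at each fixed $N$ (the discrete Laplacian is selfadjoint), so we may assume $N_n\to\infty$.

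First I would use Corollary~\ref{cor:schauder}: from $\|\Delta_{N_n}\phi_{N_n}\|_{\cC^{0,\alpha}_w}\to 0$ and $\|\phi_{N_n}\|_{\cC^{2,\alpha}_w}=1$, the inequality $\|\Delta_N\phi\|_{\cC^{0,\alpha}_w}+\|\phi\|_{\cC^0}\geq c_2\|\phi\|_{\cC^{2,\alpha}_w}$ implies that $\|\phi_{N_n}\|_{\cC^0}$ is bounded below by a positive constant for $n$ large. In particular $\phi_{N_n}$ does not tend to zero. Since $\|\phi_{N_n}\|_{\cC^{2,\alpha}_w}=1$, Theorem~\ref{theo:ascoli} (Ascoli--Arzela) lets us extract a subsequence converging in the weak $\cC^2$-sense toward a pair of functions $(\phi^+,\phi^-)$ of class $\cC^{2,\alpha}$ on $\Sigma$, with $(\phi^+,\phi^-)\neq(0,0)$.

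Next I would pass the equation to the limit. By Theorem~\ref{theo:limit}, the $\cC^2_w$-convergence of $\phi_{N_n}$ forces $\Delta_{N_n}\phi_{N_n}\stackrel{\cC^0}{\to}\opl(\phi^+,\phi^-)$; combined with $\|\Delta_{N_n}\phi_{N_n}\|_{\cC^0_w}\to 0$, this yields $\opl(\phi^+,\phi^-)=0$. By Proposition~\ref{prop:kernel}, $(\phi^+,\phi^-)$ is of the form $(c_0+c_1\theta^{-1},\,c_0-c_1\theta^{-1})$, with $c_1=0$ unless $(p,\ell)$ is degenerate.

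The main obstacle, and the heart of the argument, is to transfer the orthogonality $\phi_{N_n}\perp\scrK_{N_n}$ to the limit so as to force $c_0=c_1=0$, which will contradict $(\phi^+,\phi^-)\neq(0,0)$. For this I would exploit that discrete $\ipp{\cdot,\cdot}$-inner products converge to $L^2$-inner products for sequences converging in the $\cC^0_w$-sense (the Riemann-sum result stated just before \S\ref{sec:ip} for the space of discrete functions). Applied to the pair $(\phi_{N_n},\I_{N_n})$, this gives
$$0=\ipp{\phi_{N_n},\I_{N_n}}\longrightarrow \tfrac12\int_\Sigma \phi^+\sigma+\tfrac12\int_\Sigma\phi^-\sigma,$$
hence $\int_\Sigma(\phi^++\phi^-)\,\sigma=0$, and since $\phi^++\phi^-=2c_0$ is constant this forces $c_0=0$. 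In the degenerate case, using that $\zeta_{N_n}$ converges in the $\cC^0_w$-sense to $(\theta^{-1},-\theta^{-1})$, the same argument applied to $\ipp{\phi_{N_n},\zeta_{N_n}}=0$ gives
$$\int_\Sigma\theta^{-1}(\phi^+-\phi^-)\,\sigma=0,$$
and since $\phi^+-\phi^-=2c_1\theta^{-1}$ with $\theta>0$, we get $c_1=0$. Hence $(\phi^+,\phi^-)=(0,0)$, contradicting the lower bound on $\|\phi_{N_n}\|_{\cC^0}$. This contradiction proves the existence of $c_3>0$ for $N$ large, and a standard finite-dimensional argument extends it to all $N$.
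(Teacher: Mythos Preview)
Your argument is correct and follows essentially the same route as the paper: contradiction, Schauder estimate (Corollary~\ref{cor:schauder}), Ascoli--Arzela compactness (Theorem~\ref{theo:ascoli}), passage to the limit operator (Theorem~\ref{theo:limit}), and transfer of the orthogonality to the $L^2$-limit to force the limit into $\ker\Xi\cap(\ker\Xi)^\perp=\{0\}$. The paper organizes the same ingredients slightly differently, first proving an auxiliary lemma asserting $\|\Delta_N\phi\|_{\cC^{0,\alpha}_w}\geq c_4\|\phi\|_{\cC^0}$ (by normalizing $\|\phi\|_{\cC^0}=1$ and using Schauder to bound $\|\phi\|_{\cC^{2,\alpha}_w}$), then combining with Corollary~\ref{cor:schauder}; your choice to normalize $\|\phi\|_{\cC^{2,\alpha}_w}=1$ and use Schauder to bound $\|\phi\|_{\cC^0}$ from below is an equivalent rearrangement.

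One imprecision to fix: your justification for dismissing the bounded-$N_n$ case (``$\scrK_N$ already accounts for $\ker\Delta_N$'') is not correct as stated. In the degenerate case $\scrK_N$ contains $\zeta_N$, which is only an \emph{approximate} kernel element; there is no claim that $\ker\Delta_N\subset\scrK_N$ for a given fixed $N$, so $\scrK_N^\perp\cap\ker\Delta_N$ could be nonzero at isolated values of $N$. The point is rather that this case simply does not arise: the theorem is stated for $N$ sufficiently large, and the correct negation (for every $c>0$ and every $N_0$ there exist $N\geq N_0$ and $\phi\in\scrK_N^\perp$ violating the bound) already produces a sequence with $N_n\to\infty$. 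For the same reason, your closing remark about extending the estimate to all $N$ by a finite-dimensional argument is unnecessary.
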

\begin{proof}
  We are assuming that $\ell$ is degenerate. Since the proof in the
  nondegenerate case is completely similar, we leave the details to the reader.
  We start by proving a weaker version of the theorem:
  \begin{lemma}
    \label{lemma:specgap}
      There exists a real constant $c_4>0$ such that, for all positive integers $N$
  sufficiently large and for all discrete function $\phi\in
  \scrK_N^\perp$, we
  have
  $$
\|\Delta_N\phi\|_{\cC_w^{0,\alpha}} \geq c_4\|\phi\|_{\cC^{0}}.
  $$
  \end{lemma}
\begin{proof}[Proof of Lemma~\ref{lemma:specgap}]
  Assume that the the result is false. Then there exists a sequence
  $\phi_{N_k}\in\scrK_{N_k}^\perp$ such that
  $$
\forall k \quad \|\phi_{N_k}\|_{\cC^0}=1,  \mbox{ and }
\|\Delta_{N_k}\phi_{N_k}\|_{\cC_w^{0,\alpha}}\longrightarrow 0.
$$
 Using Corollary~\ref{cor:schauder}, we deduce a
$\cC^{2,\alpha}$-bound on $\phi_{N_k}$. Thanks to the Ascoli-Arzela
Theorem~\ref{theo:ascoli}, we may assume that $\phi_{N_k}$ converges 
in the weak $\cC^2$-sense, up to further extraction:
\begin{equation}
  \label{eq:lemmaconvphi}
  \phi_{N_k}\stackrel{\cC^2_w}\longrightarrow (\phi^+,\phi^-).
\end{equation}
By Theorem~\ref{theo:limit}, we conclude that
$$
\Delta_{N_k}\phi_{N_k}\stackrel{\cC^0}\longrightarrow \Xi(\phi^+,\phi^-).
$$
The condition $\|\Delta_{N_k}\phi_{N_k}\|_{\cC^{0,\alpha}}\to 0$
implies that $\|\Delta_{N_k}\phi_{N_k}\|_{\cC^{0}}\to 0$, which shows
that the limit is $(0,0)$. Therefore
\begin{equation}
  \label{eq:kerxi}
  (\phi^+,\phi^-)\in\ker\Xi.
\end{equation}
We are assuming now that we are in the degenerate case as before. The
nondegenerate case is treated similarly.
By assumption $\phi_{N_k}$ is orthogonal to $\scrK_N$, hence
$\ipp{\phi_{N_k},\I_{N_k}}=\ipp{\phi_{N_k},\zeta_{N_k}}=0$. Since all
these discrete functions converge in the $\cC^0$-sense, we deduce that
the limit also satisfy the orthogonality relation, that is
$$\ipp{(\phi^+,\phi^-), (\I,\I)} = \ipp{(\phi^+,\phi^-),
  (\theta^{-1},-\theta^{-1})}=0.$$
In other words $(\phi^+,\phi^-)$ is $L^2$-orthogonal to $\ker\Xi$. In
view of~\eqref{eq:kerxi} we deduce that
$$
\phi^+=\phi^-=0,
$$
and by~\eqref{eq:lemmaconvphi}, we deduce that
$$
\|\phi_{N_k}\|_{\cC^0}\longrightarrow 0
$$
which contradicts the assumption $\|\phi_{N_k}\|_{\cC^ 0}=1$.
This completes the proof of the lemma.
\end{proof}
By Lemma~\ref{lemma:specgap}, we have for every
$\phi\in\scrK_N^\perp$
$$
(1+c_4^{-1})\|\Delta_N\phi\|_{\cC_w^{0,\alpha}} \geq
\|\Delta_N\phi\|_{\cC_w^{0,\alpha}}+ \|\phi\|_{\cC^{0}}.
$$
By Corollary~\ref{cor:schauder}, the RHS is an upper bound for
$c_2\|\phi\|_{\cC_w^{2,\alpha}}$. The constant,
$$
c_3 =\frac{c_2}{1+c_4^{-1}}
$$
satisfies the theorem, which completes the proof.
\end{proof}
\begin{cor}
  \label{cor:greenprep}
  \begin{enumerate}
  \item   If $(p,\ell)$ is nondegenerate, then for every $N$ sufficiently large,
  the kernel of $\Delta_N$  is given by $\ker\Delta_N =\scrK_N=
  \RR\cdot \I_N$.
  Furthermore   $\scrK_N^\perp$ is
  preserved by $\Delta_N$ which induces an isomorphism $\Delta_N:\cK_N^\perp\to
  \cK_N^\perp$. 

  \item More generally, including the case where $(p,\ell)$ is degenerate,
    there is a direct sum decomposition for every $N$ sufficiently large
  $$
C^2(\cQ_N(\Sigma)= \scrK_N\oplus \Delta_N(\scrK_N ^\perp),
$$
and a constant $c_6$ independent of $N$, such that for all $\phi \in
C^2(\cQ_N(\Sigma))$ decomposed according to the above splitting as
$\phi=\bar\phi +\phi^ \Delta$, we have
\begin{equation}
  \label{eq:controlgreen}
c_6\|\phi\|_{\cC_w^{0,\alpha}}\geq \|\bar \phi\|_{\cC_w^{0,\alpha}} +
\|\phi_\Delta \|_{\cC_w^{0,\alpha}}.
\end{equation}
  \end{enumerate}
\end{cor}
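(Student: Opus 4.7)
The plan is to dispatch part (1) by dimension counting in the self-adjoint finite-dimensional setting, and to handle both the direct-sum statement and the bound~\eqref{eq:controlgreen} in part (2) by a contradiction argument that exploits Theorem~\ref{theo:limit}, Proposition~\ref{prop:kernel} and the compactness Theorem~\ref{theo:ascoli}.

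For part (1), $\delta_N^\star \I_N = 0$ follows directly from formula~\eqref{eq:dNstar}, so $\scrK_N = \RR\cdot\I_N \subset \ker\Delta_N$. Theorem~\ref{theo:specgap} gives $\ker\Delta_N\cap\scrK_N^\perp=0$, and in the orthogonal decomposition $C^2(\cQ_N(\Sigma)) = \scrK_N \oplus \scrK_N^\perp$ this forces $\ker\Delta_N = \scrK_N$. Self-adjointness of $\Delta_N$ for $\ipp{\cdot,\cdot}$ yields $\Delta_N(\scrK_N^\perp)\subset\scrK_N^\perp$ (for $\phi\in\scrK_N^\perp$ and $k\in\ker\Delta_N$ one has $\ipp{\Delta_N\phi,k}=\ipp{\phi,\Delta_Nk}=0$), and the spectral gap makes $\Delta_N\colon\scrK_N^\perp\to\scrK_N^\perp$ injective, hence bijective by finite-dimensionality.

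For the direct sum in part (2), injectivity of $\Delta_N$ on $\scrK_N^\perp$ gives $\dim\Delta_N(\scrK_N^\perp)=\dim\scrK_N^\perp$, so dimensions add to $\dim C^2(\cQ_N(\Sigma))$ and it suffices to show $\scrK_N\cap\Delta_N(\scrK_N^\perp)=0$. If $k=\Delta_Nh$ with $k = a\I_N+b\zeta_N\in\scrK_N$ and $h\in\scrK_N^\perp$, pairing with $\I_N$ gives $\ipp{k,\I_N}=\ipp{h,\Delta_N\I_N}=0$, which closes the nondegenerate case. In the degenerate case, pair additionally with $\zeta_N$: $\ipp{k,\zeta_N}=\ipp{h,\Delta_N\zeta_N}$, where $\|\Delta_N\zeta_N\|_{\cC^0}\to 0$ since $\zeta_N\stackrel{\cC^2_w}{\to}(\theta^{-1},-\theta^{-1})\in\ker\Xi$ by Theorem~\ref{theo:limit} and Proposition~\ref{prop:kernel}, while the spectral gap bounds $\|h\|$ by $\|k\|$. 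A direct Riemann-sum calculation shows that the Gram matrix of $\{\I_N,\zeta_N\}$ is asymptotically diagonal with strictly positive diagonal entries, so after normalizing $k$ we are forced to $a,b\to 0$, a contradiction for $N$ large.

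For the bound~\eqref{eq:controlgreen}, argue by contradiction with a sequence $\phi_{N_j} = \bar\phi_{N_j}+\phi^\Delta_{N_j}$ normalized so that $\|\bar\phi_{N_j}\|_{\cC^{0,\alpha}_w}+\|\phi^\Delta_{N_j}\|_{\cC^{0,\alpha}_w}=1$ while $\|\phi_{N_j}\|_{\cC^{0,\alpha}_w}\to 0$. Writing $\phi^\Delta_{N_j}=\Delta_{N_j}h_{N_j}$ with $h_{N_j}\in\scrK_{N_j}^\perp$, the spectral gap yields a uniform $\cC^{2,\alpha}_w$-bound on $h_{N_j}$, Theorem~\ref{theo:ascoli} extracts a $\cC^2_w$-limit $(h^+,h^-)$, and after expanding $\bar\phi_{N_j}=a_{N_j}\I_{N_j}+b_{N_j}\zeta_{N_j}$ with $a_{N_j}\to a$, $b_{N_j}\to b$ on a subsequence, Theorem~\ref{theo:limit} turns $\bar\phi_{N_j}+\Delta_{N_j}h_{N_j}=\phi_{N_j}$ into the limit identity $a(\I,\I)+b(\theta^{-1},-\theta^{-1})+\Xi(h^+,h^-)=0$. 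Since $\Xi$ is $L^2$-self-adjoint, this splits orthogonally into a kernel piece (forcing $a=b=0$) and an image piece (forcing $\Xi(h^+,h^-)=0$); the orthogonality inherited from $h_{N_j}\in\scrK_{N_j}^\perp$ places $(h^+,h^-)$ in $(\ker\Xi)^\perp$, so $(h^+,h^-)=0$. Consequently $\|\bar\phi_{N_j}\|_{\cC^{0,\alpha}_w}\to 0$ and $\|\Delta_{N_j}h_{N_j}\|_{\cC^{0,\alpha}_w}=\|\phi_{N_j}-\bar\phi_{N_j}\|_{\cC^{0,\alpha}_w}\to 0$, contradicting the normalization. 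The main obstacle is the degenerate case: the near-kernel direction $\zeta_N$ lies in $\scrK_N$ without being in $\ker\Delta_N$, and all the transversality needed for a uniformly bounded inverse has to be extracted from the quantitative gap between $\|\Delta_N\zeta_N\|\to 0$ and the positive-definite limit of the Gram matrix of $\{\I_N,\zeta_N\}$.
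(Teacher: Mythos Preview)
Your proof is correct. Part~(1) and the bound~\eqref{eq:controlgreen} follow essentially the same contradiction-and-compactness pattern as the paper (the paper actually reverses the logic and deduces (1) from (2), but your direct self-adjoint argument for (1) is equally valid).

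The one place where you take a genuinely different route is the direct-sum assertion $\scrK_N\cap\Delta_N(\scrK_N^\perp)=0$ in the degenerate case. The paper argues by extracting, via Theorem~\ref{theo:ascoli}, $\cC^2_w$-convergent subsequences of both $\phi_{N_k}\in\scrK_{N_k}$ and its preimage $\psi_{N_k}\in\scrK_{N_k}^\perp$, and then observes that the common limit lies in $\ker\Xi\cap\Im\Xi=\{0\}$. You instead pair a hypothetical nonzero $k_N=\Delta_N h_N$ against $\I_N$ and $\zeta_N$, using self-adjointness to rewrite $\ipp{k_N,\zeta_N}=\ipp{h_N,\Delta_N\zeta_N}$ and the quantitative input $\|\Delta_N\zeta_N\|_{\cC^0}\to 0$ (from Theorem~\ref{theo:limit} applied to $\zeta_N\to(\theta^{-1},-\theta^{-1})\in\ker\Xi$), together with the asymptotic non-degeneracy of the Gram matrix of $\{\I_N,\zeta_N\}$, to force the coefficients to zero. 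This avoids invoking Ascoli--Arzela for the splitting and is somewhat more quantitative; the paper's version, on the other hand, packages both the splitting and the bound into the same $\ker\Xi\cap\Im\Xi=\{0\}$ mechanism, which keeps the two arguments parallel. In your bound argument the side claim $(h^+,h^-)=0$ is superfluous (you only need $\Xi(h^+,h^-)=0$ and $a=b=0$), but it does no harm.
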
  
\begin{proof}
  The first statement is a consequence of the second statement: In the
  nondegenerate case, $\scrK_N$ is one dimensional and
  $\scrK_N\subset\ker\Delta_N$. Hence $\Delta(\scrK_N^\perp)$ has
  codimension at most $1$ in $C^2(\cQ_N(\Sigma))$. By the second
  statement the codimension is exactly $1$. Therefore $\ker\Delta_N
  =\scrK_N$. The rest of the statement follows using the fact that
  $\Delta_N$ is selfadjoint.

  We merely have to prove the second statement of the corollary.
  We start by proving that we have a splitting as claimed. Suppose
  that the intersection $\scrK_N\cap\Delta(\scrK_N^\perp)$ is not
  reduced to $0$ for arbitrarily large $N$. Then we may find a
  sequence $\phi_{N_k}$ contained in the intersections and such that
  $\|\phi_{N_k}\|_{\cC^{0}} = 1$.

  We notice that $\|\I_N\|_{\cC^{0}} =1$ and that
  $\|\zeta_N\|_{\cC^{0}}$ converges toward a positive constant, since
  $\zeta_N$ converges toward the pair of functions $(\theta^{-1},-\theta^{-1})$.
  Since $\phi_{N_k}\in \scrK_{N_k}$, we may write
  $$
\phi_{N_k}= a_k\I_{N_{k}} \mbox { resp. } \phi_{N_k}=
a_k\I_{N_{k}}+b_k\zeta_{N_k} \mbox{ in the degenerate case.}
  $$
  We deduce that the uniform $\cC^{0}$-bound on $\phi_{N_k}$
  provides a uniform bound on the coefficients $a_k$ and $b_k$.
  We may after extracting a suitable subsequence assume that the
  coefficients converge as $k$ goes to infinity. In particular
  $\phi_{N_k}$ converges toward an element of $\ker\Xi$, say in the
  $\cC^0$-sense.
  By construction we
  have a uniform $\cC^1_w$-bound on $\Phi_{N_k}$, which provides a
  uniform $\cC^{0,\alpha}_w$-bound.

  On the other hand $\phi_{N_k}\in \Delta_{N_k}(\scrK_N^\perp)$ so
  that there exists a sequence $\psi_{N_k}\in\scrK_{N_k}^\perp$ with
  $\phi_{N_k}=\Delta_{N_k}\psi_{N_k}$. By Theorem~\ref{theo:specgap},
  the uniform $\cC_w^{0,\alpha}$-bound on $\phi_{N_k}$ provides a
  uniform $\cC_w^{2,\alpha}$-bound on $\psi_{N_k}$. By Ascoli-Arzella
  Theorem~\ref{theo:ascoli}, we may assume that $\psi_{N_k}$ converges 
  in the $\cC^2_w$-sense toward a limit $(\psi^+,\psi^-)$ after
  extraction. It follows that $\phi_{N_k}=\Delta_{N_k}\psi_{N_k}$
  converges in the $\cC^0$-sense toward $\Xi(\psi^+,\psi^-)$.
In conclusion $\phi_{N_k}$ converges in the $\cC^0$-sense to an element
of $\Im\Xi\cap\ker\Xi =\{ 0 \}$.

In conclusion, the limit of $\phi_{N_k}$ must be the pair of functions
$(0,0)$, which contradicts the fact that $\| \phi_{N_k}
\|_{\cC^0}=1$. Thus
$$
\scrK_N \cap \Delta_N(\scrK_N^\perp) = \{0\}
$$
for all sufficiently large $N$. By Theorem~\ref{theo:specgap}, we know
that the restriction of $\Delta_N$ to $\scrK_N^\perp$ is injective
provided $N$ is large enough. For dimensional reasons, we have a
splitting 
$$
\scrK_N \oplus \Delta_N(\scrK_N^\perp) = C^2(\cQ_N(\Sigma)).
$$

We now proceed to the last part of the second statement. If the
control~\eqref{eq:controlgreen} does not hold, we find a sequence of
discrete functions 
$\phi_{N_k}\in C^2(\cQ_{N_k}(\Sigma))$ with decompositions
$$
\phi_{N_k}=\bar\phi_{N_k}+\phi^\Delta_{N_k},
$$
and the property that
$$
\|\phi_{N_k}\|_{\cC_w^{0,\alpha}}\to 0 \mbox{ and }
\|\bar\phi_{N_k}\|_{\cC_w^{0,\alpha}}+
\|\phi^\Delta_{N_k}\|_{\cC_w^{0,\alpha}} = 1.
$$
The $\cC^{0,\alpha}_w$-bound on $\bar\phi_{N_k}$ provides a uniform
$\cC^0$-bound. As in the first part of the proof, we may use this
bound to show that, up to extraction of a subsequence, $\bar\phi_{N_k}$
converges in the $\cC^1_w$-sense toward a limit $(\bar\phi^+,\bar\phi^-) \in \ker\Xi$.

Similarly, the $\cC_w^{0,\alpha}$ bound on $\phi^\Delta_{N_k}$ can be
used to show that, up to extraction of a subsequence, the sequence
converges in the $\cC^0$-sense toward a limit $(\phi^+_\Delta,\phi^-_\Delta)$
in the image of $\Xi$.

Eventually, we may assume that $\phi_{N_k}$ converges 
in the $\cC^0$-sense toward a limit $(\bar \phi^+ +\phi^+_\Delta,\bar
\phi^- +\phi^-_\Delta) \in \ker\Xi\oplus \Im \Xi$.
The fact that $\|\phi_{N_k}\|_{\cC_w^{0,\alpha}} \to 0$ implies that
$\|\phi_{N_k}\|_{\cC^0}\to 0$ and
by
uniqueness of the limit, we deduce that $\bar\phi^\pm=0$ and
$\phi_\Delta=0$.

However, $\bar\phi_{N_k}$ converges in the stronger, says, $\cC^1_w$-sense,
hence $\|\bar\phi_{N_k}\|_{\cC^{0,\alpha}_w}\to 0$. We
deduce that
$$\|\bar\phi_{N_k}\|=\|\phi_{N_k}-\phi^\Delta_{N_k}\|_{\cC^{0,\alpha}_w}\leq
\|\phi_{N_k}\|_{\cC^{0,\alpha}_w} +\| \phi^\Delta_{N_k}\|_{\cC^{0,\alpha}_w}\to 0,$$
which contradicts the assumption $\|\phi_{N_k}\|_{\cC_w^{0,\alpha}} + \|\phi^\Delta_{N_k}\|_{\cC_w^{0,\alpha}}=1$.
\end{proof}
\subsection{Modified construction in the degenerate case}
\label{sec:deg}
The situation for degenerate  pairs $(p,\ell)$
 came as a surprise to us. Our first guess was
that the operators $\Delta_N$ should converge in a reasonnable sense
toward
the operator involved in the smooth setting~\eqref{eq:Deltaf}. Consequently, we expected  the kernel of $\Delta_N$ to be one dimensional,
at least for $N$ large enough.
 The
 first clue that this was not true came from a local model: in this model,
 we do not choose $\Sigma$ to
be a torus, but a copy of $\RR^2$ embedded in $\RR^{2n}$ as an isotropic
Euclidean plane identified to $\RR^2$ with its quadrangulation $\cQ_N(\RR^2)$. Then one can check that the funtion $\I_N^+-\I_N^-$
belongs to the kernel of $\delta_N^\star$  directly from the formula~\eqref{eq:dNstar}.

The presence of a $2$-dimensional almost kernel $\scrK_N$ in the
degenerate case will create some trouble for solving our
probem. We may overcome them by changing slightly our construction. 

\subsubsection{The setup}
We start with a degenerate pair $(p_S,\ell_S)$, where
$$
\ell_S : S\longrightarrow \RR^4,
$$
is an isotropic immersion and  $S$ is a surface diffeomorphic to an oriented torus.
We carry out the constructions of quadrangulations $\cQ_N(S)$, graphs $\cG_N(S)$ exactly as 
in the case of $\Sigma$ (cf. \S\ref{sec:anal}), except one crucial detail. 
The lattice group $\Gamma(S)$ of the covering map $p_S:\RR^2\to S$
admits oriented basis
$(\gamma_1(S),\gamma_2(S))$. This is where
 comes the difference
with~\S\ref{sec:sqlat}: we choose a best approximation $\gamma_2^N(S)\in\Lambda_N^{ch}$
of $\gamma_2(S)$ and 
 $\gamma_1^N(S)\in \Lambda_N\setminus \Lambda_N^{ch}$ for
$\gamma_1(S)$. Notice that in the case of $\Sigma$, both
$\gamma_i^N(S)$ were chosen in $\Lambda^{ch}_N$.

This minor change
still allows us to construct  families of quadrangulations and
checkers graph. The only difference is that action of the lattice
$$
\Gamma_N(S)=span\{\gamma_1^N(S),\gamma_2^N(S)\}
$$
does not preserve the connected components of the decomposition
$$\cG_N(\RR^2)= \cG^+_N(\RR^2) \cup \cG^-_N(\RR^2),$$
and this splitting
does not descend as a splitting of $\cG_N(S)$. In particular discrete
functions $\phi\in C^2(\cQ_N(S))$ do not split into a positive and
negative component.  However, we may construct
the constant function $\I_N\in C^2(\cQ_N(S))$, which is the constant
$1$ on every face of the quadrangulation.

\subsubsection{The double cover}
We define
$$
\Gamma = span \{ \gamma_1,\gamma_2 \}
$$
where
$$
\gamma_1=2\gamma_1(S) \mbox{ and } \gamma_2=\gamma_2(S).
$$
The quotient $\Sigma = \RR^2/\Gamma$  comes with a covering map of
index $2$
\begin{equation}
  \label{eq:piS}
  \Phi^S:\Sigma\to S,
\end{equation}
and an action of $G\simeq \Gamma/\Gamma(S)\simeq \ZZ_2$ on $\Sigma$ by deck tranformations. 
We define accordingly
$$
\Gamma_N = span \{ \gamma_1^N,\gamma_2^N \}
$$
where
$$
\gamma_1^N=2\gamma^N_1(S) \mbox{ and } \gamma^N_2=\gamma^N_2(S).
$$
Notice that $2\Lambda_N\subset\Lambda_N^{ch}$, hence by definition
$\gamma_i^N\in\Lambda^{ch}_N$.
We also have double covers
$$
\Phi_N^S:\Sigma\to S
$$
with deck transformations $G_N\simeq \Gamma_N/\Gamma_N(S)\simeq \ZZ_2$
which come from the canonical projections
$\RR^2/\Gamma_N\to\RR^2/\Gamma_N(S)$.
In particular there are canonical embeddings of discrete functions
spaces induced by pullback 
$$
(\Phi^S_N)^*:C^2(\cQ_N(S))\longrightarrow C^2(\cQ_N(\Sigma)).
$$
The action of $G_N$ induces an action on $C^2(\cQ_N(\Sigma))$ and the
image of $(\Phi^S_N)^*$ consists of the discrete functions which are $G_N$-invariant.

\subsubsection{Meshes and operators for the modified construction}
Like for $\Sigma$, we may define the samples $\tau_N^S\in
\scrM_N(S)=C^0(\cQ_N(S))\otimes\RR^{2n}$ of the map $\ell_S:S\to\RR^{2n}$, the
inner product $\ipp{\cdot,\cdot}$ and the 
operators $\delta_N$, $\delta_N^\star$, etc... Using the canonical projections
$$
(\Phi^S_N)^*:C^0(\cQ_N(S))\otimes \RR^{2n} \longrightarrow C^0(\cQ_N(\Sigma))\otimes\RR^{2n}.
$$
we see that the pullbacks satisfy $\tau_N=(\Phi^S_N)^*\tau_N(S)$. In other words, they are also the
samples of the lifted isotropic immersion $\ell =\ell_S \circ \Phi^S:\Sigma \to \RR^{2n}$. Then
$\tau_N$ also induces operators denoted $\delta_N$, $\delta^\star_N$ and $\Delta_N$ which  commute with the pullback operation, by naturality of the construction.

\subsubsection{Spectral gap for the degenerate case}
All the norms defined on $C^2(\cQ_N(\Sigma))$ induce norms on
$C^2(\cQ_N(S))$ via the pullbacks $(\Phi_N^S)^*$, denoted in the same way. For instance,
for $\phi\in C^2(\cQ_N(S))$, we have
$$
\|\phi\|_{\cC^{2,\alpha}} = \|\phi\circ \Phi_N^S\|_{\cC^{2,\alpha}}.
$$
Then we prove the following result:
\begin{theo}
  \label{theo:specgap2}
  Let $\ell_S:S\to\RR^{2n}$ be an isotropic immersion of an oriented
  surface diffeomorphic to a torus with a conformal cover $p:\RR^2\to S$. There exists a constant $c_5>0$
  such that  for every $N$ sufficiently
  large and every $\phi\in C^2(\cQ_N(S))$ with $\ipp{\phi,\I_N}=0$, we
  have
    $$
\|\Delta_N\phi\|_{\cC^{0,\alpha}} \geq c_5\|\phi\|_{\cC^{2,\alpha}}.
  $$
\end{theo}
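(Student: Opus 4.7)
The plan is to reduce Theorem~\ref{theo:specgap2} to the already-established Theorem~\ref{theo:specgap} by lifting everything to the double cover $\Sigma$. The key design feature of the modified construction is that $\gamma_1^N(S) \in \Lambda_N \setminus \Lambda_N^{ch}$ while $\gamma_1^N = 2\gamma_1^N(S) \in \Lambda_N^{ch}$, so the nontrivial deck transformation $g$ in $G_N \simeq \ZZ_2$ acts on the upstairs quadrangulation $\cQ_N(\Sigma)$ by swapping the two connected components $\cG_N^+(\Sigma)$ and $\cG_N^-(\Sigma)$ of the checkers graph. This is the crucial asymmetry I will exploit.

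First, given $\phi \in C^2(\cQ_N(S))$ with $\ipp{\phi,\I_N}_S = 0$, I lift it to $\tilde\phi = \phi \circ \Phi_N^S \in C^2(\cQ_N(\Sigma))$. By construction of the cover and the naturality of the samples $\tau_N = (\Phi_N^S)^*\tau_N^S$, the operators satisfy $\Delta_N \tilde\phi = \widetilde{\Delta_N\phi}$. Moreover, the norms on $C^2(\cQ_N(S))$ are by definition the weak Hölder norms of the lifts, so
\begin{equation*}
\|\phi\|_{\cC^{2,\alpha}} = \|\tilde\phi\|_{\cC^{2,\alpha}_w}, \qquad \|\Delta_N\phi\|_{\cC^{0,\alpha}} = \|\Delta_N\tilde\phi\|_{\cC^{0,\alpha}_w}.
\end{equation*}

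Next, I verify that $\tilde\phi \in \scrK_N^\perp$ with respect to the inner product on $\Sigma$, so that Theorem~\ref{theo:specgap} applies. Orthogonality to $\I_N$ is immediate: since $\Phi_N^S$ is a double cover and $\tilde\phi$ is $G_N$-invariant, $\ipp{\tilde\phi,\I_N}_\Sigma = 2\ipp{\phi,\I_N}_S = 0$. In the degenerate case, one must additionally check $\ipp{\tilde\phi,\zeta_N}_\Sigma = 0$. This is where the asymmetry enters: because $g$ swaps the vertex sets of $\cG_N^+(\Sigma)$ and $\cG_N^-(\Sigma)$, we have $\I_N^+ \circ g = \I_N^-$. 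On the other hand $\theta_N$ is built from $\tau_N$, which is pulled back from $\tau_N^S$, hence $\theta_N$ is $G_N$-invariant. Therefore $\zeta_N \circ g = -\zeta_N$, and $G_N$-invariance of $\tilde\phi$ gives $\ipp{\tilde\phi,\zeta_N} = \ipp{\tilde\phi \circ g, \zeta_N \circ g} = -\ipp{\tilde\phi,\zeta_N}$, which forces $\ipp{\tilde\phi,\zeta_N} = 0$. (In the nondegenerate case, $\scrK_N = \RR\cdot \I_N$ and only the first orthogonality is needed; one must also note that if $(p_S,\ell_S)$ is nondegenerate then so is the lifted pair on $\Sigma$, which is obvious from the local definition of $E$.)

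With $\tilde\phi \in \scrK_N^\perp$, Theorem~\ref{theo:specgap} yields, for $N$ sufficiently large,
\begin{equation*}
\|\Delta_N\phi\|_{\cC^{0,\alpha}} = \|\Delta_N\tilde\phi\|_{\cC^{0,\alpha}_w} \geq c_3\|\tilde\phi\|_{\cC^{2,\alpha}_w} = c_3\|\phi\|_{\cC^{2,\alpha}},
\end{equation*}
so we may take $c_5 = c_3$. I expect the only mildly delicate step to be the verification that $\zeta_N$ transforms by a sign under $g$; this relies on the observation that, while positive and negative vertices get exchanged, the conformal-factor weight $\theta_N$ is genuinely invariant because it depends only on the diagonals of $\tau_N$, and these descend to $S$. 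Everything else is a matter of tracking the pullback conventions through the definitions of \S\ref{sec:deg}.
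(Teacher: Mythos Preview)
Your proof is correct and follows essentially the same route as the paper: lift $\phi$ to the double cover $\Sigma$, verify that the lift lies in $\scrK_N^\perp$ by using that the nontrivial deck transformation swaps $\I_N^+$ and $\I_N^-$ (so $\zeta_N$ is anti-invariant while $\tilde\phi$ is invariant), and then invoke Theorem~\ref{theo:specgap} to conclude with $c_5 = c_3$. The paper's argument is identical in structure and in the key observation about the sign change of $\zeta_N$ under $G_N$.
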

\begin{proof}
  We choose $N$ sufficiently large, so that the assumptions of Theorem~\ref{theo:specgap} are satisfied.
  Let $\phi\in C^2(\cQ_N(S))$ be a discrete function such that  $\ipp{\phi,\I_N}$. There may be some ambiguity in our notations, so we should emphasize that
  $(\Phi^S_N)^*\I_N$ is equal to $\I_N\in C^2(\cQ_N(\Sigma))$.
  
  We consider the pullback $\tilde \phi_N = \phi\circ\Phi_N^S$ of $\phi$ regarded as an element of $C^2(\cQ_N(\Sigma))$.
  By definition of inner products and pullbacks by $2$-fold covers, we have
  $$
 \ipp{\phi,\I_N} = \frac 12\ipp{\tilde \phi_N,\I_N} ,
 $$
hence, by assumption, $\ipp{\tilde \phi_N,\I_N}=0$.

 Notice that the action of $G_N=\ip{\Upsilon_N}\simeq\ZZ_2$ on
 $C^2(\cQ_N(\Sigma))$ respects the inner product $\ipp{\cdot,\cdot}$. Since $\gamma_1^N(S)\not\in \Lambda^{ch}_N$, we also have
 $$
\Upsilon_N\cdot \I_N^+ = \I_N^- \mbox { and conversely } \Upsilon_N\cdot \I_N^- = \I_N^+.
 $$
By construction the discrete function $\theta_N$ is $G_N$
invariant. Thus
$$
\Upsilon_N\cdot\zeta_N = \Upsilon_N\cdot(\theta_N^{-1}(\I^+_N -
\I^-_N)) = \theta_N^{-1}(-\I^+_N +  \I^-_N) = -\zeta_N.
$$
The above property implies that any $G_N$-invariant discrete function
is orthogonal to $\zeta_N$:
$$
 \ipp{\tilde\phi_N,\zeta_N}=\ipp{\Upsilon_N\cdot \tilde\phi_N,\Upsilon_N\cdot\zeta_N}=\ipp{\tilde\phi_N,-\zeta_N}
 $$
 therefore
 $$
\ipp{\tilde\phi_N,\zeta_N}=0.
$$
In conclusion $\tilde\phi_N$ is orthogonal to $\scrK_N$ and we may
apply Theorem~\ref{theo:specgap} to $\tilde\phi_N$, which proves the
theorem with $c_5=c_3$.
\end{proof}

\begin{rmk}
  Notice that Theorem~\ref{theo:specgap2} applies whether the pair $(p,\ell_S)$
is degenerate or
  nondegenerate. The applications are different from
  Theorem~\ref{theo:specgap}, in the sense that we are dealing with different
  type of quadrangulations and meshes.
  For instance the spaces of quadrangular meshes $\scrM_N$ admits a shear action whereas $\scrM_N(S)$ does not. Indeed the checkers graph associated to
  $\cQ_N(S)$ is connected whereas the checkers graph of  $\cQ_N(\Sigma)$ is not.
\end{rmk}

\section{Fixed point theorem}
\label{sec:fpt}
\subsection{Fixed point equation}
All the tools have been introduced in order to be able to apply the
contraction mapping principle.
We consider an isotropic immersion  $\ell:\Sigma\to\RR^{2n}$, its sequence of samples $\tau_N\in \scrM_N$ as before and the map
$$
F_N:C^2(\cQ_N(\Sigma))\longrightarrow C^2(\cQ_N(\Sigma))
$$
defined by
$$F_N(\phi) = \mu^r_N\left (\tau_N - J\delta_N^\star\phi \right ).$$
Solving the equation $F_N(\phi) = 0$  provides an isotropic
perturbation of the sample mesh $\tau_N$.
\begin{rmk}
  The perturbative approach introduced here is an analogue of
  Theorem~\ref{theo:perttoy} in the smooth setting.
  Indeed, let us denote by $f_N:\Sigma\to \RR^{2n}$ a smooth perturbation of
  $\ell:\Sigma\to\RR^{2n}$ and $h$ a smooth function on $\Sigma$. 
  The perturbation $\tau_N-J\delta_N^\star\phi$ is a discrete analogue
  of the smooth perturbation $K(h,f_N)=\exp_{f_N}( - i h)=
  f_N-\fJ\delta_{f_N}^\star h$. Thus the equation $F_N(\phi)=0$ is the discrete analogue of the equation
  $F(h,f_N)=0$ (cf. \eqref{eq:implicit}) in the smooth setting. 
\end{rmk}
The differential of the map is given by
$DF_N|_0 \cdot\phi= - D\mu^r_N|_{\tau_N}\circ J  \delta_N^\star (\phi) = \delta_N\delta_N^\star \phi$ hence
$$
DF_N|_0 \cdot\phi= \Delta_N\phi.
$$
As  pointed out in Lemma~\ref{lemma:quadratic}, the map $\mu_N^r$ is a quadratic. According to Definition~\ref{dfn:quadratic} and~\eqref{eq:etaN} one can write
$$
\Delta_N \phi = 2\Psi_N^r(\tau_N,\phi)
$$
and
$$
F_N(\phi) = \eta_N + \Delta_N \phi + \mu^r_N(J\delta_N^\star \phi),
$$
where $\eta_N=F_N(0)$ is the error term.
We introduce the space
$$\scrH_N = \Delta_N \left (\scrK_N^\perp \right ),$$
where $\scrK_N$ is the almost kernel of $\Delta_N$ defined at~\eqref{eq:almostkernel}.
By Corollary~\ref{cor:greenprep}, we have a direct sum decomposition
$$
C^2(\cQ_N(\Sigma))=\scrK_N\oplus \scrH_N,
$$
for every $N$ sufficiently large.

We define the Green  operator $\Green_N$ of $\Delta_N$ by
$$
\Green_N (\psi)= \left \{
\begin{array}{l}
  0 \mbox { if $\psi \in \scrK_N $}\\
  \phi \in \scrK_N^\perp \mbox{ with the property that
    $\Delta_N\phi =\psi$  if $\psi\in  \scrH_N$}
\end{array}
\right .
$$
The Green operator is bounded independently of $N$, which is a crucial
property for the application of the fixed point principle:
\begin{prop}
  \label{prop:green}
  There exists a constant $c_8>0$ such that for all positive integers
  $N$ and $\phi\in C^2(\cQ_N(\Sigma))$, we have
  $$
c_8 \|\phi\|_{\cC^{0,\alpha}_w} \geq \|\Green_N(\phi)\|_{\cC^{2,\alpha}_w}.
  $$
\end{prop}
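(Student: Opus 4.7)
The plan is to deduce this bound directly from the spectral gap Theorem~\ref{theo:specgap} combined with the decomposition estimate of Corollary~\ref{cor:greenprep}. There is essentially no new ingredient: this proposition is simply the synthesis of the estimates already assembled for the operator $\Delta_N$.

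First, fix $N$ sufficiently large so that both Corollary~\ref{cor:greenprep} and Theorem~\ref{theo:specgap} apply. Given $\phi \in C^2(\cQ_N(\Sigma))$, write the splitting
\[
\phi = \bar\phi + \phi_\Delta, \qquad \bar\phi \in \scrK_N, \quad \phi_\Delta \in \scrH_N = \Delta_N(\scrK_N^\perp),
\]
provided by Corollary~\ref{cor:greenprep}. By definition of the Green operator, $\Green_N(\phi) = \Green_N(\phi_\Delta)$ and this element lies in $\scrK_N^\perp$, with
\[
\Delta_N \Green_N(\phi_\Delta) = \phi_\Delta.
\]

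Next, apply the spectral gap Theorem~\ref{theo:specgap} to $\Green_N(\phi_\Delta) \in \scrK_N^\perp$: this yields
\[
\|\phi_\Delta\|_{\cC^{0,\alpha}_w} = \|\Delta_N \Green_N(\phi_\Delta)\|_{\cC^{0,\alpha}_w} \geq c_3 \|\Green_N(\phi_\Delta)\|_{\cC^{2,\alpha}_w}.
\]
On the other hand, Corollary~\ref{cor:greenprep} gives the uniform control $\|\phi_\Delta\|_{\cC^{0,\alpha}_w} \leq c_6 \|\phi\|_{\cC^{0,\alpha}_w}$. Chaining the two inequalities produces
\[
\|\Green_N(\phi)\|_{\cC^{2,\alpha}_w} = \|\Green_N(\phi_\Delta)\|_{\cC^{2,\alpha}_w} \leq \frac{c_6}{c_3} \|\phi\|_{\cC^{0,\alpha}_w},
\]
which proves the bound with constant $c_6/c_3$, valid for every $N$ larger than some threshold $N_0$.

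Finally, for each of the finitely many remaining values $N < N_0$, the space $C^2(\cQ_N(\Sigma))$ is finite dimensional, so $\Green_N$ is automatically bounded in any pair of norms and admits some operator norm bound $K_N$. Taking
\[
c_8 = \max\Bigl(\tfrac{c_6}{c_3}, \; \max_{N<N_0} K_N\Bigr)
\]
yields a single constant that works for all $N$. The only real content in the argument is the interplay between Theorem~\ref{theo:specgap} and Corollary~\ref{cor:greenprep}, and the main obstacle lay in establishing those two statements; once they are in hand the Green operator estimate is a formality.
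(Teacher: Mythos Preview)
Your proof is correct and follows essentially the same route as the paper: decompose $\phi=\bar\phi+\phi_\Delta$ via Corollary~\ref{cor:greenprep}, bound $\|\phi_\Delta\|_{\cC^{0,\alpha}_w}$ by $c_6\|\phi\|_{\cC^{0,\alpha}_w}$, and then apply Theorem~\ref{theo:specgap} to $\Green_N(\phi)\in\scrK_N^\perp$ to obtain the $\cC^{2,\alpha}_w$ bound with constant $c_6/c_3$. Your explicit treatment of the finitely many small $N$ by finite-dimensionality is a minor addition the paper leaves implicit.
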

\begin{proof}
  For every $N\geq N_0$ sufficiently large, we may use the decomposition
  $\phi=\bar\phi + \phi_\Delta$ and the fact that
  \begin{equation}
    \label{eq:propgreen}
    c_6\|\phi\|_{\cC^{0,\alpha}_w}\geq \|\bar \phi\|_{\cC^{0,\alpha}_w} +
\|\phi_\Delta \|_{\cC^{0,\alpha}_w},    
  \end{equation}
thanks to Corollary~\ref{cor:greenprep}.
By definition $\phi_\Delta = \Delta_N\psi$ for some $\phi\in
\scrK_N^\perp$, and $\Green_N(\phi)=\psi$. By
Theorem~\ref{theo:specgap} and~\eqref{eq:propgreen}, we deduce that
$$
c_6\|\phi\|_{\cC^{0,\alpha}_w} \geq
\|\phi_\Delta\|_{\cC^{0,\alpha}_w}\geq c_3\| \psi\|_{\cC^{2,\alpha}_w}.
$$
This proves the proposition.
\end{proof}

Notice that by definition, $\Green_N$ takes values in 
$\scrK_N^\perp$ and has kernel $\scrK_N$. If $\phi \in \scrK_N^\perp$, we have
$\Green_N\circ\Delta_N \phi=\phi$. Therefore
$$
\Green_N\circ F_N(\phi) = \phi + \Green_N \left (\eta_N
+\mu^r_N(J\delta_N^\star \phi) \right). 
$$
For $\phi\in \scrK_N^\perp$, the equation
$$F_N(\phi)\in 
\scrK_N
$$
is  equivalent to
$$
\phi = T_N(\phi)
$$
where
$$
T_N:\scrK_N^\perp\longrightarrow \scrK_N^\perp
$$
is the map defined by
$$
T_N(\phi) =- \Green_N (\eta_N +\mu^r_N(J\delta_N^\star \phi)).
$$
We merely need to apply the fixed point principle to the map $T_N$.

\subsection{Contracting map}
Notice that
\begin{align*}
T_N(\phi)-T_N(\phi')= &\Green_N\circ \mu_N^r(J\delta_N^\star \phi) -
\Green_N\circ \mu_N^r(J\delta_N^\star
\phi') \\
= & \frac 12\Green_N\circ \Psi_N^r
(J\delta_N^\star \phi'+ J\delta_N^\star \phi, J\delta_N^\star \phi'- J\delta_N^\star
\phi)
\end{align*}
hence
\begin{equation}
  \label{eq:contractmap}
  T_N(\phi)-T_N(\phi')=
   \frac 12\Green_N\circ \Psi_N^r
\left (J\delta_N^\star (\phi'+\phi), J\delta_N^\star (\phi'-  \phi)\right ).  
\end{equation}
\begin{prop}
  \label{prop:quadest}
  There exists a constant $c_7>0$ such that for every $\phi,\phi'\in
  C^2(\cQ_N(\Sigma))$, we have
  $$
  \|\Psi_N^r(J\delta_N^\star\phi,J\delta_N^\star \phi')\|_{\cC_w^{0,\alpha}}\leq c_7
  \|\phi\|_{\cC_w^{2,\alpha}}   \|\phi'\|_{\cC_w^{2,\alpha}}
  $$
\end{prop}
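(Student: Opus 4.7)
The strategy is to expand $\Psi_N^r(V,V')$ face-by-face in terms of the diagonals of the discrete vector fields $V = J\delta_N^\star\phi$ and $V'=J\delta_N^\star\phi'$, reduce the estimate to a product estimate in the weak Hölder norm (which is an algebra norm uniformly in $N$), and then bound the $\cC_w^{1,\alpha}$-norm of $\delta_N^\star\phi$ by the $\cC_w^{2,\alpha}$-norm of $\phi$ uniformly in $N$.

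By polarization of Lemma~\ref{lemma:quadratic} and formula \eqref{eq:notdiagrenq}, the symmetric bilinear form $\Psi_N^r$ is given face-by-face by
$$
\Psi_N^r(V,V')(\face)=\tfrac12\bigl[\omega(\scrU_V(\face),\scrV_{V'}(\face))+\omega(\scrU_{V'}(\face),\scrV_V(\face))\bigr].
$$
The renormalized diagonals $\scrU_V=\tfrac{N}{\sqrt2}D_V^u$ and $\scrV_V=\tfrac{N}{\sqrt2}D_V^v$ are precisely the forward finite differences of $V$ along the diagonal directions $e_1'$, $e_2'$, so their $\cC_w^{0,\alpha}$-norms on faces are controlled by the $\cC_w^{1,\alpha}$-norm of $V$ viewed at vertices. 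Since $\omega$ has constant coefficients and since the $\pm$-splitting of the weak Hölder norm makes $\|\cdot\|_{\cC_w^{0,\alpha}}$ into an algebra norm uniformly in $N$ (a routine verification component by component using~\eqref{eq:splitD}), we obtain a constant $C_1>0$ independent of $N$ such that
$$
\|\Psi_N^r(V,V')\|_{\cC_w^{0,\alpha}}\leq C_1\|V\|_{\cC_w^{1,\alpha}}\|V'\|_{\cC_w^{1,\alpha}}.
$$

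It remains to establish an estimate of the form $\|J\delta_N^\star\phi\|_{\cC_w^{1,\alpha}}\leq C_2\|\phi\|_{\cC_w^{2,\alpha}}$ uniformly in $N$. By the explicit formula \eqref{eq:dNstar}, at any vertex $v$ the value $(\delta_N^\star\phi)(v)$ is a weighted sum of $\phi(\face)$ over the (at most four) faces containing $v$, with weights $\tfrac{N^2}{2}D^{\tau_N}_{v,\face}$. Each weight has size $\cO(N)$ because the samples $\tau_N$ of the smooth immersion $\ell$ produce diagonals of size $\cO(N^{-1})$, but the four weights around $v$ satisfy a telescoping identity (equivalently, $\delta_N^\star$ annihilates constants) which allows one to rewrite the sum as a first-order finite difference of $\phi$ times smooth $\cO(1)$ coefficients built out of the diagonals of $\tau_N$. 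The coefficients converge in the $\cC^k$-sense for every $k$ by Proposition~\ref{prop:convell}, hence are uniformly $\cC_w^{1,\alpha}$-bounded. Applying a further discrete finite difference and one more telescoping manipulation trades another derivative of $\phi$ for an $\cO(1)$ factor and yields the bound. Composing with the two previous estimates completes the proof with $c_7=C_1C_2^2$.

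The main obstacle is the mild mismatch of frameworks: our weak Hölder norms are defined on discrete functions on \emph{faces} via the checkers-graph splitting, whereas $\delta_N^\star\phi$ naturally lives on \emph{vertices}, and the diagonals $\scrU_V,\scrV_V$ then return to faces with a possibly different parity. One must either extend the entire analytic machinery of \S\ref{sec:anal} to a vertex-based Hölder space (with its own $\pm$ splitting) or carefully package all estimates through the face-based quantities $\scrU_V,\scrV_V$. Tracking the factors of $N$ in the telescoping identities — so that no spurious $N$-power survives in the final inequality — is the only real subtlety; once this bookkeeping is in place the estimate is formally no more than a discrete Leibniz rule applied to a quadratic map.
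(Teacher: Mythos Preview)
Your approach is essentially the paper's. The paper polarizes $\mu_N^r$ to obtain
\[
\ip{\Psi_N^r(\tau,\tau'),\face}=\tfrac14\bigl[\omega(\scrU_\tau(\face),\scrV_{\tau'}(\face))+\omega(\scrU_{\tau'}(\face),\scrV_\tau(\face))\bigr],
\]
uses the algebra property of $\|\cdot\|_{\cC_w^{0,\alpha}}$ to get a product bound in terms of $\|\scrU_\tau\|_{\cC_w^{0,\alpha}}$ and $\|\scrV_\tau\|_{\cC_w^{0,\alpha}}$, and then proves (as a separate lemma) that for $\tau=J\delta_N^\star\phi$ these diagonal norms are controlled by $\|\phi\|_{\cC_w^{2,\alpha}}$.

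The one difference worth noting is that the paper never introduces a vertex-based Hölder norm $\|V\|_{\cC_w^{1,\alpha}}$: it goes directly from $\phi$ (face-based) to $\scrU_\tau,\scrV_\tau$ (face-based) by writing out $\scrU_{J\delta_N^\star\phi}(\face_{kl})$ explicitly as a sum of terms of the form $\phi_{ij}D^{u/v}_{ij}$; the result is visibly a combination of a second-order $u$-difference and a mixed $uv$-difference of $\phi$, with coefficients built from the renormalized diagonals of $\tau_N$, which are uniformly $\cC_w^{0,\alpha}$-bounded by Proposition~\ref{prop:convell}. This is exactly your ``second option'' in the obstacle paragraph, and it sidesteps the vertex-based machinery you were worried about having to build. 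Your description of the mechanism (telescoping/Leibniz so that no stray power of $N$ survives) is accurate; the paper simply performs that computation rather than stating it abstractly.
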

\begin{proof}
  Recall that, for $\tau\in\scrM_N$,  $\mu^r_N(\tau)\in C^2(\cQ_N(\Sigma))$ is given by the Formula
  $$
\ip{\mu^r_N(\tau), \face} = \frac 12\omega(\scrU_\tau(\face),\scrV_\tau(\face)).
$$
We deduce that
  $$
\ip{\Psi^r_N(\tau,\tau'), \face} = \frac 14\Big [
\omega(\scrU_\tau(\face),\scrV_{\tau'}(\face))
+ \omega(\scrU_{\tau'}(\face),\scrV_\tau(\face))
\Big ],
$$
and it follows that for some universal constant $c'_7>0$, we have
\begin{equation}
  \label{eq:trickholderpsi}
  \|\Psi^r_N(\tau,\tau')\|_{\cC_w^{0,\alpha}} \leq c'_7\Big [ \|\scrU_\tau
  \|_{\cC_w^{0,\alpha}}\|\scrV_{\tau'} \|_{\cC_w^{0,\alpha}}
  +  \|\scrV_\tau
  \|_{\cC_w^{0,\alpha}}\|\scrU_{\tau'} \|_{\cC_w^{0,\alpha}}\Big ].
\end{equation}
\begin{lemma}
  \label{lemma:trickholderpsi}
  There exists a universal constant $c''_7>0$  such that for all discrete
  function $\phi$ and  $\tau=J\delta_N^
  \star\phi$, we have
  $$
\|\scrU_\tau\|_{\cC_w^{0,\alpha}} \mbox{ and } \|\scrV_\tau\|_{\cC_w^{0,\alpha}} \leq
c''_7 \|\phi\|_{\cC_w^{2,\alpha}}.
  $$
\end{lemma}
\begin{proof}
  We carry out the proof in the case of $\scrU_\tau$, as the proof for
  $\scrV_\tau$ is almost identical.
  Using the index notations, we have
  $$
\ip{\scrU_\tau,\face_{kl}} = \frac 1N\Big (\ip{\delta_N^\star\phi,\vert_{k+1,l+1}}
- \ip{\delta_N^\star\phi,\vert_{k,l}} \Big ).
  $$
Using the expression of $\delta_N^\star$, we obtain
\begin{align*}
\ip{\scrU_\tau,\face_{kl}} =& \frac 1{N^2}\Big ( 
(\phi_{k+1,l}D^u_{k+1,l} -\phi_{k,l-1}D^u_{k,l-1}) -
(\phi_{k,l+1}D^u_{k,l+1} -\phi_{k-1,l}D^u_{k-1,l}) \\
& + (\phi_{k+1,l+1}D^v_{k+1,l+1} - 2\phi_{kl}D^v_{kl} + \phi_{k-1,l-1}D^v_{k-1,l-1})
 \Big ).  
\end{align*}
The first line in the above computation is related to  the second order
finite difference $\frac{\del^2}{\del u\del v}$ of $\phi$ whereas the
second line is related to  the finite difference $\frac{\del^2}{\del
  u^2}$ of $\phi$. The fact the the renormalized diagonals  converge
smoothly allows to control the $\cC_w^{0,\alpha}$-norms of these terms
using the $\cC_w^{2,\alpha}$-norm of $\phi$.
\end{proof}
Inequality~\eqref{eq:trickholderpsi} together with
Lemma~\ref{lemma:trickholderpsi} completes the proof of the proposition.
\end{proof}

\begin{cor}
  \label{cor:contract}
For all $\epsilon>0$ there exists $N_0\geq 1$ and $\delta >0$ such
that for all $N\geq N_0$,  $\phi, \phi' \in \scrK_N^\perp$, such that
$\|\phi\|_{\cC^{2,\alpha}}\leq \delta$  and
$\|\phi'\|_{\cC^{2,\alpha}}\leq \delta$, we have
$$
\|T_N(\phi)-T_N(\phi')\|_{\cC_w^{2,\alpha}}\leq \epsilon \|\phi-\phi'\|_{\cC_w^{2,\alpha}}.
$$
\end{cor}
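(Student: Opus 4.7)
The strategy is simply to chain together the two previously established estimates: the uniform bound on the Green operator (Proposition~\ref{prop:green}) and the quadratic bound on $\Psi_N^r \circ (J\delta_N^\star,\,J\delta_N^\star)$ (Proposition~\ref{prop:quadest}). Both of these estimates require $N$ to be sufficiently large, so we fix once and for all an integer $N_0$ for which Corollary~\ref{cor:greenprep} and Theorem~\ref{theo:specgap} (hence Proposition~\ref{prop:green}) apply.

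Starting from the exact identity~\eqref{eq:contractmap}, I first apply Proposition~\ref{prop:green} to obtain
$$
\|T_N(\phi)-T_N(\phi')\|_{\cC^{2,\alpha}_w} \leq \tfrac{c_8}{2}\,\bigl\|\Psi_N^r\bigl(J\delta_N^\star(\phi+\phi'),\,J\delta_N^\star(\phi-\phi')\bigr)\bigr\|_{\cC^{0,\alpha}_w}.
$$
Then Proposition~\ref{prop:quadest} applied to the pair $(\phi+\phi',\phi-\phi')$ gives
$$
\|T_N(\phi)-T_N(\phi')\|_{\cC^{2,\alpha}_w} \leq \tfrac{c_7 c_8}{2}\,\|\phi+\phi'\|_{\cC^{2,\alpha}_w}\,\|\phi-\phi'\|_{\cC^{2,\alpha}_w}.
$$
Using the triangle inequality $\|\phi+\phi'\|_{\cC^{2,\alpha}_w}\leq \|\phi\|_{\cC^{2,\alpha}_w}+\|\phi'\|_{\cC^{2,\alpha}_w}\leq 2\delta$ under our standing assumption on $\phi,\phi'$, this reduces to
$$
\|T_N(\phi)-T_N(\phi')\|_{\cC^{2,\alpha}_w} \leq c_7 c_8\,\delta\,\|\phi-\phi'\|_{\cC^{2,\alpha}_w}.
$$

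Given $\epsilon>0$, it then suffices to choose $\delta = \epsilon/(c_7 c_8)$ and to take $N_0$ as above. There is no real obstacle: all the analytical work (uniform Schauder estimates, spectral gap, the quadratic bound on $\Psi_N^r$) has been done in the preceding sections, and this corollary is simply their combination. The only mild subtlety is to make sure the constants $c_7$ and $c_8$ are genuinely independent of $N$ for $N\geq N_0$, which is precisely the content of Propositions~\ref{prop:green} and~\ref{prop:quadest}.
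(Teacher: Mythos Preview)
Your proof is correct and follows exactly the same route as the paper: apply \eqref{eq:contractmap}, then Proposition~\ref{prop:green} and Proposition~\ref{prop:quadest} in succession, and finally choose $\delta = \epsilon/(c_7 c_8)$. The only difference is that you make the triangle inequality step $\|\phi+\phi'\|_{\cC^{2,\alpha}_w}\le 2\delta$ explicit, whereas the paper leaves it implicit in the choice of $\delta$.
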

\begin{proof}
  By~\eqref{eq:contractmap}, Proposition~\ref{prop:green} and Proposition~\ref{prop:quadest} 
  \begin{align*}
  \| T_N(\phi)-T_N(\phi')\|_{\cC_w^{2,\alpha}} = &
   \frac 12\|\Green_N\Psi_N^r
\left (J\delta_N^\star (\phi'+\phi), J\delta_N^\star (\phi'-  \phi)\right
)\|_{\cC_w^{2,\alpha}} \\
\leq & \frac{c_7c_8}2 \|\phi+\phi'\|_{\cC_w^{2,\alpha}}\|\phi-\phi'\|_{\cC_w^{2,\alpha}}.    
  \end{align*}
In conclusion we may choose $\delta=\frac\epsilon{c_7c_8}$, which proves the corollary.
\end{proof}
\subsection{Fixed point principle}
The idea, as usual is to check whether the sequence $T_N^k(0)$
converges. If so, the limit must be a fixed point of $T_N$.
We have the following classical proposition
\begin{prop}
  Let $(\EE,\|\cdot\|)$ be a finite dimensional (or Banach) normed vector space and
  $T:\EE\to\EE$ an application such that
  \begin{enumerate}
    \item There exists $\delta >0$ such that the restriction of $T$ to
      the closed ball $\bar B_\delta$ of $\EE$,  centered at $0$ with
      radius $\delta$, is $\frac 
      12$-contractant, i.e.
      $$
\forall x,y\in \EE, \|x\|\leq \delta \mbox{ and } \|y\|\leq\delta
\Rightarrow \|T(x)-T(y)\|\leq \frac 12\|x-y\|.
      $$
  \item $\|T(0)\|\leq \frac \delta 2$
  \end{enumerate}
  Then the sequence $(t_k)$  defined by $t_0=0$ and $t_{k+1}=T(t_k)$
  converges to an element $t_\infty\in\EE$ with $\|t_\infty\|\leq
  \delta$. Furthermore, $t_\infty$ is a fixed point for $T$. Such fixed
  point are unique in the ball $\bar B_\delta$. In addition, we have
  $\|t_\infty\|\leq 2\|T(0)\|$.
\end{prop}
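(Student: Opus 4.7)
\medskip

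\noindent\textbf{Proof proposal.} This is the classical Banach fixed point principle in a mildly quantitative form, and the plan is a straightforward induction plus geometric series estimate. I would start by showing, by induction on $k$, that the iterates $(t_k)$ remain in the closed ball $\bar B_\delta$ and satisfy the geometric decay estimate
\[
\|t_{k+1}-t_k\|\leq 2^{-k}\|T(0)\|.
\]
The base case is immediate since $t_1-t_0=T(0)$, and the inductive step uses the contraction hypothesis on $\bar B_\delta$: if $t_k,t_{k-1}\in \bar B_\delta$, then $\|t_{k+1}-t_k\|=\|T(t_k)-T(t_{k-1})\|\leq \tfrac 12\|t_k-t_{k-1}\|$, which combined with the inductive hypothesis yields the desired bound. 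To check that $t_{k+1}\in \bar B_\delta$, a telescoping estimate together with the geometric sum gives
\[
\|t_{k+1}\|\leq \sum_{j=0}^{k}\|t_{j+1}-t_j\|\leq \|T(0)\|\sum_{j=0}^{k}2^{-j}\leq 2\|T(0)\|\leq \delta.
\]

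Once the geometric decay is established, the sequence $(t_k)$ is Cauchy in $\EE$, as for any $p\geq q$
\[
\|t_p-t_q\|\leq \sum_{j=q}^{p-1}\|t_{j+1}-t_j\|\leq \|T(0)\|\,2^{-q+1}\longrightarrow 0.
\]
Since $\EE$ is finite dimensional (or Banach), the sequence converges to some $t_\infty\in \bar B_\delta$. Passing to the limit in the relation $t_{k+1}=T(t_k)$, which is legitimate because $T$ is $\tfrac12$-Lipschitz hence continuous on $\bar B_\delta$, yields $t_\infty=T(t_\infty)$; so $t_\infty$ is a fixed point. Letting $p\to\infty$ with $q=0$ in the previous inequality gives the bound $\|t_\infty\|\leq 2\|T(0)\|$.

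For uniqueness in $\bar B_\delta$, if $t_\infty$ and $t'_\infty$ were both fixed points lying in $\bar B_\delta$, then the contraction property would force
\[
\|t_\infty-t'_\infty\|=\|T(t_\infty)-T(t'_\infty)\|\leq \tfrac 12\|t_\infty-t'_\infty\|,
\]
which implies $t_\infty=t'_\infty$. The one subtlety that really needs to be watched throughout the argument is that we never evaluate $T$ outside the ball $\bar B_\delta$, since the contraction hypothesis is only assumed there; this is precisely what hypothesis (2), $\|T(0)\|\leq \delta/2$, is designed to guarantee via the induction above. No other obstacle arises.
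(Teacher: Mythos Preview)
Your proof is correct and follows essentially the same approach as the paper's: a joint induction showing the iterates stay in $\bar B_\delta$ with geometrically decaying increments, then a telescoping Cauchy estimate, continuity to pass to the limit, and uniqueness from the contraction inequality. The only cosmetic difference is the order in which the paper presents uniqueness (first rather than last); the logical content is identical.
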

\begin{proof}
  The uniqueness of fixed points is a trivial consequence of
  the  contracting property of $T$ in the ball $\bar B_\delta = \{x\in
  \EE\|x\|\leq \delta\}$.

  For the convergence, we show first by inductions that $t_k$ remains in $\bar
  B_\delta$ for all $k$ : this is the case for $t_0=1$ and $t_1$ by
  assumption. Assume now that if $t_0,\cdots,t_{k-1}\in \bar
  B_\delta$.
  Then
  $$
\|t_k-t_{k-1}\| =\|T(t_{k-1}) -T(t_{k-2})\|\leq \frac 12\|t_{k-1}-t_{k-2}\|
  $$
 and by induction 
  $$
\|t_k-t_{k-1}\| = \frac 1{2^{p}} \|t_{k-p}-t_{k-p-1}\|.
  $$
  In particular
  $$
\|t_k-t_{k-1}\| = \frac 1{2^{k-1}} \|t_{1}-t_{0}\|=  \frac 1{2^{k-1}}\|T(0)\|.
$$
In turn we have
$$
t_k= t_k-t_0=(t_{k}- t_{k-1})+(t_{k-1}-t_{k-2})+\cdots +(t_1-t_0)
$$
and by the triangle inequality, 
$$
\|t_k\| \leq \|T(0)\|\sum_{j=0}^{k-1}\frac 1{2^k} =
\|T(0)\|\frac{1-\frac 1{2^j}}{1-\frac 12} \leq 2\|T(0)\|\leq \delta,
$$
so that $t_k\in \bar B_\delta$. This completes the induction and shows
that $t_k$ remains in $\bar B_\delta$.

Eventually, we just have to prove that $t_k$ converges. But this is
clear since
$$
t_{k+p}-t_k = (t_{k+p} - t_{k+p-1}) + (t_{k+p-2}- t_{k+p-2})+\cdots + (t_{k+1}-t_k)
$$
and by the triangle inequality
$$
\|t_{k+p}-t_k\| \leq  \|t_{k+1}-t_k\|\sum_{j=0}^\infty \frac 1{2^j}
\leq 2\|t_{k+1}-t_k\| \leq \frac 2{2^{k-1}}\|T(0)\|
$$
which shows that $t_k$ is Cauchy hence convergent in the closed ball
$\bar B_\delta$. The fact that the limit of $t_k$ is a fixed point of
$T$ is clear from the definition of the sequence, by uniqueness of the limit.
\end{proof}

We obtain the following result
\begin{theo}
  \label{theo:fpt}
  There exists a positive integer $N_0$ and a real number $\delta>0$
  such that for all $N\geq N_0$ there exists a unique $\phi_N\in
  \scrK_N^\perp$ that satisfies
  $$
\|\phi_N\|_{\cC_w^{2,\alpha}}\leq
  \delta  \mbox{ and } F_N(\phi_N)\in\scrK_N.
  $$
  Furthermore the sequence satisfies $\|\phi_N\|_{\cC_w^{2,\alpha}}=\cO(N^{-1})$.
\end{theo}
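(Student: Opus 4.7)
The plan is to apply the abstract fixed point principle just established to the map $T_N:\scrK_N^\perp\to\scrK_N^\perp$, taking $\EE=\scrK_N^\perp$ equipped with the norm $\|\cdot\|_{\cC^{2,\alpha}_w}$. Recall that a solution $\phi\in\scrK_N^\perp$ of $F_N(\phi)\in\scrK_N$ with small $\cC^{2,\alpha}_w$-norm is precisely a fixed point of $T_N$ in a small ball, so it suffices to verify the two hypotheses of the abstract principle uniformly in $N$, namely that $T_N$ is $\tfrac 12$-contracting on some closed ball $\bar B_\delta$ of fixed radius and that $\|T_N(0)\|_{\cC^{2,\alpha}_w}\leq \delta/2$.

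For the contraction hypothesis, I would apply Corollary~\ref{cor:contract} with $\epsilon = \tfrac 12$: it yields an integer $N_0$ and a radius $\delta>0$, both independent of $N$, such that for every $N\geq N_0$ the restriction of $T_N$ to $\bar B_\delta\subset \scrK_N^\perp$ is $\tfrac 12$-contracting for the norm $\|\cdot\|_{\cC^{2,\alpha}_w}$. For the smallness of $T_N(0)$, the point is that by definition
\[
T_N(0)=-\Green_N(\eta_N),
\]
where $\eta_N=\mu_N^r(\tau_N)$ is the isotropic defect of the sample. Proposition~\ref{prop:green} gives the uniform bound $\|\Green_N(\eta_N)\|_{\cC^{2,\alpha}_w}\leq c_8\|\eta_N\|_{\cC^{0,\alpha}_w}$, while the almost-isotropy estimate stated right after \eqref{eq:etaN} yields $\|\eta_N\|_{\cC^{0,\alpha}_w}=\cO(N^{-1})$ (the Hölder seminorm is controlled by the $\cC^{k}_w$-norm for $k$ large enough, or one may apply the estimate directly to the first-order finite differences of $\eta_N$). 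Combining these two facts,
\[
\|T_N(0)\|_{\cC^{2,\alpha}_w}=\cO(N^{-1}),
\]
so up to enlarging $N_0$ we may assume $\|T_N(0)\|_{\cC^{2,\alpha}_w}\leq \delta/2$ for all $N\geq N_0$.

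With both hypotheses in hand, the fixed point principle produces, for each $N\geq N_0$, a unique $\phi_N\in \bar B_\delta\cap \scrK_N^\perp$ satisfying $T_N(\phi_N)=\phi_N$, equivalently $F_N(\phi_N)\in \scrK_N$. The same principle also supplies the quantitative estimate $\|\phi_N\|_{\cC^{2,\alpha}_w}\leq 2\|T_N(0)\|_{\cC^{2,\alpha}_w}$, which combined with the $\cO(N^{-1})$ bound above gives $\|\phi_N\|_{\cC^{2,\alpha}_w}=\cO(N^{-1})$, as claimed.

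The proof is therefore a straightforward packaging of ingredients that have already been set up: uniform Schauder/spectral gap bounds (through $\Green_N$), the quadratic nature of $\mu_N^r$ (through Corollary~\ref{cor:contract}), and the smallness of the sampling defect $\eta_N$. No further obstacle arises here; the genuinely delicate work was done upstream, in particular in proving the spectral gap Theorem~\ref{theo:specgap} (which underlies Proposition~\ref{prop:green}) and in establishing the uniform quadratic estimate of Proposition~\ref{prop:quadest}.
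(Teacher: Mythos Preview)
Your proposal is correct and follows precisely the approach the paper sets up; in fact the paper itself leaves only a placeholder note at this point (``Write a proof for the fact that $F_N(0)\leq\delta/2$ is small''), so your argument is exactly the packaging of Corollary~\ref{cor:contract}, Proposition~\ref{prop:green}, the $\cO(N^{-1})$ estimate on $\eta_N$, and the abstract fixed point principle that the authors intended but did not spell out.
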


\begin{prop}
\label{prop:isoquad}
  Let $\ell:\Sigma \to \RR^{2n}$ be an isotropic immersion and
  $p:\RR^2\to\Sigma$ a conformal cover introduced before,  such that
  the pair $(p,\ell)$  is  nondegenerate. 
  Then the meshes
  $$
\rho_N=\tau_N-J\delta_N^\star\phi_N \in \scrM_N
$$
where $\phi_N$ is defined by Theorem~\ref{theo:fpt} for every $N\geq
N_0$ are isotropic.
\end{prop}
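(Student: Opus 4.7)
The plan is to combine the output of the fixed point theorem with a discrete Stokes-type identity. By Theorem~\ref{theo:fpt}, the discrete function $\phi_N \in \scrK_N^\perp$ satisfies $\mu_N^r(\rho_N) = F_N(\phi_N) \in \scrK_N$. In the nondegenerate case,~\eqref{eq:almostkernel} gives $\scrK_N = \RR \cdot \I_N$, so there exists a real constant $c_N$ with
$$
\mu_N^r(\rho_N) = c_N\,\I_N.
$$
Equivalently, the symplectic density $\ip{\mu_N^r(\rho_N),\face}$ takes the same value $c_N$ on every face $\face$ of $\cQ_N(\Sigma)$. The whole argument then boils down to showing that $c_N = 0$.

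For this, I would invoke the very definition of $\mu_N$ (cf. Definition~\ref{dfn:isotropicquad}): the quantity $\ip{\mu_N(\rho_N),\face}$ is the integral of the Liouville form $\lambda$ along the oriented quadrilateral boundary of $\face$ in $\RR^{2n}$, where the four vertices of the quadrilateral are prescribed by $\rho_N$. Summing over all faces of $\cQ_N(\Sigma)$, each oriented edge of the 1-skeleton contributes twice, once for each of the two adjacent faces, and with opposite induced orientations, because $\cQ_N(\Sigma)$ is a closed oriented 2-complex (being a CW-decomposition of the torus $\Sigma$). All edge contributions therefore cancel pairwise and
$$
\sum_{\face\in\fC_2(\cQ_N(\Sigma))}\ip{\mu_N(\rho_N),\face} = 0,
$$
which is nothing but the discrete analogue of the vanishing of $\int_\Sigma \rho_N^*\omega = \int_\Sigma d(\rho_N^*\lambda)$ on a closed surface. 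Multiplying by $N^2$ yields the same identity for $\mu_N^r$ in place of $\mu_N$.

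Comparing with $\mu_N^r(\rho_N) = c_N\I_N$, the sum equals $c_N \cdot \#\fC_2(\cQ_N(\Sigma))$, and since this cardinality is positive we conclude $c_N = 0$, so $\mu_N^r(\rho_N) = 0$ and $\rho_N$ is isotropic. The only delicate point of the argument is the pairwise cancellation of the edge contributions, which merely requires tracking the orientation conventions fixed in~\S\ref{sec:quadnot}; apart from that the proof is essentially topological and relies on none of the analytic machinery built in the previous sections. It is worth noting that this topological constraint is exactly what forces the ``gauge'' $\I_N$ direction in $\scrK_N$ to be automatically killed by the moment map, which explains why in the nondegenerate case one does not need any further adjustment to recover a genuinely isotropic mesh.
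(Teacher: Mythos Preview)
Your proof is correct and follows essentially the same route as the paper: both use the fixed point theorem to conclude $\mu_N^r(\rho_N)\in\scrK_N=\RR\cdot\I_N$ in the nondegenerate case, and then a Stokes-type argument (you phrase it as pairwise edge cancellation, the paper simply invokes Stokes and exactness of $\omega$) to show the total symplectic area vanishes, forcing the constant to be zero. The only cosmetic difference is that the paper expresses the vanishing via the inner product $\ipp{\mu_N^r(\rho_N),\I_N}$ rather than the raw face sum, but these are the same up to the factor $N^{-2}$.
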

\begin{proof}[Proof of Proposition~\ref{prop:isoquad}]
  By definition $\mu_N^r(\rho_N)\in\cK_N$. By nondegeneracy,
  $\cK_N=\RR\I_N$, so that $\mu_N^r(\rho_N) = \lambda\I_N$ for some
  constant $\lambda$. We deduce that
  $$
\ipp{\mu_N^r(\rho_N),\I_N} = \lambda\ipp{\I_N,\I_N}.
$$
This quantity does not vanish unless $\lambda =0$. But
$\ipp{\mu_N^r(\rho_N),\I_N}$ is the total symplectic area of the mesh
$\rho_N$, which has to vanish by Stokes theorem, since the symplectic
form of $\RR^4$ is exact. In conclusion $\lambda=0$ so that~$\mu_N^r(\rho_N)=0$.
\end{proof} 
\subsection{Proof of Theorem~\ref{theo:mainquad}}
We merely need to gather the previous technical results so that the
proof and our main result follows as a corollary.
\begin{proof}[Proof of Theorem~\ref{theo:mainquad}]
  Let $\ell:\Sigma\to\RR^{2n}$ we a smooth isotropic immersion. By
  Proposition~\ref{prop:avoid}, we may always assume that the
  conformal cover $p:\RR^2\to\Sigma$ is chosen in such a way that the
  pair $(p,\ell)$ is non degenerate.
  By Proposition~\ref{prop:isoquad}, the quadrangular meshes $\rho_N$
  provided by Theorem~\ref{theo:fpt}, for $N$ sufficiently large,
  are isotropic. The estimate
  $\|\phi_N\|_{\cC^{2,\alpha}}=\cO(N^{-1})$ implies that
  $$
  \sup_{\vert\in\fC_0(\cQ_N(\Sigma))}\|\rho_N(\vert)- \tau_N(\vert)\|
  = \cO(N^{-1}).
  $$
  It follows that
  $$
  \sup_{\vert\in\fC_0(\cQ_N(\Sigma))}\|\rho_N(\vert)- \ell(\vert)\|
  = \cO(N^{-1}),
  $$
  which proves the theorem.
\end{proof}
\subsection{The degenerate case}
If $(p,\ell)$ is a degenerate pair, Theorem~\ref{theo:fpt} still provides a family of quadrangular meshes $\rho_N$ with the property that $\rho_N\in\scrK_N$. However
$\rho_N$ may not be an isotropic mesh since $\scrK_N$ may not reduce to constants.
This difficulty can be taken care of by working $\ZZ_2$-equivariantly.
Given a degenerate pair $(p_S,\ell_S)$,
we construct modified quadrangulations $\cQ_N(S)$ as in \S\ref{sec:deg}.
Using the notation introduced at \S\ref{sec:deg}, we consider the lifted pair $(p,\ell)$ given by $p=p_S\circ\Phi^S$ and $\ell=\ell_S\circ\Phi^S$, where $\Phi^S:\Sigma\to S$ is a double cover introduced at~\eqref{eq:piS}.
The pair $(p,\ell)$ is  degenerate as well.
Using Theorem~\ref{theo:fpt}, we find a corresponding family of quadrangular meshes $\rho_N$. All these construction are $G_N$-equivariant. In particular $\rho_N \in \scrK_N$ is also $G_N$-invariant. We have
$$
\rho_N = a_N\I_N +b_N\zeta_N,
$$
where $\rho_N$ and $\I_N$ are $G_N$-invariant. However $\zeta_N$ is $G_N$-anti-invariant (cf. proof of Theorem~\ref{theo:specgap2}), which implies that $b_N=0$. We conclude that $a_N=0$ as in the proof of Proposition~\ref{prop:isoquad}.

In conclusion $\rho_N$ descends to the $G_N$-quotient as an isotropic quadrangular mesh $\rho_N^S\in\scrM_N(S)$ and we have proved the following result
\begin{prop}
  \label{prop:degcase}
  Let $(p_S,\ell_S)$ be any pair, where $\ell_S:S\to\RR^{2n}$ is an isotropic immersion and $p_S:\RR^2\to S$ an associated conformal cover.
  Let $\tau_N^S\in \scrM_N(S)$ be the family of samples of $\ell_S$. Then, there exists a family of isotropic quadrangular meshes $\rho_N^S\in\scrM_N(S)$ such that
  $$
\max_{\vert}\|\rho^S_N(\vert)-\tau_N^S(\vert)\| =\cO(N^{-1}).
  $$
\end{prop}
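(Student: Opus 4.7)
The plan is to recognize that Proposition~\ref{prop:degcase} follows by running the fixed point machinery of Theorem~\ref{theo:fpt} $\ZZ_2$-equivariantly on the double cover of $S$ constructed in \S\ref{sec:deg}. Concretely, for any pair $(p_S,\ell_S)$, I would build the modified quadrangulation $\cQ_N(S)$ with $\gamma_2^N(S)\in\Lambda_N^{ch}$ and $\gamma_1^N(S)\in\Lambda_N\setminus\Lambda_N^{ch}$, together with the auxiliary torus $\Sigma=\RR^2/\Gamma$ where $\Gamma$ is spanned by $2\gamma_1(S)$ and $\gamma_2(S)$, and the two-to-one cover $\Phi^S_N:\RR^2/\Gamma_N\to\RR^2/\Gamma_N(S)$. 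The lifted pair $(p,\ell)=(p_S\circ\Phi^S,\ell_S\circ\Phi^S)$ gives an isotropic immersion of $\Sigma$ whose sample mesh $\tau_N\in\scrM_N$ is the pullback of $\tau_N^S$ under $(\Phi^S_N)^*$, and the operators $\delta_N$, $\delta_N^\star$, $\Delta_N$ on $\Sigma$ intertwine with their $S$-counterparts through this pullback.

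Next, I would apply Theorem~\ref{theo:fpt} to $\ell$ on the auxiliary torus $\Sigma$ to produce $\phi_N\in\scrK_N^\perp$ with $\|\phi_N\|_{\cC^{2,\alpha}_w}=\cO(N^{-1})$, and set $\rho_N=\tau_N-J\delta_N^\star\phi_N$. The heart of the argument is the observation that every ingredient of this construction is equivariant under the deck group $G_N\simeq\ZZ_2$ of $\Phi^S_N$: the sample $\tau_N$, the operators $\delta_N, \delta_N^\star, \Delta_N$, the subspace $\scrK_N^\perp$, the Green operator $\Green_N$, and hence the contraction map $T_N$. Since $T_N$ commutes with the $G_N$-action and its fixed point in a small closed ball is unique, $\phi_N$ (and therefore $\rho_N$) must be $G_N$-invariant.

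With $G_N$-invariance in hand, the argument mimics that of Proposition~\ref{prop:isoquad} but with one additional step. By construction $\mu_N^r(\rho_N)\in\scrK_N$, so I may write $\mu_N^r(\rho_N)=a_N\I_N+b_N\zeta_N$. As noted in the proof of Theorem~\ref{theo:specgap2}, $G_N$ acts trivially on $\I_N$ and by $-1$ on $\zeta_N$; the $G_N$-invariance of $\mu_N^r(\rho_N)$ therefore forces $b_N=0$. Then $\ipp{\mu_N^r(\rho_N),\I_N}=a_N\ipp{\I_N,\I_N}$ equals the total discrete symplectic area of the closed mesh $\rho_N$, which vanishes by Stokes applied to the exact form $\omega=d\lambda$ on $\RR^{2n}$; hence $a_N=0$ and $\rho_N$ is isotropic. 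The $G_N$-invariant isotropic mesh $\rho_N$ then descends via $\Phi^S_N$ to an isotropic mesh $\rho_N^S\in\scrM_N(S)$, and the same translation of the $\cC^{2,\alpha}_w$-bound on $\phi_N$ into a pointwise bound that is carried out in the proof of Theorem~\ref{theo:mainquad} applies verbatim on the quotient to yield $\max_{\vert}\|\rho_N^S(\vert)-\tau_N^S(\vert)\|=\cO(N^{-1})$.

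The only step that requires care, and that I would treat as the main obstacle, is the verification of honest $G_N$-equivariance for the entire fixed point scheme. Naturality makes $\delta_N, \delta_N^\star, \Delta_N$ equivariant, but one must also check that the direct sum decomposition $C^2(\cQ_N(\Sigma))=\scrK_N\oplus\Delta_N(\scrK_N^\perp)$ is $G_N$-stable (so that $\Green_N$ makes sense equivariantly) and that the ball of radius $\delta$ in $\scrK_N^\perp$ in which the fixed point is unique is preserved by $G_N$. Once these equivariance statements are secured, the $G_N$-invariance of $\phi_N$ is automatic from uniqueness, and everything downstream is a matter of descending data through the quotient.
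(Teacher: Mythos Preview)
Your proposal is correct and follows essentially the same route as the paper: lift to the double cover $\Sigma$, apply Theorem~\ref{theo:fpt} there, use $G_N$-equivariance to force $\phi_N$ (hence $\rho_N$ and $\mu_N^r(\rho_N)$) to be $G_N$-invariant, kill the $\zeta_N$-component of $\mu_N^r(\rho_N)\in\scrK_N$ by anti-invariance, kill the $\I_N$-component by Stokes, and descend. If anything you are more careful than the paper, which simply asserts ``all these constructions are $G_N$-equivariant''; your explicit reduction of the $G_N$-invariance of $\phi_N$ to uniqueness of the fixed point, together with the checks that $\scrK_N$, $\scrK_N^\perp$, $\Green_N$, and the $\cC^{2,\alpha}_w$-ball are $G_N$-stable, is exactly the content needed to justify that assertion.
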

\begin{rmk}
  The approach presented in Proposition~\ref{prop:degcase} appears as
  a good solution to treat our perturbation problem in a uniforma
  manner, whether or not the pair $(p,\ell)$ is degenerate.
  The main flaw of such technique, relying on $\ZZ_2$-equivariant
  constructions, is that the moduli spaces $\scrM_N(S)$ do not admit a
  shear action as defined in~\S\ref{sec:shear} (this is due to the
  connectedness of the checkers graph of $\cQ_N(S)$).
  Unfortunately, the shear
  action is used in a crucial way at~\S\ref{sec:quadtri} to obtain
  generic quadrangular meshes that will allow to construct piecewise
  linear immersions as in Theorem~\ref{theo:maindiscr}.
\end{rmk}
\section{From quadrangulations to triangulations}
\label{sec:quadtri}
The previous section was devoted to the construction of isotropic
meshes associated to quadrangulations, sufficiently close to a given smooth isotropic immersion
$\ell:\Sigma \to \RR^ {2n}$. In this section, we explain how to define a
nearby isotropic piecewise linear map as an approximation of $\ell$.
The idea is to pass from an isotropic quadrangulation to an isotropic triangulation.

\subsection{From  quadrilaterals to  pyramids}
\label{sec:pyramid}
The goal of this section is to explain how to pass from an isotropic quadrilateral to an isotropic pyramid, by adding
one apex to the quadrilateral.
We start by studying a single isotropic quadrilateral
$(A_0,A_1,A_2,A_3)$, where $A_i$ are points in $\RR^{2n}$.
We shall use the notations
\begin{equation}
  \label{eq:diag}
  D_0=\overrightarrow {A_0A_2} \mbox{ and } D_1=\overrightarrow {A_1A_2}
\end{equation}
for the two diagonals of the quadrilateral. Recall that the
quadrilateral is isotropic if, and only if
$$
\omega(D_0,D_1)=0.
$$
\begin{rmk}
If the diagonals of an isotropic quadrilateral are linearly
independent vectors of $\RR^{2n}$, this implies that
$$L=\RR D_0\oplus \RR
D_1$$
is an isotropic  plane of $\RR^{2n}$.  
\end{rmk}

\begin{dfn}
 \label{dfn:pyramid}
A  \emph{pyramid} is given by five points $(P,A_0,A_1,A_2,A_3)$ of $\RR^{2n}$. The four points of quadrilateral
$(A_0,\cdots, A_3)$, called the \emph{base of the pyramid} and the \emph{apex} $P\in \RR^{2n}$.
If the four triangles given by
$(PA_iA_{i+1})$, where~$i$ is understood as an index modulo~$4$, are contained in
isotropic planes of~$\RR^{2n}$, we say the the pyramid is an
\emph{isotropic pyramid} (cf. Figure~\ref{figure:pyramid}).  
\end{dfn}

\begin{figure}[H]
  \begin{pspicture}[showgrid=false](-2,-1)(2,1)
    \psline(-1,-1)(0,-1)(.5,0)(-0.5,0)(-1,-1)
    \psline(.5,1)(-1,-1)(0,-1)(.5,0)(-0.5,0)
    \psline(.5,1)(-1,-1)
    \psline(.5,1)(0,-1)
    \psline(.5,1)(.5,0)
    \psline(.5,1)(-0.5,0)

    \rput(-1.5,-1){$A_0$}
    \rput(0.5,-1){$A_1$}
    \rput(1.0,0){$A_2$}
    \rput(-1.0,0){$A_3$}

    \rput(0.1,1.1){$P$}

   \psset{fillstyle=solid,fillcolor=black}
   \pscircle(-1,-1){.1}
   \pscircle(0.5,0){.1}
   \pscircle(-0.5,0){.1}
   \pscircle(0,-1){.1}
   \pscircle(0.5,1){.1}
   
  \end{pspicture}
  \caption{Pyramid with apex $P$ and base $(A_0,A_1,A_2,A_3)$}
  \label{figure:pyramid}
\end{figure}
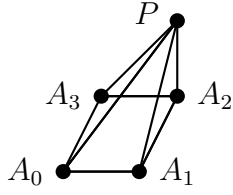

The following Lemma shows a first relation between isotropic quadrilaterals and isotropic pyramids:
\begin{lemma}
  The base of an isotropic  pyramid is an isotropic quadrilateral.
\end{lemma}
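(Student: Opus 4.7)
The plan is a direct computation using the characterization of isotropic quadrilaterals from Lemma~\ref{lemma:quadiso}, namely that the base $(A_0,A_1,A_2,A_3)$ is isotropic if and only if $\omega(D_0,D_1)=0$ for the diagonals $D_0=\overrightarrow{A_0A_2}$ and $D_1=\overrightarrow{A_1A_3}$.

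I would set $v_i=\overrightarrow{PA_i}\in\RR^{2n}$ for $i=0,1,2,3$. The assumption that the pyramid is isotropic means that each lateral triangle $(PA_iA_{i+1})$ lies in an isotropic plane, which is equivalent to
\begin{equation*}
\omega(v_i,v_{i+1})=0 \quad \text{for every } i\in\ZZ/4\ZZ.
\end{equation*}
Now $D_0=v_2-v_0$ and $D_1=v_3-v_1$, so by bilinearity and antisymmetry of $\omega$
\begin{equation*}
\omega(D_0,D_1)=\omega(v_2,v_3)-\omega(v_2,v_1)-\omega(v_0,v_3)+\omega(v_0,v_1).
\end{equation*}
All four terms on the right-hand side vanish by the isotropy of the lateral triangles (the middle two after using $\omega(v_2,v_1)=-\omega(v_1,v_2)$ and $\omega(v_0,v_3)=-\omega(v_3,v_0)$). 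Hence $\omega(D_0,D_1)=0$ and Lemma~\ref{lemma:quadiso} concludes the proof.

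Alternatively, and perhaps more conceptually, one could invoke Stokes' theorem: the four lateral triangles form an oriented surface $S$ whose boundary, after cancellation of the four edges $\overrightarrow{PA_i}$ (each appearing with opposite orientations in the two adjacent triangles), is precisely the oriented quadrilateral $(A_0,A_1,A_2,A_3)$. Thus
\begin{equation*}
\int_{(A_0A_1A_2A_3)}\lambda=\int_S\omega=\sum_{i=0}^{3}\int_{(PA_iA_{i+1})}\omega,
\end{equation*}
and each summand vanishes because the corresponding lateral triangle lies in an isotropic plane, i.e.~$\omega$ restricts to zero there. This expresses the result as a cocycle-type identity, which is morally the right way to think about it and which matches the spirit of Definition~\ref{dfn:isotropicquad}. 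There is no genuine obstacle: the lemma is essentially a one-line algebraic consequence of the isotropy conditions, and its role in the paper is preparatory for the passage from isotropic quadrangular meshes to isotropic triangular meshes via the apex construction.
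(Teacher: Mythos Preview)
Your proposal is correct and matches the paper's own proof, which simply states that the result follows from Stokes' theorem or elementary algebraic manipulations; you have supplied both arguments explicitly.
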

\begin{proof}
  The result is obtained as a trivial consequence of the Stockes
  theorem, or by elementary algebraic manipulations.
\end{proof}
Conversely, we have the following result:
\begin{lemma}
\label{lemma:pyramid}
Let  $Q=(A_0,\cdots,A_3)$ be an isotropic quadrilateral of $\RR^{2n}$
with linearly independent diagonals. We denote by $W'_Q$ be the
symplectic orthogonal of the vector space  spanned by the sides of the
quadrilateral $Q$. Let  $W_Q$ be the set of points $P\in \RR^{2n}$
which are the apexes of  isotropic pyramids with base 
given by the quadrilateral $Q$. Then $W_Q$ 
 is an affine subspace of $\RR^{2n}$  with underlying
 vector space $W'_Q$. Its dimension is $2n-2$ if $Q$ is
 flat and  $2n-3$ otherwise.
  \end{lemma}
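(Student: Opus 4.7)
The plan is to parameterize candidate apexes by the vector $v=\overrightarrow{A_0P}\in\RR^{2n}$ and translate each of the four isotropic triangle conditions from Definition~\ref{dfn:pyramid} into an affine equation in $v$. Writing $s_i=\overrightarrow{A_iA_{i+1}}$ for $i\in\ZZ/4\ZZ$, the triangle $(PA_iA_{i+1})$ lies in an isotropic $2$-plane if and only if $\omega(s_i,\overrightarrow{A_iP})=0$, and $\overrightarrow{A_iP}$ differs from $v$ by a fixed linear combination of the $s_j$'s. A direct substitution then yields the system
\begin{align*}
\omega(s_0,v) &= 0,\\
\omega(s_1,v) &= \omega(s_1,s_0),\\
\omega(s_2,v) &= \omega(s_2,s_0)+\omega(s_2,s_1),\\
\omega(s_3,v) &= 0.
\end{align*}

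The second step is to eliminate one equation using the closure relation $s_0+s_1+s_2+s_3=0$ and the isotropy assumption on $Q$. Adding the four left-hand sides of the system gives $\omega(s_0+s_1+s_2+s_3,v)=0$, automatic. Plugging $s_3=-s_0-s_1-s_2$ into the fourth equation and substituting the values prescribed by the first three reduces it to the scalar identity $\omega(s_0,s_1)+\omega(s_0,s_2)+\omega(s_1,s_2)=0$, which, via $D_0=s_0+s_1$ and $D_1=s_1+s_2$, is exactly $\omega(D_0,D_1)=0$ — the hypothesis that $Q$ is isotropic. Consequently the system reduces to the first three equations, whose solution set (provided it is nonempty) is an affine subspace directed by the symplectic orthogonal of $\mathrm{span}(s_0,s_1,s_2)=\mathrm{span}(s_0,s_1,s_2,s_3)$, which is precisely $W'_Q$.

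Next, I would compute the dimension of $\mathrm{span}(s_i)$. Using $s_0=D_0-s_1$, $s_2=D_1-s_1$ and $s_3=-D_0-D_1+s_1$, the span is contained in $\mathrm{span}(D_0,D_1,s_1)$; since $D_0$ and $D_1$ are linearly independent by hypothesis, its dimension is $2$ or $3$ according to whether $s_1\in\mathrm{span}(D_0,D_1)$, that is, according to whether $Q$ is flat or not. By non-degeneracy of $\omega$, the directing space $W'_Q$ has dimension $2n-2$ in the flat case and $2n-3$ otherwise.

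The main obstacle will be existence, i.e.\ consistency of the three affine equations; once this is established the affine structure and dimension count follow immediately. In the non-flat case $s_0,s_1,s_2$ are linearly independent, non-degeneracy of $\omega$ makes the three linear forms $v\mapsto\omega(s_i,v)$ independent, and the system is automatically solvable. The delicate point is the flat case: here I would observe that if $Q$ is contained in a plane $\Pi$ then a nonzero restriction $\omega|_\Pi$ would be a symplectic form on the $2$-plane $\Pi$, in which case $\omega(D_0,D_1)=0$ would force $D_0$ and $D_1$ to be colinear, contradicting the hypothesis. Hence $\Pi$ is itself an isotropic plane, all the pairings $\omega(s_i,s_j)$ vanish, the three equations collapse to the single trivial constraint $\omega(s_0,v)=0$, and consistency is manifest. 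This completes the plan.
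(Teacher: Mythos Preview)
Your proof is correct and follows essentially the same approach as the paper: set up the linear system for the apex, identify the homogeneous part with the symplectic orthogonal of the span of the sides, count dimensions according to flatness, and verify consistency case by case. Your treatment is in fact more explicit than the paper's in two places: you check algebraically that the fourth equation is implied by the first three together with $\omega(D_0,D_1)=0$, and you argue directly that a flat isotropic quadrilateral with independent diagonals must lie in an isotropic plane.

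One small slip: in the flat case you say the three equations ``collapse to the single trivial constraint $\omega(s_0,v)=0$''. They collapse to a \emph{homogeneous} system (all right-hand sides vanish), but there are still two independent linear constraints, since $\mathrm{span}(s_0,s_1,s_2)$ is $2$-dimensional there. This does not affect your conclusion, because you had already computed $\dim W'_Q=2n-2$ from the rank of the span, and a homogeneous system is trivially consistent; just correct the wording.
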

\begin{proof}
We are looking for a solution of
  the linear system of four equations
  $$
\omega(\overrightarrow{PA_i},\overrightarrow{A_iA_{i+1}})= 0,
$$
where  $0\leq i \leq 3$.
Put
\begin{equation}
  \label{eq:gp}
  X=\overrightarrow{GP}
\end{equation}
where $G$ is by convention the barycenter of the quadrilateral.
The system can be expressed as
  $$
\omega(X,\overrightarrow{A_iA_{i+1}})= \omega(\overrightarrow{GA_i},\overrightarrow{A_iA_{i+1}}).
$$
The LHS correspond to a linear map with kernel~$W'_Q$.

If the quadrilateral is flat, it is contained in an isotropic affine
plane parallel to $L=\RR D_0\oplus \RR D_1$. Any point $P$ in the
plane of the quadrilateral is the apex of an isotropic
pyramid. Furthermore,
the space of solutions is an affine space of codimension $2$.

If the quadrilateral is not flat, then $\dim W'_Q=2n-3$ and the LHS of the linear system has rank 3.
The condition that the quadrilateral is isotropic is
precisely the compatibility condition, that insures that the RHS of
the equations is in the image of the Linear map. We conclude that the
system of equations admits a $2n-3$-dimensional affine space of
solutions. 
\end{proof}

Lemma~\ref{lemma:pyramid} is a excellent tool for passing from
isotropic meshes associated to  quadrangulations to isotropic
meshes associated to  triangulations and, in turns, to piecewise
 linear isotropic maps. One issue, that has to dealt with, is how
$C^0$-estimates are preserved and also, whether the piecewise linear map induced by this construction are still immersions. Indeed, Lemma~\ref{lemma:pyramid} does
provide any information  about the distance from $W_Q$ to the
quadrilateral.

\subsubsection{Optimal apex}
\label{sec:optiapex}
  There exists large families of isotropic pyramids as shown by Lemma~\ref{lemma:pyramid}.
  In this section we introduce some particular solutions of the corresponding linear system, called \emph{optimal pyramids} and \emph{optimal apex}.
  
We use the notations introduced in the proof of
Lemma~\ref{lemma:pyramid}. Again, we consider an isotropic quadrilateral
$Q=(A_0,\cdots,A_3)$. We are assuming that $Q$ has linearly independent diagonals $D_0, D_1$.
Hence the diagonals span an isotropic  plane $L=\RR D_0\oplus \RR D_1$. We may consider its
complexification
\begin{equation}
  \label{eq:cxquad}
L^\CC = L\oplus JL,  
\end{equation}
and the corresponding orthogonal complex (and symplectic) spliting
$$
\RR^{2n}=L^\CC \oplus M.
$$
Notice that the real dimension of $L^\CC$ is $4$.

We are looking for a point  $P\in\RR^ {2n}$ solution of the linear system
\begin{equation}
  \label{eq:sys1}
    \omega(\overrightarrow{GP},\overrightarrow {A_iA_{i+1}})= \gamma_i
     \quad {0\leq i \leq 3}
\end{equation}
where
\begin{equation}
  \gamma_i=\omega(\overrightarrow{GA_i},\overrightarrow {GA_{i+1}}).
\end{equation}
  According to Lemma~\ref{lemma:pyramid}, the affine  space of
  solutions $W'_Q$ has dimension  $2n-2$ or  $2n-3$ in $\RR^{2n}$
  depending on  the flatness of the quadrilateral. We may reduce to particular solutions by adding the constraint
  \begin{equation}
    \label{eq:sys2}
    \overrightarrow{PG}\in L^\CC .
  \end{equation}
We  use the notation $X=\overrightarrow{GP}$. A quadrilateral $(A_0,\cdots,A_3)$ is determined
by specifying its barycenter $G$, the side vector
$V=\overrightarrow{A_0A_1}$ and the diagonals $D_0$ and $D_1$.
We first compute the terms $\gamma_i$ of the RHS in terms of these quantities.
By definition
$$
\begin{array}{llll}
 4 \overrightarrow{A_0G} &= \overrightarrow{A_0A_1}+
 \overrightarrow{A_0A_2}+ \overrightarrow{A_0A_3} &= & D_0 + D_1 +2V \\
 4 \overrightarrow{A_1G} &= \overrightarrow{A_1A_0}+
 \overrightarrow{A_1A_2}+ \overrightarrow{A_1A_3} &= & D_0 +D_1 -2V \\
 4 \overrightarrow{A_2G} &= \overrightarrow{A_2A_0}+
 \overrightarrow{A_2A_1}+ \overrightarrow{A_2A_3}  &= &-3  D_0+D_1+2V\\
 4 \overrightarrow{A_3G} &= \overrightarrow{A_3A_0}+
 \overrightarrow{A_3A_1}+ \overrightarrow{A_3A_2} & = & D_0-3D_1 -2V
\end{array}
$$
Hence
\begin{equation}
  \label{eq:gammai}
\begin{array}{rll}
  16\gamma_0 &=&\omega(D_0+D_1+2V,D_0+D_1-2V) =4\omega(V,D_0+D_1)\\
  16\gamma_1 &=& \omega(D_0+D_1-2V,-3D_0+D_1+2V) =4\omega(V,D_0-D_1)\\
  16\gamma_2 & =& \omega(-3D_0+D_1+2V,D_0-3D_1-2V) =-4\omega(V,D_0+D_1)\\
  16\gamma_3 & =& \omega(D_0-3D_1-2V,D_0+D_1+2V) =-4\omega(V,D_0-D_1)
\end{array}  
\end{equation}
Let $D'_0$ and $D'_1$ be the basis of $L$ defined by the orthogonality
conditions
$$
\forall i,j\in\{0,1\},\quad g(D_i,D'_j)= \delta_{ij}
$$
and put
\begin{equation}
  \label{eq:Bi}
  B_i=JD'_i, \quad i\in\{0,1\}
\end{equation}
which are a basis for $JL$. Notice that by definition
$$
\omega(D_i,B_j)=g(D_i,D_j)=\delta_{ij}.
$$
We may express the vectors $X$ and $V$ using the basis $(D_0,D_1,B_0,B_1)$ of $L^\CC$~as
\begin{align}
  \label{eq:decX}
  X &=a_0D_0 +a_1D_1 + b_0B_0+b_1B_1 \\
  \label{eq:decV}
V &=\alpha_0 D_0 +\alpha_1D_1 + \beta_0B_0+\beta_1B_1 +V_M,
\end{align}
where $\alpha_i,\beta_i,a_i,b_i\in\RR$ and $V_M\in M$.
By~\eqref{eq:gammai} \eqref{eq:decX} and  \eqref{eq:decV}, we have
\begin{align*}
  4\gamma_0 &= \beta_0+\beta_1 \\
  4  \gamma_1 & = \beta_0-\beta_1 \\
  4  \gamma_2 &= - \beta_0 -\beta_1  \\
      4  \gamma_3 &= -\beta_0+\beta_1 
\end{align*}
The linear system~\eqref{eq:sys1} with constraint~\eqref{eq:sys2}
is equivalent to (after adding up lines)
\begin{equation}
  \label{eq:sys3}
  \left \{
\begin{array}{lcl}
  \omega(X,V) &=& \gamma_0 \\
  \omega(X,D_0)&=&\gamma_0+\gamma_1\\
  \omega(X,D_1) &=& \gamma_1+\gamma_2 
\end{array}
\right .  
\end{equation}
where we have removed the last redundent equation.
Eventually, a solution of~\eqref{eq:sys3} is given by
\begin{equation}
  \label{eq:sys4}
  \left \{
\begin{array}{rcl}
  a_0 \beta_0 + a_1 \beta_1 - b_0\alpha_0  - b_1\alpha_1  &=& \frac
  14(\beta_0+\beta_1) \\
  b_0 &=&-\frac 12 \beta_0\\
  b_1 &=& \frac 12 \beta_1
\end{array}
\right .  
\end{equation}
i.e.  the solutions $X$ are given by
\begin{equation}
  \label{eq:sys6}
  X = a_0D_0+a_1D_1  - \frac {\beta_0}2B_0 +\frac{\beta_1}2B_1
\end{equation}
where $a_0$ and $a_1$ satisfy the affine equation
\begin{equation}
  \label{eq:sys5}
  \beta_0 a_0 +\beta_1 a_1   = \frac  14(\beta_0+\beta_1) + \frac {\beta_1\alpha_1-\beta_0\alpha_0}2.
\end{equation}
In conclusion, any solution $(a_0,a_1)$ of the affine equation
\begin{equation}
  \label{eq:sys7}
  \beta_0 a_0 +\beta_1 a_1   = \xi(V)
\end{equation}
where
$$
\xi(V):=\frac {\beta_0(1-2\alpha_0)+\beta_1(1+2\alpha_1)}4
$$
provides a solution to our linear system.
If the orthogonal projection of $V$ onto $JL$ does not vanish, we have
$(\beta_0,\beta_1)\neq (0,0)$ and the above equation defines a
line, which, in turn defines a one dimensional space of solutions
$X\in L^\CC$.
We summarize our computations in the following lemma:
\begin{lemma}
  Assume that $Q=(A_0,\cdots,A_3)$ is an isotropic quadrilateral with
  linearly independent diagonals in $\RR^{2n}$. Assume that the
  orthogonal projection of $Q$ in $L^\CC$ is not a flat quadrilateral.
  Then set of points $P\in G+L^\CC$ which are the apex of an isotropic
  pyramid over $Q$, form a
  $1$-dimensional affine space. 
\end{lemma}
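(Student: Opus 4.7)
The computation has essentially been set up in the paragraphs preceding the statement, so the plan is mostly to organize what is already there and to identify the correct nondegeneracy condition. First I would recall the setup: any candidate apex in $G+L^\CC$ is of the form $P=G+X$ with $X\in L^\CC$, and I would write $X$ in the basis $(D_0,D_1,B_0,B_1)$ as in \eqref{eq:decX}, where $B_0,B_1$ are the vectors defined by \eqref{eq:Bi}. Using the relations $\omega(D_i,B_j)=\delta_{ij}$ and $\omega(D_i,D_j)=0$, the system \eqref{eq:sys1}--\eqref{eq:sys3} characterizing isotropic pyramids reduces, under the constraint $X\in L^\CC$, to the explicit system \eqref{eq:sys4} already written down. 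The last two equations of \eqref{eq:sys4} determine $b_0,b_1$ uniquely in terms of the side vector $V$, and the first becomes the single affine relation \eqref{eq:sys5} on the pair $(a_0,a_1)$.

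The crux is therefore to show that $(\beta_0,\beta_1)\neq(0,0)$ precisely when the orthogonal projection of $Q$ onto $L^\CC$ is not flat, for then the affine equation \eqref{eq:sys7} cuts out a genuine line in the $(a_0,a_1)$-plane and hence a $1$-dimensional affine subspace of solutions $X\in L^\CC$. The key observation is that the projection of $Q$ onto $L^\CC$ is the quadrilateral with the same diagonals $D_0,D_1$ (which lie in $L\subset L^\CC$) and with side vector equal to the $L^\CC$-component of $V$, namely $\alpha_0 D_0+\alpha_1 D_1+\beta_0 B_0+\beta_1 B_1$. Since $D_0,D_1$ already span the $2$-plane $L$, this projected quadrilateral is contained in a $2$-plane if and only if its side vector lies in $L$, i.e.\ if and only if $\beta_0=\beta_1=0$. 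Thus the non-flatness hypothesis translates exactly into $(\beta_0,\beta_1)\neq(0,0)$.

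Granted this, equation \eqref{eq:sys7} is a nontrivial affine equation in two unknowns, so its solution set is a line; combined with the already-determined values of $b_0,b_1$, this gives a $1$-dimensional affine subspace of $L^\CC$, hence of apex points in $G+L^\CC$. The only step requiring a little care is the translation between the two formulations of nondegeneracy (``projection of $Q$ non-flat'' versus ``$JL$-component of $V$ nonzero''); every other step is a direct reading of \eqref{eq:sys4}--\eqref{eq:sys7} from the preceding computation. I expect no further obstacles.
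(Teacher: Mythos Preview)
Your proposal is correct and follows essentially the same approach as the paper: the lemma is stated as a summary of the computations \eqref{eq:sys1}--\eqref{eq:sys7} immediately preceding it, and you organize those computations in exactly the intended way. Your explicit identification of ``projection of $Q$ onto $L^\CC$ not flat'' with $(\beta_0,\beta_1)\neq(0,0)$ via the side vector is the one step the paper leaves slightly implicit (it phrases the condition as ``the orthogonal projection of $V$ onto $JL$ does not vanish''), and your argument for that equivalence is clean and correct.
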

Under the assumptions of the lemma, we may consider  a particular solution given by
\begin{equation}
  \label{eq:optimal}
X= \xi(V) \frac{\beta_0}{\beta_0^2 + \beta_1^2} D_0 + \xi(V)
\frac{\beta_1}{\beta_0^2 + \beta_1^2}D_1 - \frac{\beta_0}2B_0 +
\frac{\beta_1}2B_1.
\end{equation}
 The above solution corresponds to the apex $P$, which is the closest point to the barycenter $G$, with the
 property that the corresponding pyramid is isotropic and $\overrightarrow {GP}\in L^\CC$.
 This leads us to the following definition:
 \begin{dfn}
  Let $(A_0,\cdots,A_3)$ be an isotropic quadrilateral of $\RR^{2n}$ with linearly independent diagonals
  and $G$ be its barycenter. The closest point $P$ to $G$ in $G+L^\CC$
  such that $(P,A_0,\cdots,A_3)$ isotropic is called \emph{the optimal apex}, and the
  corresponding pyramid an \emph{optimal isotropic pyramid}.
 \end{dfn}
 \begin{rmk}
   If the orthogonal projection of the quadrilateral in $L^\CC$ is flat, then the optimal apex is just the barycenter $G$ of the quadrilateral. If it is not flat the optimal apex is given by $X=\overrightarrow{GP}$, where $X$ is given  by Formula~\eqref{eq:optimal}.
 \end{rmk}

Optimal pyramids enjoy nice properties. We first point out that they are almost always non degenerated in the sense of the following lemma: 
\begin{lemma}
  \label{lemma:isotroppyr}
  Let $(A_0,\cdots,A_3)$ be an isotropic quadrilateral such that
  its orthogonal projection on $L^\CC$ is not flat. Using the above notations,
  let $V'$ be the orthogonal projection of $V$ on $L^\CC$ and $X$ be the
  optimal solution. Then $D_0,
  D_1, V',X$  is a basis of $L^\CC$, unless $\beta_0=0$ or
  $\beta_1=0$.
  If $\beta_0\beta_1\neq 0$  the rays of the optimal isotropic pyramid
  $\overrightarrow{PA_i}$, for $0\leq i\leq 3$
  are linearly independent.
\end{lemma}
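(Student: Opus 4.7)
The plan is a direct computation in the basis $(D_0,D_1,B_0,B_1)$ of $L^\CC$, completed by an auxiliary direction $V_M \in M$. First I would write $V' = \alpha_0 D_0 + \alpha_1 D_1 + \beta_0 B_0 + \beta_1 B_1$ and read off $X = a_0 D_0 + a_1 D_1 - \tfrac{\beta_0}{2} B_0 + \tfrac{\beta_1}{2} B_1$ from the optimal solution \eqref{eq:optimal}, where the precise values of $a_0,a_1$ will play no role. The first claim reduces to showing $V'$ and $X$ are linearly independent modulo $\RR D_0 \oplus \RR D_1$, which in turn reduces to computing the $2\times 2$ determinant
\[
\det\begin{pmatrix} \beta_0 & \beta_1 \\ -\beta_0/2 & \beta_1/2\end{pmatrix} = \beta_0\beta_1.
\]
Hence $(D_0,D_1,V',X)$ is a basis of $L^\CC$ if and only if $\beta_0\beta_1 \neq 0$.

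For the second claim I would expand each $\overrightarrow{GA_i}$ using the formulas in \S\ref{sec:optiapex} and then subtract $X$ to obtain $\overrightarrow{PA_i} = \overrightarrow{GA_i} - X$ in the basis $(D_0,D_1,B_0,B_1,V_M)$. A direct but pleasant calculation shows that the optimal choice of $X$ has been engineered so that the $B_0$ and $B_1$ components of the four rays alternate: more precisely, the $(B_0,B_1,V_M)$-components of $\overrightarrow{PA_i}$ are
\[
(0,-\beta_1,-\tfrac12),\ (\beta_0,0,\tfrac12),\ (0,-\beta_1,-\tfrac12),\ (\beta_0,0,\tfrac12),
\]
for $i=0,1,2,3$ respectively. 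In particular $\overrightarrow{PA_2}-\overrightarrow{PA_0}$ and $\overrightarrow{PA_3}-\overrightarrow{PA_1}$ belong to $L = \RR D_0 \oplus \RR D_1$ (they equal $D_0$ and $D_1$ respectively, as is geometrically clear from Chasles' relation).

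Therefore the linear independence of $(\overrightarrow{PA_0},\overrightarrow{PA_1},\overrightarrow{PA_2},\overrightarrow{PA_3})$ is equivalent to that of $(\overrightarrow{PA_0},\overrightarrow{PA_1},D_0,D_1)$. Since $D_0,D_1$ are already independent and span $L$, one only needs the classes of $\overrightarrow{PA_0}$ and $\overrightarrow{PA_1}$ modulo $L$ to be independent. From the table above these classes are $-\beta_1 B_1 - \tfrac12 V_M$ and $\beta_0 B_0 + \tfrac12 V_M$; their $(B_0,B_1)$-components form the diagonal pair $(0,-\beta_1)$ and $(\beta_0,0)$, which are independent as soon as $\beta_0\beta_1 \neq 0$.

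The main (and only) obstacle is purely bookkeeping: one must keep track of several different coordinate expansions simultaneously and make sure the algebraic simplifications in the $B_0,B_1$ components collapse correctly. There is no serious geometric difficulty, but a clean presentation hinges on choosing the decomposition $\RR^{2n} = L^\CC \oplus M$ before computing, so that the component $V_M$ of $V$ can be kept separate from the $L^\CC$-part and the cancellations become transparent.
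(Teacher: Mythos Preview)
Your proof is correct and follows essentially the same route as the paper. For the first claim, the paper rephrases your $2\times 2$ determinant by saying that $\operatorname{span}(D_0,D_1,V',X)=\operatorname{span}(D_0,D_1,\beta_0B_0+\beta_1B_1,-\beta_0B_0+\beta_1B_1)$, which is the same computation. For the second claim, the paper simply asserts it is ``an immediate consequence of the first'' without further detail; your explicit reduction to independence of $(\overrightarrow{PA_0},\overrightarrow{PA_1},D_0,D_1)$ via Chasles, followed by looking at the $(B_0,B_1)$-components modulo $L$, is exactly the kind of verification the paper is leaving to the reader, and your bookkeeping checks out.
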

\begin{proof}
  Easy manipulations on vectors show that the vector space spanned by $D_0,D_1,V', X$ is also spanned by $D_0, D_1$ and the vectors
  $$
\beta_0 B_0 +\beta_1B_1 \mbox{ and } -\beta_0 B_0 +\beta_1B_1.
$$
The two above vectors belong to $JL$ and they are linearly independent if, and only if
$$
\beta_0\beta_1\neq 0,
$$
which proves the lemma as the second statement is an immediate consequence of the first.
\end{proof}

\subsection{$\cC^0$-estimates for optimal pyramids}

  \begin{dfn}
    A quadrilateral of $\RR^{2n}$ with orthonormal diagonals  $(D_0,D_1)$ is called an
    \emph{orthonormogonal quadrilateral}. If  the diagonals
    satisfy
    $$\forall i,j\in \{0,1\} \quad \Big |  g(D_i,D_j)-\delta_{ij}\Big | \leq\epsilon
    $$
    for some $\epsilon >0$, we say that they are
    \emph{$\epsilon$-orthonormal}. Under this assumption,   we say that \emph{the quadrilateral is $\epsilon$-orthonormogonal}.
  \end{dfn}

By continuity, we have the following result:
\begin{lemma}
  \label{lemma:continuityc0}
 For every pair  $D_0$, $D_1\in\RR^{2n}$ of  linearly independent vectors, we define
  $D'_0$, $D_1'\in span(D_0,D_1)$ by  the orthogonality relations
  $$
g(D_i,D'_j) =\delta_{ij}, \forall i,j\in\{0,1\}.
$$
Then $D'_0,D'_1$ is a basis of $span(D_0,D_1)$. Furthermore, 
for every $\epsilon'>0$ there exists $\epsilon_0>0$
such that for every $0<\epsilon<\epsilon_0$ and every
$\epsilon$-orthonormal family $(D_0,D_1)$, the family
$(D'_0,D_1')$ is $\epsilon'$-orthonormal.  
\end{lemma}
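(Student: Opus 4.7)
The plan is to reduce the statement to an elementary $2\times 2$ linear algebra fact about inversion of Gram matrices. First, I would introduce the Gram matrix $G=(G_{ij})$ with $G_{ij}=g(D_i,D_j)$. Since $(D_0,D_1)$ is linearly independent, the bilinear form induced by $g$ on $\mathrm{span}(D_0,D_1)$ is positive definite, so $G$ is invertible. I would then explicitly exhibit the dual basis by setting
\[
D'_j = \sum_{i=0,1}(G^{-1})_{ij}\,D_i \in \mathrm{span}(D_0,D_1),
\]
and verify the duality relations $g(D_k,D'_j)=\sum_i (G^{-1})_{ij}G_{ki}=\delta_{kj}$. This establishes existence and that $(D'_0,D'_1)$ is indeed a basis of the same $2$-plane.

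The heart of the argument is the identification of the Gram matrix of the dual basis. A direct calculation gives
\[
g(D'_i,D'_j) = \sum_{k,l}(G^{-1})_{ki}(G^{-1})_{lj}\,g(D_k,D_l) = (G^{-1}GG^{-1})_{ij} = (G^{-1})_{ij},
\]
so the Gram matrix of $(D'_0,D'_1)$ is exactly $G^{-1}$.

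With this in hand, the continuity statement is immediate. The $\epsilon$-orthonormality of $(D_0,D_1)$ translates to $\|G-I\|_\infty\leq \epsilon$ (entrywise). Since matrix inversion is continuous at the identity on $\mathrm{GL}_2(\RR)$ --- indeed, for $\epsilon<1/2$ one has the explicit Neumann-type bound $\|G^{-1}-I\|_\infty\leq C\epsilon$ for some absolute constant $C$ --- the matrix $G^{-1}$ tends to $I$ as $\epsilon\to 0$. Given $\epsilon'>0$, I would simply choose $\epsilon_0>0$ small enough so that $\|G-I\|_\infty\leq\epsilon_0$ forces $\|G^{-1}-I\|_\infty\leq\epsilon'$; this then reads exactly as $|g(D'_i,D'_j)-\delta_{ij}|\leq\epsilon'$, which is the $\epsilon'$-orthonormality of $(D'_0,D'_1)$.

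There is no real obstacle here: the only mild subtlety is making sure the bilinear form is restricted to the $2$-plane $\mathrm{span}(D_0,D_1)$ before speaking of the Gram matrix, and tracking that the constants depend only on dimension $2$ and not on the ambient $\RR^{2n}$. Both are automatic since the whole statement takes place inside $\mathrm{span}(D_0,D_1)$.
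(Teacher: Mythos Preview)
Your proof is correct and is precisely the natural elaboration of the paper's argument: the paper simply introduces the lemma with the phrase ``By continuity, we have the following result'' and gives no further detail. Your explicit identification of the Gram matrix of $(D'_0,D'_1)$ as $G^{-1}$ and the appeal to continuity of inversion at the identity is exactly what is meant.
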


\begin{rmk}
  \label{rmk:ong}
We shall assume from now on that the quadrilateral is
$\epsilon$-orthonormogonal, with $\epsilon>0$ sufficiently small, so
that $D_0,D_1$ are linearly independent,
 $\|D_i\|\leq 2$ and  $\|D'_i\|\leq 2$.  
\end{rmk}

  \begin{prop}
\label{prop:boundedpyramid}
  There exist $C>0$ and $\epsilon >0$ such that
for every $\epsilon$-orthonormogonal isotropic quadrilateral of
diameter $d$,   the diameter $d'$  of the corresponding optimal isotropic pyramid  satisfies
$$
d' \leq C(d+1).
$$
\end{prop}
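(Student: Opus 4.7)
The strategy is to derive the bound directly from the explicit formula~\eqref{eq:optimal} for the displacement $X=\overrightarrow{GP}$ of the optimal apex from the barycenter, by controlling the size of the four coefficients appearing there. Writing
\[
X = \xi(V)\frac{\beta_0}{\beta_0^2+\beta_1^2} D_0 + \xi(V)\frac{\beta_1}{\beta_0^2+\beta_1^2} D_1 - \frac{\beta_0}{2} B_0 + \frac{\beta_1}{2} B_1,
\]
the two pairs of terms must be treated separately: the $B_i$-coefficients are harmless, while the $D_i$-coefficients contain the potentially divergent denominator $\beta_0^2+\beta_1^2$, which is the sole subtle point of the argument.

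First, under the $\epsilon$-orthonormal hypothesis on $(D_0,D_1)$, Lemma~\ref{lemma:continuityc0} and Remark~\ref{rmk:ong} imply that the dual vectors $D_i'$ are themselves close to orthonormal (up to enlarging the constant by a universal factor), and hence the vectors $B_i=JD_i'$ satisfy $\|B_i\|\leq 2$. Using the orthogonality relations $\omega(B_j,D_i)=-\delta_{ij}$ together with the fact that $L$ is isotropic and $M=L^{\CC\perp}$ is symplectic-orthogonal to $L^\CC$, I would observe that the coefficients in the decomposition of the side vector $V$ are recovered from $V$ by the formulae
\[
\alpha_i = g(V,D_i')+O(\epsilon)\|V\|,\qquad \beta_i = -\omega(V,D_i).
\]
Since $\|V\|\leq d$ (as a side of the quadrilateral) and $\|D_i\|,\|D_i'\|\leq 2$, this gives $|\alpha_i|\leq C'd$ and $|\beta_i|\leq C'd$. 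The $B_i$-terms in the formula for $X$ therefore contribute at most $C'' d$ to $\|X\|$.

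For the $D_i$-terms, the key observation is that the numerator $\xi(V)$ already carries a factor of $\sqrt{\beta_0^2+\beta_1^2}$ which cancels part of the bad denominator. Indeed, applying Cauchy--Schwarz to
\[
\xi(V)=\frac{\beta_0(1-2\alpha_0)+\beta_1(1+2\alpha_1)}{4}
\]
gives
\[
|\xi(V)|\leq \tfrac14\sqrt{(1-2\alpha_0)^2+(1+2\alpha_1)^2}\cdot\sqrt{\beta_0^2+\beta_1^2}\leq C'''(1+d)\sqrt{\beta_0^2+\beta_1^2},
\]
using the bound $|\alpha_i|\leq C'd$ from the previous step. Consequently
\[
\left|\xi(V)\frac{\beta_i}{\beta_0^2+\beta_1^2}\right|\leq |\xi(V)|\cdot\frac{1}{\sqrt{\beta_0^2+\beta_1^2}}\leq C'''(1+d),
\]
uniformly in $\beta_0,\beta_1$, and the $D_i$-terms contribute at most $4C'''(1+d)$ to $\|X\|$. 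Combining, $\|X\|\leq C(1+d)$ for a universal constant $C$. When the projection of the quadrilateral onto $L^\CC$ is flat, we have $P=G$ so $X=0$ and the estimate is trivial; otherwise it follows from the above. The bound on the diameter $d'$ of the pyramid is then immediate from the triangle inequality $\|\overrightarrow{PA_i}\|\leq \|X\|+\|\overrightarrow{GA_i}\|\leq C(1+d)+d$.

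The main (and only) conceptual obstacle is the apparent blow-up of the coefficient of $D_i$ when $\beta_0^2+\beta_1^2$ is small; the heart of the proof is the algebraic identity for $\xi(V)$, which encodes the fact that the optimal apex stays bounded precisely because $\xi(V)$ vanishes at least as fast as $\sqrt{\beta_0^2+\beta_1^2}$.
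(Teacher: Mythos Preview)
Your proof is correct and follows the same approach as the paper: bound the coefficients in the explicit formula~\eqref{eq:optimal} using the near-orthonormality of $(D_0,D_1,B_0,B_1)$ and the control $|\alpha_i|,|\beta_i|\lesssim\|V\|\leq d$. The paper's version is terser at the one delicate point---it simply asserts that ``all the coefficients $a_i$ and $b_i$ are controlled by $\|V\|+1$''---whereas you make explicit, via Cauchy--Schwarz on $\xi(V)$, why the apparently singular denominator $\beta_0^2+\beta_1^2$ is harmless. This is a genuine improvement in exposition, not a different method. (Incidentally, your $O(\epsilon)\|V\|$ correction in the formula for $\alpha_i$ is unnecessary: since $D_i'\in L$, $B_j\perp L$, and $V_M\perp L^\CC$, one has $\alpha_i=g(V,D_i')$ exactly.)
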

Loosely stated, the above proposition says that, for every  isotropic
quadrilateral which is almost orthonormogonal, the diameter of the
optimal isotropic pyramid is commensurate with the diameter of the
quadrilateral. 
\begin{proof}
  If the projection of the quadrilateral in $L^\CC$ is flat, then the optimal apex coincide with the barycenter of the quadrilateral and the proposition is obvious. Thus, we will assume that the projection of the quadrilateral is not flat in the rest of the proof.
  
  As $\epsilon\to 0$, the basis $D_0,D_1,B_0,B_1$ becomes almost orthonormal.
In particular, there exists $\epsilon>0$ sufficiently small such that under the assumptions of the proposition, we have
  $$
\max(|\alpha_0|,|\alpha_1|,|\beta_0|,|\beta_1|)\leq 2\|V\|.
$$
Then Formula~\eqref{eq:optimal} for the optimal solution $X=a_0D_0+a_1D_1+b_0B_0+b_1B_1$
  shows that all the coefficients
  $a_i$ and $b_i$ are controlled by $\|V\|+1$ (up to multiplication by a universal constant). Now,
  $$
\|X\|\leq |a_0|\|D_0\| + |a_1|\|D_1\| +|b_0|\|B_0\| + |b_1|\|B_1\|.
  $$
According to Remark~\ref{rmk:ong}, if $\epsilon$ is sufficiently small,
we have
$$
\|D_i\| \mbox{ and } \|D'_j\| \leq 2.
$$
Hence $\|B'_j\| \leq 2$ and it follows from the triangle inequality that
$$
\|X\|\leq 2(|a_0|+ |a_1| +|b_0|+ |b_1|).
$$
This shows that the distance between the optimal apex and the center of gravity of the quadrilateral is controlled by $\|V\|+1$, up to multiplication by a universal constant. The diameters of the quadrilateral controls $\|V\|$, hence the diameter of the
quadrilateral controls $\|X\|$ and the lemma follows.
\end{proof}

\subsection{Many quadrilaterals and pyramids}
Every faces of an isotropic quadrangular mesh  $\tau\in\scrM_N$ can be
seen  as a collection of isotropic quadrilaterals of $\RR^{2n}$.
In this section we explain how to define particular triangulations
$\cT_N(\Sigma)$ as a refinement of the quadrangulations
$\cQ_N(\Sigma)$. Then we explain how to deduce an isotropic
quadrangular mesh
$\tau'\in\scrM'_N=C^0(\cT_N(\Sigma))$from $\tau$.

\subsubsection{Triangulations obtained by refinement}
We define  triangulation $\cT_N(\RR^2)$ by replacing each face $\face$ of $\cQ_N(\RR^2)$ with
its barycenter $\zert_\face\in\RR^2$. The barycenter $\zert_\face$ is joined to the
vertices of the face $\face$ by straight line
segments. We also add four faces given by the four triangles which
appear as in the picture below. This
operation is better understood by drawing  a local picture of the
corresponding $CW$-complexes of $\RR^2$:
  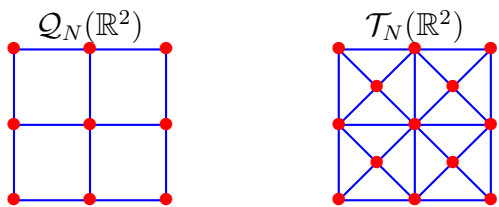
\begin{figure}[H]
  \begin{pspicture}[showgrid=false](-1,-1.5)(1,1.5)
    \psset{linecolor=blue }
    \psline (-1,1) (1,1)
    \psline (-1,0) (1,0)
    \psline (-1,-1) (1,-1)
    \psline (-1,1) (-1,-1)
    \psline (0,1) (0,-1)
    \psline (1,1) (1,-1)
    \color{red}
      \rput (-1,1){$\bullet$}
      \rput (-1,0){$\bullet$}
      \rput (-1,-1){$\bullet$}
      \rput (0,1){$\bullet$}
      \rput (0,0){$\bullet$}
      \rput (0,-1){$\bullet$}
      \rput (1,1){$\bullet$}
      \rput (1,0){$\bullet$}
      \rput (1,-1){$\bullet$}
      \color{black}
      \rput (0,1.3){$\cQ_N(\RR^2)$}
  \end{pspicture}
  \hspace{2cm}
  \begin{pspicture}[showgrid=false](-1,-1.5)(1,1.5)
    \psset{linecolor=blue }
    \psline (-1,1) (1,1)
    \psline (-1,0) (1,0)
    \psline (-1,-1) (1,-1)
    \psline (-1,1) (-1,-1)
    \psline (0,1) (0,-1)
    \psline (1,1) (1,-1)
    \psline (-1,1) (1,-1)
    \psline (0,1) (1,0)
    \psline (-1,0) (0,-1)
    \psline (-1,-1) (1,1)
    \psline (-1,0) (0,1)
    \psline (0,-1) (1,0)
    \color{red}
      \rput (-1,1){$\bullet$}
      \rput (-1,0){$\bullet$}
      \rput (-1,-1){$\bullet$}
      \rput (0,1){$\bullet$}
      \rput (0,0){$\bullet$}
      \rput (0,-1){$\bullet$}
      \rput (1,1){$\bullet$}
      \rput (1,0){$\bullet$}
      \rput (1,-1){$\bullet$}
      \rput (-.5,.5){$\bullet$}
      \rput (.5,.5){$\bullet$}
      \rput (-.5,-.5){$\bullet$}
      \rput (.5,-.5){$\bullet$}
      \color{black}
      \rput (0,1.3){$\cT_N(\RR^2)$}
  \end{pspicture}
  \caption{Triangular refinement of a quadrangulation}
  \end{figure}
As explained in~\S\ref{sec:quadconv} in the case of quadrangulations, the
triangulations $\cT_N(\RR^2)$ descend to $\Sigma$t via
the covering map
$p_N:\RR^2\to \Sigma$. The resulting triangulation of $\Sigma$ is
denoted $\cT_N(\Sigma)$. We define a moduli space of mesh associated to such triangulation 
$$
 \scrM'_N=C^0(\cT_N(\Sigma))\otimes \RR^{2n}.
 $$
 \subsubsection{Optimal triangulation of isotropic quadrangular mesh}
 \label{sec:optitri}
 Let $\tau\in\scrM_N$ be an isotropic quadrangular mesh.
In addition, we are assuming that  the quadrilateral
of $\RR^{2n}$ associated to each face
$\face$ of $\cQ_N(\Sigma)$  via
$\tau$  have linearly independent diagonals.
For each face $\face$ of  $\cQ_N(\Sigma)$, the mesh $\tau$ associates an
isotropic quadrilateral with linearly independent diagonals. We
associate an optimal apex
 $P_\face\in \RR^{2n}$ to such a quadrilateral.
Then, we define a triangular mesh $\tau'\in \scrM'_N$ as follows:
\begin{itemize}
\item If $\vert$ is a vertex of $\cQ_N(\Sigma)$, we define
  $\tau'(\vert)=\tau(\vert)$.
\item If $\zert$ is a vertex of $\cT_N(\Sigma)$ which is not a vertex of
  $\cT_N(\Sigma)$, it is the barycenter of a face $\face$ of
  $\cQ_N(\Sigma)$ and we put $\tau'(\zert)=P_f$, where $P_f$ is the optimal
  vertex defined via $\tau$.
\end{itemize}
This leads us to the following definition
\begin{dfn}
    Let $\tau\in\scrM_N$ be an isotropic quadrangular mesh with linearly
  independent diagonals. The triangular mesh $\tau'\in\scrM_N'$
  defined above is called the optimal triangulation of the isotropic mesh $\tau$.
\end{dfn}
By construction, we have the following obvious
property:
\begin{prop}
  Let $\tau\in\scrM_N$ be an isotropic quadrangular mesh with linearly
  independent diagonals. The optimal triangulation $\tau'\in\scrM_N'$
  of the quadrangular mesh $\tau$  defines a piecewise linear map
  $\ell':\Sigma\to\RR^{2n}$, which is isotropic.
\end{prop}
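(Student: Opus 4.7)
The plan is to observe that this proposition follows almost immediately from the construction, and that the only substantive content to verify is the identification of the faces of $\cT_N(\Sigma)$ with the lateral faces of the optimal pyramids.

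First, I would note that any triangular mesh $\tau'\in\scrM'_N$ canonically defines a piecewise linear map $\ell':\Sigma\to\RR^{2n}$: each triangle $T$ of $\cT_N(\Sigma)$ has three vertices, $\tau'$ assigns three points of $\RR^{2n}$ to these vertices, and the unique affine map matching the data on vertices extends continuously across the $1$-skeleton (agreement on common edges follows from $\tau'$ being defined on shared vertices). Thus $\ell'$ is well defined as a piecewise linear map.

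Next, to check isotropy, I need to verify that $\ell'^{*}\omega$ vanishes on each triangle $T$ of $\cT_N(\Sigma)$. By the refinement construction of $\cT_N(\Sigma)$ from $\cQ_N(\Sigma)$, every such triangle $T$ is of the form having vertices $\zert_\face$, $\vert_i$, $\vert_{i+1}$, where $\face$ is a face of $\cQ_N(\Sigma)$, $\zert_\face$ is its barycenter, and $\vert_i,\vert_{i+1}$ are two consecutive vertices of $\face$. Under $\ell'$, these vertices are mapped to $P_\face=\tau'(\zert_\face)$, $A_i=\tau(\vert_i)$, $A_{i+1}=\tau(\vert_{i+1})$, where by construction $(P_\face,A_0,A_1,A_2,A_3)$ is the optimal isotropic pyramid associated to the isotropic quadrilateral $(A_0,\ldots,A_3)$ of the face $\face$ (which has linearly independent diagonals by assumption, so Lemma~\ref{lemma:pyramid} and the construction of~\S\ref{sec:optiapex} apply).

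By Definition~\ref{dfn:pyramid}, the fact that $(P_\face,A_0,\ldots,A_3)$ is an isotropic pyramid means precisely that each of the four lateral triangles $(P_\face A_iA_{i+1})$ is contained in an isotropic affine plane of $\RR^{2n}$. The image of $T$ under $\ell'$ is exactly one such triangle, so $\ell'(T)$ lies in an isotropic plane, which gives $\ell'^{*}\omega|_T=0$. Since this holds for every triangle of $\cT_N(\Sigma)$, the piecewise linear map $\ell'$ is isotropic in the sense of the definition given in the introduction. No step here is really an obstacle; the proposition is essentially a repackaging of the optimal pyramid construction, and the main point is just matching definitions.
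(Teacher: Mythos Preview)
Your proof is correct and matches the paper's own treatment: the paper states this proposition as an ``obvious property'' following ``by construction'' and gives no further argument, and what you have written is precisely the unpacking of that construction --- each triangle of $\cT_N(\Sigma)$ is a lateral face of an optimal isotropic pyramid, hence lies in an isotropic plane by Definition~\ref{dfn:pyramid}.
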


\subsection{Approximation by isotropic triangular mesh}
\label{sec:trimesh}
In Theorem~\ref{theo:fpt}, we construct a sequence of isotropic
quadrangular meshes $\rho_N\in\scrM_N$ out of  a smooth isotropic
immersion $\ell:\Sigma\to\RR^{2n}$. By construction,
$$
\rho_N=\tau_N- J\delta^\star_N\phi_N,
$$
where $\|\phi_N\|_{C^{2,\alpha}} =\cO(N^{-1})$.
By Proposition~\ref{prop:convell}, the renormalized diagonals of
$\tau_N$ converge towards the partial derivatives of $\ell$. Thus, the
same holds for  $\rho_N$, i.e.
\begin{equation}
  \label{eq:convdiag}
\scrU_{\rho_N}^\pm\longrightarrow \frac{\del\ell}{\del u} \mbox{ and }
\scrV_{\rho_N}^\pm\longrightarrow \frac{\del\ell}{\del v}.
\end{equation}
In particular the diagonals are linearly independent for every $N$
sufficiently large and  we may define an optimal isotropic
triangulation $\rho'_N\in\scrM'_N$ associated to $\rho_N$ as in the previous
section.  It turns out that the triangular meshes $\rho_N'$ are also
good $\cC^0$ approximations of the map $\ell$ in the sense of the
following proposition:
\begin{prop}
  \label{prop:convrhoprime}
  There exists a constant $C>0$, and $N_0>0$ such that for every
  integer $N\geq N_0$ and every vertex $\vert\in \cT_N(\Sigma)$
  $$
\|\ell(\vert) - \rho'_N(\vert)\|\leq \frac CN.
  $$
\end{prop}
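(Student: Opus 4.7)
The plan is to split the vertices of $\cT_N(\Sigma)$ into two classes. A vertex of $\cT_N(\Sigma)$ is either (i) an original vertex of $\cQ_N(\Sigma)$, in which case the construction at~\S\ref{sec:optitri} gives $\rho'_N(\vert) = \rho_N(\vert)$ and the estimate follows at once from Theorem~\ref{theo:mainquad}, or (ii) the barycenter $\zert_\face$ of some face $\face$ of $\cQ_N(\Sigma)$, for which $\rho'_N(\zert_\face) = P_\face$ is the optimal apex of the isotropic quadrilateral $Q_\face = \rho_N(\face)$. The latter case is where the work lies.

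For such a face $\face$, write $P_\face = G_\face + X_\face$ with $G_\face$ the barycenter of $Q_\face$, and split
$$
\|\ell(\zert_\face) - P_\face\| \leq \|\ell(\zert_\face) - G_\face\| + \|X_\face\|.
$$
I would control the first term by a Taylor expansion of $\ell$ about $\zert_\face$: the four vertices $A_0,\ldots,A_3$ of $\face$ lie symmetrically about $\zert_\face$ at distance $\cO(N^{-1})$, so the first-order Taylor contributions cancel in the average of the $\ell(A_i)$, and combining with $\|\rho_N(A_i) - \ell(A_i)\| = \cO(N^{-1})$ from Theorem~\ref{theo:mainquad} gives $\|\ell(\zert_\face) - G_\face\| = \cO(N^{-1})$.

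The heart of the argument is the estimate $\|X_\face\| = \cO(N^{-1})$. If the projection of $Q_\face$ on the symplectic $4$-plane $L^\CC$ associated to its diagonals is flat, then $X_\face = 0$ by definition; otherwise I would plug into the explicit formula~\eqref{eq:optimal}. Using~\eqref{eq:convdiag} together with the fact that the tangent planes to $\ell(\Sigma)$ are Lagrangian, the sizes of the basic quantities in \S\ref{sec:optiapex} can be tracked as follows: $\|V\|, \|D_0\|, \|D_1\| = \cO(N^{-1})$; the dual frame satisfies $\|D'_i\|, \|B_i\| = \cO(N)$ since the renormalized diagonals converge to a linearly independent orthogonal tangent frame; $\alpha_i = g(V,D'_i) = \cO(1)$; and crucially $\beta_i = -\omega(V,D_i) = \cO(N^{-2})$. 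This last estimate is obtained by combining the Taylor expansion for the sample $\tau_N$, which alone would give $\cO(N^{-3})$ because $\omega$ vanishes identically on the Lagrangian tangent planes of $\ell(\Sigma)$, with the crude $\cC^0$-correction $\|\rho_N-\tau_N\| = \cO(N^{-1})$ from Theorem~\ref{theo:mainquad}.

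Feeding these orders into~\eqref{eq:optimal} is the step that requires the most care, and it is the main obstacle of the proof. The apparent difficulty is that the coefficient $\xi(V)\beta_i/(\beta_0^2+\beta_1^2)$ of $D_i$ threatens to blow up as $(\beta_0,\beta_1) \to (0,0)$. The saving grace is the Cauchy--Schwarz bound
$$
|\xi(V)| \leq C\bigl(1+\max(|\alpha_0|,|\alpha_1|)\bigr)\sqrt{\beta_0^2+\beta_1^2},
$$
which forces $|a_i| = \cO(1)$ and hence $\|a_i D_i\| = \cO(N^{-1})$; meanwhile $\|b_i B_i\| = |\beta_i|\cdot\|B_i\|/2 = \cO(N^{-2})\cdot\cO(N) = \cO(N^{-1})$. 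Summing yields $\|X_\face\| = \cO(N^{-1})$ as required. In effect, the proof hinges on a delicate balance between three competing scales in~\eqref{eq:optimal}: the $\cO(N)$ blowup of the dual frame $B_i$, the $\cO(N^{-2})$ smallness of $\beta_i$ forced by the Lagrangian geometry of $\ell$, and the cancellation that keeps $\xi(V)/\sqrt{\beta_0^2+\beta_1^2}$ uniformly bounded.
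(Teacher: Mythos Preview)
Your argument is correct and follows essentially the same strategy as the paper's proof: both control the distance from the optimal apex $P_\face$ to the quadrilateral $Q_\face$ and then conclude by the triangle inequality. The organizational difference is that the paper rescales each quadrilateral so that its diagonals become $\epsilon$-orthonormal and then invokes Proposition~\ref{prop:boundedpyramid} as a black box, whereas you bypass the rescaling and track the powers of $N$ directly in the explicit formula~\eqref{eq:optimal}. Your Cauchy--Schwarz bound $|\xi(V)|\leq C\sqrt{\beta_0^2+\beta_1^2}$ is exactly the cancellation that makes Proposition~\ref{prop:boundedpyramid} work, and you spell it out more explicitly than the paper does. Two minor remarks: the rate $\|\rho_N(\vert)-\ell(\vert)\|=\cO(N^{-1})$ comes from Theorem~\ref{theo:fpt} (or the proof of Theorem~\ref{theo:mainquad}) rather than from the statement of Theorem~\ref{theo:mainquad} itself; and your refined claim that $\beta_i=\cO(N^{-3})$ for the unperturbed sample $\tau_N$ is correct but unnecessary, since the crude bound $|\beta_i|=|\omega(V,D_i)|\leq \|V\|\,\|D_i\|=\cO(N^{-2})$ already suffices.
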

\begin{proof} 
 Since $\|\phi_N\|_{\cC^{2,\alpha}_w}=\cO(N^{-1})$, we
deduce that $\|\phi_N\|_{\cC^{1}_w}=\cO(N^{-1})$. It follows that
there exists a constant $C_1>0$, such that
$\|\rho_N(\vert)-\tau_N(\vert)\|= \|\delta_N^\star\phi_N(\vert)\|\leq C_1N^{-1}$ for every $N$
sufficiently large and every vertex $\vert$ of $\cQ_N(\Sigma)$.
In such case, we have $\tau_N(\vert)=\ell(\vert)$ and
$\rho_N(\vert)=\rho'_N(\vert)$ so that
\begin{equation}
    \label{eq:comparerho1}
    \|\ell(\vert)-\rho'_N(\vert)\|\leq \frac {C_1}N.
\end{equation}
If $\vert$ is a vertex of $\cT_N(\Sigma)$ but not a vertex of
$\cQ_N(\Sigma)$, it is associated to a face $\face$ of the
quadrangulation and $\rho'_N(\vert)$ is the optimal apex associated to
$\face$ and $\rho_N$, by definition of $\rho'_N$ (cf. \S\ref{sec:optitri}).
The renormalized diagonals 
  $\scrU_{\rho_N}^\pm$ and $\scrV_{\rho_N}^\pm$ converge  toward
$\frac{\del\ell}{\del u}$ and $\frac{\del\ell}{\del v}$ by~\eqref{eq:convdiag}.
The partial derivatives $\frac{\del\ell}{\del u}$ and
$\frac{\del\ell}{\del v}$ are orthogonal, with norm
$\sqrt{\theta}$. Therefore
\begin{equation}
  \label{eq:vfon}
  \frac 1{\sqrt\theta_N}\scrU_{\rho_N}^\pm, \quad \frac
1{\sqrt\theta_N}\scrV_{\rho_N}^\pm
\end{equation}
converge toward a pair of smooth orthonormal vector fields on $\Sigma$.
In particular,  there exists $N_0$ such
that for all $N\geq N_0$, the vectors fields~\eqref{eq:vfon} are
$\epsilon$-orthonormal, where $\epsilon >0$ is chosen according to
Proposition~\ref{prop:boundedpyramid}. Since $\theta$ is a positive smooth function
on a compact surface, it is bounded above and below by positive
constants. Since $\theta_N^\pm\to\theta$, it follows that $\theta_N$
is also uniformly bounded above and below by positive constants for
$N$ sufficiently large. Using 
Proposition~\ref{prop:boundedpyramid} with the rescaled pyramid,
we deduce that  the apex $\vert$ is close to all the vertices $\zert$ of
$\face$ in the sense that, for some constant $C_2>0$ independent of $N_0$,
$\vert$ and $\zert$, we have
\begin{equation}
  \label{eq:comparerho}
  \|\rho'_N(\vert)-\rho_N(\zert)\|\leq \frac {C_2}N.
\end{equation}
Since $\ell$ is smooth, there exists a constant $C_3>0$ such that
for every pair of points $w_1,w_2\in \Sigma$ contained in the same
face of $\cQ_N(\Sigma)$, we have
\begin{equation}
  \label{eq:varell}
  \|\ell(w_1)-\ell(w_2)\|\leq\frac { C_3}N
\end{equation}
 In particular, for $\vert$ and $\zert$ as above, 
$$
\|\ell(\vert)-\rho'_N(\vert)\|\leq
\|\ell(\vert)-\ell(\zert)\|+\|\ell(\zert)-\rho_N(\zert) \| +\|\rho_N(\zert)-\rho'_N(\vert)\|.
$$
Since $\zert$ and $\vert$ belong to the same face,
$\|\ell(\vert)-\ell(\zert)\|\leq C_3N^{-1}$ by~\eqref{eq:varell}.
The second term satisfies  $\|\ell(\zert)-\rho_N(\zert) \| \leq C_1N^{-1}$
by~\eqref{eq:comparerho1} and the third term
$\|\rho_N(\zert)-\rho'_N(\vert)\|\leq C_2N^{-1}$
by~\eqref{eq:comparerho}. The proposition follows, with $C=C_1+C_2+C_3$.
\end{proof}

We deduce  the following result, which proves the first part
of Theorem~\ref{theo:maindiscr}
\begin{theo}
  \label{theo:maindiscrweak}
  The piecewise linear maps $\ell_N:\Sigma\to\RR^{2n}$ associated to the triangular
  meshes $\rho'_N$ are isotropic. Furthermore
  $$
\|\ell-\ell_N\|_{\cC^0} =\cO(N^{-1}),
$$
where $\|\cdot\|_{\cC^0}$ denotes the usual $\cC^0$-norm for maps $\Sigma\to\RR^{2n}$.
\end{theo}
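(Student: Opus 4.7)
The statement splits cleanly into two independent assertions, and both follow by assembling tools already established in the paper.

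For the isotropy claim, I would first unpack how $\ell_N$ acts on each triangle of $\cT_N(\Sigma)$. By construction of the refined triangulation, every triangle $T\in\fC_2(\cT_N(\Sigma))$ has two vertices lying on a common edge of some face $\face\in\fC_2(\cQ_N(\Sigma))$ together with the barycenter $\zert_\face$. Under $\rho'_N$, the two edge-vertices are sent to consecutive base points of the isotropic pyramid produced by the optimal-apex construction in \S\ref{sec:optiapex}, while $\zert_\face$ is sent to the optimal apex $P_\face$. Hence $\ell_N(T)$ is exactly one of the four lateral faces of the isotropic pyramid over the base quadrilateral $\rho_N(\face)$, and by Definition~\ref{dfn:pyramid} this triangle lies in an isotropic plane of $\RR^{2n}$. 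Since $\ell_N$ is affine on $T$, the pullback $\ell_N^*\omega$ vanishes on $T$. This holds on every triangle of $\cT_N(\Sigma)$, proving isotropy.

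For the $\cC^0$-estimate, fix an arbitrary $x\in\Sigma$ and let $T$ be a triangle of $\cT_N(\Sigma)$ containing $x$, with vertices $\vert_0,\vert_1,\vert_2$. Write $\ell_N(x)=\sum_{i=0}^{2}\lambda_i\rho'_N(\vert_i)$ as the affine combination with barycentric coordinates $\lambda_i\geq 0$, $\sum\lambda_i=1$. The triangle inequality gives
$$
\|\ell(x)-\ell_N(x)\|\leq \|\ell(x)-\ell(\vert_0)\|+\|\ell(\vert_0)-\rho'_N(\vert_0)\|+\sum_{i=0}^{2}\lambda_i\|\rho'_N(\vert_0)-\rho'_N(\vert_i)\|.
$$
The first term is $\cO(N^{-1})$ because $\ell$ is smooth on the compact surface $\Sigma$ and the diameter of $T$ is $\cO(N^{-1})$. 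The second term is $\cO(N^{-1})$ by Proposition~\ref{prop:convrhoprime}. For each term in the sum, another triangle inequality
$$
\|\rho'_N(\vert_0)-\rho'_N(\vert_i)\|\leq \|\rho'_N(\vert_0)-\ell(\vert_0)\|+\|\ell(\vert_0)-\ell(\vert_i)\|+\|\ell(\vert_i)-\rho'_N(\vert_i)\|
$$
together with Proposition~\ref{prop:convrhoprime} and the Lipschitz property of $\ell$ shows that each summand is $\cO(N^{-1})$. Summing over $i$ and taking the sup over $x\in\Sigma$ gives $\|\ell-\ell_N\|_{\cC^0}=\cO(N^{-1})$.

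There is no real obstacle here; the heavy lifting has been done in Proposition~\ref{prop:convrhoprime} (which itself relies on the diameter bound of Proposition~\ref{prop:boundedpyramid} to control the location of the optimal apex). The only point requiring mild attention is the combinatorial identification of triangles of $\cT_N(\Sigma)$ with lateral faces of the optimal isotropic pyramids, which follows directly from the definitions in \S\ref{sec:optitri}.
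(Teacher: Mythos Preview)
Your proof is correct and follows essentially the same route as the paper: both arguments deduce isotropy directly from the optimal-pyramid construction, and both obtain the $\cC^0$-estimate by combining Proposition~\ref{prop:convrhoprime} at vertices with the Lipschitz bound on $\ell$ via a triangle inequality. The only cosmetic difference is that the paper isolates your second triangle inequality as a separate lemma (bounding $\|\rho'_N(\vert_0)-\rho'_N(\vert_i)\|$ by $C_4/N$) before passing to arbitrary points, whereas you inline this step using barycentric coordinates.
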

\begin{proof}
The first part of the theorem is obvious. By definition of an
isotropic triangular mesh, the piecewise linear map $\ell_N$ associated
to $\rho'_N$ is isotropic.

The following lemma is a trivial consequence of the convergence
statement of
Proposition~\ref{prop:convrhoprime}. This roughly says that the
triangles of the mesh $\rho'_N$ have diameter of order $\cO(N^{-1})$.
\begin{lemma}
  \label{lemma:triest}
  There exists a constant $C_4>0$ such that for every $N$ sufficiently
  large  and  every pair of vertices $\vert_1,\vert_2$ of
  $\cT_N(\Sigma)$ which belong to the same face
  $$
\|\rho'_N(\vert_1)-\rho'_N(\vert_2)\|\leq \frac{C_4}N.
$$
\end{lemma}
Lemma~\ref{lemma:triest} applied to the piecewise linear maps $\ell_N$
shows that
there exists a constant $C_5>0$ such that for every $N$ sufficiently large and $w_1,w_2\in \Sigma$ which belong
to the same triangular face of $\cT_N(\Sigma)$, we have
\begin{equation}
  \label{eq:varellprimeN}
  \|\ell_N(w_1)-\ell_N(w_2)\|\leq \frac{C_5}N.  
\end{equation}
For $N$ sufficiently large, we may assume the
control~\eqref{eq:varell}. For $w\in\Sigma$ and $N$ sufficiently
large, we choose a vertex $\vert$ of the face of $\cT_N(\Sigma)$ which
contains $w$. Then
$$
\|\ell_N(w)-\ell(w)\|\leq \|\ell_N(w)-\ell_N(\vert)\|+ \|\ell_N(\vert)-\ell(\vert)\|
+\|\ell(\vert)-\ell(w)\|.
$$
The first term is bounded by~\eqref{eq:varellprimeN}, the second term
is bounded by Proposition~\ref{prop:convrhoprime} and the third is
bounded by \eqref{eq:varell}. This proves the theorem.
\end{proof}

\subsection{Piecewise linear immersions}
Recall that a piecewise linear map is an immersion if, and only if, it
is a locally injective map.
The piecewise linear isotropic approximations $\ell_N$ of a smooth
isotropic immersion $\ell:\Sigma\to\RR^{2n}$ considered at \S\ref{sec:trimesh} are only close in
$C^0$-norm by Theorem~\ref{theo:maindiscrweak}.
Since this estimate is rather weak, we cannot deduce from this fact that
$\ell_N$ is  an immersion for $N$ sufficiently large.
However there are many free parameters in our construction:
\begin{itemize}
\item The distortion action on $\scrM_N$ preserves isotropic meshes.
  \item The apex of each isotropic pyramid with fixed base lies in an
    affine space of dimension at least $2n-3$.
\end{itemize}
These parameters can be used to construct piecewise linear isotropic
immersions, at least when the dimension of the target space is
sufficiently large, which turns out to be   $n\geq 3$. 

\subsubsection{Perturbed meshes without flat faces}
We start by perturbing the  isotropic quadrangular meshes
$\rho_N\in\scrM_N$
constructed at Theorem~\ref{theo:fpt}.
Our goal is to perturb $\rho_N$ by the shear action,
 to make sure that the quadrilateral
associated to each face of the mesh satisfy the following proposition
and, in particular,
are not flat in $\RR^{2n}$.
\begin{prop}
  \label{prop:partimm}
 For every $N$ sufficiently large, there exists  $T_N \in\RR^{2n}\times \RR^{2n}$
 such that for every $s>0$  small enough,
 the quadrangular mesh
 $$\rho_N^s=sT_N\cdot\rho_N$$
 satisfies the
 following properties:
 \begin{enumerate}
   \label{prop:noflatface}
 \item   \label{prop:item1} For each face of the quadrangular mesh $\rho^s_N$, the
   orthogonal projection of 
 the corresponding  quadrilateral 
  onto the complex space generated by its diagonals (cf. ~\eqref{eq:cxquad}) is not
  flat.
  \item \label{prop:itemvert} For every vertex $\vert$ of $\cQ_N(\Sigma)$, the four vectors
    of $\RR^{2n}$, associated via $\rho_N^s$ to the four edges with vertex $\vert$,
 span a $3$-dimensional
    subspace of $\RR^{2n}$. Furthermore any triplet obtained as a
    subset of  the four above
    vectors is a linearly independent family.
    \item \label{prop:item2} The associated
     triangular meshes $(\rho_N^s)'\in \scrM'_N$ have generic
     pyramids. In other words, 
      for every vertex $\vert$ of $\cT_N(\Sigma)$ which is not a vertex
      of $\cQ_N(\Sigma)$, the
 four vectors of $\RR^{2n}$ associated to the four edges of the mesh $(\rho_N^s)'$
 at $\vert$  are linearly independent.
 \end{enumerate}
\end{prop}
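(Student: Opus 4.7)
The plan is to exploit the finite degrees of freedom of the shear action of \S\ref{sec:shear} to arrange each of the three genericity conditions. Translations $(T_+=T_-)$ act trivially on all three conditions, so the essential parameter is the single vector $w:=T_--T_+\in\RR^{2n}$.

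First I would record how the shear $\rho_N^s=sT\cdot\rho_N$ acts on the relevant geometric data. Since every diagonal in a face of $\cQ_N(\Sigma)$ connects same-colored vertices, all diagonals are invariant under the shear, so the isotropic planes $L_\face$ of \eqref{eq:cxquad} and the associated bases $(D_0,D_1,B_0,B_1)$ from \S\ref{sec:optiapex} do not depend on $s$. By contrast each side vector $V_\face$ becomes $V_\face+s\varepsilon_\face w$ for a sign $\varepsilon_\face\in\{\pm 1\}$ determined by the orientation, and at a vertex $\vert\in\fC_0^\pm(\Sigma)$ the four incident edge vectors all acquire the common shift $\mp sw$. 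With these formulas, each condition becomes a polynomial condition in $(w,s)$. For condition~\ref{prop:item1} one needs $\beta_0(s)\beta_1(s)\neq 0$ on every face, where $\beta_0,\beta_1$ are the $JL_\face$-components of the side vector in the notation of \S\ref{sec:optiapex}; these are affine functions of $s$ whose linear parts are nontrivial, independent linear forms in $w$ because $(D_0,D_1)$ is a linearly independent pair, so the product $\beta_0(s)\beta_1(s)$ is a nonzero polynomial in $s$ on a Zariski-open subset of $w\in\RR^{2n}$. For condition~\ref{prop:itemvert}, since the $e_i(s)=e_i(0)\mp sw$ at a vertex $\vert$ share the common shift $\mp sw$, their span equals $\operatorname{span}\{e_i(0)-e_j(0)\}+\RR w$; by the smooth convergence of Proposition~\ref{prop:convell} and \eqref{eq:convdiag}, the differences $e_i(0)-e_j(0)$ lie close, for $N$ large, to the tangent plane $T_{\ell(\vert)}\ell(\Sigma)$, so any $w$ transverse to this tangent plane raises the dimension to three, and a further generic linear condition on $w$ ensures that every triple of the shifted edges is linearly independent. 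Condition~\ref{prop:item2} is then automatic from Lemma~\ref{lemma:isotroppyr}, which asserts that $\beta_0(s)\beta_1(s)\neq 0$ already forces the four rays of the optimal pyramid to be linearly independent.

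To conclude, I would combine the polynomial conditions over the finitely many faces and vertices of $\cQ_N(\Sigma)$, yielding a bad set which is a finite union of proper algebraic subvarieties of $\RR^{2n}\times\RR$. Choosing $w$ in the Zariski-open complement of the projection of this set to $\RR^{2n}$ gives a fixed $T_N$, and the remaining polynomial conditions in $s$ alone then have only finitely many roots, so every sufficiently small $s>0$ satisfies all three properties simultaneously. The main obstacle is ensuring uniformity in $N$: one must check that the Zariski-open set of admissible $w$ is non-empty (and, for effective estimates, non-degenerate) for every sufficiently large $N$. This is guaranteed by \eqref{eq:convdiag}, whereby the renormalized diagonals of $\rho_N$ converge smoothly to the linearly independent partial derivatives $\frac{\del\ell}{\del u},\frac{\del\ell}{\del v}$; this provides uniform positive lower bounds on $\|D_0\|,\|D_1\|$, on the angle between the diagonals, and on the angle between a suitable choice of $w$ and the tangent planes to $\ell$ at all vertices simultaneously.
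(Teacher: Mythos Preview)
Your approach is essentially the paper's: both arguments observe that the diagonals (hence the planes $L_\face$, $L^\CC_\face$) are shear-invariant while side and edge vectors receive a common shift $\pm sw$, and then choose $w$ (equivalently $T_+$) outside a finite union of proper linear subspaces determined by the faces and vertices. Your algebraic–geometric packaging (polynomial conditions in $(w,s)$, Zariski-open complements) is cleaner than the paper's case-by-case treatment, but the content is the same.

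One small correction: for item~\ref{prop:itemvert}, the span of the shifted edges is not $P+\RR w$ with $P=\operatorname{span}\{e_i(0)-e_j(0)\}$, but rather $P+\RR\bigl(e_1(0)\mp sw\bigr)$. Your formula is correct precisely when $e_1(0)\in P$, i.e.\ when the unperturbed span is already $2$-dimensional (equivalently, when $\rho_N(\vert)$ lies in the affine $2$-plane of its four neighbors, which is exactly the case the paper isolates). When $e_1(0)\notin P$ the span is already $3$-dimensional at $s=0$ and stays so for small $s$ by openness, so your conclusion survives. The ``uniformity in $N$'' paragraph is not needed for the statement as written (which fixes $N$), though the observation is correct.
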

\begin{proof}
We use the notations introduced at the beginning of \S\ref{sec:quadtri}: for
a quadrilateral $Q=(A_0,\cdots,A_3)$ of $\RR^{2n}$, we denote by $X$ the
vector defined by~\eqref{eq:gp}, $D_0,
D_1$ the diagonals defined~\eqref{eq:diag}  and by $B_0,B_1$ the
vectors~\eqref{eq:Bi} of $JL$ (cf. \eqref{eq:cxquad}).

The condition that the projection of the quadrilateral onto $L^\CC$ is
flat is equivalent to $\beta_0=\beta_1=0$, where the $\beta_i$ has been defined in (\ref{eq:decV}).
Assume that the projection is flat. Then for
 every $T_+ \in
\RR^{2n}$ not contained in the hyperplanes $\beta_0=0$ or
$\beta_1=0$, the projection of the quadrilateral $Q_s=(A_0+sT_+,A_1,A_2+sT_+,A_3)$ is not
flat for every $s>0$. Furthermore the optimal pyramid with base $Q_s$ is
generic in the sense of Lemma~\ref{lemma:isotroppyr}.

Let $\rho_N$ be the isotropic quadrangular mesh considered in
hypothesis of the
proposition. We choose $T_+\in\RR^{2n}$ which satisfies the above
property, for every quadrilateral associated to faces of the mesh
$\rho_N$ with flat projection onto the space of complexified
directions. This is possible, since we merely need to choose $T_+\in
\RR^{2n}$ away from a finite collection of $2$-planes.
Then $\rho^s_N:=(sT_+,0)\cdot\rho_N $ satisfies the items
\eqref{prop:item1} and \eqref{prop:item2} of the proposition
provided $s>0$ is sufficiently small.

We just have to show that the condition \eqref{prop:itemvert} can be
satisfied for a suitable choice of deformations.
Given a vertex $\vert$ of $\cQ_N(\Sigma)$ we consider the four
diagonals
 $D_{\vert,\face}^{\rho_N}$ for the four quadrangular faces $\face$
with vertex $\vert$.
The renormalised diagonals of the mesh converge
toward the partial derivatives of $\ell$ at $\vert$ (cf.~\eqref{eq:convdiag}) as
$N\to\infty$. Since $\ell$ is an immersion, this shows that the four
vectors $D_{\vert,\face}^{\rho_N}$ span a space of dimension $2$ or
$3$ for every $N$ sufficiently large. If this space is $3$-dimensional,
\eqref{prop:itemvert} is satisfied with $s=0$ and nothing needs to be
done. Assume that the space of diagonals is $2$-dimensional.
The four vertices connected by an edge to $\vert$ define four points
of $\RR^{2n}$ via $\rho_N$. By assumption, these points  lie in an affine plane of
$\RR^{2n}$. If $\rho_N(\vert)$ does not belong to this plane,
then~\eqref{prop:itemvert} is satified. Otherwise, we require the additional condition that $T_+$
does not belong to the plane spanned by the diagonals.
We have to consider every vertex $\vert$ as above and this
adds a finite  number of conditions for  choosing $T_+$. A finite
family of proper subspaces of a vector space never covers the entire
space. Thus it is possible to find the desired $T_+$.
This concludes the proof of the proposition with $T_N=(T_+,0)$.
\end{proof}
\begin{cor}
  \label{cor:immae}
 Given $N$ large enough, for every $s>0$
  sufficiently small, the isotropic triangulation $(\rho_N^s)'$
  defines a piecewise linear map $\ell_{N,s}:\Sigma\to\RR^{2n}$ which
  is an immersion at every point $w\in\Sigma$ which does not belong to
  the   $1$-skeleton of $\cQ_N(\Sigma)$.  In particular $\ell_{N,s}$
  is an immersion at almost every point of $\Sigma$.
\end{cor}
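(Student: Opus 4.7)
The plan is to analyse local injectivity of $\ell_{N,s}$ stratum by stratum and reduce each case to item~\ref{prop:item2} of Proposition~\ref{prop:partimm}, namely the linear independence of the four rays $\overrightarrow{P_\face A_i}$ emanating from each apex. Fix $N$ large and $s>0$ small enough that all conclusions of Proposition~\ref{prop:partimm} hold, and let $w\in\Sigma$ avoid the $1$-skeleton of $\cQ_N(\Sigma)$. Then $w$ lies in the interior of a unique quadrangular face $\face$, and with respect to the finer triangulation $\cT_N(\Sigma)$ it belongs to one of three strata: the interior of one of the four triangles $T_i=(P_\face,A_i,A_{i+1})$, the interior of one of the four added edges $[P_\face,A_i]$, or the apex $P_\face$ itself.

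For strata of the first kind, $\ell_{N,s}$ is affine in a neighbourhood of $w$, and local injectivity amounts to non-degeneracy of the image triangle. Since any two of the four rays from $P_\face$ are linearly independent by item~\ref{prop:item2}, this is immediate. For strata of the second kind, local injectivity at an interior point of $[P_\face,A_i]$ is equivalent to the two adjacent image triangles $T_{i-1}$ and $T_i$ spanning distinct affine $2$-planes of $\RR^{2n}$; an elementary computation shows this to be equivalent to linear independence of the triple $\overrightarrow{P_\face A_{i-1}},\overrightarrow{P_\face A_i},\overrightarrow{P_\face A_{i+1}}$, which again follows from item~\ref{prop:item2}.

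The heart of the argument is local injectivity at the apex $P_\face$ itself. Here I would argue as follows: the four rays from $P_\face$ span a $4$-dimensional subspace $W\subset\RR^{2n}$ and, near $P_\face$, each of the four sectors of a small disk neighbourhood maps affinely into a $2$-plane generated by two consecutive rays. Linear independence of the four rays implies that two non-adjacent such $2$-planes intersect only at the image of $P_\face$, while two adjacent ones intersect exactly along the image of their shared edge; combining this with non-degeneracy of each individual triangle rules out any self-intersection on a sufficiently small neighbourhood of $P_\face$. I expect this sector-by-sector case analysis to be the main technical obstacle, since one must argue injectivity from the rank condition alone, without any recourse to smoothness or transversality. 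The last sentence of the corollary is then immediate, as the $1$-skeleton of $\cQ_N(\Sigma)$ is a finite union of embedded arcs, hence has Lebesgue measure zero in $\Sigma$.
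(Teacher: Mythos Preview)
Your proposal is correct and arrives at the same conclusion as the paper via essentially the same mechanism, namely item~\eqref{prop:item2} of Proposition~\ref{prop:partimm}. The paper's own proof is terser: it observes that the restriction of $\ell_{N,s}$ to the interior of each face of $\cQ_N(\Sigma)$ is a cone over the apex $P_\face$, so that by the linear (cone) structure, local injectivity at any interior point follows from local injectivity at $P_\face$ alone; the latter is then declared ``clear'' from the linear independence of the four rays. Your stratum-by-stratum analysis (triangle interiors, radial edges, apex) unpacks exactly what that ``clear'' step means, and your sector argument at the apex---adjacent image planes meet only along the shared ray, opposite image planes meet only at $\ell(P_\face)$---is the right justification. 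So the two proofs differ only in that you spell out each stratum explicitly rather than invoking the cone reduction; nothing is missing.
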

\begin{proof}
By linerarity, it is sufficient to check that $\ell_{N,s}$ is
an immersion at every vertex of $\cT_N(\Sigma)$ which is not a vertex
of $\cQ_N(\Sigma)$. But this is clear for $N$ large enough and $s>0$
sufficiently small, by Proposition~\ref{prop:noflatface}, item \eqref{prop:item2}. 
\end{proof}

\begin{rmk}
  The above corollary proves the second part of
Theorem~\ref{theo:maindiscr} concerning  piecewise linear
isotropic immersions when $n= 2$. Indeed, the  the $1$-skeleton of
$\cQ_{N}(\Sigma)$ is a finite union of meridians of the torus $\Sigma$.
\end{rmk}
\subsubsection{Further perturbations by moving apexes}
We are going to apply further isotropic perturbations to the triangular meshes
$(\rho_N^s)'\in\scrM'_N$, so that  that the corresponding piecewise
linear map is also an immersion along the $1$-skeleton of
$\cQ_N(\Sigma)$.

By definition, $(\rho_N^s)'$ is defined from the
quadrangular mesh $\rho_N^s$, by adding the apex of an optimal
isotropic pyramid for each face of $\cQ_N(\Sigma)$.
The definition of an optimal pyramid is somewhat
arbitrary: for $N$ large enough and $s>0$ sufficiently small,  every
face of $\rho_N^s$  satisfy Proposition~\ref{prop:noflatface},
item~\eqref{prop:item1}.
Hence, for each face of $\rho_N^s$, the affine space of apexes of
isotropic pyramids is  $2n-3$-dimensional.
We deduce the following lemma:
\begin{lemma}
  \label{lemma:moveapex}
  For $N$ large enough and $s>0$ sufficiently small, there exists a
   family of isotropic deformations of the triangular isotropic mesh
   $(\rho^s_N)'$. This family is obtained by moving each vertex of
   $\cT_N(\Sigma)$ which does not belong to $\cQ_N(\Sigma)$ within a
   $2n-3$-dimensional affine space.
\end{lemma}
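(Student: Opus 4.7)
The plan is to argue that the apex associated to each quadrangular face can be perturbed independently within a large affine space, without spoiling the isotropic condition anywhere else in the mesh. The key observation is that the barycenter vertex $\zert_\face$ of a face $\face$ of $\cQ_N(\Sigma)$ appears in exactly four triangles of $\cT_N(\Sigma)$: the four triangles $(\zert_\face, A_i, A_{i+1})$ of the pyramid with base the quadrilateral associated to $\face$ via $\rho_N^s$. Hence perturbing $(\rho_N^s)'(\zert_\face)$ alone affects only these four triangles, leaving all other triangles of $\cT_N(\Sigma)$ unchanged.

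First I would invoke Proposition~\ref{prop:noflatface}~\eqref{prop:item1}: for $N$ large and $s>0$ small, the orthogonal projection of each quadrilateral onto the complex space $L^\CC$ spanned by its diagonals is not flat. In particular, the quadrilateral itself is not contained in a $2$-plane of $\RR^{2n}$, so Lemma~\ref{lemma:pyramid} applies and yields that the set $W_\face$ of apexes $P\in\RR^{2n}$ for which $(P,A_0,A_1,A_2,A_3)$ is an isotropic pyramid is an affine subspace of $\RR^{2n}$ of dimension exactly $2n-3$.

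Next I would assemble these local perturbations into a global family. Let $\cF_N$ denote the set of faces of $\cQ_N(\Sigma)$, and form the product
\begin{equation*}
\cW_N \;=\; \prod_{\face\in\cF_N} W_\face,
\end{equation*}
which is an affine space of dimension $(2n-3)\cdot \#\cF_N$. To each choice $(P_\face)_{\face\in\cF_N}\in\cW_N$ one associates a triangular mesh $\tau\in\scrM_N'$ defined by $\tau(\vert)=\rho_N^s(\vert)$ for $\vert\in\fC_0(\cQ_N(\Sigma))$ and $\tau(\zert_\face)=P_\face$ for the barycenter vertex $\zert_\face$ of each face $\face$. The choice $P_\face\in W_\face$ guarantees, by Definition~\ref{dfn:pyramid} and Definition~\ref{dfn:isotropicquad}, that each of the four triangles of the pyramid over $\face$ is contained in an isotropic plane, hence the triangular mesh $\tau$ is isotropic. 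Setting $P_\face$ equal to the optimal apex recovers $(\rho_N^s)'$, which shows that $\cW_N$ parametrizes a family of isotropic deformations of $(\rho_N^s)'$ of the desired form.

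There is essentially no obstacle here: the only subtlety is to confirm that adjacent pyramids do not interact, but this is immediate from the combinatorics of the refinement $\cT_N(\Sigma)\to\cQ_N(\Sigma)$, since each new vertex $\zert_\face$ lies strictly in the interior of the face $\face$ and belongs only to the four triangles obtained by coning $\del\face$ over $\zert_\face$. Thus the isotropic equation decouples face by face, and the $(2n-3)$-dimensional freedom provided by Lemma~\ref{lemma:pyramid} for each face assembles into the global $(2n-3)\cdot\#\cF_N$-dimensional family of deformations claimed.
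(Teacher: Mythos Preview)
Your proof is correct and follows essentially the same approach as the paper, which states the lemma as an immediate deduction from Proposition~\ref{prop:noflatface}\eqref{prop:item1} together with Lemma~\ref{lemma:pyramid}. Your argument is in fact more detailed than the paper's, spelling out the decoupling of the isotropic condition across faces and the assembly into a global product family.
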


  The key observation, that will make Lemma~\ref{lemma:moveapex} useful for
  our purpose, is that $2n-3\geq 3$ for  $n\geq 3$.
  In particular, we deduce the following proposition:
  
  \begin{prop}
    \label{prop:imm}
  Assume that $n\geq 3$, and $N$ is sufficiently large. Then, for every $s>0$ sufficiently small,  there exist isotropic triangular meshes arbitrarily
 close to $(\rho_N^s)'$, which define piecewise linear immersions  $\Sigma\hkto\RR^{2n}$.
\end{prop}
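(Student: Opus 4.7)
The plan is to exploit Lemma~\ref{lemma:moveapex} and achieve immersion along the $1$-skeleton of $\cQ_N(\Sigma)$ by a generic choice of apex perturbations; since $n\geq 3$ gives $2n-3\geq 3$, the allowed $(2n-3)$-dimensional affine space of isotropic moves of each apex will leave enough room for a transversality-style argument. By Corollary~\ref{cor:immae}, the unperturbed map $\ell_{N,s}$ is already an immersion off the $1$-skeleton of $\cQ_N(\Sigma)$, so only interior points of edges and vertices of $\cQ_N(\Sigma)$ remain to be treated.

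First I would set up the local injectivity conditions explicitly. At an interior point of an edge $e$ of $\cQ_N(\Sigma)$ with endpoints $\vert_1,\vert_2$, the two triangles of $\cT_N(\Sigma)$ containing $e$ have apexes $P_1,P_2$ (barycenters of the two adjacent quadrangular faces), and local injectivity there amounts to linear independence in $\RR^{2n}$ of the vectors $\overrightarrow{\vert_1 \vert_2}$, $\overrightarrow{\vert_1 P_1}$, $\overrightarrow{\vert_1 P_2}$ evaluated via the perturbed mesh. At a vertex $\vert$ of $\cQ_N(\Sigma)$, the link in $\cT_N(\Sigma)$ is an $8$-cycle alternating between the four $\cQ_N$-neighbors of $\vert$ and the four adjacent apexes; local injectivity at $\vert$ is equivalent to the cone from $\ell_{N,s}(\vert)$ over this polygonal loop being embedded, which unpacks into finitely many open generic conditions: distinct directions of the $8$ rays from $\vert$, non-collinearity of the two link edges meeting at each apex (a condition Proposition~\ref{prop:partimm}\eqref{prop:itemvert} does not directly supply, since it only controls the four $\cQ_N$-edge directions), and non-intersection, after radial projection from $\ell_{N,s}(\vert)$, of any two non-adjacent link edges.

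Then I would run a standard genericity argument. Each bad condition above is a polynomial equation in the parameters of the product of the $(2n-3)$-dimensional affine spaces of allowed apex perturbations, and therefore cuts out a proper algebraic subset. Viewed one apex at a time, each bad locus is the intersection of a low-dimensional affine subspace of $\RR^{2n}$ (a ``forbidden plane'' in the edge conditions, a ``collinearity'' or ``intersection'' locus in the vertex conditions) with the $(2n-3)$-dimensional perturbation space of a single apex; the dimension count $2n-3\geq 3$ guarantees that each such intersection has positive codimension. Since there are only finitely many conditions in total, their union is nowhere dense, so any perturbation in the complement that is arbitrarily close to the zero perturbation yields an isotropic triangular mesh arbitrarily close to $(\rho_N^s)'$ whose associated piecewise linear map is an immersion at every point of $\Sigma$.

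The main obstacle will be the immersion condition at vertices of $\cQ_N(\Sigma)$, where several non-intersection and non-collinearity conditions must hold simultaneously for the full $8$-cycle link. The argument goes through because each link edge is incident to exactly one apex; hence two non-adjacent link edges involve distinct apex parameters and can be handled by independent perturbations in the $(2n-3)$-dimensional affine spaces, and $2n-3\geq 3$ is precisely the dimensional threshold that makes this simultaneous general-position achievable while preserving the isotropy constraint.
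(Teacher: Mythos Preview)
Your proposal is correct and follows the same strategy as the paper: use the $(2n{-}3)$-dimensional isotropic freedom at each apex (Lemma~\ref{lemma:moveapex}) together with $2n{-}3\geq 3$ to put the apexes in general position by a finite genericity argument. The only cosmetic difference is that the paper first reduces, via an auxiliary Lemma~\ref{lemma:immew}, to checking the immersion condition at the vertices of $\cQ_N(\Sigma)$ and then carries out an explicit case-by-case analysis of pairs of triangles around such a vertex, whereas you phrase the same content as a transversality argument treating edges and vertices in one pass.
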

\begin{proof}
  As in Corollary~\ref{cor:immae}, showing that a map is an immersion is a purely local
  matter.
  We draw a local picture of the triangular mesh $(\rho^s_N)'$, near
  the image $O$ of vertex
  $\vert$ of $\cQ_N(\Sigma)$. In Figure~\ref{figure:localpert}, the
  bullet labelled $O$ actually represents  $\rho_N^s(\vert)\in
  \RR^{2n}$. Similarly, all be points $P_i$, and $ A_{ij}\in \RR^{2n}$
  of the picture are
  images of corresponding vertices of  $\cT_N(\Sigma)$ by the triangular
  isotropic mesh $(\rho_N^s)'$.
Notice that the black and blue bullets are prescribed by the
  quadrangular mesh $\rho_N^s$, whereas the red bullets are defined by its
  triangular refinement $(\rho_N^s)'$. More specifically, the red bullets are the optimal apexes of
  the corresponding  optimal isotropic pyramids.
    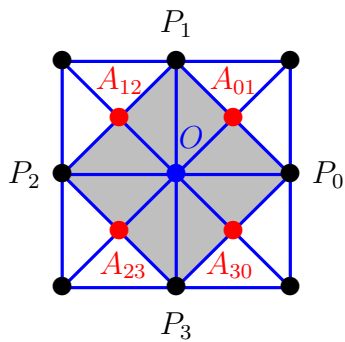
\begin{figure}[H]
  \begin{pspicture}[showgrid=false](-2,-2)(2,2)

\psscalebox{1.5}{
  \psdiamond[linecolor=lightgray,fillstyle=solid,fillcolor=lightgray](0,0)(1,1)
  \psset{linecolor=blue }
    \psline (-1,1) (1,1)
    \psline (-1,0) (1,0)
    \psline (-1,-1) (1,-1)
    \psline (-1,1) (-1,-1)
    \psline (0,1) (0,-1)
    \psline (1,1) (1,-1)
    \psline (-1,1) (1,-1)
    \psline (0,1) (1,0)
    \psline (-1,0) (0,-1)
    \psline (-1,-1) (1,1)
    \psline (-1,0) (0,1)
    \psline (0,-1) (1,0)
    \color{red}
    \rput (-.5,.5){$\bullet$}
      \rput (.5,.5){$\bullet$}
      \rput (-.5,-.5){$\bullet$}
      \rput (.5,-.5){$\bullet$}
      
      \color{black}
      \rput (-1,1){$\bullet$}
      \rput (-1,0){$\bullet$}
      \rput (-1,-1){$\bullet$}
      \rput (0,1){$\bullet$}
      \rput (0,-1){$\bullet$}
      \rput (1,1){$\bullet$}
      \rput (1,0){$\bullet$}
      \rput (1,-1){$\bullet$}
      \color{blue}
      \rput (0,0){$\bullet$}
}
  \psset{linecolor=black }
      \rput (2,0){$P_0$}
      \rput (-2,0){$P_2$}
      \rput (0,2){$P_1$}
      \rput (0,-2){$P_3$}

        \color{red}
        \rput(-.75,1.25){$A_{12}$}
        \rput(+.75,1.25){$A_{01}$}
        \rput(-.7,-1.2){$A_{23}$}
        \rput(+.7,-1.2){$A_{30}$}
        
        \color{blue}
        \rput(.2,.5){$O$}
  \end{pspicture}
  \caption{Local perturbations of a triangular mesh}
  \label{figure:localpert}
    \end{figure}

    We are now looking for a perturbation $(\rho_N^s)''$ of $(\rho_N^s)'$
    by moving  the  points $A_{ij}$.
    We denote $\ell_{N,s}'$ (resp. $\ell_{N,s}''$) the piecewise
    linear maps associated the triangular mesh $(\rho_N^s)''$
    (resp. $(\rho_N^s)''$).

    \begin{enumerate}
    \item \label{item:immae}The property of being an immersion is stable under small
deformations. Thus, for sufficiently small perturbation,
Corollary~\ref{cor:immae} holds for $\ell''_{N,s}$ as well. In
particular,
 $\ell''_{N,s}$ is an immersion at every point $w\in\Sigma$  contained
in the interior of one of the four faces of $\cQ_N(\Sigma)$, with vertex
$\vert$ (the four smaller square in the figure).
\item \label{item:perthyp}Suppose that we can choose a perturbation so that
 that $\ell''_{N,s}$
 is  an  immersion at the vertex
$\vert$ (corresponding to the point $O$). By linearity, this implies that $\ell''_{N,s}$ 
 is  an immersion at every interior point $w\in\Sigma$
 of  the union of shaded faces of the triangulation $\cT_N(\Sigma)$  (with gray color on the picture). 
    \end{enumerate}
If we are able to show that there exists a perturbation, which
satifies the condition \eqref{item:perthyp} as above, we deduce,
together with the above property \eqref{item:immae}, that the piecewise
linear map $\ell''_{N,s}$ is an immersion at every interior point $w$
 of the union of the four faces of $\cQ_N(\Sigma)$ with vertex $\vert$
 (the big square in Figure~\ref{figure:localpert}).
 In conclusion, if we have proved the following lemma:
 \begin{lemma}
   \label{lemma:immew}
   If $(\rho_{N}^s)''$  is a triangular mesh sufficiently close to
   $(\rho_{N}^s)'$, such that the correponding piecewise linear map
   $\ell''_{N,s}:\Sigma\to\RR^{2n}$ is an immersion at every vertex of
   $\cQ_N(\Sigma)$, then $\ell''_{N,s}$ is an immersion at every point
   of $\Sigma$.
 \end{lemma}
 We merely need to show that there exists an isotropic perturbation
 $(\rho_{N,s})''$ which satisfies the hypothesis of
 Lemma~\ref{lemma:immew} and the proof of the proposition will be
 complete.

Consider the mesh $(\rho_N^s)'$ represented locally by
Figure~\ref{figure:localpert}. There are $2n-3$ degrees of freedom for
perturbing each red vertex $A_{ij}$, in such a way that the triangular
mesh remains isotropic.
We would like
to put them in general position, so that the piecewise linear map
 is an
 immersion at $O$.
 First, notice that the local injectivity is partially satisfied by
$(\rho_N^s)'$ for every $s>0$ sufficiently small. Indeed, by
 Corollary~\ref{cor:immae}, two contiguous triangles of the mesh $(\rho_N^s)'$
 in a common pyramid, for instance $(OP_0A_{01})$ and $(OA_{01}P_1)$, are contained
in distinct planes intersecting along a line of $\RR^{2n}$, which in this particular
case is  $(OA_{01})$.

Consider now the two triangles of 
$(OP_0A_{01})$ and $(OA_{30}P_0)$ of $\RR^{2n}$. There are two
possibilities:
\begin{enumerate}
\item The line  $(OA_{30})$ is not contained in the plane of the triangle
$(OP_0A_{01})$, the two triangles lie in distinct plane intersecting
  along the line $(OP_0)$.
  \item  The line  $(OA_{30})$ is
 contained in the plane of the triangle $(OP_0A_{01})$. In this case,
 the associated piecewise linear map is not locally injective at $O$.
\end{enumerate}
In the second situation, we can always find an arbitrarily small
perturbation of the point $A_{30}$ which brings us back to the first situation, such that the associated piecewise
linear map is still isotropic.
Indeed, as pointed out there is a $2n-3\geq 3$ dimensional family of
points $A_{30}$ such that provide isotropic perturbation. There is a least. 
Such space cannot be contained in the plane of $(OP_0A_{01})$
for obvious dimensional reasons. Thus, we may find the wanted
arbitrarily small isotropic perturbations of $A_{30}$ such that we are
in the first situation.

We consider now the case where we have two non contiguous triangles,
for instance $(OP_0A_{01})$ and 
 $OP_1A_{12}$.  We know that the three lines $OP_0$, $OA_{01}$ and
$OP_1$ span a $3$-dimensional space by Corollary~\ref{cor:immae}.
By moving slightly $A_{12}$ within its~$2n-3$-dimensional family
of isotropic perturbation,  we can make sure that the intersection of
the planes containing the triangles $(OP_0A_{01})$ and $(OP_1A_{12})$ reduces to the point $O$.

There are other situations that we should handle as well.
For instance, we consider the triangles $(OP_0A_{01})$ and
$(OA_{12}P_2)$.
By Propositin~\ref{prop:noflatface}, item \eqref{prop:itemvert},
 the lines $(OP_0)$ and
$(OP_2)$ are distinct. Up to a small isotropic perturbation by moving
 $A_{01}$ within its $2n-3$-dimensional family,  we may assume that
 $A_{01}$ does not belong to the plane $(OP_0P_2)$.
 By moving $A_{12}$ similarly, we may assume that $A_{12}$ does not
 belong to the plane that contains the triangle $(OP_0A_{01})$.
Eventually, the two planes that contain $(OP_0A_{01})$
and $(OA_{12}P_2)$, after perturbation, intersect at a single point $O$. 

Other cases are dealt with similarly. Eventually we have proved that
there are arbitrarily small isotropic deformations of $(\rho_N^s)'$,
obtained by moving the points $A_{ij}$, 
such that  the eight triangles of the mesh around $O$ lie in distinct
planes. In particular, the corresponding piecewise linear map is
an immersion at the vertex $\vert$.

By induction, we can apply further similar perturbation, so that the
isotropic piecewise linear map is an immersion at every vertex of
$\cQ_N(\Sigma)$.
This proves the proposition.
\end{proof}
\subsection{Proof of Theorem~\ref{theo:maindiscr}}
Gathering our results provides a complete proof one of our main
results:
\begin{proof}[Proof of Theorem~\ref{theo:maindiscr}]
  The existence of $\cC^0$-approximations of smooth isotropic
  immersions $\ell:\Sigma\to\RR^{2n}$ by piecewise  linear isotropic
  maps is proved in Theorem~\ref{theo:maindiscrweak}.
The statement for existence of approximations by piecewise linear
isotropic immersions is a proved at Proposition~\ref{prop:imm}.
The remaining case, for $n=2$, is a consequence of Corollary~\ref{cor:immae}.
\end{proof}

\section{Discrete moment map flow}
\label{sec:dmmf}
The moduli space $\scrM=\{ f:\Sigma\to M, f^*[\omega]=0\}$, where $\Sigma$ is a closed
surface endowed with an area form $\sigma$ was introduced at
\S\ref{sec:dream}. If $M$ is a K\"ahler
manifold, then $\scrM$ has an induced formal K\"ahler structure
$(\scrM,\fJ,\fg,\Omega)$. The 
group $\cG=\Ham(\Sigma,\sigma)$ acts isometrically on $\scrM$.
The action is Hamiltonian, with moment map $\mu:\scrM\to
C^\infty_0(\Sigma)$, given by $\mu(f)=\frac{f^*\omega}\sigma$. In this
setting, a natural moment map flow is defined (cf. \S\ref{sec:mmf}) by 
$$
\frac{df}{dt} = -\frac 12  \grad \|\mu\|^2.
$$
The properties of the above flow shall be studied
 in a sequel to this
 work~\cite{JRT}. For the time being,  we merely provide a numerical
 simulation of the above flow, implemented in the program
  \emph{Discrete Moment Map Flow (DMMF)}, hosted on the
 webpage:
\begin{center}
\href{http://www.math.sciences.univ-nantes.fr/~rollin/index.php?page=flow}{http://www.math.sciences.univ-nantes.fr/\textasciitilde{}rollin/}.  
\end{center}
The idea is to approximate the flow, which is an evolution equation on
an infinite dimensional space of maps, by an analogue finite dimensional
approximation.
 We
do not try to compare the two flows from a mathematical
perspective. The finite dimensional flow is expected to
converge in some sense to the infinite dimensional flow as
$N\to\infty$, but this is part of a broader project
to be expanded later in~\cite{JRT}.

\subsection{Definition of the finite dimensional flow}
In the rest of this section, we focus on the case where $M=\RR^4$,
 with its standard Kähler structure and
$\Sigma$ is a surface diffeomorphic to a  torus, endowed
covering map $p:\RR^2\to\Sigma$ with $\Gamma$, its group of deck
transformation, which is a lattice of $\RR^2$. This data allows to define the
quadrangulations $\cQ_N(\Sigma)$.
The space 
of quadrangular meshes $\scrM_N$
is seen as a discrete analogue of the moduli space $\scrM$. The moment map $\mu$  has a discrete
version as well, given by
$$
\mu_N^r:\scrM_N\to C^2(\cQ_N(\Sigma)).
$$
The space of discrete functions $C^2(\cQ_N(\Sigma))$ is also
understood as a discrete analogue of $C^\infty(\Sigma)$. Recall that
this space of discrete functions is  endowed with an inner product
$\ipp{\cdot,\cdot}$,  which is an analogue of the $L^2$-inner product
induced by $\sigma$ (cf. \S\ref{sec:ip}) and denoted
$\ipp{\cdot,\cdot}$ as well. We denote by $\|\cdot\|$ the norm induced by the
inner product $\ipp{\cdot,\cdot}$. Then
\begin{align*}
D\|\mu_N^r \|^2|_\tau\cdot V  & =2\ipp{D\mu_N^r|_\tau  \cdot V,
  \mu_N^r(\tau) }\\
&=- 2\ipp{D\mu_N^r|_\tau\circ J  \cdot JV, \mu_N^r(\tau) }\\
   &=2  \ipp{\delta_\tau (JV), \mu_N^r(\tau)}\\
  &=2\ipp{JV,\delta_\tau^\star \mu_N^r(\tau)}
\end{align*}
hence
\begin{equation}
  \label{eq:gradphi}
-\frac 12 D\|\mu_N^r \|^2|_\tau\cdot V  =\ipp{V,J\delta_\tau^\star \mu_N^r(\tau)}.  
\end{equation}
where
$$
\delta_\tau = - D\mu_N^r|_\tau\circ J.
$$
Its adjoint $\delta_\tau^*$ is defined by
$\ipp{\delta_\tau V,\phi}=\ipp{V,\delta_\tau^\star\phi}$.
For each map $u:\scrM_N\to C^2(\cQ_N(\Sigma))$,
we may define a formal gradient vector field on the moduli space
$$\grad\, u :\scrM_N\longrightarrow
C^0(\cQ_N(\Sigma))\otimes\RR^4$$
by
$Du|_\tau\cdot V =\ipp {\grad\, u|_\tau , V }$.
Thus, by~\eqref{eq:gradphi}
$$
-\frac 12 \grad \|\mu_N^r(\tau)\|^2 = J\delta_\tau^\star \mu_N^r(\tau).
 $$
and we can define a downward gradient flow by
$$
\frac{d\tau}{dt} = - \frac 12 \grad \|\mu_N^r\|^2
$$
which is equivalent to
\begin{equation}
  \label{eq:discrf}
\boxed{\frac{d\tau}{dt} = J\delta_\tau^\star\mu_N^r(\tau).  }
\end{equation}
\begin{dfn}
 A solution $\tau_t:I\to\scrM_N$ of the ordinary differential
 equation~\eqref{eq:discrf}, where $I$ is an open interval of $\RR$, is called a solution of the discrete moment
 map flow. 
\end{dfn}
\begin{rmk}
  The discrete moment map flow is an ordinary differential equation
  with smooth coefficients on
  the affine space $\scrM_N$. The solution exists for short time but
  might blowup in finite time. The general behavior of the flow
  will be addressed in~\cite{JRT}.
\end{rmk}

The flow has typical properties of ODE with smooth coefficients:
\begin{prop}
  \label{prop:convflow}
  Assume that $\tau_t:[0,C)\to\scrM_N$ is a maximal solution of the
    the discrete moment map flow. If $\tau_t$ is bounded for $t\in [0,C)$, then
    $C=+\infty$. If in addition $\tau_t$ converges to some
      $\tau_\infty$, then 
  $\mu_N^r(\tau_\infty)\in\ker\delta_{\tau_\infty}^*$.
\end{prop}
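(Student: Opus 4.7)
\medskip

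\noindent\textbf{Proof proposal.} The plan is to exploit that \eqref{eq:discrf} is an ODE on the finite dimensional affine space $\scrM_N$ whose right-hand side is actually polynomial in $\tau$: indeed $\mu_N^r(\tau)$ is quadratic in $\tau$ by Lemma~\ref{lemma:quadratic}, and the coefficients of $\delta^\star_\tau$ depend linearly on $\tau$ through the diagonals $D^\tau_{\vert,\face}$, by Formula~\eqref{eq:dNstar}. Hence $\tau\mapsto J\delta^\star_\tau \mu_N^r(\tau)$ is a cubic (in particular locally Lipschitz) vector field on $\scrM_N$, and standard ODE theory yields the unique maximal solution $\tau_t$ on $[0,C)$.

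For the first assertion, I would appeal to the classical continuation principle for ODEs on a finite dimensional vector space: if $C<\infty$, then $\tau_t$ must leave every compact subset of $\scrM_N$ as $t\to C^-$. Under the hypothesis that $\tau_t$ remains bounded on $[0,C)$, its image lies in a fixed Euclidean ball of $\scrM_N$, whose closure is compact; this contradicts the escape condition, hence $C=+\infty$.

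The second assertion is the standard fact that a bounded trajectory of a gradient flow can only accumulate at critical points. The key computation, already present implicitly in the paragraph preceding Equation~\eqref{eq:discrf}, is
\begin{equation*}
\frac{d}{dt}\|\mu^r_N(\tau_t)\|^2 \;=\; D\|\mu^r_N\|^2|_{\tau_t}\cdot \tfrac{d\tau_t}{dt} \;=\; -2\,\ipp{J\delta^\star_{\tau_t}\mu^r_N(\tau_t),\, J\delta^\star_{\tau_t}\mu^r_N(\tau_t)} \;=\; -2\,\|\delta^\star_{\tau_t}\mu^r_N(\tau_t)\|^2,
\end{equation*}
where I used that $J$ is an isometry of $\scrM_N$ for $\ipp{\cdot,\cdot}$. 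Since $\|\mu^r_N(\tau_t)\|^2\ge 0$, integrating this identity on $[0,\infty)$ shows $\int_0^{\infty}\|\delta^\star_{\tau_s}\mu^r_N(\tau_s)\|^2\,ds<\infty$.

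The conclusion then follows from continuity: the map $\tau\mapsto\|\delta^\star_\tau \mu^r_N(\tau)\|^2$ is polynomial, hence continuous; if $\tau_t\to \tau_\infty$, then $\|\delta^\star_{\tau_t}\mu^r_N(\tau_t)\|^2 \to \|\delta^\star_{\tau_\infty}\mu^r_N(\tau_\infty)\|^2$. Were this last quantity strictly positive, it would remain bounded below by some $\epsilon>0$ for all $t$ large enough, contradicting the integrability just established. Therefore $\delta^\star_{\tau_\infty}\mu^r_N(\tau_\infty)=0$, that is, $\mu^r_N(\tau_\infty)\in\ker\delta^\star_{\tau_\infty}$. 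None of the steps presents a serious obstacle; the only point worth verifying carefully is the isometry property of $J$ with respect to $\ipp{\cdot,\cdot}$, which is built into the construction of the inner product at \S\ref{sec:ip}.
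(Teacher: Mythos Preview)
Your proof is correct. For the first assertion, your continuation principle is exactly equivalent to the paper's argument (bounded $\tau_t$ forces bounded $\tfrac{d\tau}{dt}$ since the vector field is polynomial, hence no finite-time blowup).

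For the second assertion, you take a slightly different route. The paper simply invokes the general ODE fact that a convergent trajectory of an autonomous flow with continuous vector field must converge to an equilibrium: since $\tau\mapsto J\delta^\star_\tau\mu_N^r(\tau)$ is continuous and $\tau_t\to\tau_\infty$, the limit is a fixed point, so $J\delta^\star_{\tau_\infty}\mu_N^r(\tau_\infty)=0$. Your argument instead exploits the gradient structure explicitly, computing the energy decay $\tfrac{d}{dt}\|\mu_N^r(\tau_t)\|^2=-2\|\delta^\star_{\tau_t}\mu_N^r(\tau_t)\|^2$ and then using integrability plus continuity. Both are valid; the paper's is shorter, while yours makes the Lyapunov structure manifest and gives the extra byproduct that $t\mapsto\|\mu_N^r(\tau_t)\|^2$ is nonincreasing along the flow.
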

\begin{rmk}
  If the function $\|\mu^r_N\|^2$ on $\scrM_N$ was Morse, any bounded
  flow $\tau_t$
  would  automatically converge toward a critical point of the function.
 Although we are not trying to prove this fact, all our experiments with the DMMF program seem to indicate that the
  flow is generically bounded and convergent.  If $\ker\delta_{\tau_\infty}
  = 0$, the conclusion of Proposition~\ref{prop:convflow} implies that
  $\mu_N^r(\tau_\infty)$ is a constant discrete function and, by
  Stokes theorem, $\tau_\infty$ must be an
  isotropic quadrangular mesh. Notice that the fact that the kernel of the
  operator $\delta_\tau^\star$ is $1$-dimensional holds for generic
  $\tau$ according to Proposition~\ref{prop:generickernel}. This is also
  confirmed by all the experiments using the DMMF program.
\end{rmk}
\begin{proof}
  If $C$ is finite, and $\frac {d\tau}{dt}$ is bounded, then $\tau_t$
  must converge to some $\tau_C$ as $t\to C$. 
  This contradicts the fact that $C$ is
  maximal.
  Hence, if $C$ is finite, $\frac {d\tau}{dt}$ must be unbounded.
  In particular $\tau_t$ cannot remain in a bounded set, as the RHS of
  the evolution equation would be bounded.
In conclusion, if $\tau_t$ is bounded,   we have $C=+\infty$.
If $\tau_t$ converges towards $\tau_\infty$, the limit must be a fixed
point of the flow and 
$\delta^\star_{\tau_\infty}\mu_N^r(\tau_\infty)=0$.
\end{proof}

\subsection{Implementation of the discrete flow}
\label{sec:discflow}
\subsubsection{Particular lattices}
Recall that the quadrangulations $\cQ_N(\Sigma)$ are defined by identifying the
torus $\Sigma$ with a quotient, via the diffeomorphism
 $\Phi:\RR^2/\Gamma\to\Sigma$ induced by the covering map $p:\RR^2\to\Sigma$.
We merely  have to make a choice for the
lattice $\Gamma$, in order to define $\cQ_N(\Sigma)$ and a
corresponding discrete moment map flow. This choice is arbitrary and
a sufficiently  sophisticated program could deal with any choice.
This is not the case of the DMMF program, however, which is base on
the choice of lattice
$$
\Gamma'' = \ZZ e_1\oplus \ZZ e_2,
$$
and surface  $\Sigma
=\RR^2/\Gamma''$ already introduced at~\S\ref{sec:examples}.
Then $\Gamma''\subset
\Lambda_N$ for every positive integer $N$. The quadrangulation $\cQ_N(\RR^2)$
descends as a quadrangulation of the quotient $\cQ_N(\Sigma)$.
The quadrangulation has $N^2$ vertices and a mesh in $\scrM_N$ can be
stored as an $N\times N$ array with entries in $\RR^4$.

\subsubsection{The Euler method}
It is easy to provide numerical approximations of an ODE such as the
discrete moment map flow by the Euler method.
We consider discrete time values $t_i=i\Delta t$, where $i$ is an
integer and
 $\Delta t>0$ is a small time step increment. Starting at
time $t_0=0$ with a mesh $\tau_0\in\scrM_N$, we compute $\tau_1$,
$\tau_2$, etc... as follows: given $\tau_i \in \scrM_N$
at time $t=i\Delta t$ we compute
$$V_{i}=\delta_{\tau_i}^\star\mu_N^r(\tau_i)$$
and define
$$
\tau_{i+1}= \tau_i +\Delta t\cdot JV_i .
$$
 The above computations are easy to carry out and
the operator $\delta_\tau^\star$ is explicitly given  by Lemma~\ref{lemma:operators}.
Starting from any quadrangular mesh, we can compute the above flow very
quickly in real time on an ordinary machine, whenever $N$ is not too
big (for instance $N\leq 100$ on our laptop).

\subsubsection{Visualization}
A choice has to be made for the visualization of each mesh
$\tau\in\scrM_N$ on a computer screen. The basic idea is to choose a
projection of $\RR^4$ on a $3$-dimensional manifold and represent a
mesh as a surface in a $3$-dimensional space.
We explain now the choice made in the DMMF program, which may not be
the best for certain situations:
we perform a radial projection of the vertices of $\tau$ onto the unit
sphere $\SS^3$ of $\RR^4$, centered at the origin. This projection is
followed by a stereographic projection of the sphere minus a point
onto one of its tangent spaces identified to
$\RR^3$. Once the positions of the  projections of the vertices of
$\tau$ in $\RR^3$ are computed, we can draw the quadrilateral
associated to the
faces in $\RR^3$. A library like OpenGL allows to represent a
quadrangular mesh of $\RR^3$ in perspective.
We fill the faces with a range of colors which depends on the symplectic
density of each face (i.e. the value of $\mu_N^r(\tau)$ on this face).
In addition, motions of the mouse are used to precompose these projections
with Euclidean rotations of $\RR^4$. This technique allows the user to look at surfaces from
various angles using the mouse.

\subsubsection{The DMMF code}
We found out the  \emph{processing} language,
which is a  java dialect, was extremely efficient to code the DMMF program.
The source code and more information on the technical aspects of  the
DMMF program are available on the homepage:
\begin{center}
\href{http://www.math.sciences.univ-nantes.fr/~rollin/index.php?page=flow}{http://www.math.sciences.univ-nantes.fr/\textasciitilde{}rollin/}.  
\end{center}
 The program starts the flow by sampling various examples of parametrized 
tori in $\RR^4$. From an experimental point of view, our numerous
observations seem to 
indicate that the flow should always converges, that the convergence is
fast, and that the limits are
isotropic. Figure~\ref{figure:dmmf} shows an example of  (static) output of the
DMMF program. This quadrangular mesh has diameter of order $1$ and
symplectic density of order $10^{-8}$. The reader is encouraged to
experiment directly  the more interactive and dynamic aspects of the program.
\begin{figure}[H]
  \includegraphics[width=400bp]{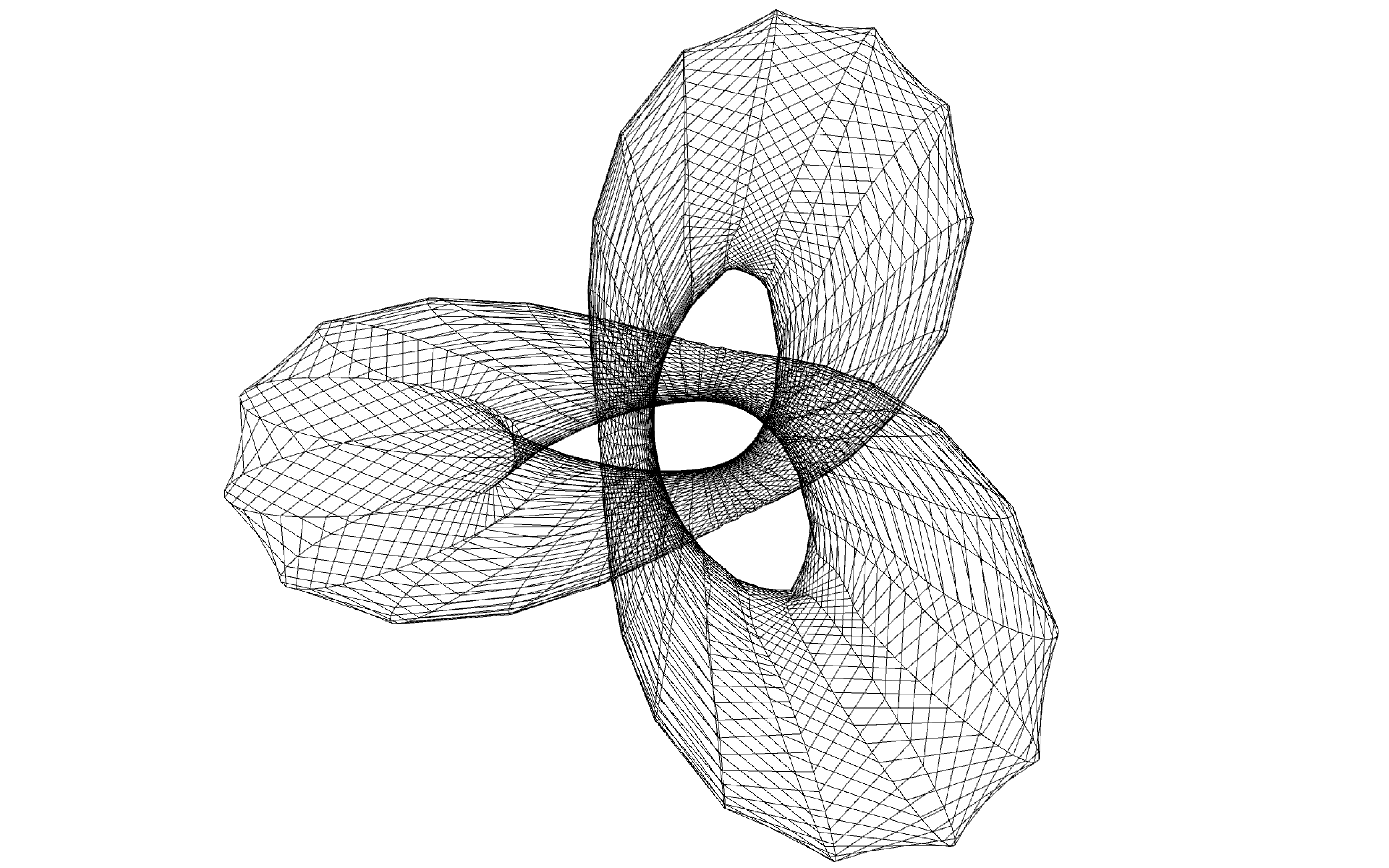}
  \caption{Isotropic quadrangular mesh}
  \label{figure:dmmf}
\end{figure}

\vspace{10pt}
\bibliographystyle{abbrv}
\bibliography{dislag}

\end{document}